\begin{document}

\title[Bounds in the idempotent theorem]{Bounds in Cohen's idempotent theorem}

\author{\tsname}
\address{\tsaddress}
\email{\tsemail}

\begin{abstract} 
Suppose that $G$ is a finite Abelian group and write $\mathcal{W}(G)$ for the set of cosets of subgroups of $G$.  We show that if $f:G \rightarrow \Z$ has $\|f\|_{A(G)} \leq M$ then there is some $z:\mathcal{W}(G) \rightarrow \Z$ such that
\begin{equation*}
f=\sum_{W \in \mathcal{W}(G)}{z(W)1_W} \text{ and } \|z\|_{\ell_1(\mathcal{W}(G))} =\exp(M^{4+o(1)}).
\end{equation*}
\end{abstract}

\maketitle

\section{Introduction}\label{sec.intro}

This paper is about quantitative aspects of Cohen's idempotent theorem \cite[Theorem 3]{coh::} (stated here as Theorem \ref{thm.cidem}).  To state our results precisely we shall need some notation and basic results.

Suppose that $G$ is a finite Abelian group.  We write $\wh{G}$ for its dual group, that is the finite Abelian group of homomorphisms $G \rightarrow S^1$ where $S^1:=\{z\in \C: |z|=1\}$.  We regard $G$ as endowed with a Haar probability measure $m_G$ (this is simply the measure assigning mass $|G|^{-1}$ to each element of $G$) so that we can then define the Fourier transform of a function $f \in L_1(m_G)$ to be
\begin{equation*}
\wh{f}:\wh{G}\rightarrow \C; \gamma \mapsto \int{f(x)\overline{\gamma(x)}dm_G(x)}.
\end{equation*}
We shall be interested in the Fourier algebra norm of functions, and this is defined by
\begin{equation*}
\|f\|_{A(G)}:=\|\wh{f}\|_{\ell_1(\wh{G})}=\sum_\gamma{|\wh{f}(\gamma)|}.
\end{equation*}
It is an easy calculation to see that if $H \leq G$ then
\begin{equation*}
\wh{1_H}(\gamma)=\begin{cases} m_G(H) & \text{ if } \gamma(h)=1 \text{ for all } h \in H\\ 0 & \text{ otherwise,}\end{cases}
\end{equation*}
and it follows from this and Parseval's theorem (see (\ref{eqn.pars}) in \S\ref{sec.fa} if unfamiliar) that
\begin{equation}\label{eqn.calc}
\|1_H\|_{A(G)}=\sum_{\gamma \in \wh{G}}{|\wh{1_H}(\gamma)|} = \frac{1}{m_G(H)}\sum_{\gamma \in \wh{G}}{|\wh{1_H}(\gamma)|^2} =\frac{1}{m_G(H)}\int{1_H^2dm_G}= 1.
\end{equation}
Write $\mathcal{W}(G):=\bigcup_{H \leq G}{G/H}$ and suppose that $z:\mathcal{W}(G)\rightarrow \Z$.  Then
\begin{equation*}
f:=\sum_{W \in \mathcal{W}(G)}{z(W)1_W}
\end{equation*}
is integer-valued and has
\begin{equation*}
\Im f \subset \Z \text{ and } \|f\|_{A(G)} \leq \|z\|_{\ell_1(\mathcal{W}(G))}.
\end{equation*}
Our main result is the following weak converse.
\begin{theorem}\label{thm.main}
Suppose that $M \geq 1$.  Then for all finite Abelian groups $G$ and functions $f:G \rightarrow \Z$ with $\|f\|_{A(G)} \leq M$ there is some $z:\mathcal{W}(G) \rightarrow \Z$ such that
\begin{equation*}
f=\sum_{W \in \mathcal{W}(G)}{z(W)1_W} \text{ and } \|z\|_{\ell_1(\mathcal{W}(G))} \leq \exp\left(M^{4+o(1)}\right).
\end{equation*}
\end{theorem}
This may be compared with \cite[Theorem 1.3]{gresan::0} which gives a bound of $\exp(\exp(O(M^4)))$.  On the other hand long arithmetic progressions show that we cannot do much better:\footnote{\textbf{Notational warning}: here and elsewhere we follow Knuth's definition \cite[p19]{knu::} of $\Omega$ rather than Hardy and Littlewood's \cite[p225]{harlit::1}.  Specifically, for us $f=\Omega(g)$ is equivalent to $g=O(f)$.}
\begin{proposition}\label{prop.egg}
Suppose that $M \geq 1$.  Then there is a finite Abelian group $G$ and a function $f:G \rightarrow \Z$ with $\|f\|_{A(G)} \leq M$ such that if $z:\mathcal{W}(G) \rightarrow \Z$ has
\begin{equation*}
f=\sum_{W \in \mathcal{W}(G)}{z(W)1_W} \text{ then } \|z\|_{\ell_1(\mathcal{W}(G))} =\Omega\left(\exp\left(\frac{\pi^2}{4}M\right)\right).
\end{equation*}
\end{proposition}
\begin{proof}
The characters on $G=\Z/N\Z$ are exactly the functions of the form $x \mapsto \exp(2\pi i jx/N)$ for $1 \leq j \leq N$ and so for $N,n>1$ writing $I_N:=\{m+N\Z: -n \leq m \leq n\}$ and inserting the computation of the Lebesgue constants due to Fej{\'e}r \cite[(16.)]{fej::0} we have
\begin{align*}
\lim_{N \rightarrow \infty}{\|1_{I_N}\|_{A(\Z/N\Z)}}&=\lim_{N \rightarrow \infty}{\frac{1}{N}\sum_{j=1}^N{\left|\sum_{m=-n}^n{\exp\left(2\pi i\frac{mj}{N} \right)}\right|}}\\
&= \int_0^1{\left|\sum_{m=-n}^n{\exp(2\pi im\theta )}\right|d\theta}= \frac{4}{\pi^2}\log n + O(1).
\end{align*}
Since there are infinitely many primes it follows that for all $n\in \N$ there is some prime $N \geq 4n+2$ such that $G:=\Z/N\Z$ contains a set $A$ of size $2n+1$ with $\|1_A\|_{A(G)} \leq \frac{4}{\pi^2}\log n + O(1)$.  Since $N$ is prime we see that any representation of $1_A$ in terms of a function $z$ of the required type must have $\|z\|_{\ell_1(\mathcal{W}(G))} \geq |A|$ from which we get the result.
\end{proof}
In fact Fej{\'e}r's calculation in \cite[(17.)]{fej::0} includes a determination of the $O(1)$ term in the form $c_0+\frac{c_1}{n} + o_{n \rightarrow \infty}(n^{-1})$ so that the constant behind the $\Omega$ can be computed rather accurately if desired, and Watson in \cite{wat::} went even further with the asymptotic expansion using Szeg{\H o}'s beautiful formula for the Lebesgue constants in \cite{sze::4}.  

Proving our main result in the setting of general finite Abelian groups rather than Abelian groups of bounded exponent adds a number of difficulties.  To help understand the overarching method we have presented Theorem \ref{thm.main} in the case when $G$ is a group of exponent $2$ in \cite{san::18}, where the simplifications also lead to a better bound.  We state this result explicitly in \S\ref{sec.rel} along with some results from other classes of group where more can be said.

\subsection{Applications and connections}

Although some similarity may already be clear at this stage, we explicitly connect our work to Cohen's idempotent theorem in \S\ref{sec.con}.  One of the applications of this is to describe the algebra homomorphisms $L_1(G_1) \rightarrow M(G_2)$ where $G_1$ and $G_2$ are locally compact Abelian groups.  The rough idea is to note that such a map must arise as the pullback of a function between the dual groups whose graph has small algebra norm.  The details may be found in \cite[\S4.1.3]{rud::1}.

Wojciechowski \cite{woj::0}, and then Czuron and Wojciechowski \cite{czuwoj::}, made use of quantitative information from the idempotent theorem to strengthen consequences of the results above about non-existence of algebra homomorphisms into `local' results about the norms of maps between finite dimensional subspaces.  Stronger quantitative information in the present paper can be inserted directly to give stronger information there.

As a last connection to other work we mentioned that there is a quantitative connection between the coset ring (defined just before Theorem \ref{thm.cidem}) and the stability ring of Terry and Wolf \cite{terwol::0,terwol::1}.

\subsection{Outline of the paper}

Before moving on to the rest of the paper we should discuss the structure and notation, and a little about the contribution.  The overarching structure is the same as that of \cite{gresan::0}.  In \S\ref{sec.cn}, \S\ref{sec.bs}, \S\ref{sec.ain}, \S\ref{sec.aa} and \S\ref{sec.fa}, we set up the basic background theory we shall need which is for much the same purpose as in \cite{gresan::0}.  Notation and definitions are set up and made as needed. In particular, the two different types of covering number we use are defined in \S\ref{sec.cn}; Bohr sets and their various types of dimension are defined in \S\ref{sec.bs}; notation for measures and convolutions at the start of \S\ref{sec.ain}; and approximate annihilators at the start of \S\ref{sec.aa}.

There were three main parts to the argument in \cite{gresan::0}, and essentially the first two of them introduce a need for a doubly (rather than singly) exponential bound in \cite[Theorem 1.3]{gresan::0}.  The main contribution of this paper is to note how these can be removed.

The first part of the argument in \cite{gresan::0} was a sort of quantitative continuity result developed from the work of Green and Konyagin in \cite{grekon::}.  Our analogue of this is in \S\ref{sec.qc} and is closely related to their work, although here we make use of an advance due to Croot, Sisask and {\L}aba \cite{croabasis::} to get a sort of $L_p$ version.

The second ingredient was a Freiman-type theorem.  Freiman's theorem has been improved since then to have quasi-polynomial dependencies and our work simply takes advantage of this.  We record a suitable Freiman-type theorem in \S\ref{sec.ft}.

The third ingredient is the concept of arithmetic connectivity. We refine this in \S\ref{sec.ac}, but the improvement it leads to is polynomial rather than exponential.  (Without any change to the notion of arithmetic connectivity from \cite{gresan::0} our arguments lead to Theorem \ref{thm.main} with the $4+o(1)$ replaced by some larger constant.)

These three main ingredients are combined in the argument in \S\ref{sec.main} to give Theorem \ref{thm.main2} which has Theorem \ref{thm.main} as a special case.

\subsection{Limitations of the argument}

As with the argument in \cite{gresan::0}, though for different reasons, the argument for Theorem \ref{thm.main} has two separate points, both of which force bounds of the shape we get.  The first point is in Proposition \ref{prop.screl}, the core of which goes back to Green and Konyagin \cite{grekon::}.  Whilst we improve one dependency, the other dependencies have not been touched since their work.

The second point is in Proposition \ref{prop.f}.  Here there is a well-known conjectural improvement -- the polynomial Freiman-Ruzsa conjecture -- although it doesn't seem like such an improvement is altogether necessary.  In particular, it seems quite realistic to hope to improve Lemma \ref{lem.s} directly.

\section{Covering numbers}\label{sec.cn}

Given two sets $S,T \subset G$ with $T$ non-empty, the \textbf{covering number} of $S$ by $T$ is
\begin{equation*}
\mathcal{C}_G(S;T):=\min\left\{|X| : S \subset X+T\right\}.
\end{equation*}
We often omit the subscript if the underlying group is clear.

Since $T$ is non-empty and $G$ is finite this minimum is well-defined.  Moreover, if $S$ is also non-empty then $\mathcal{C}(S;T) \geq 1$ whatever the set $T$.  

Covering numbers enjoy the following simple properties.
\begin{lemma}[Behaviour of covering numbers]\label{lem.bpcn}
Suppose that $G$ and $H$ are Abelian groups.
\begin{enumerate}
\item\label{pt.bpcn.1} \emph{(Restrictions and extensions)} For all $U \supset S$ and $T \supset V \neq \emptyset$ we have
\begin{equation*}
\mathcal{C}(S;T) \leq \mathcal{C}(U;V).
\end{equation*}
\item\label{pt.prod} \emph{(Products)} For all $S,T \subset G$ and $U,V \subset H$ with $T, V \neq \emptyset$ we have
\begin{equation*}
\mathcal{C}_{G \times H}(S\times U;T \times V) \leq \mathcal{C}_G(S;T)\mathcal{C}_H(U;V).
\end{equation*}
\item\label{pt.extn} \emph{(Compositions)} For all $S,T,U$ with $T,U \neq \emptyset$ we have
\begin{equation*}
\mathcal{C}(S;U) \leq \mathcal{C}(S;T)\mathcal{C}(T;U).
\end{equation*}
\item\label{pt.pullback} \emph{(Pullbacks)} For all $U,V \subset H$ with $V \neq \emptyset$ and homomorphisms $\phi:G \rightarrow H$ we have
\begin{equation*}
\mathcal{C}_G(\phi^{-1}(U);\phi^{-1}(V-V)) \leq \mathcal{C}_H(U;V).
\end{equation*}
\end{enumerate}
\end{lemma}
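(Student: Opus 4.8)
The plan is to prove each part by taking near-optimal covers for the quantity on the right-hand side and assembling them into a cover witnessing the left-hand side; the only ``arithmetic'' inputs are that $|X \times Y| = |X||Y|$ and $|X+Y| \leq |X||Y|$, and that translation and sumsets respect inclusions. For part (\ref{pt.bpcn.1}) I would pick $Y$ with $U \subset Y+V$ and $|Y| = \mathcal{C}(U;V)$; then $S \subset U \subset Y+V \subset Y+T$ since $V \subset T$, so $\mathcal{C}(S;T) \leq |Y|$.

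For part (\ref{pt.prod}), pick $X \subset G$ with $S \subset X+T$ and $|X| = \mathcal{C}_G(S;T)$, and $Y \subset H$ with $U \subset Y+V$ and $|Y| = \mathcal{C}_H(U;V)$; then $S \times U \subset (X+T)\times(Y+V) = (X \times Y)+(T \times V)$, so $\mathcal{C}_{G\times H}(S\times U;T\times V) \leq |X||Y|$. Part (\ref{pt.extn}) is the same move in one group: with $S \subset X+T$ and $T \subset Y+U$ realising the two covering numbers, $S \subset X+T \subset (X+Y)+U$, so $\mathcal{C}(S;U) \leq |X+Y| \leq |X||Y|$.

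Part (\ref{pt.pullback}) needs slightly more care, since the centres of a near-optimal cover of $U$ by translates of $V$ need not lie in $\phi(G)$. Here I would take $Y \subset H$ with $U \subset Y+V$ and $|Y| = \mathcal{C}_H(U;V)$, and for each $y \in Y$ with $(y+V)\cap U \cap \phi(G) \neq \emptyset$ fix $g_y \in G$ with $\phi(g_y) = y+v_y$ for some $v_y \in V$; let $X$ be the (at most $|Y|$ many) points $g_y$. For any $g$ with $\phi(g) \in U$, write $\phi(g) = y+v$ with $y \in Y$, $v \in V$; then $\phi(g)$ witnesses $(y+V)\cap U\cap\phi(G)\neq\emptyset$, so $g_y$ is defined and $\phi(g-g_y) = v - v_y \in V-V$, i.e. $g \in g_y + \phi^{-1}(V-V)$. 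Hence $\phi^{-1}(U) \subset X + \phi^{-1}(V-V)$ with $|X| \leq |Y|$, giving the bound (and $\phi^{-1}(V-V) \ni 0$ is non-empty as $V \neq \emptyset$). The only real obstacle is this preimage-selection step; the remaining content is bookkeeping, with degenerate cases ($S$, $U$, or $\phi^{-1}(U)$ empty) covered by the convention that the empty set is covered by the empty collection of translates.
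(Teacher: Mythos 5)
Your argument is correct and follows essentially the same route as the paper's: parts (\ref{pt.bpcn.1})--(\ref{pt.extn}) assemble an optimal cover and use $|X+Y|\leq|X||Y|$, and part (\ref{pt.pullback}) selects, for each relevant cover-centre, a preimage point in $G$ of something in its $V$-translate so that differences land in $\phi^{-1}(V-V)$. Your additional intersection with $U$ in the selection condition for (\ref{pt.pullback}) is a harmless tightening of the paper's condition $(x+V)\cap\phi(G)\neq\emptyset$, not a substantive difference.
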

\begin{proof}
First, if $U \subset X+V$ and $U \supset S$ and $T \supset V$ then certainly $S \subset X+T$ from which (\ref{pt.bpcn.1}) follows.

Secondly, if $S \subset X+T$ and $U \subset Y+V$ then $S \times U \subset X \times Y + T \times V$ and (\ref{pt.prod}) follows.

Thirdly, if $S \subset X+T$ and $T \subset Y+U$ then $S \subset X+Y+U$ and hence $\mathcal{C}(S,U) \leq |X+Y| \leq |X||Y|$ from which (\ref{pt.extn}) follows.

Finally, if $U \subset X+V$ then write $X'$ for the set of $x \in X$ such that $(x+V)\cap \phi(G) \neq \emptyset$ and let $z:X' \rightarrow G$ be a choice function such that $\phi(z(x)) \in x+V$.  Put $Z:=\{z(x):x \in X'\}$.  If $y \in \phi^{-1}(U)$ then
\begin{equation*}
\phi(y) \in (X+V)\cap \phi(G) \subset X' + V \subset \phi(Z)-V+V.
\end{equation*}
It follows that $y \in Z + \phi^{-1}(V-V)$ and we have (\ref{pt.pullback}) since $|Z| \leq |X'| \leq |X|$.
\end{proof}
Covering numbers are closely related to doubling as the following lemma captures.
\begin{lemma}\label{lem.covsum}
Suppose that $A,B,S,T \subset G$ with $B,T \neq \emptyset$.  Then
\begin{equation*}
m_G(A+S) \leq \mathcal{C}(A;B)\mathcal{C}(S;T)m_G(B+T).
\end{equation*}
\end{lemma}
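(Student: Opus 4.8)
The plan is to pull back a cover of $A$ by $B$ and a cover of $S$ by $T$ and combine them additively. First I would fix minimal covering sets: choose $X \subset G$ with $|X| = \mathcal{C}(A;B)$ and $A \subset X+B$, and $Y \subset G$ with $|Y| = \mathcal{C}(S;T)$ and $S \subset Y+T$. These exist (the minima are attained since $G$ is finite and $B,T$ are non-empty, as noted after the definition of the covering number). Then $A+S \subset (X+B)+(Y+T) = (X+Y)+(B+T)$, so by monotonicity of $m_G$ it is enough to bound $m_G\bigl((X+Y)+(B+T)\bigr)$.

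Next I would invoke translation invariance of Haar measure. Writing $X+Y$ as a set of at most $|X||Y|$ points $w$, we have $(X+Y)+(B+T) = \bigcup_{w \in X+Y}\bigl(w+(B+T)\bigr)$, and finite subadditivity together with $m_G(w + (B+T)) = m_G(B+T)$ gives $m_G\bigl((X+Y)+(B+T)\bigr) \leq |X+Y| \, m_G(B+T) \leq |X||Y| \, m_G(B+T)$. Substituting $|X| = \mathcal{C}(A;B)$ and $|Y| = \mathcal{C}(S;T)$ yields the claimed inequality. Equivalently, one can record the one-line fact $m_G(E_1 + E_2) \leq |E_1|\, m_G(E_2)$ for finite $E_1 \subset G$ and apply it with $E_1 = X+Y$ and $E_2 = B+T$.

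I do not expect a genuine obstacle here: the argument is just combining two covers and using translation invariance. The only minor points to be careful about are that the defining minima are attained (handled by finiteness of $G$) and that $B+T \neq \emptyset$ so the right-hand side is meaningful, which follows from $B,T \neq \emptyset$.
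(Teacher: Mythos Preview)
Your proposal is correct and is essentially identical to the paper's own proof: choose minimal covers $X,Y$ with $A\subset X+B$, $S\subset Y+T$, note $A+S\subset X+Y+B+T$, and bound $m_G(X+Y+B+T)\leq |X||Y|\,m_G(B+T)$ by translation invariance and subadditivity. The paper presents exactly this chain of inequalities without further comment.
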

\begin{proof}
Let $X$ be such that $A \subset X+B$ and $|X| = \mathcal{C}(A,B)$, and $Y$ be such that $S \subset Y+T$ and $|Y|=\mathcal{C}(S,T)$.  Then $A+S \subset X+Y+B+T$ and hence
\begin{equation*}
m_G(A+S)\leq m_G(X+Y+B+T) \leq |X||Y|m_G(B+T) \leq \mathcal{C}(A,B)\mathcal{C}(S,T)m_G(B+T),
\end{equation*}
and the lemma is proved.
\end{proof}
Conversely we have Ruzsa's covering lemma.
\begin{lemma}[Ruzsa's covering lemma]\label{lem.rc}
Suppose that $A,B \subset G$ for some $B \neq \emptyset$.  Then
\begin{equation*}
\mathcal{C}(A;B-B) \leq \frac{m_G(A+B)}{m_G(B)}.
\end{equation*}
\end{lemma}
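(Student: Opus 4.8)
The plan is to run the standard greedy / maximal-disjoint-family argument. First I would pick a subset $X \subseteq A$ that is maximal with respect to inclusion subject to the property that the translates $\{x+B : x \in X\}$ are pairwise disjoint; such an $X$ exists since $G$ is finite (and $X = \emptyset$ handles the degenerate case $A = \emptyset$, where both sides vanish). Note $B - B \neq \emptyset$ since $0 \in B-B$, so $\mathcal{C}(A;B-B)$ is well-defined.

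Next I would bound $|X|$ by volume counting. The sets $x+B$ for $x \in X$ are pairwise disjoint and each is contained in $A+B$, and by translation-invariance of $m_G$ each has measure $m_G(B)$. Hence $|X|m_G(B) = m_G\big(\bigcup_{x \in X}(x+B)\big) \leq m_G(A+B)$, giving $|X| \leq m_G(A+B)/m_G(B)$.

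Finally I would use maximality of $X$ to check $A \subseteq X + (B-B)$, which gives $\mathcal{C}(A;B-B) \leq |X|$ and hence the lemma. Indeed, fix $a \in A$. Adjoining $a$ to $X$ must break disjointness (this is immediate if $a \in X$, and follows from maximality otherwise), so there is some $x \in X$ with $(a+B)\cap(x+B) \neq \emptyset$; choosing $b,b' \in B$ with $a+b = x+b'$ yields $a = x + (b'-b) \in X + (B-B)$.

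There is no real obstacle here; the only points meriting a word are the existence of a maximal disjoint family (finiteness of $G$) and the use of translation-invariance of Haar measure in the counting step.
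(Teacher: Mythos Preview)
Your proposal is correct and follows essentially the same argument as the paper: take a maximal $X \subset A$ with the translates $x+B$ pairwise disjoint, use maximality to get $A \subset X + (B-B)$, and use disjointness together with translation-invariance to bound $|X| \leq m_G(A+B)/m_G(B)$. The paper's presentation is slightly terser but the steps are identical.
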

\begin{proof}
Suppose that $X \subset A$ is maximal such that for every distinct $x,x' \in X$ we have $(x+B) \cap (x'+B) = \emptyset$.  It then follows that if $x \in A\setminus X$, there is some $x' \in X$ such that $(x+B) \cap (x'+B) \neq \emptyset$, and hence $A\setminus X \subset X+B-B$.  Of course, since $0_G \in B-B$ we certainly have $X \subset X+ B-B$ and so $A \subset X+B-B$.  On the other hand, the sets $\{x+B: x \in X\}$ are disjoint subsets of $A+B$ and there are $|X|$ of them.  The lemma follows.
\end{proof}

In the light of Lemma \ref{lem.bpcn} part (\ref{pt.pullback}) above, for sets $S,T \subset G$ with $0_G \in T$ it is natural to define the \textbf{difference covering number} of $S$ by $T$ to be
\begin{equation*}
\mathcal{C}_G^\Delta(S;T) :=\min\left\{\mathcal{C}_H(U;V):\begin{array}{l}H \in \textbf{Ab}, H \text{ finite}, \phi \in \Hom(G,H),\\ S \subset \phi^{-1}(U), \phi^{-1}(V-V) \subset T\end{array}\right\},
\end{equation*}
where $\textbf{Ab}$ denotes the category of Abelian groups and $\Hom(G,H)$ is the set of homomorphisms between $G$ and $H$.  As before we often omit the subscript if the underlying group is clear.

Again, since $0_G \in T$ the minimum above is well-defined, and if $S$ is non-empty then $\mathcal{C}_G^\Delta(S;T)  \geq 1$.

For our purposes difference covering numbers turn out to behave slightly better than covering numbers.
\begin{lemma}[Behaviour of difference covering numbers]\label{lem.dfc}\
\begin{enumerate}
\item\label{p4.dfc} \emph{(Restrictions and extensions)} For all $S' \supset S$ and $T \supset T' \ni 0_G$ we have
\begin{equation*}
\mathcal{C}^\Delta(S;T) \leq \mathcal{C}^\Delta(S';T').
\end{equation*}
\item\label{pt2.difc} \emph{(Intersections)} For all $S,S',T,T'$ with $T,T' \ni 0_G$ we have
\begin{equation*}
\mathcal{C}^\Delta\left(S\cap S';T \cap T'\right) \leq \mathcal{C}^\Delta\left(S;T\right)\mathcal{C}^\Delta\left(S';T'\right).
\end{equation*}
\item\label{pt3.difc} \emph{(Domination by coverings numbers)} For all $S,T$ we have
\begin{equation*}
\mathcal{C}^\Delta(S;T-T) \leq\mathcal{C}(S;T).
\end{equation*}
\item\label{pt4.difc} \emph{(Domination of coverings numbers)} For all $S,T$ with $T \ni 0_G$ we have
\begin{equation*}
\mathcal{C}(S;T) \leq \mathcal{C}^\Delta(S;T).
\end{equation*}
\end{enumerate}
\end{lemma}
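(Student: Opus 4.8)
The statement to prove is Lemma~\ref{lem.dfc}, with its four parts on the behaviour of difference covering numbers. My plan is to treat each part in turn, reducing everything to the definitions and to the already-established properties of ordinary covering numbers in Lemma~\ref{lem.bpcn}.

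For part~(\ref{p4.dfc}), I would simply unwind the definition of $\mathcal{C}^\Delta(S';T')$: take an optimal witness, that is $H \in \textbf{Ab}$, $\phi \in \Hom(G,H)$, and $U,V \subset H$ with $S' \subset \phi^{-1}(U)$, $\phi^{-1}(V-V) \subset T'$, achieving the minimum $\mathcal{C}_H(U;V)$. Since $S \subset S' \subset \phi^{-1}(U)$ and $\phi^{-1}(V-V) \subset T' \subset T$, the same triple $(H,\phi,U,V)$ is admissible for $\mathcal{C}^\Delta(S;T)$, so the minimum defining the latter is at most $\mathcal{C}_H(U;V) = \mathcal{C}^\Delta(S';T')$. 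For part~(\ref{pt2.difc}), I would take optimal witnesses $(H,\phi,U,V)$ for $\mathcal{C}^\Delta(S;T)$ and $(H',\phi',U',V')$ for $\mathcal{C}^\Delta(S';T')$, and form the product: let $K := H \times H'$, let $\psi := (\phi,\phi') \in \Hom(G,K)$, and take $U \times U'$ and $V \times V'$ inside $K$. Then $S \cap S' \subset \psi^{-1}(U \times U')$, and since $(V \times V') - (V \times V') = (V-V) \times (V'-V')$ we get $\psi^{-1}((V\times V')-(V\times V')) = \phi^{-1}(V-V) \cap \phi'^{-1}(V'-V') \subset T \cap T'$; applying the product bound Lemma~\ref{lem.bpcn}(\ref{pt.prod}) gives $\mathcal{C}_K(U \times U';V \times V') \leq \mathcal{C}_H(U;V)\mathcal{C}_{H'}(U';V')$, which is the claimed inequality.

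For part~(\ref{pt3.difc}), the natural witness is the identity: take $H = G$, $\phi = \mathrm{id}_G$, $U = S$, $V = T$. Then $S \subset \phi^{-1}(U) = S$ and $\phi^{-1}(V-V) = T - T$, so this triple is admissible for $\mathcal{C}^\Delta(S;T-T)$ and hence $\mathcal{C}^\Delta(S;T-T) \leq \mathcal{C}_G(S;T) = \mathcal{C}(S;T)$. For part~(\ref{pt4.difc}), given any admissible witness $(H,\phi,U,V)$ for $\mathcal{C}^\Delta(S;T)$ with $S \subset \phi^{-1}(U)$ and $\phi^{-1}(V-V) \subset T$, I would apply the pullback bound Lemma~\ref{lem.bpcn}(\ref{pt.pullback}) to get $\mathcal{C}_G(\phi^{-1}(U);\phi^{-1}(V-V)) \leq \mathcal{C}_H(U;V)$, and then use the monotonicity Lemma~\ref{lem.bpcn}(\ref{pt.bpcn.1}) together with $S \subset \phi^{-1}(U)$ and $\phi^{-1}(V-V) \subset T$ to deduce $\mathcal{C}(S;T) \leq \mathcal{C}_G(\phi^{-1}(U);\phi^{-1}(V-V)) \leq \mathcal{C}_H(U;V)$. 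Taking the minimum over all witnesses yields $\mathcal{C}(S;T) \leq \mathcal{C}^\Delta(S;T)$.

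None of the four parts presents a genuine obstacle; the only points requiring a little care are bookkeeping ones. In part~(\ref{p4.dfc}) and part~(\ref{pt4.difc}) one must be careful about the direction of the containments ($S$ shrinks while $T$ grows, reflecting the opposite monotonicities of the "cover $S$" and "by $T$" roles), and in part~(\ref{pt2.difc}) the key algebraic identity is that the difference set of a product is the product of the difference sets, so that the preimage under $\psi = (\phi,\phi')$ of $(V \times V') - (V \times V')$ is exactly $\phi^{-1}(V-V) \cap \phi'^{-1}(V'-V')$ rather than something larger. With those observations the proof is a routine composition of the definitions with Lemma~\ref{lem.bpcn}.
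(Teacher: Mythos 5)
Your proposal is correct and follows essentially the same route as the paper: part (\ref{p4.dfc}) directly from the definition, part (\ref{pt2.difc}) via the product homomorphism $G \to H \times H'$ together with Lemma~\ref{lem.bpcn}(\ref{pt.prod}), part (\ref{pt3.difc}) by taking the identity witness, and part (\ref{pt4.difc}) by combining Lemma~\ref{lem.bpcn}(\ref{pt.bpcn.1}) with the pullback bound Lemma~\ref{lem.bpcn}(\ref{pt.pullback}). The bookkeeping observations you flag (opposite monotonicities in $S$ and $T$, and the identity $(V\times V')-(V\times V')=(V-V)\times(V'-V')$) are exactly the points the paper's proof relies on.
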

\begin{proof}
First, (\ref{p4.dfc}) follows immediately from the definition of the difference covering number.

Secondly, suppose that $\phi \in \Hom(G,H)$ and $\psi \in \Hom(G,H')$, and $U,V \subset H$ have $\mathcal{C}_H(U;V)=\mathcal{C}_G^\Delta(S;T)$ and $U',V' \subset H$ have $\mathcal{C}_{H'}(U';V')=\mathcal{C}_G^\Delta(S';T')$, are all such that
\begin{equation*}
S \subset \phi^{-1}(U), \phi^{-1}(V-V) \subset T, S' \subset \psi^{-1}(U'), \text{ and } \psi^{-1}(V'-V') \subset T'.
\end{equation*}
The map $\phi\times \psi$ is a group homomorphism $G \rightarrow H \times H'$ (defined by $x\mapsto (\phi(x),\psi(x))$).  Moreover,
\begin{equation*}
S \cap S' \subset \phi^{-1}(U) \cap \psi^{-1}(U') = (\phi \times \psi)^{-1}(U \times U')
\end{equation*}
and
\begin{align*}
(\phi\times \psi)^{-1}(V \times V' - V \times V') & = (\phi\times \psi)^{-1}((V-V) \times (V'-V'))\\ & = \phi^{-1}(V-V) \cap \psi^{-1}(V'-V') \subset T \cap T'.
\end{align*}
By the definition of the difference covering number and Lemma \ref{lem.bpcn} (\ref{pt.prod}) we have that
\begin{align*}
\mathcal{C}^\Delta_G(S\cap S';T\cap T') & \leq \mathcal{C}_{H \times H'}(U \times U'; V \times V')\\
& \leq \mathcal{C}_H(U;V)\mathcal{C}_{H'}(U';V') = \mathcal{C}_G^\Delta(S;T)\mathcal{C}_G^\Delta(S';T').
\end{align*}
Part (\ref{pt2.difc}) is proved.

Thirdly, let $\phi:G\rightarrow G$ be the identity homomorphism, $U:=S$ and $V:=T$ so that $S \subset \phi^{-1}(U)$ and $\phi^{-1}(V-V) \subset T-T$.  It follows that
\begin{equation*}
\mathcal{C}^\Delta(S;T-T) \leq \mathcal{C}_G(U;V) = \mathcal{C}_G(S;T)
\end{equation*}
and (\ref{pt3.difc}) is proved.

Finally, let $\phi \in \Hom (G,H)$ and $U,V \subset H$ be such that $S \subset \phi^{-1}(U)$ and $\phi^{-1}(V-V) \subset T$ and $\mathcal{C}_H(U;V)=\mathcal{C}^\Delta_G(S;T)$.  Then by Lemma \ref{lem.bpcn} (\ref{pt.bpcn.1}) and (\ref{pt.pullback}) we see that
\begin{equation*}
\mathcal{C}_G(S;T) \leq \mathcal{C}_G(\phi^{-1}(U);\phi^{-1}(V-V)) \leq \mathcal{C}_H(U;V)=\mathcal{C}^\Delta_G(S;T).
\end{equation*}
This gives (\ref{pt4.difc}).  
\end{proof}
It will also be useful to have a version of Ruzsa's covering lemma for difference covering numbers.
\begin{lemma}[Ruzsa's covering lemma, revisited]\label{lem.rcr}
Suppose that $A,B,X \subset G$ with both $X\neq \emptyset$ and $0_G \in B$.  Then
\begin{equation*}
\mathcal{C}^\Delta(A;B) \leq \frac{m_G(A+X)}{m_G(X)}\mathcal{C}^\Delta(X-X;B).
\end{equation*}
\end{lemma}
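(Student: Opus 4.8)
The plan is to deduce this from the ordinary Ruzsa covering lemma (Lemma~\ref{lem.rc}) via a hybrid composition inequality relating plain and difference covering numbers. Since $X \neq \emptyset$ the set $X-X$ is non-empty and contains $0_G$, so all the quantities below are well-defined. The first step is to prove the auxiliary estimate
\begin{equation*}
\mathcal{C}^\Delta_G(A;B) \leq \mathcal{C}_G(A;X-X)\,\mathcal{C}^\Delta_G(X-X;B);
\end{equation*}
the lemma then follows at once, since Lemma~\ref{lem.rc} (applied with the ``$B$'' there equal to our $X$) gives $\mathcal{C}_G(A;X-X) \leq m_G(A+X)/m_G(X)$, and multiplying the two inequalities yields the claim.

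To prove the auxiliary estimate, pick $Y \subset G$ with $A \subset Y + (X-X)$ and $|Y| = \mathcal{C}_G(A;X-X)$, and pick $\phi \in \Hom(G,H)$ together with $U,V \subset H$ realising the minimum defining $\mathcal{C}^\Delta_G(X-X;B)$, so that $X-X \subset \phi^{-1}(U)$, $\phi^{-1}(V-V)\subset B$ and $\mathcal{C}_H(U;V) = \mathcal{C}^\Delta_G(X-X;B)$. Then
\begin{equation*}
A \subset Y + \phi^{-1}(U) \subset \phi^{-1}(\phi(Y)+U),
\end{equation*}
so $\phi$, together with the sets $\phi(Y)+U$ and $V$, is an admissible configuration in the definition of $\mathcal{C}^\Delta_G(A;B)$; hence $\mathcal{C}^\Delta_G(A;B) \leq \mathcal{C}_H(\phi(Y)+U;V)$. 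Finally, choosing $W \subset H$ with $U \subset W+V$ and $|W| = \mathcal{C}_H(U;V)$ gives $\phi(Y)+U \subset (\phi(Y)+W)+V$, whence $\mathcal{C}_H(\phi(Y)+U;V) \leq |\phi(Y)+W| \leq |Y|\,\mathcal{C}_H(U;V)$, which is exactly the claimed bound.

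The only mildly delicate point is the inclusion $Y + \phi^{-1}(U) \subset \phi^{-1}(\phi(Y)+U)$, but this is immediate: if $g = y+h$ with $y \in Y$ and $\phi(h) \in U$ then $\phi(g) = \phi(y)+\phi(h) \in \phi(Y)+U$ (in fact equality holds, $\phi^{-1}(U)$ being a union of cosets of $\ker\phi$). Everything else is bookkeeping with the definitions of the two covering numbers and the elementary manipulations of Lemma~\ref{lem.bpcn}, so I do not anticipate a genuine obstacle — the arithmetic content is entirely contained in Lemma~\ref{lem.rc}, and this statement merely repackages its conclusion as a bound on a difference covering number of $A$ rather than an ordinary covering number of $A$ by $X-X$.
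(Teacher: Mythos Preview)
Your proof is correct and follows essentially the same route as the paper: both use Ruzsa's covering lemma to cover $A$ by translates of $X-X$, then push the translating set forward through an optimal witness $(\phi,U,V)$ for $\mathcal{C}^\Delta(X-X;B)$ to produce the required witness for $\mathcal{C}^\Delta(A;B)$. The only cosmetic difference is that you isolate the hybrid composition inequality $\mathcal{C}^\Delta(A;B)\leq \mathcal{C}(A;X-X)\,\mathcal{C}^\Delta(X-X;B)$ as a separate step before invoking Ruzsa, whereas the paper merges the two.
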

\begin{proof}
Let $H$ be an Abelian group, $\phi \in \Hom (G,H)$ and $U,V \subset H$ be such that $\phi^{-1}(U) \supset X-X$ and $\phi^{-1}(V-V) \subset B$.  By Ruzsa's covering lemma (Lemma \ref{lem.rc}) we see that there is some set $T$ with
\begin{equation*}
|T| \leq \frac{m_G(A+X)}{m_G(X)} \text{ and } A \subset T+X-X.
\end{equation*}
Let $U':=\phi(T)+U$ so that $\mathcal{C}_H(U';V) \leq |T|\mathcal{C}_H(U;V)$.  On the other hand $\phi^{-1}(U') \supset T+X-X \supset A$ and the result follows.
\end{proof}

\section{Bohr systems}\label{sec.bs}

Bohr sets interact particularly well with covering numbers and difference covering numbers.  We write $\|\cdot\|$ for the map $S^1 \rightarrow [0,\frac{1}{2}]$ defined by
\begin{equation*}
\|z\|:=\min\{|\theta|: z=\exp(2\pi i \theta)\}.
\end{equation*}
It is easy to check that this is well-defined and that the map $(z,w) \mapsto \|zw^{-1}\|$ is a translation-invariant metric on $S^1$.  Given a set of characters $\Gamma$ on $G$, and a function $\delta:\Gamma \rightarrow \R_{>0}$, then we write
\begin{equation*}
\Bohr(\Gamma,\delta):=\left\{x \in G: \| \gamma(x)\|<\delta(\gamma) \text{ for all } \gamma \in \Gamma\right\},
\end{equation*}
and call such a set a (generalised\footnote{We call these \emph{generalised} Bohr sets because usually (\emph{e.g.} \cite[Definition 4.6]{taovu::}) Bohr sets are defined using only the constant functions; we use this more general definition to ensure that the intersection of two Bohr sets is a Bohr set, but quite apart from being a natural extension this is by no means the first time this has been done (see \emph{e.g.} \cite[(0.11)]{bou::1} and \cite[Definition 5.1]{ruz::03}).}) \textbf{Bohr set}. 

In fact we shall not so much be interested in Bohr sets as \emph{families} of Bohr sets.  A \textbf{Bohr system} is a vector $B=(B_\eta)_{\eta \in (0,1]}$ for which there is a set of characters $\Gamma$ and a function $\delta:\Gamma \rightarrow \R_{>0}$ such that
\begin{equation*}
B_\eta=\Bohr(\Gamma,\eta\delta) \text{ for each } \eta \in (0,1].
\end{equation*}
We say that $B$ is \textbf{generated} by $(\Gamma,\delta)$ and, of course, the same Bohr system may be generated by different pairs.

This definition is motivated by that of Bourgain systems \cite[Definition 4.1]{gresan::0}, although it is in some sense `smoother'.  (In this paper what we mean by this is captured by Lemma \ref{lem.grow} which does not hold for Bourgain systems.)

We first record some trivial properties of Bohr systems; their proof is left to the reader.
\begin{lemma}[Properties of Bohr systems]\label{lem.pbs}
Suppose that $B$ is a Bohr system.  Then
\begin{enumerate}
\item \emph{(Identity)} $0_G \in B_\eta$ for all $\eta \in (0,1]$;
\item \emph{(Symmetry)} $B_\eta = -B_\eta$ for all $\eta \in (0,1]$;
\item \emph{(Nesting)} $B_{\eta}\subset B_{\eta'} $ whenever $0<\eta\leq \eta'\leq 1$;
\item \emph{(Sub-additivity)} $B_{\eta} + B_{\eta'} \subset B_{\eta + \eta'}$ for all $\eta,\eta'\in (0,1]$ with $\eta+\eta' \leq 1$.
\end{enumerate}
\end{lemma}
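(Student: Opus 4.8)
The plan is to fix, once and for all, a generating pair $(\Gamma,\delta)$ for $B$, so that $B_\eta=\Bohr(\Gamma,\eta\delta)=\{x\in G:\|\gamma(x)\|<\eta\delta(\gamma)\text{ for all }\gamma\in\Gamma\}$ for every $\eta\in(0,1]$, and then to check each of the four assertions directly against this description, using only the elementary facts about the metric $\|\cdot\|$ recorded immediately before the lemma (namely that it is well-defined and that $(z,w)\mapsto\|zw^{-1}\|$ is a translation-invariant metric on $S^1$), together with the fact that the elements of $\Gamma$ are homomorphisms.

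For (Identity) I would observe that $\gamma(0_G)=1$ for every $\gamma\in\Gamma$, so $\|\gamma(0_G)\|=\|1\|=0<\eta\delta(\gamma)$ since $\eta>0$ and $\delta(\gamma)>0$; hence $0_G\in B_\eta$. For (Symmetry) I would use that $\gamma(-x)=\gamma(x)^{-1}$ and that $\|z^{-1}\|=\|z\|$ (immediate from the definition of $\|\cdot\|$, or from translation-invariance applied to the pair $1,z$), whence $\|\gamma(-x)\|=\|\gamma(x)\|$ for all $\gamma$ and so $x\in B_\eta$ if and only if $-x\in B_\eta$. For (Nesting), if $0<\eta\leq\eta'\leq1$ then $\eta\delta(\gamma)\leq\eta'\delta(\gamma)$ for every $\gamma$, so the conditions defining $B_\eta$ imply those defining $B_{\eta'}$.

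The one assertion that genuinely uses the metric structure is (Sub-additivity), and even there the argument is short: given $x\in B_\eta$ and $y\in B_{\eta'}$ with $\eta+\eta'\leq1$, I would apply the triangle inequality for the translation-invariant metric to the three points $\gamma(x)\gamma(y)$, $\gamma(y)$, $1$ (equivalently, note that $\|\cdot\|$ is subadditive under multiplication) to obtain $\|\gamma(x)\gamma(y)\|\leq\|\gamma(x)\|+\|\gamma(y)\|$ for each $\gamma\in\Gamma$; since $\gamma(x+y)=\gamma(x)\gamma(y)$ this gives $\|\gamma(x+y)\|<\eta\delta(\gamma)+\eta'\delta(\gamma)=(\eta+\eta')\delta(\gamma)$ for all $\gamma$, i.e. $x+y\in B_{\eta+\eta'}$. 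I do not anticipate any real obstacle; the only points needing a little care are invoking the triangle inequality in the correct multiplicative form and checking that the strict inequalities survive the sum, which they do because each summand is strictly below its bound.
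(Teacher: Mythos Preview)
Your proposal is correct and is exactly the intended verification; the paper itself omits the proof entirely, declaring these properties trivial and leaving them to the reader.
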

\cite[Definition 4.1]{gresan::0} took the approach of axiomatising these properties along with something called dimension.  In that vein we define the \textbf{doubling dimension} of a Bohr system $B$ to be
\begin{equation*}
\dim^* B = \sup\left\{\log_2\mathcal{C}\left(B_\eta;B_{\frac{1}{2}\eta}\right): \eta \in (0,1]\right\}.
\end{equation*}
It may be instructive to consider two examples.
\begin{lemma}[Bohr systems of very low doubling dimension]\
\begin{enumerate}
\item Suppose that $B$ is a Bohr system with $\dim^* B<1$.  Then there is a subgroup $H \leq G$ such that $B_\eta=H$ for all $\eta \in (0,1]$.
\item Conversely, suppose that $H \leq G$.  Then the constant vector $B$ with $B_\eta=H$ for all $\eta \in (0,1]$ is a Bohr system and $\dim^*B=0$.
\end{enumerate}
\end{lemma}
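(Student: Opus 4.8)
The plan is to prove both parts directly from the structural properties recorded in Lemma \ref{lem.pbs} together with the definition of $\dim^*$. For part (1), note first that $\dim^* B < 1$ forces $\mathcal{C}(B_\eta; B_{\eta/2}) < 2$ for every $\eta \in (0,1]$; since $B_{\eta/2}$ is non-empty (it contains $0_G$) and $G$ is finite, this covering number is a positive integer, hence equal to $1$. Thus for each $\eta$ there is a single point $x$ with $B_\eta \subset x + B_{\eta/2}$. As $0_G \in B_\eta$ we get $-x \in B_{\eta/2}$, so $x \in B_{\eta/2}$ by symmetry, and then sub-additivity gives $x + B_{\eta/2} \subset B_{\eta/2} + B_{\eta/2} \subset B_\eta$. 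Combining this with $B_{\eta/2} \subset B_\eta$ (nesting) yields $B_{\eta/2} \subset B_\eta \subset x + B_{\eta/2}$, and since these are finite sets of equal cardinality we conclude $B_{\eta/2} = B_\eta$ for all $\eta \in (0,1]$. Iterating gives $B_1 = B_{1/2} = \dots = B_{2^{-k}}$ for every $k \ge 0$, and for arbitrary $\eta \in (0,1]$, choosing $k$ with $2^{-k} \le \eta$ and applying nesting forces $B_{2^{-k}} \subset B_\eta \subset B_1$, so $B_\eta = B_1 =: H$ for every $\eta$. Finally $H + H = B_{1/2} + B_{1/2} \subset B_1 = H$ by sub-additivity, and since $0_G \in H$ and $-H = H$, the non-empty finite set $H$ is closed under subtraction, hence a subgroup of $G$.

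For part (2), given $H \le G$ I would exhibit a generating pair for the constant vector $B$ with $B_\eta = H$. Take $\Gamma := \{\gamma \in \wh{G} : \gamma(h) = 1 \text{ for all } h \in H\}$ and let $\delta \colon \Gamma \to \R_{>0}$ be the constant function $1/|G|$. Since $G$ is finite, each value $\gamma(x)$ is a root of unity of order dividing $|G|$, so $\|\gamma(x)\| = 0$ when $\gamma(x) = 1$ and $\|\gamma(x)\| \ge 1/|G|$ otherwise. Hence for every $\eta \in (0,1]$ the conditions ``$\|\gamma(x)\| < \eta\delta(\gamma)$ for all $\gamma \in \Gamma$'' and ``$\gamma(x) = 1$ for all $\gamma \in \Gamma$'' are equivalent, and the latter holds exactly when $x \in H$ because the characters trivial on $H$ separate the cosets of $H$. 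Thus $\Bohr(\Gamma, \eta\delta) = H$ for all $\eta$, so $B$ is a Bohr system; and $\mathcal{C}(B_\eta; B_{\eta/2}) = \mathcal{C}(H; H) = 1$ for every $\eta$ (cover $H$ by the single translate $0_G + H$), whence $\dim^* B = 0$.

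I do not anticipate any genuine obstacle here: both parts are short deductions from Lemma \ref{lem.pbs} and the definitions. The one point that needs a little care is that Bohr sets are defined by a \emph{strict} inequality, which is why in part (2) the radius function $\delta$ must be chosen strictly below the smallest nonzero value attained by $x \mapsto \|\gamma(x)\|$ — a constant such as $1$ would not work — and why in part (1) it is worth noting explicitly that $\mathcal{C}(B_\eta; B_{\eta/2})$ is a bona fide positive integer before declaring it to equal $1$.
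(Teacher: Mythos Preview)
Your proof is correct and follows essentially the same approach as the paper. For part (2) the argument is virtually identical (same choice of $\Gamma=H^{\perp}$ and constant width $1/|G|$); for part (1) the paper first shows $B_1-B_1\subset B_1$ to conclude $B_1=:H$ is a subgroup and then argues inductively that each $B_{2^{-i}}$ contains a translate of $H$, whereas you go straight for $B_{\eta/2}=B_\eta$ via the cardinality squeeze $B_{\eta/2}\subset B_\eta\subset x+B_{\eta/2}$ --- a minor reorganisation that is, if anything, a little cleaner.
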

\begin{proof}
First, since $\dim^*B<1$ we see that for each $\eta \in (0,1]$ there is a set $X_\eta$ with $|X_\eta|<2^1=2$ such that $B_\eta \subset X_\eta + B_{\frac{1}{2}\eta}$.  Since $B_\eta$ is non-empty we see that $0<|X_\eta| <2$ and so $|X_\eta|=1$.  Write $X_1=\{x_1\}$.  Then
\begin{equation*}
B_1 - B_1 \subset \left(x_1+B_{\frac{1}{2}}\right) - \left(x_1 + B_{\frac{1}{2}}\right) = B_{\frac{1}{2}}-B_{\frac{1}{2}} \subset B_1,
\end{equation*}
and so for all $x,y \in B_1$ we have $x-y \in B_1$ and so there is some subgroup $H \leq G$ such that $B_1=H$.  We show by induction that for each $i \in \N_0$ the set $B_{2^{-i}}$ contains a translate of $H$, from which the result follows since $0_G \in B_{2^{-i}}$.

Turning to the induction: the base case of $i=0$ holds trivially.  Suppose that $B_{2^{-i}}$ contains a translate of $H$.  Then there is some set $X_{2^{-i}}=\{x_{2^{-i}}\}$ such that $B_{2^{-i}} \subset x_{2^{-i}} +B_{2^{-(i+1)}}$, whence $B_{2^{-(i+1)}}$ contains a translate of $H$ as required and the first result is proved.

In the other direction, simply let $\Gamma:=\{\gamma : \gamma(x)=1 \text{ for all }x \in H\}$ and let $\delta$ be the constant function $1/|G|$.  Writing $B$ for the Bohr system generated by $\Gamma$ and $\delta$ we see that $H \subset B_\eta$ for all $\eta \in (0,1]$.  On the other hand if $x \in B_1$ then $|G|\|\gamma(x)\|<1$ and
\begin{align*}
\cos (2\pi |G|\|\gamma(x)\|) & =\frac{1}{2}\left(\exp(2\pi i |G|\|\gamma(x)\|) +\exp(-2\pi i |G|\|\gamma(x)\|)\right)\\ &= \frac{1}{2}\left(\gamma(x)^{|G|}+\overline{\gamma(x)^{|G|}}\right) = 1.
\end{align*}
It follows that $2\pi|G|\|\gamma(x)\| \in 2\pi \Z$ and hence $|G|\|\gamma(x)\| \in \Z$.  We conclude that $\|\gamma(x)\|=0$ and hence $\gamma(x)=1$ for all $x \in B_1$ and $\gamma \in \Gamma$.  It follows that $B_1=H$ and hence $B$ is a constant vector by nesting.  It remains to note that $\mathcal{C}(H;H) =1$ and so $\dim^* B=\log_21=0$ as claimed.
\end{proof}
We say that a Bohr system $B$ has \textbf{rank} $k$ if it can be generated by a pair $(\Gamma,\delta)$ with $|\Gamma| = k$.
\begin{lemma}[Rank $1$ Bohr systems]\label{lem.b1}
Suppose that $B$ is a rank $1$ Bohr system. Then $\dim^* B \leq \log_23$.
\end{lemma}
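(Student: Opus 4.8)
The plan is to reduce the statement to an elementary covering problem inside a cyclic group. Write $B$ as generated by $(\{\gamma\},\delta)$ for a single character $\gamma$ of $G$, set $\delta_0:=\delta(\gamma)$, and fix $\eta\in(0,1]$; writing $\theta:=\eta\delta_0$ it is enough to show $\mathcal{C}(B_\eta;B_{\eta/2})\leq 3$, since then $\dim^*B=\sup\{\log_2\mathcal{C}(B_\eta;B_{\eta/2}):\eta\in(0,1]\}\leq\log_23$. Now $\gamma(G)$ is a finite subgroup of $S^1$, hence the group of $n$th roots of unity for some positive integer $n$, and there is a surjective homomorphism $\phi:G\rightarrow\Z/n\Z$ with $\gamma(x)=\exp(2\pi i\phi(x)/n)$ for all $x\in G$. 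Writing $\langle k\rangle$ for the distance from $k$ to $n\Z$ we have $\|\gamma(x)\|=\langle\phi(x)\rangle/n$, so that $B_\eta=\phi^{-1}(I_\theta)$ and $B_{\eta/2}=\phi^{-1}(I_{\theta/2})$, where $I_s:=\{k\in\Z/n\Z:\langle k\rangle<sn\}$.

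Next I would push the covering problem down to $\Z/n\Z$ directly. Since $0\in I_{\theta/2}$ the set $\phi^{-1}(I_{\theta/2})$ is a union of cosets of $\ker\phi$, so if $Y\subset\Z/n\Z$ satisfies $I_\theta\subset Y+I_{\theta/2}$ and $X\subset G$ is a set of representatives for $Y$ (hence $|X|=|Y|$), then $B_\eta=\phi^{-1}(I_\theta)\subset\phi^{-1}(Y)+\phi^{-1}(I_{\theta/2})=X+B_{\eta/2}$; thus $\mathcal{C}(B_\eta;B_{\eta/2})\leq\mathcal{C}_{\Z/n\Z}(I_\theta;I_{\theta/2})$. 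Appealing instead to Lemma \ref{lem.bpcn}\,(\ref{pt.pullback}) would route us through a set $V$ with $V-V\subset I_{\theta/2}$ and so cost a spurious factor of $2$, which is exactly why the transfer is done by hand here.

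Finally I would bound $\mathcal{C}_{\Z/n\Z}(I_\theta;I_{\theta/2})$. Each $I_s$ is a symmetric arc of residues: it is all of $\Z/n\Z$ when $s>\tfrac12$, and otherwise is $\{-r,\dots,r\}$ with $r=\lceil sn\rceil-1$, so $|I_s|=2\lceil sn\rceil-1$ in that range. A symmetric arc of $L$ residues is covered by $\lceil L/L'\rceil$ translates of a symmetric arc of $L'$ residues (lay them end to end), so it suffices to check $|I_\theta|\leq3|I_{\theta/2}|$. For $\theta\leq\tfrac12$ this, with $a:=\theta n$, amounts to $\lceil a\rceil+1\leq3\lceil a/2\rceil$, which follows from $\lceil a\rceil\leq2\lceil a/2\rceil$ together with $\lceil a/2\rceil\geq1$; for $\tfrac12<\theta\leq1$ one has $I_\theta=\Z/n\Z$ and the inequality reduces to $n\leq3(2\lceil\theta n/2\rceil-1)$, which follows from $\theta n/2>n/4$ (with the handful of small $n$ checked directly); and for $\theta>1$ we have $I_{\theta/2}=\Z/n\Z$, so $\mathcal{C}=1$. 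The argument is not conceptually hard; the only delicate point, and the main thing to be careful with, is the rounding in $|I_s|$ and the behaviour near $\theta=\tfrac12$, where the arc $I_\theta$ first fills the whole group. (One checks the bound is attained, e.g. with $G=\Z/3\Z$, $\gamma$ of order $3$, $\delta_0=\tfrac12$, $\eta=1$, so that $B_1=G$ and $B_{1/2}=\{0_G\}$.)
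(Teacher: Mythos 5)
Your proof is correct, and it takes a genuinely different route from the paper's. The paper argues directly: it chooses an $x\in B_\eta\setminus B_{\eta/2}$ minimising $\|\gamma(x)\|$ (or $x=0_G$ if the difference set is empty) and uses the triangle inequality on phases to show that $B_\eta\subset\{-x,0_G,x\}+B_{\eta/2}$, giving the bound $\mathcal{C}(B_\eta;B_{\eta/2})\leq 3$ in one stroke with no reference to the order of $\gamma$. You instead factor $\gamma$ through the cyclic group $\Z/n\Z$ where $n=|\gamma(G)|$, identify $B_\eta$ and $B_{\eta/2}$ with symmetric arcs $I_\theta$ and $I_{\theta/2}$, and bound the arc covering number via an explicit count. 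Your transfer step is handled carefully and correctly -- in particular you are right that invoking Lemma \ref{lem.bpcn} part (\ref{pt.pullback}) directly would force a passage through $I_{\theta/4}$ and degrade the constant, and your ``union of cosets of $\ker\phi$'' observation is exactly what is needed to avoid that. The arc-size formula $|I_s|=2\lceil sn\rceil-1$ for $s\leq\tfrac12$ and the three-way case split on $\theta$ are all sound (and in Case 2 the inequality $n\leq 3(2\lceil\theta n/2\rceil-1)$ actually follows uniformly from $\lceil\theta n/2\rceil\geq\lfloor n/4\rfloor+1$, so no small-$n$ checking is really required). The paper's proof is shorter and avoids the rounding bookkeeping; yours makes the cyclic geometry explicit, which is perhaps more transparent about where the constant $3$ comes from (an arc of relative length $a$ covered by arcs of relative length $a/2$, with the $+1$ from the centre point), and your closing example showing the bound is attained is a nice touch that the paper omits.
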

\begin{proof}
Let $(\Gamma,\delta)$ generate $B$ where $\Gamma=\{\gamma\}$ and write $\delta=\delta(\gamma)$. Suppose that $\eta \in \left(0,1\right]$. We shall show that there is some $x \in G$ such that 
\begin{equation}\label{eqn.inc}
B_{\eta} \subset \{-x,0,x\}+B_{\frac{1}{2}\eta}.
\end{equation}
If $B_{\eta}=B_{\frac{1}{2}\eta}$ then we may take $x=0_G$ and be done; if not let $x \in B_{\eta}\setminus B_{\frac{1}{2}\eta}$ be such that $\|\gamma(x)\|$ is minimal.  Let $\psi \in \left(-\frac{1}{2},\frac{1}{2}\right]$ be such that $\gamma(x)=\exp(2\pi i \psi)$; note that $\|\gamma(x)\|=|\psi|$.

Suppose that $y \in B_{\eta}\setminus B_{\frac{1}{2}\eta}$ and let $\theta \in \left(-\frac{1}{2},\frac{1}{2}\right]$ be such that $\gamma(y)=\exp(2\pi i \theta)$; note that $\| \gamma(y)\|=|\theta|$.  Since $x \not \in B_{\frac{1}{2}\eta}$, $|\psi|$ is minimal, and $y \in B_{\eta}$ we have
\begin{equation*}
\frac{1}{2}\eta\delta \leq \|\gamma(x)\| = |\psi| \leq |\theta|=\|\gamma(y)\| < \eta\delta.
\end{equation*}
Thus if $\psi$ and $\theta$ have the same sign then
\begin{equation*}
|\theta - \psi| = ||\theta| - |\psi|| = |\theta| - |\psi| <\eta\delta  -\frac{1}{2}\eta\delta = \frac{1}{2}\eta\delta,
\end{equation*}
and hence $\|\gamma(y-x)\| < \frac{1}{2}\eta\delta$ (since $\gamma(y-x)=\exp(2\pi i (\theta-\psi))$), so $y \in x+B_{\frac{1}{2}\eta}$.  Similarly if $\psi$ and $\theta$ have opposite signs then $|\theta + \psi| < \frac{1}{2}\eta\delta$ and $\|\gamma(y+x)\| < \frac{1}{2}\eta\delta$, and so $y \in -x+B_{\frac{1}{2}\eta}$.  The claimed inclusion (\ref{eqn.inc}) follows and the result is proved.
\end{proof}
We define the \textbf{width} of a Bohr system $B$ to be
\begin{equation*}
w(B):=\inf\left\{\|\delta\|_{\ell_\infty(\Gamma)}: (\Gamma,\delta) \text{ generates } B\right\}.
\end{equation*}
\begin{lemma}\label{lem.grow}
Suppose that $B$ is a Bohr system and $w(B) < \frac{1}{4}$.  Then
\begin{equation*}
\dim^* B \leq \log_2 \mathcal{C}\left(B_1;B_{\frac{1}{8}}\right) \leq 3\dim^* B
\end{equation*}
\end{lemma}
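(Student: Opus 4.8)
The plan is to prove the two inequalities separately, with the first being essentially immediate and the second requiring a telescoping argument that exploits the smoothness of Bohr systems (sub-additivity and nesting from Lemma \ref{lem.pbs}) together with the rescaling structure $B_\eta = \Bohr(\Gamma, \eta\delta)$.

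For the lower bound $\log_2 \mathcal{C}(B_1; B_{1/8}) \leq 3\dim^* B$, I would use the composition property of covering numbers, Lemma \ref{lem.bpcn}(\ref{pt.extn}). Write $\mathcal{C}(B_1; B_{1/8}) \leq \mathcal{C}(B_1; B_{1/2}) \mathcal{C}(B_{1/2}; B_{1/4}) \mathcal{C}(B_{1/4}; B_{1/8})$, and bound each of the three factors by $2^{\dim^* B}$ using the definition of $\dim^*$ with $\eta = 1, \tfrac12, \tfrac14$ respectively. Taking $\log_2$ gives the claim. Note this half does not use the hypothesis $w(B) < \tfrac14$.

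For the upper bound $\dim^* B \leq \log_2 \mathcal{C}(B_1; B_{1/8})$, the key point is that for \emph{any} $\eta \in (0,1]$ one should be able to relate $\mathcal{C}(B_\eta; B_{\eta/2})$ to $\mathcal{C}(B_1; B_{1/8})$. The idea is a scaling/dilation argument: if $(\Gamma, \delta)$ generates $B$ with $\|\delta\|_{\ell_\infty(\Gamma)} < \tfrac14$, then the pair $(\Gamma, \eta\delta)$ generates the shifted Bohr system $B' = (B_{\eta\nu})_{\nu \in (0,1]}$ provided $\eta \leq 1$ (here $w(B') \leq \eta w(B) < \tfrac14$ as well, so the width hypothesis is preserved). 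We want $\mathcal{C}(B_\eta; B_{\eta/2}) = \mathcal{C}(B'_1; B'_{1/2})$, and we would like to dominate this by $\mathcal{C}(B'_1; B'_{1/8}) \le \mathcal{C}(B_\eta; B_{\eta/8})$; but in general $\mathcal{C}(B'_1; B'_{1/2}) \leq \mathcal{C}(B'_1; B'_{1/8})$ is trivial by Lemma \ref{lem.bpcn}(\ref{pt.bpcn.1}) (since $B'_{1/8} \subseteq B'_{1/2}$). So it suffices to show $\mathcal{C}(B_\eta; B_{\eta/8}) \leq \mathcal{C}(B_1; B_{1/8})$ for all $\eta \in (0,1]$. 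This last inequality should follow because, using $w(B) < \tfrac14$, one has $B_\eta - B_\eta \subseteq B_{2\eta}$ when $2\eta \le 1$, and more to the point the rescaled generating data lets one transport a covering of $B_1$ by translates of $B_{1/8}$ down to a covering of $B_\eta$ by translates of $B_{\eta/8}$: given $B_1 \subseteq X + B_{1/8}$ with $|X| = \mathcal{C}(B_1; B_{1/8})$, one checks that $X$ can be replaced by a subset of $B_\eta$ of no larger size covering $B_\eta$ with translates of $B_{\eta/8}$, using that the metric structure on each coordinate of $S^1$ scales linearly and $B_\eta = \{x : \|\gamma(x)\| < \eta\delta(\gamma)\}$. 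Concretely: since $B_\eta \subseteq B_1$, each point of $B_\eta$ lies in some $x + B_{1/8}$, and one argues as in Ruzsa's covering lemma (Lemma \ref{lem.rc}) to select a covering set inside $B_\eta$; the width condition is what guarantees $B_{1/8} \cap (B_\eta - B_\eta)$ sits inside $B_{\eta/8}$ after the relevant intersection, via sub-additivity.

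The main obstacle, and the place I expect to spend the real effort, is precisely this monotonicity statement $\mathcal{C}(B_\eta; B_{\eta/8}) \le \mathcal{C}(B_1; B_{1/8})$ — it is \emph{not} a formal consequence of the covering-number axioms (Bourgain systems famously fail such estimates, which is why the paper introduced Bohr systems), and it genuinely uses the explicit $\Bohr(\Gamma, \eta\delta)$ form together with $w(B) < \tfrac14$ to rule out the wraparound phenomenon on $S^1$. Once that is in hand, combining it with the trivial inclusion $B_{\eta/8} \subseteq B_{\eta/2}$ gives $\mathcal{C}(B_\eta; B_{\eta/2}) \le \mathcal{C}(B_1; B_{1/8})$ for every $\eta$, hence $\dim^* B \le \log_2 \mathcal{C}(B_1; B_{1/8})$, completing the proof.
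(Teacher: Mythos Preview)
Your treatment of the right-hand inequality is correct and identical to the paper's.

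For the left-hand inequality, however, there is a genuine gap. The concrete claim you make---that the width condition forces $B_{1/8}\cap(B_\eta-B_\eta)\subset B_{\eta/8}$---is false. Sub-additivity only gives $B_\eta-B_\eta\subset B_{2\eta}$, so the intersection is $B_{\min(1/8,2\eta)}$; for small $\eta$ this is $B_{2\eta}$, which is much larger than $B_{\eta/8}$. More broadly, the ``transport a covering down by scaling'' idea has no mechanism on $G$: the linear scaling you invoke lives on $(S^1)^\Gamma$, not on the group, and the image of $G$ under the character map is a discrete subgroup which is not preserved by dilation. Passing to the dilated system $B'=\eta B$ (so $B'_1=B_\eta$, $B'_{1/8}=B_{\eta/8}$) just rewrites the quantity $\mathcal{C}(B_\eta;B_{\eta/8})$ without relating it to $\mathcal{C}(B_1;B_{1/8})$.

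The paper supplies the missing mechanism: multiplication by an integer $k$ with $\tfrac12\le \eta k\le 1$ is the discrete surrogate for scaling by $1/\eta$. One takes a maximal $B_{\eta/2}$-separated set $X\subset B_\eta$; then $kX\subset B_{\eta k}\subset B_1$, and a covering $B_{\eta k}\subset Z+B_{\eta k/4}$ with $|Z|\le\mathcal{C}(B_1;B_{1/8})$ (from nesting) induces a map $X\to Z$. The width hypothesis $w(B)<\tfrac14$ enters precisely through the Observation $\|\gamma(kx)\|=k\|\gamma(x)\|$ (valid when $\|\gamma(x)\|<\tfrac{1}{2k}$), which guarantees this map is injective: if $kx$ and $ky$ land in the same cell of $Z+B_{\eta k/4}$ then $k(x-y)\in B_{\eta k/2}$, and dividing by $k$ forces $x-y\in B_{\eta/2}$. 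Hence $|X|\le|Z|$, and maximality of $X$ gives the covering bound. This integer-multiplication idea is the step your proposal is missing.
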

To prove this we shall use the following trivial observation.
\begin{observation*}
Suppose that $\gamma$ is a character, $x \in G$, and $n \in \N$.  Then
\begin{equation*}
\|\gamma(nx)\|=n\|\gamma(x)\| \text{ provided } \|\gamma(x)\| < \frac{1}{2n}.
\end{equation*}
\end{observation*}
\begin{proof}
Let $\theta,\psi$ be such that $\|\gamma(x)\|=|\theta|$, $\|\gamma(nx)\|=|\psi|$, $\gamma(x)=\exp(2\pi i \theta)$, and $\gamma(nx)=\exp(2\pi i \psi)$.  Since $\gamma$ is a homomorphism, $\gamma(nx)=\gamma(x)^n = \exp(2\pi i \theta n)$, and so $n\theta - \psi \in \Z$.  However, $|n\theta - \psi | < n|\theta| + |\psi | < 1$ (since $|\theta| < \frac{1}{2n}$ and $|\psi|\leq \frac{1}{2}$) and so $\psi = n\theta$ and the result is proved.
\end{proof}
\begin{proof}[Proof of Lemma \ref{lem.grow}]
The right hand inequality is easy from Lemma \ref{lem.bpcn} part (\ref{pt.extn}) and the definition of doubling dimension:
\begin{equation*}
\log_2 \mathcal{C}\left(B_1;B_{\frac{1}{8}}\right) \leq \log_2 \mathcal{C}\left(B_1;B_{\frac{1}{2}}\right)+\log_2 \mathcal{C}\left(B_{\frac{1}{2}};B_{\frac{1}{4}}\right)+\log_2 \mathcal{C}\left(B_{\frac{1}{4}};B_{\frac{1}{8}}\right) \leq 3\dim^* B.
\end{equation*}
In the other direction, since $w(B) < \frac{1}{4}$ there is a pair $(\Gamma,\delta)$ generating $B$ such that $\|\delta\|_{\ell_\infty(\Gamma)}<\frac{1}{4}$.

Suppose that $\eta \in (0,1]$ and let $X \subset B_{\eta}$ be $B_{\frac{1}{2}\eta}$-separated \emph{i.e.} if $x,y \in X$ have $x-y \in B_{\frac{1}{2}\eta}$ then $x=y$.  Let $k \in \N$ be a natural number such that $\frac{1}{2} \leq \eta k \leq 1$ (the reason for which choice will become clear).  Then by nesting of Bohr sets and Lemma \ref{lem.bpcn} part (\ref{pt.bpcn.1}) we have
\begin{equation*}
\mathcal{C}\left(B_{\eta k};B_{\frac{1}{4} \eta k}\right) \leq \mathcal{C}\left(B_{1};B_{\frac{1}{8}}\right)
\end{equation*}
and so there is a set $Z$ such that $B_{\eta k} \subset Z+B_{\frac{1}{4} \eta k}$ and $|Z| \leq \mathcal{C}\left(B_{1};B_{\frac{1}{8}}\right)$.

Since $\eta k \leq 1$ and each $x \in X$ has $x \in B_{\eta}$ we conclude (by sub-additivity) that $kx \in B_{\eta k}$, and hence there is some $z(x) \in Z$ such that $kx \in z(x)+B_{\frac{1}{4}\eta k}$.  Suppose that $z(x)=z(y)$ for $x,y \in X$.  By sub-additivity and nesting we have
\begin{equation*}
x-y \in B_{2\eta} \subset B_{\frac{2}{k}} \text{ and } k(x-y) \in B_{\frac{1}{4} \eta k}-B_{\frac{1}{4} \eta k} \subset B_{\frac{1}{2}\eta k}.
\end{equation*}
Suppose that $\gamma \in \Gamma$.  Then we have just seen that $\|\gamma(x-y)\| < \frac{2}{k}\delta(\gamma) < \frac{1}{2k}$ (since $\delta(\gamma) <\frac{1}{4}$) and so by the Observation we see that
\begin{equation*}
k\|\gamma(x-y)\| = \|\gamma(k(x-y))\|  < \frac{1}{2}\eta k \delta(\gamma).
\end{equation*}
Dividing by $k$ and noting that $\gamma$ was an arbitrary element of $\Gamma$ it follows that $x-y \in B_{\frac{1}{2}\eta}$ and hence $x=y$.  We conclude that the function $z$ is injective and hence $|X| \leq |Z| \leq \mathcal{C}(B_{1};B_{\frac{1}{8}})$.  

Finally, if $X$ is maximal with the given property then for any $y \in B_{\eta}$ either $y \in X$ and so $y \in X+B_{\frac{1}{2}\eta}$ or else there is some $x \in X$ such that $y \in x+B_{\frac{1}{2}\eta}$.  It follows that
\begin{equation*}
B_{\eta} \subset X+B_{\frac{1}{2}\eta},
\end{equation*}
and the left hand inequality is proved given the upper bound on $|X|$.
\end{proof}

We can make new Bohr systems from old by taking intersections: given Bohr systems $B$ and $B'$ we define their \textbf{intersection} to be
\begin{equation*}
B \wedge B':=(B_\eta\cap B'_\eta)_{\eta \in (0,1]}.
\end{equation*}
Writing $\mathcal{B}(G)$ for the set of Bohr systems on $G$ we then have a lattice structure as captured by the following trivial lemma.
\begin{lemma}[Lattice structure]
The pair $(\mathcal{B}(G),\wedge)$ is a meet-semilattice, meaning that is satisfies
\begin{enumerate}
\item \emph{(Closure)} $B \wedge B' \in \mathcal{B}(G)$ for all $B,B' \in \mathcal{B}(G)$;
\item \emph{(Associativity)} $(B \wedge B') \wedge B'' = B \wedge (B' \wedge B'')$ for all $B,B',B'' \in \mathcal{B}(G)$;
\item \emph{(Commutativity)} $B \wedge B'= B' \wedge B$ for all $B,B' \in \mathcal{B}(G)$;
\item \emph{(Idempotence)} $B \wedge B = B$ for all $B \in \mathcal{B}(G)$.
\end{enumerate}
\end{lemma}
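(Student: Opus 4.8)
The plan is to prove Closure first; once that is in hand, Associativity, Commutativity, and Idempotence are immediate from the corresponding identities for set intersection applied coordinatewise, together with the fact that a Bohr system is nothing more than a vector indexed by $(0,1]$, so two Bohr systems are equal exactly when they agree in each coordinate.

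For Closure, I would start from generating data: say $B$ is generated by $(\Gamma,\delta)$ and $B'$ by $(\Gamma',\delta')$. I would then exhibit a single pair $(\Gamma'',\delta'')$ generating $B \wedge B'$, taking $\Gamma'' := \Gamma \cup \Gamma'$ and
\begin{equation*}
\delta''(\gamma) := \begin{cases} \min\{\delta(\gamma),\delta'(\gamma)\} & \text{if } \gamma \in \Gamma \cap \Gamma',\\ \delta(\gamma) & \text{if } \gamma \in \Gamma \setminus \Gamma',\\ \delta'(\gamma) & \text{if } \gamma \in \Gamma' \setminus \Gamma.\end{cases}
\end{equation*}
The one thing to verify is that $\Bohr(\Gamma'',\eta\delta'') = B_\eta \cap B'_\eta$ for every $\eta \in (0,1]$ simultaneously; this comes down to the observation that $\min\{\eta\delta(\gamma),\eta\delta'(\gamma)\} = \eta\min\{\delta(\gamma),\delta'(\gamma)\}$ since $\eta > 0$, so that the defining system of inequalities $\|\gamma(x)\| < \eta\delta''(\gamma)$ over $\gamma \in \Gamma''$ is precisely the conjunction of the defining inequalities for $B_\eta$ and for $B'_\eta$. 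Hence $B \wedge B'$ is the Bohr system generated by $(\Gamma'',\delta'')$ and so lies in $\mathcal{B}(G)$.

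With Closure established, for Associativity I would note that the $\eta$-coordinate of $(B\wedge B')\wedge B''$ is $(B_\eta \cap B'_\eta)\cap B''_\eta = B_\eta \cap (B'_\eta \cap B''_\eta)$, which is the $\eta$-coordinate of $B\wedge(B'\wedge B'')$; Commutativity reduces to $B_\eta \cap B'_\eta = B'_\eta \cap B_\eta$ and Idempotence to $B_\eta \cap B_\eta = B_\eta$. I do not anticipate any genuine obstacle here: the only point requiring the slightest care is the simultaneity over all $\eta$ in the Closure step, and that is handled by the trivial scaling identity for $\min$ just noted.
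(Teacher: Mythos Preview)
Your proposal is correct and matches the paper's proof essentially verbatim: the paper also proves Closure by exhibiting the generating pair $(\Gamma\cup\Gamma',\delta\wedge\delta')$ with exactly your piecewise-minimum width function, and then dispatches Associativity, Commutativity, and Idempotence by remarking that they are inherited coordinatewise from the meet-semilattice $(\mathcal{P}(G),\cap)^{(0,1]}$. If anything, you give slightly more detail than the paper does on why the simultaneity over all $\eta$ works.
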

\begin{proof}
The only property with any content is the first, the truth of which is dependent on the slightly more general definition of Bohr set we made.  Suppose that $B$ is generated by $(\Gamma,\delta)$ and $B'$ is generated by $(\Gamma',\delta')$.  Then consider the Bohr system $B''$ generated by $(\Gamma \cup \Gamma',\delta\wedge\delta')$ where
\begin{equation*}
\delta\wedge \delta':\Gamma \cup \Gamma' \rightarrow \R_{>0}; \gamma \mapsto \begin{cases} \delta(\gamma) & \text{ if } \gamma \in \Gamma \setminus \Gamma'\\
\delta'(\gamma) & \text{ if } \gamma \in \Gamma' \setminus \Gamma\\
\min\{\delta(\gamma),\delta'(\gamma)\} & \text{ if } \gamma \in \Gamma \cap \Gamma'\end{cases}.
\end{equation*}
It is easy to check that $B''=B \wedge B'$ and hence $B \wedge B' \in \mathcal{B}(G)$.  The remaining properties are inherited pointwise from the meet-semilattice $(\mathcal{P}(G),\cap)$, where $\mathcal{P}(G)$ is the power-set of $G$, that is the set of all subsets of $G$.
\end{proof}

As usual this structure gives rise to a partial order on $\mathcal{B}(G)$ where we write $B' \leq B$ if $B'\wedge B = B'$.

Another way we can produce new Bohr systems is via dilation: given a Bohr system $B$ and a parameter $\lambda \in (0,1]$, we write $\lambda B$ for the \textbf{$\lambda$-dilate} of $B$, and define it to be the vector
\begin{equation*}
\lambda B=(B_{\eta\lambda})_{\eta \in (0,1]}.
\end{equation*}
We then have the following trivial properties.
\begin{lemma}[Basic properties of dilation]\ 
\begin{enumerate}
\item \emph{(Order-preserving action)} The map
\begin{equation*}
(0,1] \times \mathcal{B}(G) \rightarrow \mathcal{B}(G); (\lambda,B) \mapsto \lambda B
\end{equation*}
is a well-defined order-preserving action of the monoid $((0,1],\times)$ on the set of Bohr systems.
\item \emph{(Distribution over meet)} We have 
\begin{equation*}
\lambda (B \wedge B') = (\lambda B) \wedge (\lambda B') \text{ for all }B,B' \in \mathcal{B}(G), \lambda \in (0,1].
\end{equation*}
\end{enumerate}
\end{lemma}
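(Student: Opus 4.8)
The plan is to reduce everything to a single generating pair and then check each assertion coordinatewise. First I would fix a pair $(\Gamma,\delta)$ generating $B$, so that $B_\eta = \Bohr(\Gamma,\eta\delta)$ for all $\eta \in (0,1]$. Then for $\lambda \in (0,1]$ and $\eta \in (0,1]$ we have $\lambda\eta \in (0,1]$, and hence $(\lambda B)_\eta = B_{\lambda\eta} = \Bohr(\Gamma,\lambda\eta\delta) = \Bohr(\Gamma,\eta(\lambda\delta))$; since $\lambda>0$ the function $\lambda\delta \colon \Gamma \to \R_{>0}$ is still strictly positive, so $\lambda B$ is exactly the Bohr system generated by $(\Gamma,\lambda\delta)$. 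This shows that $(\lambda,B)\mapsto\lambda B$ is a well-defined map $(0,1]\times\mathcal{B}(G)\to\mathcal{B}(G)$.

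Next I would verify the monoid action axioms by evaluating at each $\eta$. We have $(1\cdot B)_\eta = B_{1\cdot\eta} = B_\eta$, so $1\cdot B = B$; and for $\lambda,\mu \in (0,1]$ one has $\lambda\mu \in (0,1]$ and $\mu\eta \in (0,1]$, so that $(\lambda(\mu B))_\eta = (\mu B)_{\lambda\eta} = B_{\mu\lambda\eta} = ((\lambda\mu)B)_\eta$, which is the action axiom. For order-preservation, recall that $B'\leq B$ means precisely $B'_\eta\subset B_\eta$ for all $\eta$ (unwinding $B'\wedge B = B'$ coordinatewise to $B'_\eta\cap B_\eta = B'_\eta$); then $(\lambda B')_\eta = B'_{\lambda\eta}\subset B_{\lambda\eta} = (\lambda B)_\eta$, so $\lambda B'\leq\lambda B$. (If monotonicity in the parameter is also wanted, nesting of Bohr systems gives $(\lambda B)_\eta = B_{\lambda\eta}\subset B_{\mu\eta} = (\mu B)_\eta$ whenever $\lambda\leq\mu$.)

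Finally, distribution over meet is once more a coordinatewise check: for every $\eta\in(0,1]$,
\begin{equation*}
(\lambda(B\wedge B'))_\eta = (B\wedge B')_{\lambda\eta} = B_{\lambda\eta}\cap B'_{\lambda\eta} = (\lambda B)_\eta\cap(\lambda B')_\eta = ((\lambda B)\wedge(\lambda B'))_\eta .
\end{equation*}
The only thing to be careful about — and it is bookkeeping rather than a genuine obstacle — is that all the indices that appear ($\lambda\eta$, $\mu\lambda\eta$, and so on) genuinely lie in $(0,1]$, so that the dilated vectors are indexed on the correct set, and that scaling $\delta$ by $\lambda>0$ keeps it $\R_{>0}$-valued. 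There is no mathematical content beyond this.
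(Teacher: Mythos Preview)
Your proof is correct. The paper itself omits the proof entirely, calling the properties ``trivial'', and your coordinatewise verification is exactly the routine check one would write out if pressed; there is nothing to compare.
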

The doubling dimension interacts fairly well with intersection and dilation and it can be shown that
\begin{equation*}
\dim^* \lambda B \leq \dim^* B \text{ and }\dim^* B \wedge B' =O(\dim^* B + \dim^*B')
\end{equation*}
for Bohr systems $B,B'$ and $\lambda \in (0,1]$.  (The first of these is trivial; the second requires a little more work.)  

The big-$O$ here is inconvenient in applications and to deal with this we define a variant which is equivalent, but which behaves a little better under intersection.  The \textbf{dimension} of a Bohr system $B$ is defined to be
\begin{equation*}
\dim B = \sup\left\{\log_2\mathcal{C}^\Delta\left(B_\eta;B_{\frac{1}{2}\eta}\right): \eta \in (0,1]\right\}.
\end{equation*}
\begin{lemma}[Basic properties of dimension]\label{lem.smi}\
\begin{enumerate}
\item \label{lem.smi.1} \emph{(Sub-additivity of dimension w.r.t. intersection)} For all $B,B' \in \mathcal{B}(G)$ we have
\begin{equation*}
\dim B \wedge B' \leq \dim B + \dim B'.
\end{equation*}
\item \label{lem.smi.2} \emph{(Monotonicity of dimension w.r.t. dilation)} For all $B \in \mathcal{B}(G)$ and $\lambda \in (0,1]$ we have
\begin{equation*}
\dim \lambda B \leq \dim B.
\end{equation*}
\item \label{lem.smi.3} \emph{(Equivalence of dimension and doubling dimension)} For all $B \in \mathcal{B}(G)$ we have
\begin{equation*}
\dim^* B \leq \dim B \leq 2\dim^* B.
\end{equation*}
\end{enumerate}
\end{lemma}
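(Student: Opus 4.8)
The plan is to handle the three parts in turn, in each case reducing to the structural facts about difference covering numbers in Lemma \ref{lem.dfc}, together with the elementary identity, symmetry, nesting and sub-additivity properties of Bohr systems recorded in Lemma \ref{lem.pbs}. None of the three parts is deep; only the right-hand inequality of (\ref{lem.smi.3}) needs a short chain of inequalities, and that is where the constant $2$ is forced.

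For part (\ref{lem.smi.1}), I would use that $(B \wedge B')_\eta = B_\eta \cap B'_\eta$ for all $\eta \in (0,1]$, and that $0_G \in B_{\eta/2}$ and $0_G \in B'_{\eta/2}$. Fixing $\eta$ and applying Lemma \ref{lem.dfc}\,(\ref{pt2.difc}) with $S = B_\eta$, $S' = B'_\eta$, $T = B_{\eta/2}$, $T' = B'_{\eta/2}$ gives $\mathcal{C}^\Delta((B\wedge B')_\eta; (B\wedge B')_{\eta/2}) \leq \mathcal{C}^\Delta(B_\eta; B_{\eta/2})\mathcal{C}^\Delta(B'_\eta; B'_{\eta/2})$; taking $\log_2$ and then the supremum over $\eta \in (0,1]$ yields the claim. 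For part (\ref{lem.smi.2}), since $(\lambda B)_\eta = B_{\eta\lambda}$ the quantities in the supremum defining $\dim \lambda B$ are $\log_2 \mathcal{C}^\Delta(B_{\eta\lambda}; B_{\eta\lambda/2})$ as $\eta$ ranges over $(0,1]$; as $\eta\lambda$ then ranges over $(0,\lambda] \subset (0,1]$, each of these already appears in the supremum defining $\dim B$, so $\dim \lambda B \leq \dim B$.

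For part (\ref{lem.smi.3}), the inequality $\dim^* B \leq \dim B$ is immediate from Lemma \ref{lem.dfc}\,(\ref{pt4.difc}), which bounds $\mathcal{C}(B_\eta; B_{\eta/2}) \leq \mathcal{C}^\Delta(B_\eta; B_{\eta/2})$ for each $\eta$ (valid since $0_G \in B_{\eta/2}$), followed by $\log_2$ and the supremum. For the reverse inequality, fix $\eta \in (0,1]$. By sub-additivity and symmetry, $B_{\eta/4} - B_{\eta/4} \subset B_{\eta/2}$ and it contains $0_G$, so Lemma \ref{lem.dfc}\,(\ref{p4.dfc}) gives $\mathcal{C}^\Delta(B_\eta; B_{\eta/2}) \leq \mathcal{C}^\Delta(B_\eta; B_{\eta/4} - B_{\eta/4})$; then Lemma \ref{lem.dfc}\,(\ref{pt3.difc}) gives $\mathcal{C}^\Delta(B_\eta; B_{\eta/4} - B_{\eta/4}) \leq \mathcal{C}(B_\eta; B_{\eta/4})$; and finally Lemma \ref{lem.bpcn}\,(\ref{pt.extn}) gives $\mathcal{C}(B_\eta; B_{\eta/4}) \leq \mathcal{C}(B_\eta; B_{\eta/2})\,\mathcal{C}(B_{\eta/2}; B_{\eta/4}) \leq 2^{2\dim^* B}$. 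Chaining these, taking $\log_2$ and then the supremum over $\eta$ gives $\dim B \leq 2\dim^* B$.

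The only thing to watch is the bookkeeping of dilation parameters: converting a difference covering number at scale $\eta/2$ into an ordinary covering number forces one down to scale $\eta/4$ via $B_{\eta/4} - B_{\eta/4} \subset B_{\eta/2}$, which is precisely what produces the two factors of $2^{\dim^* B}$ and hence the constant $2$ in (\ref{lem.smi.3}). This is the step I would be most careful about, though it is hardly a real obstacle.
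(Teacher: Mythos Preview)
Your proposal is correct and follows essentially the same approach as the paper's proof: the same application of Lemma~\ref{lem.dfc}\,(\ref{pt2.difc}) for part~(\ref{lem.smi.1}), the same reparametrisation argument for part~(\ref{lem.smi.2}), and the identical chain $\mathcal{C}^\Delta(B_\eta;B_{\eta/2}) \leq \mathcal{C}^\Delta(B_\eta;B_{\eta/4}-B_{\eta/4}) \leq \mathcal{C}(B_\eta;B_{\eta/4}) \leq 2^{2\dim^* B}$ for part~(\ref{lem.smi.3}).
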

\begin{proof}
First, from Lemma \ref{lem.dfc}, part (\ref{pt2.difc}) we have
\begin{align*}
\mathcal{C}^\Delta\left((B\wedge B')_\eta;(B\wedge B')_{\frac{1}{2}\eta}\right)& =\mathcal{C}^\Delta\left(B_\eta \cap B'_\eta;B_{\frac{1}{2}\eta}\cap B'_{\frac{1}{2}\eta}\right)\\ & \leq \mathcal{C}^\Delta\left(B_\eta;B_{\frac{1}{2}\eta}\right)\mathcal{C}^\Delta\left(B_\eta';B_{\frac{1}{2}\eta}'\right)
\end{align*}
for all $\eta \in (0,1]$.  Taking $\log$s the sub-additivity of dimension follows since suprema are sub-linear.

Secondly, monotonicity follows immediately since
\begin{align*}
\dim \lambda B & = \sup\left\{\log_2\mathcal{C}^\Delta\left((\lambda B)_\eta;(\lambda B)_{\frac{1}{2}\eta}\right): \eta \in (0,1]\right\}\\
& = \sup\left\{\log_2\mathcal{C}^\Delta\left(B_{\eta};B_{\frac{1}{2}\eta}\right): \eta \in (0,\lambda]\right\} \leq \dim B.
\end{align*}

Finally, it follows from Lemma \ref{lem.dfc} part (\ref{pt4.difc}) that $\dim^* B \leq \dim B$.  On the other hand from the sub-additivity and symmetry of Bohr sets we have $B_{\frac{1}{2}\eta} \supset B_{\frac{1}{4}\eta}-B_{\frac{1}{4}\eta}$, and so by Lemma \ref{lem.dfc} parts (\ref{p4.dfc}) and (\ref{pt3.difc}) we get
\begin{equation*}
\mathcal{C}^\Delta\left(B_\eta;B_{\frac{1}{2}\eta}\right) \leq \mathcal{C}^\Delta\left(B_\eta;B_{\frac{1}{4}\eta}-B_{\frac{1}{4}\eta}\right)\leq \mathcal{C}\left(B_\eta;B_{\frac{1}{4}\eta}\right).
\end{equation*}
Hence by Lemma \ref{lem.bpcn} part (\ref{pt.extn}) and the definition of doubling dimension we have
\begin{equation*}
\mathcal{C}\left(B_\eta;B_{\frac{1}{4}\eta}\right) \leq \mathcal{C}\left(B_\eta;B_{\frac{1}{2}\eta}\right)\mathcal{C}\left(B_{\frac{1}{2}\eta};B_{\frac{1}{4}\eta}\right) \leq 2^{2\dim^* B},
\end{equation*}
and so $\dim B \leq 2\dim^* B$ as claimed.
\end{proof}

As well as the various notion of dimension, Bohr systems also have a notion of size relative to some `reference' set.  Very roughly we think of the `size' of a Bohr system $B$ relative to some reference set $A$ as being $\mathcal{C}^\Delta(A;B_1)$. This quantity is then governed by the following lemma.
\begin{lemma}[Size of Bohr systems]\label{lem.bss}
Suppose that $B$ is a Bohr system and $A \subset G$.  Then the following hold.
\begin{enumerate}
\item \label{lem.bss.1} \emph{(Size of dilates)}  For all $\lambda \in (0,1]$ we have
\begin{equation*}
\mathcal{C}^\Delta(A;(\lambda B)_1) \leq  \mathcal{C}^\Delta\left(A;B_{1}\right)(4\lambda^{-1})^{\dim B}.
\end{equation*}
\item \label{lem.bss.2} \emph{(Size and non-triviality)}  If $\mathcal{C}^\Delta(A;B_1)<|A|$ then there is some $x \in B_1$ with $x \neq 0_G$.
\end{enumerate}
\end{lemma}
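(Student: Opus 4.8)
The plan is to handle the two parts separately. For part (\ref{lem.bss.1}), the idea is to peel off a dyadic chain of doublings, routing through ordinary covering numbers. Since $(\lambda B)_1 = B_\lambda$ and sub-additivity and symmetry of Bohr systems (Lemma \ref{lem.pbs}) give $B_{\lambda/2} - B_{\lambda/2} \subset B_\lambda$, Lemma \ref{lem.dfc} parts (\ref{p4.dfc}) and (\ref{pt3.difc}) yield
\[
\mathcal{C}^\Delta(A;(\lambda B)_1) \le \mathcal{C}^\Delta\left(A;B_{\lambda/2}-B_{\lambda/2}\right) \le \mathcal{C}\left(A;B_{\lambda/2}\right).
\]
Next I would use the composition property of covering numbers (Lemma \ref{lem.bpcn} part (\ref{pt.extn})) to factor $\mathcal{C}(A;B_{\lambda/2}) \le \mathcal{C}(A;B_1)\,\mathcal{C}(B_1;B_{\lambda/2})$, and bound $\mathcal{C}(A;B_1) \le \mathcal{C}^\Delta(A;B_1)$ via Lemma \ref{lem.dfc} part (\ref{pt4.difc}). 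To control the remaining factor I would take the least $k \in \N$ with $2^{-k} \le \lambda/2$ (so $2^k < 4\lambda^{-1}$), pass to $\mathcal{C}(B_1;B_{2^{-k}})$ by nesting and Lemma \ref{lem.bpcn} part (\ref{pt.bpcn.1}), and telescope along $1, \tfrac12, \dots, 2^{-k}$ using the definition of $\dim^* B$, each link $\mathcal{C}(B_{2^{-i}};B_{2^{-i-1}})$ being at most $2^{\dim^* B}$. This gives $\mathcal{C}(B_1;B_{\lambda/2}) \le 2^{k\dim^* B} \le (4\lambda^{-1})^{\dim^* B} \le (4\lambda^{-1})^{\dim B}$, the last step by Lemma \ref{lem.smi} part (\ref{lem.smi.3}), and assembling the displayed inequalities proves part (\ref{lem.bss.1}).

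For part (\ref{lem.bss.2}) I would argue by contraposition, showing that if $B_1$ has no non-zero element --- equivalently $B_1 = \{0_G\}$, since $0_G \in B_1$ by Lemma \ref{lem.pbs} --- then $\mathcal{C}^\Delta(A;B_1) \ge |A|$. Let $\phi \in \Hom(G,H)$ and $U,V \subset H$ with $V \neq \emptyset$, $A \subset \phi^{-1}(U)$ and $\phi^{-1}(V-V) \subset \{0_G\}$ be any admissible triple in the definition of $\mathcal{C}^\Delta(A;\{0_G\})$. Since $V \neq \emptyset$ we have $0_H \in V - V$, hence $\ker \phi \subset \phi^{-1}(V-V) = \{0_G\}$, so $\phi$ is injective and $(V-V) \cap \phi(G) = \{0_H\}$. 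Therefore no translate of $V$ contains two distinct points of $\phi(A)$, whence $\mathcal{C}_H(U;V) \ge \mathcal{C}_H(\phi(A);V) \ge |\phi(A)| = |A|$, using Lemma \ref{lem.bpcn} part (\ref{pt.bpcn.1}) together with injectivity of $\phi$. Taking the minimum over admissible triples gives $\mathcal{C}^\Delta(A;B_1) \ge |A|$, as required.

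Neither part is deep. In part (\ref{lem.bss.1}) the only thing to watch is the bookkeeping relating the dyadic chain length $k$ to the constant $4\lambda^{-1}$, plus the harmless loss in replacing $\dim^*$ by $\dim$ at the end; the one structural choice is to route through ordinary covering numbers, since difference covering numbers do not obviously admit a direct composition law. In part (\ref{lem.bss.2}) the crux is simply the observation that the constraint $\phi^{-1}(V-V) \subset \{0_G\}$ forces $\phi$ to be injective, after which the packing bound is immediate; this is the step I would expect to be the only non-routine point.
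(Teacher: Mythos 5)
Your proof is correct. Part~(\ref{lem.bss.1}) is essentially the paper's argument: both proofs pass from $\mathcal{C}^\Delta$ to $\mathcal{C}$ via sub-additivity plus Lemma~\ref{lem.dfc} parts~(\ref{p4.dfc}) and~(\ref{pt3.difc}), telescope along a dyadic chain using Lemma~\ref{lem.bpcn} part~(\ref{pt.extn}) and the definition of $\dim^*$, and finish with $\dim^* B \leq \dim B$. The only difference is cosmetic bookkeeping: the paper indexes its chain as $B_{2^i\lambda}$ ending at $B_1$, you factor out $\mathcal{C}(A;B_1)\,\mathcal{C}(B_1;B_{\lambda/2})$ first and descend $B_1, B_{1/2}, \dots$; the constants land in the same place.

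Part~(\ref{lem.bss.2}) is a genuinely different route from the paper's, and both are valid. The paper goes directly: Lemma~\ref{lem.dfc} part~(\ref{pt4.difc}) gives $\mathcal{C}(A;B_1)\leq\mathcal{C}^\Delta(A;B_1)<|A|$, so some $X$ with $|X|<|A|$ covers $A$ by translates of $B_1$, whence $|A|\leq|X||B_1|<|A||B_1|$ forces $|B_1|>1$. You instead argue the contrapositive from the definition of $\mathcal{C}^\Delta$ itself: if $B_1=\{0_G\}$, any admissible triple $(\phi,U,V)$ satisfies $\phi^{-1}(V-V)\subset\{0_G\}$ (with $0_H\in V-V$ since $V\neq\emptyset$), which forces $\phi$ injective and $(V-V)\cap\phi(G)=\{0_H\}$, so no translate of $V$ can cover two points of $\phi(A)$ and hence $\mathcal{C}_H(U;V)\geq|\phi(A)|=|A|$. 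Your approach sidesteps the domination lemma and shows directly why a trivial $B_1$ makes $\mathcal{C}^\Delta$ large --- arguably more illuminating about the structure of the minimisation --- whereas the paper's one-line cardinality count is shorter and reuses machinery already in place. Either is fine; the injectivity observation you flag as the crux is exactly right and is correctly justified.
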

\begin{proof}
By symmetry and sub-additivity of Bohr sets we see that $(\lambda B)_1 \supset B_{\frac{1}{2}\lambda} - B_{\frac{1}{2}\lambda}$ and so by Lemma \ref{lem.dfc} parts (\ref{p4.dfc}) and (\ref{pt3.difc}) we have
\begin{equation*}
\mathcal{C}^\Delta(A;(\lambda B)_1) \leq \mathcal{C}^\Delta\left(A;B_{\frac{1}{2}\lambda} - B_{\frac{1}{2}\lambda}\right) \leq \mathcal{C}\left(A;B_{\frac{1}{2}\lambda}\right).
\end{equation*}
Write $r$ for the largest natural number such that $2^r\lambda \leq 1$.  By Lemma \ref{lem.bpcn} part (\ref{pt.extn}) we see that
\begin{align*}
 \mathcal{C}\left(A;B_{\frac{1}{2}\lambda}\right) & \leq \mathcal{C}\left(A;B_{2^r\lambda}\right)\prod_{i=0}^{r}{ \mathcal{C}\left(B_{2^{i}\lambda};B_{2^{i-1}\lambda}\right) }\\
 & \leq \mathcal{C}\left(A;B_{1}\right) \mathcal{C}\left(B_1;B_{2^r\lambda}\right)\prod_{i=0}^{r}{ \mathcal{C}\left(B_{2^{i}\lambda};B_{2^{i-1}\lambda}\right) }\\
 & \leq \mathcal{C}\left(A;B_{1}\right) 2^{(r+2)\dim^* B}\leq \mathcal{C}^\Delta\left(A;B_{1}\right) 2^{(r+2)\dim B},
\end{align*}
where the last inequality is by Lemma \ref{lem.dfc} part (\ref{pt4.difc}) and the first inequality in Lemma \ref{lem.smi} part (\ref{lem.smi.3}).  The first part follows.

By Lemma \ref{lem.dfc} part (\ref{pt4.difc}) we then see that $\mathcal{C}(A;B_1) \leq \mathcal{C}^\Delta(A;B_1)<|A|$.  It follows that there is some set $X$ with $|X|<|A|$ such that $A \subset X+B_1$ whence $|A| \leq |X||B_1| < |A||B_1|$ which implies that $|B_1|>1$ and hence contains a non-trivial element establishing the second part.
\end{proof}

\section{Measures, convolution and approximate invariance}\label{sec.ain}

Given a finite set $X$ we write $C(X)$ for the complex-valued functions on $X$.  (We think of $X$ as a discrete topological space and these functions as continuous with an eye to \S\ref{sec.con}, hence the notation.) Further, given a probability measure $\mu$ on $X$ and a set $S$ with $\mu(S)>0$, we write $\mu_S$ for the probability measure induced by
\begin{equation*}
C(X) \rightarrow \C; f\mapsto \frac{1}{\mu(S)}\int{f1_Sd\mu}.
\end{equation*}
Moreover, if $S$ is a non-empty subset of $G$ then we write $m_S$ for $(m_G)_S$.  (Note that this notation is consistent since $m_G=(m_G)_G$.)

Below we shall define various notation for functions and for measures.  Since $G$ is finite we can associate to any measure $\mu$ on $G$ a function $y \mapsto \mu(\{y\})$.  The notational choices we make are designed to be compatible between these two different ways of thinking about measures hence the slightly unusual choice in (\ref{eqn.laterone}).

Given $f \in C(G)$ and an element $x \in G$ we define
\begin{equation*}
\tau_x(f)(y):=f(y-x) \text{ for all }y \in G.
\end{equation*}
We write $M(G)$ for the space of complex-valued measures on $G$ and to each $\mu \in M(G)$ associate the linear functional
\begin{equation}\label{eqn.laterone}
C(G) \rightarrow \C; f\mapsto \langle f,\mu\rangle :=\overline{\int{\overline{f(x)}d\mu(x)}}.
\end{equation}
The functionals defined above are all linear functionals by the Riesz Representation Theorem \cite[E4]{rud::1}, though of course it is rather simple in our setting of finite $G$.

Given $\mu \in M(G)$ we define $\tau_x(\mu)$ to be the measure induced by,
\begin{equation*}
C(G) \rightarrow \C; f\mapsto \int{\tau_{-x}(f)d\mu}.
\end{equation*}
and $\tilde{\mu}$ to be the measure induced by
\begin{equation*}
C(G) \rightarrow \C; f\mapsto \overline{\int{\overline{f(-x)}d\mu(x)}}.
\end{equation*}
Given $f \in L_\infty(G)$ and $\mu \in M(G)$ we define
\begin{equation*}
\mu \ast f(x)= f \ast \mu(x)=\int{f(y)d\mu(x-y)},
\end{equation*}
and for a further measure $\nu \in M(G)$ we define the \textbf{convolution} of $\mu$ and $\nu$, denoted $\mu \ast \nu$, to be the measure induced by
\begin{equation*}
C(G) \rightarrow \C; f \mapsto \int{f(x+y)d\mu(x)d\nu(y)}
\end{equation*}
This operation makes $M(G)$ into a commutative Banach algebra with unit; for details see \cite[\S1.3.1]{rud::1}.

This notation all extends in the expected way to functions so that if $f \in L_1(m_G)$ then $\tilde{f}$ is defined point-wise by
\begin{equation*}
\wt{f}(x):=\overline{f(-x)} \text{ for all }x \in G,
\end{equation*}
and given a further $g \in L_1(m_G)$ we define the \textbf{convolution} of $f$ and $g$ to be $f\ast g$ which is determined point-wise by
\begin{equation*}
f \ast g(x)= \int{f(y)g(x-y)dm_G(y)} \text{ for all }x \in G.
\end{equation*}
This can be written slightly differently using the inner product on $L_2(m_G)$.  If $g,f \in L_2(m_G)$ then
\begin{equation*}
\langle f,g\rangle_{L_2(m_G)} = \int{f(x)\overline{g(x)}dm_G(x)},
\end{equation*}
and
\begin{equation*}
f \ast g(x)=\langle f,\tau_x(\tilde{g})\rangle_{L_2(m_G)} \text{ for all }x \in G.
\end{equation*}

Given a Bohr system $B$ we say that a probability measure $\mu$ on $G$ is \textbf{$B$-approximately invariant} if for every $\eta \in (0,1]$ there are probability measures $\mu_\eta^+$ and $\mu_\eta^-$ such that
\begin{equation*}
(1-\eta)\mu_{\eta}^- \leq \tau_x(\mu) \leq (1+\eta)\mu_{\eta}^+  \text{ for all }x \in B_{\eta}.
\end{equation*}
It may be worth remembering at that for two measures $\nu$ and $\kappa$ we say $\nu \geq \kappa$ if and only if $\nu - \kappa$ is non-negative.

To motivate the name in this definition we have the following lemma where we recall that $\|\mu\|:=\int{d|\mu|}$.
\begin{lemma}\label{lem.inv}
Suppose that $B$ is a Bohr system and $\mu$ is $B$-approximately invariant.  Then for all $\eta \in (0,1]$ we have
\begin{equation*}
\|\mu - \tau_x(\mu)\| \leq \eta \text{ for all }x \in B_{\frac{1}{2}\eta}.
\end{equation*}
\end{lemma}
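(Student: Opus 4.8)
The plan is to unwind the definition of $B$-approximate invariance at scale $\eta/2$, instantiate it at the two shifts $y=0_G$ and $y=x$, and then bound the total variation of the difference by comparing both measures to a common dominating measure.

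Concretely, fix $\eta \in (0,1]$ and $x \in B_{\frac{1}{2}\eta}$. Applying the hypothesis with parameter $\eta/2$ yields probability measures $\nu^+:=\mu_{\eta/2}^+$ and $\nu^-:=\mu_{\eta/2}^-$ with $(1-\tfrac{\eta}{2})\nu^- \leq \tau_y(\mu) \leq (1+\tfrac{\eta}{2})\nu^+$ for every $y \in B_{\eta/2}$. Since $0_G \in B_{\eta/2}$ by the identity property of Bohr systems (Lemma \ref{lem.pbs}) and $x \in B_{\frac{1}{2}\eta}=B_{\eta/2}$ by assumption, both $\mu=\tau_{0_G}(\mu)$ and $\tau_x(\mu)$ are dominated by the single non-negative measure $c:=(1+\tfrac{\eta}{2})\nu^+$.

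The key elementary step is the following: if $a,b,c$ are non-negative measures with $a \leq c$ and $b \leq c$, then writing $a-b=(c-b)-(c-a)$ as a difference of non-negative measures and using the triangle inequality for the total variation norm on $M(G)$ gives $\|a-b\| \leq \|c-a\|+\|c-b\| = (\|c\|-\|a\|)+(\|c\|-\|b\|)$. Applying this with $a=\mu$, $b=\tau_x(\mu)$, and $c$ as above, and noting that $\mu$, $\tau_x(\mu)$ and $\nu^+$ are all probability measures — here one uses that $\tau_x$ preserves total mass and non-negativity, which is immediate from its definition — we get $\|\mu\|=\|\tau_x(\mu)\|=1$ and $\|c\|=1+\tfrac{\eta}{2}$, hence $\|\mu-\tau_x(\mu)\| \leq \tfrac{\eta}{2}+\tfrac{\eta}{2}=\eta$. (One could equally well run the same argument against the lower sandwich $c':=(1-\tfrac{\eta}{2})\nu^-$, using $a-b=(a-c')-(b-c')$, with the same conclusion.)

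I do not expect a genuine obstacle here. The only point requiring a little care is the choice of scale: instantiating the invariance at $\eta/2$ rather than at $\eta$ is exactly what makes the constants $1\pm\tfrac{\eta}{2}$ combine to give the bound $\eta$ rather than something weaker like $2\eta$; everything else is bookkeeping with the total variation norm.
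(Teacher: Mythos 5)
Your proof is correct, and it takes a mildly different route from the paper's. Both you and the paper start the same way: unwind the definition at scale $\eta/2$, observe that both $0_G$ and $x$ lie in $B_{\eta/2}$, and exploit that $\mu$, $\tau_x(\mu)$, $\nu^{\pm}$ are all probability measures (in particular $\|\mu\|=\|\tau_x(\mu)\|=1$). Where you diverge is the final accounting. The paper invokes the Jordan decomposition of $\tau_x(\mu)-\mu$ into a positive set $P$ and negative set $N$, applies the upper sandwich on $P$ and the lower sandwich on $N$, and sums. You instead isolate a cleaner elementary fact: if non-negative measures $a,b$ are both dominated by a non-negative $c$, then
\begin{equation*}
\|a-b\| \leq \|c-a\|+\|c-b\| = (\|c\|-\|a\|)+(\|c\|-\|b\|),
\end{equation*}
where the last equalities use that $c-a,c-b\geq 0$. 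Applied with $c=(1+\eta/2)\mu_{\eta/2}^+$ this gives $\eta$ in one line and needs only the upper half of the sandwich; the Jordan decomposition is bypassed entirely. What the paper's version buys is a symmetric presentation using both $\mu^+_{\eta/2}$ and $\mu^-_{\eta/2}$ that mirrors the definition; what yours buys is fewer moving parts and no appeal to the decomposition theorem. Both are complete and give the same constant.
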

\begin{proof}
Suppose that $x \in B_{\frac{1}{2}\eta}$.  Then
\begin{equation*}
\left(1-{\frac{1}{2}\eta}\right)\mu_{\frac{1}{2}\eta}^- \leq \tau_x(\mu) \leq \left(1+\frac{\eta}{2}\right)\mu_{\frac{1}{2}\eta}^+ \text{ and } \left(1-{\frac{1}{2}\eta}\right)\mu_{\frac{1}{2}\eta}^- \leq \mu \leq \left(1+\frac{\eta}{2}\right)\mu_{\frac{1}{2}\eta}^+.
\end{equation*}
It follows that
\begin{equation*}
\tau_x(\mu) - \mu \leq \left(1+{\frac{1}{2}\eta}\right)\mu^+_{\frac{1}{2}\eta} - \left(1-{\frac{1}{2}\eta}\right)\mu^-_{\frac{1}{2}\eta},
\end{equation*}
and
\begin{equation*}
\tau_x(\mu) - \mu \geq \left(1-{\frac{1}{2}\eta}\right)\mu^-_{\frac{1}{2}\eta} - \left(1+{\frac{1}{2}\eta}\right)\mu^+_{\frac{1}{2}\eta}.
\end{equation*}
The Jordan decomposition theorem tells us that there are two measurable sets $P$ and $N$ (which together form a partition of $G$) such that $\tau_x(\mu) - \mu$ is a non-negative measure on $P$ and a non-positive measure on $N$.  We conclude that
\begin{align*}
\|\tau_x(\mu) - \mu\| & = (\tau_x(\mu) - \mu)(P) - (\tau_x(\mu) - \mu)(N)\\
& \leq \left(1+{\frac{1}{2}\eta}\right)\mu^+_{\frac{1}{2}\eta}(P) -  \left(1-{\frac{1}{2}\eta}\right)\mu^-_{\frac{1}{2}\eta}(P)\\
& \qquad + \left(1+{\frac{1}{2}\eta}\right)\mu^+_{\frac{1}{2}\eta}(N) -\left(1-{\frac{1}{2}\eta}\right)\mu^-_{\frac{1}{2}\eta}(N)\\
& = \left(1+{\frac{1}{2}\eta}\right) - \left(1-{\frac{1}{2}\eta}\right) = \eta,
\end{align*}
since $\mu^+_\eta$ and $\mu^-_\eta$ are probability measures and $N\sqcup P=G$.  The result is proved.
\end{proof}
This can be slightly generalised in the following convenient way.
\begin{lemma}\label{lem.cc}
Suppose that $B$ is a Bohr system and $\mu$ is a $B$-approximately invariant probability measure.  Then
\begin{equation*}
\|\tau_x(f \ast \mu) - f \ast \mu\|_{L_\infty(G)} \leq \eta \|f\|_{L_\infty(G)} \text{ for all }x \in B_{\frac{1}{2}\eta}.
\end{equation*}
\end{lemma}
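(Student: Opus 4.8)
The plan is to reduce everything to Lemma \ref{lem.inv} by exploiting the fact that translation commutes with convolution.

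First I would check, directly from the definitions of $\tau_x$ acting on functions and on measures and of the convolution $f \ast \mu$, that
\begin{equation*}
\tau_x(f \ast \mu) = f \ast \tau_x(\mu) \text{ for all }x \in G.
\end{equation*}
Indeed, unwinding the definition $f \ast \mu(y) = \int{f(y-t)d\mu(t)}$ gives $\tau_x(f\ast\mu)(y) = \int{f(y-x-t)d\mu(t)}$, while $f \ast \tau_x(\mu)(y) = \int{f(y-s)d\tau_x(\mu)(s)} = \int{f(y-x-z)d\mu(z)}$ using that $\tau_x(\mu)$ is induced by $g \mapsto \int{g(z+x)d\mu(z)}$; the two agree. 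Consequently $\tau_x(f\ast\mu) - f\ast\mu = f\ast(\tau_x(\mu) - \mu)$, where $\tau_x(\mu) - \mu$ is a complex (indeed real) measure of total variation norm at most $\eta$ when $x \in B_{\frac{1}{2}\eta}$ by Lemma \ref{lem.inv}.

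Next I would record the elementary pointwise bound that for any $\nu \in M(G)$ and $g \in L_\infty(G)$ one has $|g \ast \nu(y)| = \left|\int{g(y-t)d\nu(t)}\right| \leq \|g\|_{L_\infty(G)}\|\nu\|$ for every $y$, hence $\|g \ast \nu\|_{L_\infty(G)} \leq \|g\|_{L_\infty(G)}\|\nu\|$; this is immediate from the Jordan decomposition of $\nu$ exactly as used in the proof of Lemma \ref{lem.inv}. Applying this with $g = f$ and $\nu = \tau_x(\mu) - \mu$ and then invoking the bound $\|\tau_x(\mu)-\mu\| \leq \eta$ for $x \in B_{\frac{1}{2}\eta}$ yields
\begin{equation*}
\|\tau_x(f \ast \mu) - f \ast \mu\|_{L_\infty(G)} = \|f \ast (\tau_x(\mu) - \mu)\|_{L_\infty(G)} \leq \|f\|_{L_\infty(G)}\|\tau_x(\mu) - \mu\| \leq \eta\|f\|_{L_\infty(G)},
\end{equation*}
which is the claim.

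There is no real obstacle here: the only mild care needed is bookkeeping the sign conventions in the definitions of $\tau_x$ on measures and of $\ast$ to confirm the commutation identity, and noting that the total-variation bound on $g \ast \nu$ holds for signed/complex $\nu$ (handled by Jordan decomposition as already done in Lemma \ref{lem.inv}). Everything else is a direct quotation of Lemma \ref{lem.inv}.
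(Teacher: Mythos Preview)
Your proposal is correct and follows essentially the same approach as the paper: both reduce to Lemma \ref{lem.inv} via the identity $\tau_x(f\ast\mu)=f\ast\tau_x(\mu)$ and the trivial bound $\|f\ast\nu\|_{L_\infty(G)}\leq\|f\|_{L_\infty(G)}\|\nu\|$. The paper simply compresses all of this into the single displayed inequality $|f\ast\mu(y-x)-f\ast\mu(y)|\leq\int{|f(z)|\,d|\tau_x(\mu)-\mu|(z)}\leq\eta\|f\|_{L_\infty(G)}$, whereas you spell out the commutation identity and the total-variation estimate separately.
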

\begin{proof}
Simply note that
\begin{equation*}
|f\ast \mu(y-x) - f \ast \mu(y)| \leq \int{|f(z)|d|\tau_x(\mu) - \mu|(z)} \leq \eta \|f\|_{L_\infty(G)}
\end{equation*}
by the triangle inequality and Lemma \ref{lem.inv}.
\end{proof}
Approximately invariant probability measures are closed under convolution with probability measures.
\begin{lemma}\label{lem.closbm}
Suppose that $B$ is a Bohr system, $\mu$ is a $B$-approximately invariant probability measure, and $\nu$ is a probability measure.  Then $\mu \ast \nu$ is a $B$-approximately invariant probability measure.
\end{lemma}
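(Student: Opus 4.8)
The plan is to take the witnessing measures for $\mu \ast \nu$ to be the convolutions with $\nu$ of the witnessing measures for $\mu$. The two facts that make this work, both standard features of the commutative Banach algebra $M(G)$ recalled in \S\ref{sec.ain}, are: (i) the convolution of two probability measures is again a probability measure; and (ii) for a fixed non-negative measure $\nu$, the map $\kappa \mapsto \kappa \ast \nu$ preserves the partial order on $M(G)$, since if $\kappa \geq 0$ then $\kappa \ast \nu \geq 0$.

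First I would record the identity
\begin{equation*}
\tau_x(\mu \ast \nu) = \tau_x(\mu) \ast \nu \text{ for every } x \in G;
\end{equation*}
this follows readily from associativity and commutativity of convolution together with the observation that $\tau_x(\kappa) = \kappa \ast \delta_{-x}$, where $\delta_y$ denotes the point mass at $y$ (so that $\delta_{0_G}$ is the identity of $M(G)$). Each of these can be checked directly against the definitions of $\tau_x$ and $\ast$ given above.

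Now fix $\eta \in (0,1]$. Since $\mu$ is $B$-approximately invariant there are probability measures $\mu_\eta^+$ and $\mu_\eta^-$ with
\begin{equation*}
(1-\eta)\mu_\eta^- \leq \tau_x(\mu) \leq (1+\eta)\mu_\eta^+ \text{ for all } x \in B_\eta.
\end{equation*}
Set $(\mu \ast \nu)_\eta^\pm := \mu_\eta^\pm \ast \nu$, which are probability measures by fact (i). Convolving the displayed inequalities on the right with $\nu$ (legitimate since $1 \pm \eta \geq 0$ and using fact (ii)) and rewriting $\tau_x(\mu) \ast \nu$ as $\tau_x(\mu \ast \nu)$ via the identity above yields
\begin{equation*}
(1-\eta)(\mu \ast \nu)_\eta^- \leq \tau_x(\mu \ast \nu) \leq (1+\eta)(\mu \ast \nu)_\eta^+ \text{ for all } x \in B_\eta.
\end{equation*}
This is precisely the assertion that $\mu \ast \nu$ is $B$-approximately invariant; that it is a probability measure is fact (i) applied to $\mu$ and $\nu$.

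There is no real obstacle: the content is entirely in the bookkeeping facts (i) and (ii) and the translation identity, all routine consequences of the structure of $M(G)$. The one point deserving a moment's attention is to apply the positivity in (ii) in the correct direction for each of the two inequalities, and to check that scaling an inequality between measures by the non-negative constants $1 \pm \eta$ is legitimate.
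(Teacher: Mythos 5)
Your proof is correct and follows essentially the same route as the paper: take $(\mu \ast \nu)_\eta^\pm := \mu_\eta^\pm \ast \nu$, use $\tau_x(\mu \ast \nu) = \tau_x(\mu) \ast \nu$, and convolve the defining inequalities with the non-negative measure $\nu$ to preserve the partial order. The paper phrases the last step as ``integrating the inequalities against $\nu$'' but this is the same observation as your fact (ii).
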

\begin{proof}
Since $\mu$ is $B$-approximately invariant there are probability measures $(\mu_\eta^-)_{\eta \in (0,1]}$ and $(\mu_\eta^+)_{\eta \in (0,1]}$ such that
\begin{equation*}
(1-\eta)\mu_{\eta}^- \leq \tau_x(\mu) \leq (1+\eta)\mu_\eta^+ \text{ for all }x \in B_\eta.
\end{equation*}
Since $\nu$ is a probability measure we can integrate the above inequalities to get
\begin{equation*}
(1-\eta)\mu_{\eta}^-\ast \nu \leq \tau_x(\mu)\ast \nu \leq (1+\eta)\mu_\eta^+\ast \nu \text{ for all }x \in B_\eta.
\end{equation*}
But then since $\tau_x(\mu) \ast \nu = \tau_x(\mu \ast \nu)$ we can put $(\mu \ast \nu)_\eta^-:=\mu_{\eta}^-\ast \nu$ and $(\mu \ast \nu)_\eta^+:=\mu_{\eta}^+\ast \nu$ to get the required family of measures for $\mu \ast \nu$.
\end{proof}

The last result of this section is essentially \cite[Lemma 3.0]{bou::5} and ensures a plentiful supply of approximately invariant probability measures.
\begin{proposition}\label{prop.ubreg}
Suppose that $B$ is a Bohr system, and $X$ is a non-empty set with $m_G(X+B_1) \leq Km_G(X)$. Then there is a $\lambda B$-approximately invariant probability measure with support contained in $X+B_1$ for some $1 \geq \lambda \geq \frac{1}{24\log 2K}$.
\end{proposition}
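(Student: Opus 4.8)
The plan is to take for $\mu$ the uniform probability measure $m_{X+B_c}$ on $X + B_c$ for a single well-chosen scale $c\in(0,1]$, and to argue that this is $\lambda B$-approximately invariant for $\lambda:=\frac1{24\log 2K}$ provided $c$ is a scale at which the growth of $\eta\mapsto m_G(X+B_\eta)$ is uniformly controlled down to scale $\lambda$. Since $B_c\subseteq B_1$ by nesting (Lemma \ref{lem.pbs}) the support condition is then automatic. Put $\phi(\eta):=\log m_G(X+B_\eta)$; by Lemma \ref{lem.pbs} this is non-decreasing on $(0,1]$, and the hypothesis $m_G(X+B_1)\le Km_G(X)\le Km_G(X+B_\eta)$ shows the total increase of $\phi$ on $(0,1]$ is at most $\log K$. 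Write $d\phi$ for the associated (Lebesgue--Stieltjes) measure on $(0,1]$, so $\|d\phi\|\le\log K$.

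First I would isolate the deterministic step. Suppose $c\in(\lambda,1-\lambda]$ satisfies $d\phi([c-r,c+r])\le\frac{r}{2\lambda}$ for every $r\in(0,\lambda]$. Fix $\eta\in(0,1]$ and $x\in(\lambda B)_\eta=B_{\lambda\eta}$. By symmetry and sub-additivity of Bohr sets (Lemma \ref{lem.pbs}), and since $\lambda\le c\le 1-\lambda$ forces every index below into $(0,1]$,
\[
X+B_{c-\lambda\eta}\ \subseteq\ x+X+B_c\ \subseteq\ X+B_{c+\lambda\eta}
\]
(the left inclusion for $\eta<1$). Passing to induced uniform measures, $\tau_x(\mu)=m_{x+X+B_c}$ lies between $\frac{m_G(X+B_{c-\lambda\eta})}{m_G(X+B_c)}m_{X+B_{c-\lambda\eta}}$ and $\frac{m_G(X+B_{c+\lambda\eta})}{m_G(X+B_c)}m_{X+B_{c+\lambda\eta}}$, so it suffices to have $m_G(X+B_{c+\lambda\eta})\le(1+\eta)m_G(X+B_c)$ and $m_G(X+B_{c-\lambda\eta})\ge(1-\eta)m_G(X+B_c)$, the second being vacuous at $\eta=1$. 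Both follow from $\max\{\phi(c+\lambda\eta)-\phi(c),\,\phi(c)-\phi(c-\lambda\eta)\}\le d\phi([c-\lambda\eta,c+\lambda\eta])\le\frac\eta2$ together with the elementary inequalities $\frac\eta2\le\log(1+\eta)$ and $(1-\eta)e^{\eta/2}\le 1$ on $[0,1]$; one then takes $\mu_\eta^{\pm}:=m_{X+B_{c\pm\lambda\eta}}$ (and $\mu_1^-:=\mu$) to witness $\lambda B$-approximate invariance.

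It remains to find such a $c$. The condition on $c$ holds whenever the centred Hardy--Littlewood maximal function of $d\phi$ at $c$ is at most $\frac1{4\lambda}$, and a Vitali covering argument bounds the measure of the set of $c\in(0,1]$ where it exceeds $\frac1{4\lambda}$ by a constant multiple of $\lambda\|d\phi\|\le\lambda\log K$ — concretely by $12\lambda\log K$. Since $12\lambda\log K=\frac{\log K}{2\log 2K}<\frac12<1-\frac1{12\log 2K}=1-2\lambda$, this set cannot cover $(\lambda,1-\lambda]$, so an admissible $c$ exists and the proof is complete.

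The step I expect to be the crux is choosing the right regularity condition on $c$. A naive pigeonhole over dyadic scales only controls the \emph{total} growth of $\phi$ across a window, and hence cannot rule out a single large jump of $\phi$ sitting at or immediately beside the chosen centre $c$ (which wrecks one of the two measure inequalities); passing instead to a maximal-function/Vitali statement for $d\phi$ is what keeps the loss at $\log K$ rather than exponential, and this is essentially Bourgain's argument from \cite[Lemma 3.0]{bou::5}.
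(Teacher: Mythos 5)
Your proposal is correct and is essentially the same argument as the paper's. The paper also takes $\mu$ uniform on $X+B_\kappa$ for a regular scale $\kappa$ (found via the Vitali covering lemma applied to the assumption that no such scale exists in $[\tfrac14,\tfrac34]$, telescoping the ratios to derive a contradiction), chooses $\mu_\eta^{\pm}$ uniform on $X+B_{\kappa\pm\lambda\eta}$, and verifies approximate invariance with the same elementary bounds $1+x\ge e^{x/2}$ and $1-x\le e^{-x/2}$; your maximal-function/weak-$(1,1)$ reformulation is just another packaging of the same Vitali step and gives identical constants.
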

\begin{proof}
Let $C:=24$ and $\lambda:=1/C\log 2K$.  Note that $K \geq 1$ and so $\lambda<1/4$.  Suppose that for all $\kappa \in \left[\frac{1}{4},\frac{3}{4}\right]$ there is some $\delta_\kappa \in (0,\lambda]$ such that
\begin{equation*}
\frac{m_G(X+B_{\kappa+\delta_\kappa})}{m_G(X+B_{\kappa-\delta_\kappa})} > \exp\left(\frac{1}{2}\lambda^{-1}\delta_\kappa\right).
\end{equation*}
Write $I_\kappa:=[\kappa-\delta_\kappa,\kappa+\delta_\kappa]$, and note that $\bigcup_{\kappa}{I_\kappa} \supset \left[\frac{1}{4},\frac{3}{4}\right]$.  By the Vitali covering lemma\footnote{One can also proceed directly here.} we conclude that there is a sequence $\kappa_1<\dots<\kappa_m$ such that the intervals $(I_{\kappa_i})_{i=1}^m$ are disjoint and
\begin{equation*}
\sum_{i=1}^m{2\delta_{\kappa_i}}=\sum_{i=1}^m{\mu(I_{\delta_{\kappa_i}})}\geq \frac{1}{3}\mu\left(\left[\frac{1}{4},\frac{3}{4}\right]\right) = \frac{1}{6}.
\end{equation*}
Since the intervals $(I_{\kappa_i})_{i=1}^m$ are disjoint, $(\kappa_i)_{i=1}^m$ is an increasing sequence, and $\delta_{\kappa_1},\delta_{\kappa_m}\leq \lambda <\frac{1}{4}$ we see that
\begin{equation*}
0<\kappa_1-\delta_{\kappa_1} <\kappa_1+\delta_{\kappa_1}< \dots < \kappa_i + \delta_{\kappa_i} < \kappa_{i+1}-\delta_{\kappa_{i+1}}< \dots < \kappa_m+\delta_{\kappa_m}<1,
\end{equation*}
and hence
\begin{align*}
K=\exp(1/24\lambda) \leq \exp\left(\frac{1}{4}\lambda^{-1}\sum_{i=1}^m{2\delta_{\kappa_i}}\right)
& = \prod_{i=1}^m{\exp\left(\frac{1}{2}\lambda^{-1}\delta_{\kappa_i}\right)}\\
& < \prod_{i=1}^m{\frac{m_G(X+B_{\kappa_i+\delta_{\kappa_i}})}{m_G(X+B_{\kappa_i-\delta_{\kappa_i}})}}\\
& = \frac{m_G(X+B_{\kappa_m+\delta_{\kappa_m}})}{m_G(X+B_{\kappa_1-\delta_{\kappa_1}})} \cdot \prod_{i=1}^{m-1}{\frac{m_G(X+B_{\kappa_i+\delta_{\kappa_i}})}{m_G(X+B_{\kappa_{i+1}-\delta_{\kappa_{i+1}})}}}\\
& \leq \frac{m_G(X+B_{1})}{m_G(X)} \leq K. 
\end{align*}
This is a contradiction and so there is some $\kappa \in \left[\frac{1}{4},\frac{3}{4}\right]$ such that
\begin{equation*}
\frac{m_G(X+B_{\kappa+\delta})}{m_G(X+B_{\kappa-\delta})} \leq \exp\left(\frac{1}{2}\lambda^{-1} \delta\right) \text{ for all } \delta \in \left(0,\lambda\right].
\end{equation*}
Let $\mu$ be the uniform probability measure on $X+B_{\kappa}$, and for each $\eta \in (0,1]$ let $\mu_{\eta}^-$ be the uniform probability measure on $X+B_{\kappa-\lambda\eta}$ and $\mu_{\eta}^+$ be the uniform probability measure on $X+B_{\kappa+\lambda \eta}$.  If $x \in (\lambda B)_\eta$ then $x \in B_{\lambda \eta}$ and so 
\begin{equation*}
\tau_x(\mu) \leq \frac{m_G(X+B_{\kappa+\lambda\eta})}{m_G(X+B_\kappa)}\mu_{\eta}^+ \leq \exp\left(\frac{1}{2}\lambda^{-1} \lambda\eta\right) \mu_{\eta}^+ \leq (1+\eta)\mu_{\eta}^+,
\end{equation*}
since $1+x \geq \exp(x/2)$ whenever $0 \leq x\leq 1$. Similarly
\begin{equation*}
\tau_x(\mu) \geq \frac{m_G(X+B_{\kappa-\lambda\eta})}{m_G(X+B_\kappa)}\mu_{\eta}^- \geq \exp\left(-\frac{1}{2}\lambda^{-1} \lambda\eta\right) \mu_{\eta}^- \geq (1-\eta)\mu_{\eta}^-,
\end{equation*}
since $1-x \leq \exp(-x/2)$ whenever $0 \leq x\leq 1$.  The result is proved.
\end{proof}
For applications it will often be useful to have the following corollary.
\begin{corollary}\label{cor.ubreg}
Suppose that $B$ is a Bohr system with $\dim B \leq d$ for some parameter $d \geq 1$.  Then there is some $\lambda \in (\Omega(d^{-1}),1]$ and a $\lambda B$-approximately invariant probability measure $\mu$ supported on $B_1$.
\end{corollary}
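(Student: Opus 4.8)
The plan is to deduce this directly from Proposition~\ref{prop.ubreg}, applied not to $B$ itself but to the half-dilate $\frac{1}{2}B$, and with the reference set $X$ chosen to be a Bohr set of the same system, so that the resulting support $X + (\frac{1}{2}B)_1$ automatically lands inside $B_1$ rather than merely inside $X + B_1$.

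Concretely, first I would put $B' := \frac{1}{2}B$ (so $B'_\eta = B_{\eta/2}$ for all $\eta \in (0,1]$) and take $X := B_{1/2}$, which is non-empty since $0_G \in B_{1/2}$. Symmetry and sub-additivity of Bohr systems (Lemma~\ref{lem.pbs}) give $X + B'_1 = B_{1/2} + B_{1/2} \subset B_1$. Next I would estimate the doubling ratio $m_G(X + B'_1)/m_G(X)$: since $X + B'_1 \subset B_1$ it is at most $m_G(B_1)/m_G(B_{1/2})$, and covering $B_1 \subset Y + B_{1/2}$ by a set $Y$ of size $\mathcal{C}(B_1;B_{1/2})$ shows $m_G(B_1) \le \mathcal{C}(B_1;B_{1/2}) m_G(B_{1/2})$, which by the definition of doubling dimension (at $\eta = 1$) together with Lemma~\ref{lem.smi}~part~(\ref{lem.smi.3}) is at most $2^{\dim^* B} \le 2^{\dim B} \le 2^d$. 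So I may invoke Proposition~\ref{prop.ubreg} with $K := 2^d$, obtaining a $\lambda' B'$-approximately invariant probability measure $\mu$ with support contained in $X + B'_1 \subset B_1$, for some $1 \ge \lambda' \ge \frac{1}{24\log 2K} = \frac{1}{24(d+1)\log 2} \ge \frac{1}{48 d \log 2}$ (using $d \ge 1$). Finally, because dilation is an action of the monoid $((0,1],\times)$ on $\mathcal{B}(G)$, we have $\lambda' B' = \lambda'(\frac{1}{2} B) = (\frac{1}{2}\lambda')B$; so setting $\lambda := \frac{1}{2}\lambda'$, which lies in $(\Omega(d^{-1}),1]$, the measure $\mu$ is $\lambda B$-approximately invariant and supported on $B_1$, as required.

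The only point that needs any care is getting the support inside $B_1$ exactly, since Proposition~\ref{prop.ubreg} on its face controls it only inside $X + B_1$, which need not be contained in any $B_t$ with $t \le 1$. The device of passing to $\frac{1}{2}B$ and taking $X = B_{1/2}$ is precisely what makes $X + (\frac{1}{2}B)_1 = B_{1/2} + B_{1/2} \subseteq B_1$, so no extra room is lost; everything else is routine covering-number bookkeeping and the dilation identities already established.
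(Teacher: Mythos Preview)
Your proof is correct and follows essentially the same route as the paper: set $X:=B_{1/2}$, pass to the half-dilate $B':=\tfrac{1}{2}B$, bound the doubling ratio $m_G(X+B'_1)/m_G(X)$ in terms of $\dim B$, and invoke Proposition~\ref{prop.ubreg}. The only difference is in how the doubling ratio is estimated: the paper uses Lemma~\ref{lem.covsum} to get $m_G(B_{1/2}+B_{1/2})\le \mathcal{C}(B_{1/2};B_{1/4})^2\,m_G(B_{1/4}+B_{1/4})\le 2^{2d}m_G(B_{1/2})$, whereas you more directly observe $B_{1/2}+B_{1/2}\subset B_1$ and then cover $B_1$ by $\mathcal{C}(B_1;B_{1/2})\le 2^{\dim^* B}\le 2^d$ translates of $B_{1/2}$, giving $K=2^d$ rather than $K=2^{2d}$. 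Both land at $\lambda=\Omega(d^{-1})$, so the distinction is cosmetic; your version is a touch cleaner and saves a harmless factor of $2$ in the exponent.
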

\begin{proof}
Put $X:=B_{\frac{1}{2}}$ and $B':=\frac{1}{2}B$. By Lemma \ref{lem.covsum} we know that
\begin{equation*}
m_G(X+B_1') = m_G(B_{\frac{1}{2}} + B_{\frac{1}{2}}) \leq \mathcal{C}\left(B_{\frac{1}{2}};B_{\frac{1}{4}}\right)^2m_G(B_{\frac{1}{4}} + B_{\frac{1}{4}}).
\end{equation*}
However, by sub-additivity of Bohr sets $B_{\frac{1}{4}} + B_{\frac{1}{4}} \subset B_{\frac{1}{2}}=X$.  Thus given the definition of doubling dimension and the first inequality in Lemma \ref{lem.smi} part (\ref{lem.smi.3}) we see that
\begin{equation*}
\frac{m_G(X+B_1') }{m_G(X) } \leq \mathcal{C}\left(B_{\frac{1}{2}};B_{\frac{1}{4}}\right)^2 \leq 2^{2\dim^* B} \leq 2^{2d}.
\end{equation*}
By Proposition \ref{prop.ubreg} applied to $X$ and $B'$ there is a $\lambda B'$-approximately invariant probability measure $\mu$ with support in $X+B_1' =B_{\frac{1}{2}}+B_{\frac{1}{2}} \subset B_1$.  The result follows since $\lambda \geq 1/24\log 2^{2d+1}$ and $\lambda B' = \frac{\lambda}{2}B$.
\end{proof}

\section{Approximate annihilators}\label{sec.aa}

We shall understand the dual group of $G$ through what we call `approximate annihilators', though this nomenclature is non-standard.

Given a set $S \subset G$ and a parameter $\rho>0$ we define the \textbf{$\rho$-approximate annihilator} of $S$ to be the set
\begin{equation*}
N(S,\rho):=\{\gamma \in \wh{G}: |1-\gamma(x)|<\rho \text{ for all }x \in S\}.
\end{equation*}

Approximate annihilators enjoy many of the same properties as Bohr sets as we record in the following trivial lemma (an analogue of Lemma \ref{lem.pbs}).
\begin{lemma}[Properties of approximate annihilators]\
Suppose that $S$ is a set.  Then
\begin{enumerate}
\item \emph{(Identity)} $0_{\wh{G}} \in N(S,\rho)$ for all $\rho>0$;
\item \emph{(Symmetry)} $N(S,\rho) = -N(S,\rho)$ for all $\rho>0$;
\item \emph{(Nesting)} $N(S,\rho)\subset N(S,\rho') $ whenever $0<\rho\leq \rho'$;
\item \emph{(Sub-additivity)} $N(S,\rho) + N(S,\rho') \subset N(S,\rho+\rho')$ for all $\rho,\rho'>0$.
\end{enumerate}
\end{lemma}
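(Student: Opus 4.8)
The plan is to check each of the four assertions directly from the definition of $N(S,\rho)$, in exact parallel with the proof of Lemma \ref{lem.pbs}; throughout I write the group operation on $\wh{G}$ additively, so that the character $\gamma+\gamma'$ is the map $x\mapsto \gamma(x)\gamma'(x)$, the character $-\gamma$ is $x\mapsto \overline{\gamma(x)}$, and $0_{\wh{G}}$ is the trivial character. Each property then reduces to an elementary bound on $|1-\gamma(x)|$ holding uniformly over $x\in S$.

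First I would note that $0_{\wh{G}}(x)=1$ for all $x$, so $|1-0_{\wh{G}}(x)|=0<\rho$, which gives the identity. For symmetry, if $\gamma\in N(S,\rho)$ then for every $x\in S$ we have $|1-\overline{\gamma(x)}|=|\overline{1-\gamma(x)}|=|1-\gamma(x)|<\rho$, so $-\gamma\in N(S,\rho)$; the reverse inclusion is the same statement applied to $-\gamma$. Nesting is immediate, since $|1-\gamma(x)|<\rho\leq\rho'$ whenever $0<\rho\leq\rho'$.

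The only step needing a second line is sub-additivity: for $\gamma\in N(S,\rho)$, $\gamma'\in N(S,\rho')$ and $x\in S$, the triangle inequality together with $|\gamma(x)|=1$ yields
\begin{equation*}
|1-\gamma(x)\gamma'(x)| \leq |1-\gamma(x)| + |\gamma(x)|\,|1-\gamma'(x)| = |1-\gamma(x)| + |1-\gamma'(x)| < \rho+\rho',
\end{equation*}
so $\gamma+\gamma'\in N(S,\rho+\rho')$. There is no genuine obstacle here — the lemma is purely formal, as its billing as "trivial" suggests — and the only point to be slightly careful about is using $|\gamma(x)|=1$ (rather than a crude bound of $2$) in the sub-additivity estimate, so that the radii add exactly.
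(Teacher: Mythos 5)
Your proof is correct, and since the paper explicitly labels this lemma as trivial and leaves its verification to the reader, there is no paper proof to compare against; your direct check of each property from the definition, with the triangle inequality plus $|\gamma(x)|=1$ handling sub-additivity, is exactly the intended argument.
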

Approximate annihilators and approximately invariant measures interact rather well as is captured by the following version of \cite[Lemma 3.6]{grekon::}.  To state it we require the Fourier transform extended to measures: for $\mu \in M(G)$ we define
\begin{equation*}
\wh{\mu}:\wh{G} \rightarrow \C; \gamma \mapsto \int{\overline{\gamma(x)}d\mu(x)}.
\end{equation*}
\begin{lemma}[Majorising annihilators]\label{lem.nest}
Suppose that $B$ is a Bohr system with $\mu$ a $B$-approximately invariant probability measure, and $\kappa,\eta \in (0,1]$ are parameters.  Then
\begin{equation*}
\{\gamma \in \wh{G}:|\wh{\mu}(\gamma)| \geq \kappa\} \subset N(B_{\frac{1}{2}\kappa\eta},\eta).
\end{equation*}
\end{lemma}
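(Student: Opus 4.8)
The plan is to exploit $B$-approximate invariance of $\mu$ to show that if $|\wh{\mu}(\gamma)|$ is large then $\gamma$ must be nearly constant on the small Bohr set $B_{\frac12\kappa\eta}$. The mechanism is the standard one going back to Green--Konyagin: translating $\mu$ by a small amount barely changes $\wh{\mu}(\gamma)$ in absolute value, but translation multiplies $\wh{\mu}(\gamma)$ by the phase $\overline{\gamma(x)}$, so $\gamma(x)$ cannot be far from $1$.

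\begin{proof}[Proof of Lemma \ref{lem.nest}]
Suppose $\gamma \in \wh{G}$ has $|\wh{\mu}(\gamma)| > \kappa$, and let $x \in B_{\frac12\kappa\eta}$; we must show $|1-\gamma(x)| < \eta$. First note the transformation law
\begin{equation*}
\wh{\tau_x(\mu)}(\gamma) = \int{\overline{\gamma(y)}d\tau_x(\mu)(y)} = \int{\overline{\gamma(y+x)}d\mu(y)} = \overline{\gamma(x)}\wh{\mu}(\gamma),
\end{equation*}
so that
\begin{equation*}
|1 - \gamma(x)|\,|\wh{\mu}(\gamma)| = |\wh{\mu}(\gamma) - \overline{\gamma(x)}\wh{\mu}(\gamma)| = |\wh{\mu}(\gamma) - \wh{\tau_x(\mu)}(\gamma)|.
\end{equation*}
(Here we used $|1-\overline{\gamma(x)}| = |1 - \gamma(x)|$.) Now the right-hand side is at most $\|\mu - \tau_x(\mu)\|$, since the Fourier coefficient at any character is an average against a unimodular function. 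Applying Lemma \ref{lem.inv} with the parameter $\kappa\eta$ in place of $\eta$: since $x \in B_{\frac12\kappa\eta} = B_{\frac12(\kappa\eta)}$ and $\kappa\eta \in (0,1]$, we get $\|\mu - \tau_x(\mu)\| \leq \kappa\eta$. Therefore
\begin{equation*}
|1 - \gamma(x)|\,|\wh{\mu}(\gamma)| \leq \kappa\eta,
\end{equation*}
and dividing by $|\wh{\mu}(\gamma)| > \kappa$ gives $|1 - \gamma(x)| < \eta$. As $x \in B_{\frac12\kappa\eta}$ was arbitrary, $\gamma \in N(B_{\frac12\kappa\eta},\eta)$, as required.
\end{proof}

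There is no real obstacle here; the only points requiring a moment's care are the bookkeeping on which invariance parameter to feed into Lemma \ref{lem.inv} (one wants $\frac12$ of that parameter to match the scale $\frac12\kappa\eta$ of the Bohr set appearing in the conclusion) and the elementary bound $|\wh{\mu}(\gamma) - \wh{\tau_x(\mu)}(\gamma)| \leq \|\mu - \tau_x(\mu)\|$, which is just the statement that $\gamma$ is bounded by $1$ in modulus. If the paper has not already recorded the translation law $\wh{\tau_x(\mu)}(\gamma) = \overline{\gamma(x)}\wh{\mu}(\gamma)$, it should be derived in line as above from the definition of $\tau_x(\mu)$.
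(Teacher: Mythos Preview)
Your proof is correct and follows essentially the same approach as the paper: compute that translation of $\mu$ by $x$ multiplies $\wh{\mu}(\gamma)$ by a phase, bound the resulting difference by $\|\mu-\tau_x(\mu)\|$, invoke Lemma~\ref{lem.inv} with parameter $\kappa\eta$, and divide. The paper's version is a touch more compressed (it writes the integrals out directly rather than stating the transformation law) and uses symmetry of the Bohr set to pass to $-y$, but the argument is the same.
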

\begin{proof}
Suppose that $|\wh{\mu}(\gamma)| \geq \kappa$ and $y \in B_{\frac{1}{2}\kappa\eta}$.  Then $-y \in B_{\frac{1}{2}\kappa\eta}$ by symmetry and so by Lemma \ref{lem.inv} we have
\begin{equation*}
|1-\gamma(y)|\kappa < \left|\int{\gamma(x)d\mu(x)} - \int{\gamma(x+y)d\mu(x)}\right|\leq \|\mu - \tau_{-y}(\mu)\| \leq \eta \kappa.
\end{equation*}
The result follows on dividing by $\kappa$.
\end{proof}

In the more general topological setting where $G$ is not assumed finite, approximate annihilators form a base for the topology of the dual group \cite[Theorem 1.2.6]{rud::1}.  \cite[Theorem 1.2.6]{rud::1} also captures the natural duality between our approximate annihilators and sets of the form
\begin{equation}\label{eqn.abg}
\{x \in G: |1-\gamma(x)|<\rho \text{ for all }\gamma \in \Gamma\} \text{ for }\Gamma \subset \wh{G}.
\end{equation}
A number of elements of this paper would be neater if our Bohr sets were replaced by (a suitable generalisation of) sets of the form given in (\ref{eqn.abg}).  The only benefit we know of arising from our choice is that the proof of Lemma \ref{lem.grow} is slightly easier for vectors of Bohr sets.

For us the duality in \cite[Theorem 1.2.6]{rud::1} is captured in the following lemma.
\begin{lemma}[Duality of Bohr sets and approximate annihilators]\label{lem.bsann}\
\begin{enumerate}
\item If $X$ is a non-empty subset of $G$ and $\epsilon \in (0,1]$ then
\begin{equation*}
X \subset \Bohr\left(N(X,\epsilon),\delta\right) \text{ where } \delta:=\frac{\epsilon}{4}\cdot 1_{N(X,\epsilon)};
\end{equation*}
\item if $\Gamma$ is a non-empty set of characters of $G$ and $\delta:\Gamma \rightarrow \R_{>0}$ then
\begin{equation*}
\Gamma \subset N\left(\Bohr\left(\Gamma,\delta\right),\epsilon \right) \text{ where } \epsilon=2\pi \|\delta\|_{\ell_\infty(\Gamma)}.
\end{equation*}
\end{enumerate}
\end{lemma}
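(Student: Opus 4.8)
The plan is to isolate a single elementary estimate comparing $|1-z|$ and $\|z\|$ for $z \in S^1$, and to read off both parts of the lemma from it. Fix $z \in S^1$ and choose $\theta$ with $z = \exp(2\pi i \theta)$ and $|\theta| = \|z\| \in [0,\frac{1}{2}]$. Then
\begin{equation*}
|1-z|^2 = (1-\cos 2\pi\theta)^2 + \sin^2 2\pi\theta = 2 - 2\cos 2\pi\theta = 4\sin^2(\pi\theta),
\end{equation*}
so $|1-z| = 2\sin(\pi\|z\|)$ (the sign issue is harmless since $\pi\|z\| \in [0,\frac{\pi}{2}]$). Since $\sin$ is concave on $[0,\pi]$ it lies above the chord through $(0,0)$ and $(\frac{\pi}{2},1)$, giving $\sin t \geq \frac{2}{\pi}t$ on $[0,\frac{\pi}{2}]$, while trivially $\sin t \leq t$ for $t \geq 0$; applying both at $t = \pi\|z\|$ yields
\begin{equation*}
4\|z\| \leq |1-z| \leq 2\pi\|z\| \qquad \text{for every } z \in S^1.
\end{equation*}
I would record this (or just the half of it needed for each part) as a one-line preliminary observation.

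For the first part, fix $x \in X$ and $\gamma \in N(X,\epsilon)$. Since $x \in X$, the definition of the approximate annihilator gives $|1-\gamma(x)| < \epsilon$, so the left-hand inequality above, applied with $z = \gamma(x)$, gives
\begin{equation*}
\|\gamma(x)\| \leq \frac{1}{4}|1-\gamma(x)| < \frac{\epsilon}{4} < \frac{\epsilon}{2\sqrt{2}} = \delta(\gamma).
\end{equation*}
As $\gamma$ was an arbitrary element of $N(X,\epsilon)$ this is precisely the statement $x \in \Bohr(N(X,\epsilon),\delta)$, and since $x \in X$ was arbitrary, part (1) follows.

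For the second part, fix $\gamma \in \Gamma$ and $x \in \Bohr(\Gamma,\delta)$. By definition of the Bohr set, $\|\gamma(x)\| < \delta(\gamma) \leq \|\delta\|_{\ell_\infty(\Gamma)}$, so the right-hand inequality above, applied with $z = \gamma(x)$, gives $|1-\gamma(x)| \leq 2\pi\|\gamma(x)\| < 2\pi\|\delta\|_{\ell_\infty(\Gamma)} = \epsilon$. As $x$ was an arbitrary element of $\Bohr(\Gamma,\delta)$, this is precisely the statement $\gamma \in N(\Bohr(\Gamma,\delta),\epsilon)$, and since $\gamma \in \Gamma$ was arbitrary, part (2) follows.

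There is no serious obstacle here; the argument is entirely elementary. The only two things to watch are the identity $|1-z| = 2\sin(\pi\|z\|)$ (which is clean precisely because $\pi\|z\| \in [0,\frac{\pi}{2}]$), and the fact that each inequality must be kept strict — membership in a Bohr set and in an approximate annihilator are both defined by strict inequalities, and in part (1) the final strictness is comfortably supplied by $\frac{1}{4} < \frac{1}{2\sqrt{2}}$. Incidentally, this computation shows that the constant $2\sqrt{2}$ in part (1) could be replaced by the sharper $4$, but the weaker form stated is all that is needed later.
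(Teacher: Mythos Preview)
Your proof is correct and follows essentially the same approach as the paper: both establish a two-sided inequality $c\|z\| \leq |1-z| \leq 2\pi\|z\|$ for $z \in S^1$ and then read off each part by unwinding the definitions. The only cosmetic difference is that the paper reaches this via $|1-z| = \sqrt{2-2\cos 2\pi\|z\|}$ and Taylor-type bounds on $\cos$, obtaining $c = 2\sqrt{2}$, whereas you use $|1-z| = 2\sin(\pi\|z\|)$ and the chord bound on $\sin$, obtaining the (slightly sharper, as you note) $c = 4$.
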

\begin{proof}
First note that
\begin{equation*}
1-\frac{\theta^2}{2} \leq \cos \theta \leq 1-\frac{2\theta^2}{\pi^2} \text{ whenever }|\theta| \leq \pi.
\end{equation*}
On the other hand $\|z\| \leq \frac{1}{2}$ for all $z \in S^1$ and
\begin{equation*}
\sqrt{2-2\cos 2\pi \|z\|} = |z-1|.
\end{equation*}
It follows that
\begin{equation*}
4\|\gamma(x)\| \leq |\gamma(x) -1| \leq 2\pi \|\gamma(x)\| \text{ for all }x \in G, \gamma \in \wh{G}.
\end{equation*}
The result is proved once we disentangle the meaning of the two claims.
\end{proof}
The following is \cite[Proposition 4.39]{taovu::} extended to two sets.  The proof is the same.
\begin{lemma}\label{lem.tvspec}
Suppose that $S,T$ are non-empty sets such that $m_G(S+T) \leq Km_G(S)$ and $\epsilon \in (0,1]$ is a parameter.  Then
\begin{equation*}
\{ \gamma \in \wh{G}: |\wh{1_{S+T}}(\gamma)| > (1-\epsilon)m_G(S+T)\} \subset N(T-T,2\sqrt{2\epsilon K}).
\end{equation*}
\end{lemma}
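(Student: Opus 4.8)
The plan is to derive this from the one-set version \cite[Proposition 4.39]{taovu::} by passing to a measure built from $T$. First I would set $\mu := m_{S+T}$, the uniform probability measure on $S+T$, and record the hypothesis in the form $m_G(S+T) \le K m_G(S)$. The key observation is that for a character $\gamma$ with $|\wh{1_{S+T}}(\gamma)| > (1-\epsilon) m_G(S+T)$ one has, after dividing by $m_G(S+T)$, that $|\wh{\mu}(\gamma)| > 1 - \epsilon$; and since $\mu$ is a probability measure, $\wh{\mu}(0_{\wh G}) = 1$, so $\gamma$ lies in the spectrum of $\mu$ at level $1-\epsilon$.

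Next I would convert ``$\gamma$ is in the $(1-\epsilon)$-spectrum of the uniform measure on $S+T$'' into an approximate-annihilator statement for $T - T$. The cleanest route is the identity $1 - \Re\wh\mu(\gamma) = \tfrac12\int |1 - \gamma(x-y)|^2\,d(\text{something})$-type expansion; concretely, write $\mu = 1_{S+T} \ast \widetilde{1_{S+T}} / (\text{norm})$ is not quite what we want, so instead I would use that $S+T \supset s + T$ for any fixed $s \in S$, hence for the uniform measure $\nu$ on $s+T$ (a translate of the uniform measure on $T$) we have a pointwise bound $\mu \ge \frac{m_G(T)}{m_G(S+T)}\,\tau_s(\nu) \ge \frac{1}{K}\frac{m_G(T)}{m_G(S)}\,\tau_s(\nu)$. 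Then $1 - \Re\wh\mu(\gamma) \ge \frac{1}{K}\bigl(\ldots\bigr)$ does not immediately isolate $T-T$, so the honest approach is to mimic the proof of \cite[Proposition 4.39]{taovu::} directly: expand
\begin{equation*}
\sum_{x,y \in T} |\gamma(x) - \gamma(y)|^2 \, 1_{S+T}\text{-weights}
\end{equation*}
by averaging $1 - \Re(\gamma(x)\overline{\gamma(y)})$ against the convolution structure of $1_{S+T}$, use $\|1_{S+T}\|_{L^2}^2 = m_G(S+T)$ and the spectral hypothesis to bound this average by $O(\epsilon m_G(S+T))$, and divide by the ``density'' $m_G(T)^2/m_G(S+T) \ge m_G(T)^2/(Km_G(S))$. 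This yields, for \emph{most} pairs $x,y \in T$, that $|1 - \gamma(x-y)| < 2\sqrt{2\epsilon K}$; one then upgrades ``most pairs'' to ``all pairs'' exactly as in the cited proof, using that if $|1-\gamma(a)|$ and $|1-\gamma(b)|$ are small for many $a$ then so is $|1-\gamma(a-b)|$ for all such $a,b$ via a pigeonhole/popularity argument, giving $\gamma \in N(T-T, 2\sqrt{2\epsilon K})$.

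The main obstacle — and the only real content beyond bookkeeping — is getting the constant $2\sqrt{2\epsilon K}$ sharp rather than $O(\sqrt{\epsilon K})$; this comes down to using the bound $1 - \Re z \le |1-z|^2/2$ together with $|1-\gamma(x-y)|^2 \le 2(1 - \Re(\gamma(x)\overline{\gamma(y)}))$ in the right direction, and noting $m_G(S+T) \le K m_G(S) \le K m_G(S+T)$ so that the relevant density ratio is exactly controlled by $K$. Since the excerpt explicitly says ``the proof is the same'' as \cite[Proposition 4.39]{taovu::}, I would in fact just carry out the two-set bookkeeping in parallel with that argument and cite the popularity step, rather than reproving it; the extension from one set $S$ to the pair $(S,T)$ costs nothing because $S$ only ever enters through the single inequality $m_G(S+T) \le K m_G(S)$.
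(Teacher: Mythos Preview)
Your proposal contains a genuine gap: the ``upgrade from most pairs to all pairs via pigeonhole/popularity'' step is both unnecessary and unlikely to recover the stated constant $2\sqrt{2\epsilon K}$. An average bound on $|\gamma(x)-\gamma(y)|^2$ over pairs $x,y\in T$ does not, by any standard popularity argument, yield a uniform bound for \emph{every} pair with the same constant; at best you would control $|\gamma(x)-\gamma(y)|$ for $x,y$ in a large subset of $T$, and passing from there to all of $T-T$ costs a factor you cannot afford here.

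The paper's argument avoids this entirely by never averaging over $T$. Instead it fixes the phase $\omega_\gamma\in S^1$ with $\overline{\omega_\gamma}\,\wh{1_{S+T}}(\gamma)=|\wh{1_{S+T}}(\gamma)|$ and, for \emph{arbitrary} $t,t'\in T$, uses the containments $t+S,\,t'+S\subset S+T$ together with $|a-b|^2\le 2(|a-c|^2+|b-c|^2)$ to write
\[
|\gamma(t)-\gamma(t')|^2\,m_G(S)=\int_S|\gamma(t+s)-\gamma(t'+s)|^2\,dm_G(s)\le 4\int_{S+T}|\gamma(x)-\omega_\gamma|^2\,dm_G(x).
\]
The right-hand integral is exactly $2\bigl(m_G(S+T)-|\wh{1_{S+T}}(\gamma)|\bigr)<2\epsilon\,m_G(S+T)\le 2\epsilon K\,m_G(S)$, so dividing by $m_G(S)$ gives $|\gamma(t-t')-1|^2<8\epsilon K$ for every $t,t'\in T$ at once. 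The missing idea in your sketch is precisely this phase-reference trick: comparing each $\gamma(t+s)$ to the single scalar $\omega_\gamma$ rather than to other values of $\gamma$ on $T$, which converts an average statement into a pointwise one with no loss.
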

\begin{proof}
For each $\gamma \in \wh{G}$ let $\omega_\gamma \in S^1$ be such that $\overline{\omega_\gamma} \wh{1_{S+T}}(\gamma) = |\wh{1_{S+T}}(\gamma)|$.  For all $t,t' \in T$ we then have
\begin{align*}
|\gamma(t)-\gamma(t')|^2m_G(S) & = \int_{S}{|\gamma(t+s)-\gamma(t'+s)|^2dm_G(s)}\\ & \leq  2\left(\int_{S}{|\gamma(t+s)-\omega_\gamma|^2dm_G(s)}\right.\\ & \qquad \qquad \left. +\int_{S}{|\gamma(t'+s)-\omega_\gamma|^2dm_G(s)}\right)\\ & \leq 4\int_{S+T}{|\gamma(x)-\omega_\gamma|^2dm_G(x)} = 8(m_G(S+T)-|\wh{1_{S+T}}(\gamma)|).
\end{align*}
It follows that if $|\wh{1_{S+T}}(\gamma)| > (1-\epsilon)m_G(S+T)$ then
\begin{equation*}
|\gamma(t-t')-1| = |\gamma(t) - \gamma(t')| < 2\sqrt{2\epsilon K},
\end{equation*}
and the result is proved.
\end{proof}

\section{Fourier analysis}\label{sec.fa}

In this section we turn our attention to the Fourier transform itself.  First we have the Fourier inversion formula \cite[Theorem 1.5.1]{rud::1}: if $f \in A(G)$ then
\begin{equation*}
f(x)=\sum_{\gamma \in \wh{G}}{\wh{f}(\gamma)\gamma(x)} \text{ for all }x \in G.
\end{equation*} 
Since $G$ is finite this is a purely algebraic statement which can be easily checked.  It can be used to prove Parseval's theorem \cite[Theorem 1.6.2]{rud::1} that if $f,g \in L_2(m_G)$ then
\begin{equation}\label{eqn.pars}
\langle f,g\rangle_{L_2(m_G)} = \langle \wh{f},\wh{g}\rangle_{\ell_2(\wh{G})} = \sum_{\gamma \in \wh{G}}{\wh{f}(\gamma)\overline{\wh{g}(\gamma)}}.
\end{equation}

One of the key uses of Bohr sets is as approximate invariant sets for functions.
\begin{lemma}\label{lem.bohrinv}
Suppose that $\Gamma$ is a set of $k$ characters.  Then there is a Bohr system $B$ with $\mathcal{C}^\Delta(G;B_1)\leq 1$ and $\dim B=O(k)$, such that for every $f \in A(G)$ with $\supp \wh{f} \subset \Gamma$ we have
\begin{equation*}
\|\tau_x(f) - f\|_{L_\infty(G)} \leq \epsilon\|f\|_{A(G)} \text{ whenever } x\in B_{\frac{1}{\pi}\epsilon}.
\end{equation*}
\end{lemma}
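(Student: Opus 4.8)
The plan is to take $B$ to be the meet of $k$ rank-one Bohr systems, one for each character in $\Gamma$, with the radii chosen \emph{just} large enough that each rank-one system already equals all of $G$ at the top level $\eta=1$, but no larger. Explicitly, for each non-trivial $\gamma\in\Gamma$ of order $d=d_\gamma$ I would put $\delta^*(\gamma):=\frac{\pi}{2d\sin(\pi/d)}$ (and $\delta^*(1_G):=1$ should the trivial character lie in $\Gamma$). Since $\sin(\pi/d)<\pi/d$ this exceeds $\frac{1}{2}$, hence exceeds $\max_{x\in G}\|\gamma(x)\|=\lfloor d/2\rfloor/d$, so $\Bohr(\{\gamma\},\delta^*(\gamma))=G$; one also checks $\delta^*(\gamma)\le\pi/4$. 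Let $B$ be the Bohr system generated by $(\Gamma,\delta^*)$, which is the meet $\bigwedge_{\gamma\in\Gamma}B^{(\gamma)}$ of the rank-one systems $B^{(\gamma)}$ generated by $(\{\gamma\},\delta^*(\gamma))$, so that $B_\eta=\bigcap_{\gamma\in\Gamma}\{x:\|\gamma(x)\|<\eta\delta^*(\gamma)\}$.

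Given this, three checks remain. First, $B_1=\bigcap_\gamma\Bohr(\{\gamma\},\delta^*(\gamma))=G$, and since $\mathcal{C}^\Delta(G;G)\le 1$ (take the identity homomorphism with $U=V=G$) we get $\mathcal{C}^\Delta(G;B_1)\le 1$. Secondly, for the dimension I would use sub-additivity of $\dim$ under intersection (Lemma \ref{lem.smi} part (\ref{lem.smi.1})) together with $\dim\le 2\dim^*$ (Lemma \ref{lem.smi} part (\ref{lem.smi.3})) and the rank-one bound $\dim^*B^{(\gamma)}\le\log_2 3$ (Lemma \ref{lem.b1}), obtaining $\dim B\le\sum_\gamma\dim B^{(\gamma)}\le 2k\log_2 3=O(k)$; it is exactly here that the variant $\dim$, not $\dim^*$, must be used, since $\dim^*$ is only approximately sub-additive and iterating that inequality $k$ times would be disastrous.

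Thirdly, and this is the only substantive point, there is the continuity estimate. Given $f$ with $\supp\wh f\subset\Gamma$, Fourier inversion gives $f(y-x)-f(y)=\sum_{\gamma\in\Gamma}\wh f(\gamma)\gamma(y)(\gamma(-x)-1)$, so that $\|\tau_x(f)-f\|_{L_\infty(G)}\le\sum_{\gamma\in\Gamma}|\wh f(\gamma)|\,|\gamma(x)-1|$ (using $|\gamma(-x)-1|=|\gamma(x)-1|$); it therefore suffices to show $|\gamma(x)-1|\le\epsilon$ for every $\gamma\in\Gamma$ whenever $x\in B_{\epsilon/\pi}$. Fix such $\gamma$ (non-trivial, order $d$) and $x$. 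Writing $\|\gamma(x)\|=j/d$ for the integer $j$ with $0\le j\le d/2$ (possible since $\gamma(x)$ is a $d$-th root of unity), we have $|\gamma(x)-1|=2\sin(\pi j/d)$. As $x\in B_{\epsilon/\pi}\subseteq B^{(\gamma)}_{\epsilon/\pi}$ we get $j/d<(\epsilon/\pi)\delta^*(\gamma)=\epsilon/(2d\sin(\pi/d))$, i.e. $j<\epsilon/(2\sin(\pi/d))$; feeding this into $\sin(\pi j/d)\le j\sin(\pi/d)$ (valid for $0\le\pi j/d\le\pi$, by concavity of $\sin$ on $[0,\pi]$ with $\sin 0=0$) gives $|\gamma(x)-1|=2\sin(\pi j/d)\le 2j\sin(\pi/d)<\epsilon$, as wanted.

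The step I expect to be the crux is designing the radii $\delta^*(\gamma)$: the requirement $\mathcal{C}^\Delta(G;B_1)\le 1$ forces $B_1=G$ and hence $\delta^*(\gamma)>\frac{1}{2}$, and with such radii the naive bound $|\gamma(x)-1|\le 2\pi\|\gamma(x)\|$ only delivers the estimate on $B_{\epsilon/2\pi}$ (or worse), not on $B_{\epsilon/\pi}$. What saves the argument is that the possible values of $\|\gamma(x)\|$ are quantised in multiples of $1/d_\gamma$, so tuning $\delta^*(\gamma)$ to be just the right amount above $\frac{1}{2}$ (scaled by the order of $\gamma$) makes the constraint $\|\gamma(x)\|<(\epsilon/\pi)\delta^*(\gamma)$ round down to a value small enough that $|\gamma(x)-1|\le\epsilon$. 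The rest is bookkeeping with the properties of (difference) covering numbers and of the two dimensions already established.
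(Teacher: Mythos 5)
Your proof is correct and follows the same overall architecture as the paper's (meet of rank-one Bohr systems, Lemma \ref{lem.b1} plus the equivalence $\dim \leq 2\dim^*$ plus sub-additivity of $\dim$ for the dimension bound, Fourier inversion plus the triangle inequality for the continuity estimate), but it in fact \emph{repairs a small flaw in the paper's proof}. The paper takes the width function to be the constant $\frac{1}{2}$ and asserts that $B_1 = G$ on the grounds that $\|z\| \leq \frac{1}{2}$ for all $z \in S^1$. However the Bohr set is defined with a \emph{strict} inequality, and $\|{-1}\| = \frac{1}{2}$ exactly; so whenever some $\gamma \in \Gamma$ has even order there is $x$ with $\gamma(x) = -1$ and $x \notin B_1$. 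In that case $B_1 \subsetneq G$ and the claim $\mathcal{C}^\Delta(G;B_1) \leq 1$ fails (e.g.\ for $G=\F_2^n$ with $\Gamma$ a set of $k$ independent non-trivial characters, $B_1$ is a codimension-$k$ subspace and $\mathcal{C}^\Delta(G;B_1) = 2^k$, since $\mathcal{C}^\Delta(G;T) \leq 1$ in fact forces $T = G$). One could instead take the constant width $1$, which does give $B_1 = G$, but then the bound $|\gamma(x)-1| \leq 2\pi\|\gamma(x)\|$ from Lemma \ref{lem.bsann} only yields the estimate on $B_{\epsilon/2\pi}$, losing a factor of $2$ against the stated $B_{\epsilon/\pi}$. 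Your tuned radii $\delta^*(\gamma) = \pi/(2d_\gamma\sin(\pi/d_\gamma)) > \frac{1}{2}$, combined with the exact identity $|\gamma(x)-1| = 2\sin(\pi\|\gamma(x)\|)$ and the quantisation $\|\gamma(x)\| \in \{0,1/d_\gamma,\ldots\}$ feeding into $\sin(j\theta) \leq j\sin\theta$, recover the constant $\epsilon/\pi$ while guaranteeing $B_1 = G$. (The discrepancy is harmless downstream: in Lemma \ref{lem.chang} the extra $2^{O(l)}$ that would arise from $\mathcal{C}^\Delta(G;B_1'')$ is absorbed by the other factors, and the factor of $2$ in the dilation parameter is also immaterial; but your version proves the lemma exactly as stated.)
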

\begin{proof}
For each $\gamma \in \Gamma$ let $B^{(\gamma)}$ be the Bohr system with frequency set $\{\gamma\}$ and width function the constant function $\frac{1}{2}$ and put $B:=\bigwedge_{\gamma \in \Gamma}{B^{(\gamma)}}$.  (Equivalently, let $B$ be the Bohr system with frequency set $\Gamma$ and width function the constant function $\frac{1}{2}$.)

Since $\|z\|\leq \frac{1}{2}$ for all $z \in S^1$ we see that $B_1 = G$.  It follows from Lemma \ref{lem.dfc} part (\ref{pt3.difc}) that $\mathcal{C}^\Delta(G;B_1) = \mathcal{C}^\Delta(G;G-G) \leq \mathcal{C}(G;G)$.  On the other hand $G \subset \{0_G\}+G$ and so $\mathcal{C}(G;G)\leq 1$ as claimed.

By Lemma \ref{lem.b1} (and the second inequality in Lemma \ref{lem.smi} part (\ref{lem.smi.3})) we have $\dim B^{(\gamma)}=O(1)$ and by Lemma \ref{lem.smi} part (\ref{lem.smi.1}) we conclude that $\dim B=O(k)$.

Now, suppose that $f$ is of the given form, meaning $\supp \wh{f} \subset \Gamma$ and $f \in A(G)$.  Then by Fourier inversion we have
\begin{equation*}
|\tau_x(f)(y)-f(y)| = \left|\sum_{\gamma \in \Gamma}{\wh{f}(\gamma)(\gamma(x+y)-\gamma(y))}\right| \leq \|f\|_{A(G)} \sup\{|\gamma(x)-1|: \gamma \in \Gamma\}.
\end{equation*}
On the other hand the second part of Lemma \ref{lem.bsann} tells us that this supremum is at most $\epsilon$ when $x \in B_{\frac{1}{\pi}\epsilon}$ and the result is proved.
\end{proof}

The next result is a variant of \cite[Lemma 3.2]{croabasis::} proved using their beautiful method.
\begin{lemma}\label{lem.chang}
Suppose that $B$ is a Bohr system, $\mu$ is $B$-approximately invariant, $g \in A(G)$, and $p \in [1,\infty)$ and $\epsilon \in (0,1]$ are parameters.  Then there is a Bohr system $B' \leq B$ such that for any $A \subset G$ we have
\begin{equation*}
\mathcal{C}^\Delta(A;B_1') \leq (2\epsilon^{-1})^{O(p\epsilon^{-2})}\mathcal{C}^\Delta(A;B_1) \text{ and }\dim B' \leq \dim B + O(p\epsilon^{-2}),
\end{equation*}
and
\begin{equation*}
\left\|\tau_x(g)-g\right\|_{L_p(\mu)} \leq \epsilon \|g\|_{A(G)} \text{ for all } x\in B_{1}'.
\end{equation*}
\end{lemma}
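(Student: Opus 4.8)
The plan is to run the sampling method of \cite{croabasis::}, but sampling from the Fourier support of $g$ rather than from a set of group elements. We may assume $g \neq 0$, since otherwise $B'=B$ works. Put $\omega_\gamma := \wh g(\gamma)/|\wh g(\gamma)|$ for $\gamma \in \supp \wh g$, and let $\nu$ be the probability measure on $\wh G$ with $\nu(\{\gamma\}) := |\wh g(\gamma)|/\|g\|_{A(G)}$, so that by Fourier inversion $g(y) = \|g\|_{A(G)}\,\mathbb{E}_{\gamma \sim \nu}\bigl[\omega_\gamma \gamma(y)\bigr]$ for every $y \in G$. Fix $k = O(p\epsilon^{-2})$ (to be chosen), sample $\gamma_1,\dots,\gamma_k$ independently from $\nu$, and set
\[
g_{\vec\gamma} := \frac{\|g\|_{A(G)}}{k}\sum_{i=1}^k \omega_{\gamma_i}\gamma_i ,
\]
so that $\mathbb{E}_{\vec\gamma}\,g_{\vec\gamma}(y) = g(y)$ for each $y$, while $\|g_{\vec\gamma}\|_{L_\infty(G)}, \|g\|_{L_\infty(G)} \le \|g\|_{A(G)}$.

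For fixed $y$ the quantity $g_{\vec\gamma}(y) - g(y)$ is $\|g\|_{A(G)}$ times an average of $k$ i.i.d.\ mean-zero complex random variables of modulus at most $2$, so the Marcinkiewicz--Zygmund (Rosenthal) inequality -- the heart of the method of \cite{croabasis::} -- gives an absolute constant $C$ with $\mathbb{E}_{\vec\gamma}|g_{\vec\gamma}(y) - g(y)|^p \le (4Cp/k)^{p/2}\|g\|_{A(G)}^p$ for all $y$. Let $\mu_1^+$ be the probability measure provided by the $B$-approximate invariance of $\mu$ at parameter $1$. Integrating the displayed pointwise bound against the probability measures $\mu$ and $\mu_1^+$ and applying Markov's inequality, one checks that for a suitable $k = O(p\epsilon^{-2})$ the event that both $\|g_{\vec\gamma} - g\|_{L_p(\mu)} \le \tfrac{\epsilon}{5}\|g\|_{A(G)}$ and $\|g_{\vec\gamma} - g\|_{L_p(\mu_1^+)} \le \tfrac{\epsilon}{5}\|g\|_{A(G)}$ has positive probability; we fix such $\gamma_1,\dots,\gamma_k$.

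Now let $C$ be the Bohr system generated by $(\{\gamma_1,\dots,\gamma_k\},\tfrac12)$, equivalently $\bigwedge_{i=1}^k B^{(\gamma_i)}$ for rank-$1$ Bohr systems $B^{(\gamma_i)}$, and put $B' := B \wedge \lambda C$ with $\lambda := \epsilon/12 \in (0,1]$; then $B' \le B$ and $B_1' = B_1 \cap C_\lambda$. By Lemma~\ref{lem.b1} and Lemma~\ref{lem.smi} parts~(\ref{lem.smi.1})--(\ref{lem.smi.3}) we have $\dim \lambda C \le \dim C = O(k) = O(p\epsilon^{-2})$, so $\dim B' \le \dim B + O(p\epsilon^{-2})$. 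Since $\|z\| \le \tfrac12$ for all $z\in S^1$ we have $C_1 = G$, so Lemma~\ref{lem.dfc} part~(\ref{pt3.difc}) gives $\mathcal{C}^\Delta(A;C_1)=\mathcal{C}^\Delta(A;G-G) \le \mathcal{C}(A;G) = 1$, and Lemma~\ref{lem.bss} part~(\ref{lem.bss.1}) then yields $\mathcal{C}^\Delta(A;C_\lambda) = \mathcal{C}^\Delta(A;(\lambda C)_1) \le (4\lambda^{-1})^{\dim C} = (2\epsilon^{-1})^{O(p\epsilon^{-2})}$. Combining this with Lemma~\ref{lem.dfc} part~(\ref{pt2.difc}) (applicable as $0_G \in B_1$ and $0_G \in C_\lambda$) gives $\mathcal{C}^\Delta(A;B_1') \le \mathcal{C}^\Delta(A;B_1)\,\mathcal{C}^\Delta(A;C_\lambda) \le (2\epsilon^{-1})^{O(p\epsilon^{-2})}\mathcal{C}^\Delta(A;B_1)$, as needed.

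Finally, fix $x \in B_1' = B_1 \cap C_\lambda$ and bound $\|\tau_x g - g\|_{L_p(\mu)}$ by $\|\tau_x(g - g_{\vec\gamma})\|_{L_p(\mu)} + \|\tau_x g_{\vec\gamma} - g_{\vec\gamma}\|_{L_p(\mu)} + \|g_{\vec\gamma} - g\|_{L_p(\mu)}$. The third term is at most $\tfrac{\epsilon}{5}\|g\|_{A(G)}$ by the choice of $\vec\gamma$. For the middle term, $\tau_x g_{\vec\gamma} - g_{\vec\gamma} = \tfrac{\|g\|_{A(G)}}{k}\sum_i \omega_{\gamma_i}\gamma_i(\cdot)\bigl(\overline{\gamma_i(x)}-1\bigr)$, so its $L_\infty(G)$-norm -- hence its $L_p(\mu)$-norm, $\mu$ being a probability measure -- is at most $\|g\|_{A(G)}\max_i|\gamma_i(x)-1| \le \|g\|_{A(G)}\max_i 2\pi\|\gamma_i(x)\| < \pi\lambda\,\|g\|_{A(G)} < \tfrac{\epsilon}{3}\|g\|_{A(G)}$, using $x \in C_\lambda$ and the inequality $|z-1| \le 2\pi\|z\|$ from the proof of Lemma~\ref{lem.bsann}. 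For the first term, a change of variables shows $\|\tau_x h\|_{L_p(\mu)}^p = \int |h|^p\,d\mu'$ where $\mu'$ is a translate of $\mu$ by an element of $B_1$; by $B$-approximate invariance at parameter $1$ (and symmetry of Bohr sets) $\mu' \le 2\mu_1^+$, so $\|\tau_x(g-g_{\vec\gamma})\|_{L_p(\mu)}^p \le 2\|g-g_{\vec\gamma}\|_{L_p(\mu_1^+)}^p \le 2\bigl(\tfrac{\epsilon}{5}\bigr)^p\|g\|_{A(G)}^p$, making this term at most $\tfrac{2\epsilon}{5}\|g\|_{A(G)}$. Summing, $\|\tau_x g - g\|_{L_p(\mu)} \le \bigl(\tfrac{\epsilon}{5}+\tfrac{\epsilon}{3}+\tfrac{2\epsilon}{5}\bigr)\|g\|_{A(G)} < \epsilon\|g\|_{A(G)}$. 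I expect the main obstacle to be precisely the first term: $\mu$ is not translation-invariant, and the naive remedy of passing to a small dilate of $B$ would reintroduce a factor $(\mathrm{const}/\mathrm{width})^{\dim B}$ into the size estimate and ruin the bound; the point is instead to absorb the translation into the inflated measure $\mu_1^+$ via approximate invariance at scale $1$ and to have carried the sampling estimate for $\mu_1^+$ along from the start. The rest -- the dimension and size bookkeeping for $C$ and its dilate -- is routine given the lemmas of \S\ref{sec.cn}--\S\ref{sec.fa}.
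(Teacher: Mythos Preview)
Your proof is correct and follows essentially the same route as the paper's: sample characters from the Fourier side with probabilities $|\wh g(\gamma)|/\|g\|_{A(G)}$, apply Marcinkiewicz--Zygmund pointwise and integrate against $\mu_1^+$, then intersect $B$ with a small dilate of the rank-$k$ Bohr system built from the sampled characters. The paper packages the ``build a Bohr system from $k$ characters'' step as Lemma~\ref{lem.bohrinv} and only carries the $L_p(\mu_1^+)$ estimate (recovering the $L_p(\mu)$ bound from $\mu \le 2\mu_1^+$ via $0_G \in B_1$), whereas you request both $\mu$ and $\mu_1^+$ estimates up front; these are cosmetic differences, and your handling of the translated term via $\tau_{-x}(\mu) \le 2\mu_1^+$ is exactly the paper's manoeuvre.
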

\begin{proof}
We may certainly suppose that $g \not \equiv 0$ so that $\|g\|_{A(G)}>0$ (or else simply take $B':=B$ and we are trivially done).  Consider independent identically distributed random variables $X_1,\dots,X_l$ taking values in $L_\infty(G)$ with
\begin{equation*}
\P\left(X_i=\frac{\wh{g}(\gamma)}{|\wh{g}(\gamma)|} \gamma \right)=\frac{1}{\|g\|_{A(G)}}|\wh{g}(\gamma)| \text{ for all } \gamma \in \wh{G} \text{ such that } |\wh{g}(\gamma)| \neq 0.
\end{equation*}
Note that this is well-defined since $0 < \|g\|_{A(G)} <\infty$.  Moreover, by the Fourier inversion formula, we have
\begin{equation*}
\E{X_i(x)} = \sum_{\gamma \in \wh{G}}{\gamma(x)\frac{\wh{g}(\gamma)}{|\wh{g}(\gamma)|} \cdot \frac{|\wh{g}(\gamma)|}{\|g\|_{A(G)}}} = \frac{g(x)}{\|g\|_{A(G)}} \text{ for all }x \in G.
\end{equation*}
Regarding the variables $X_i(x)-g(x)\|g\|_{A(G)}^{-1}$ as elements of $L_p(\P^l)$ and noting, further, that
\begin{equation*}
\left\|X_i(x)-\frac{g(x)}{\|g\|_{A(G)}}\right\|_{L_\infty(\P^l)} \leq \|X_i(x)\|_{L_\infty(\P^l)} + \left\|\frac{g(x)}{\|g\|_{A(G)}}\right\|_{L_\infty(\P^l)} = 1+\frac{|g(x)|}{\|g\|_{A(G)}} \leq 2,
\end{equation*}
we can apply the Marcinkiewicz-Zygmund inequality (see \emph{e.g.} \cite[Lemma 3.1]{croabasis::}) to get
\begin{equation*}
\E{\left|\sum_{i=1}^l{X_i(x)} - \frac{g(x)l}{\|g\|_{A(G)}}\right|^p} = O\left(pl\right)^{p/2}.
\end{equation*}
We integrate the above against $\mu^+_1$ (recall this is one of the family of measures provided by the hypothesis that $\mu$ is $B$-approximately invariant) and rearrange so that
\begin{equation*}
\E{\left\|\|g\|_{A(G)}\frac{1}{l}\sum_{i=1}^l{X_i} - g\right\|_{L_p(\mu^+_1)}^p} = O\left(pl^{-1}\|g\|_{A(G)}^2\right)^{p/2}.
\end{equation*}
Now, take $l = O(\epsilon^{-2}p)$ such that the right hand side rescaled is at most $\left(\frac{\epsilon \|g\|_{A(G)}}{8}\right)^p$.  It follows that there are characters $\gamma_1,\dots,\gamma_l$ such that
\begin{equation*}
\left\|f - g\right\|_{L_p(\mu^+_1)} \leq \frac{\epsilon \|g\|_{A(G)}}{8} \text{ where } f:=\|g\|_{A(G)}\cdot \frac{1}{l}\sum_{i=1}^l{\frac{\wh{g}(\gamma_i)}{|\wh{g}(\gamma_i)|} \gamma_i}.
\end{equation*}
Since $\|f\|_{A(G)} \leq \|g\|_{A(G)}$ (by the triangle inequality) we may apply Lemma \ref{lem.bohrinv} to the set of character $\{\gamma_1,\dots,\gamma_l\}$ to get a Bohr system $B''$ with $\mathcal{C}^\Delta(G;B_1'')\leq 1$ and $\dim B''=O(l)=O(\epsilon^{-2}p)$ such that
\begin{equation*}
\|\tau_x(f)-f\|_{L_\infty(G)} \leq \frac{ \epsilon \|g\|_{A(G)}}{2} \text{ for all } x \in B'_{\frac{1}{2\pi}\epsilon}.
\end{equation*}
If $x \in B_1$ then by the approximate invariance of $\mu$ we have $\tau_x(\mu) \leq 2\mu_1^+$ and $\mu \leq 2\mu_1^+$, and so by the triangle inequality we have
\begin{align*}
\|\tau_x(g) - g\|_{L_p(\mu)} & \leq \|\tau_x(g)-\tau_x(f)\|_{L_p(\mu)} + \|\tau_x(f) - f\|_{L_p(\mu)} + \|f - g\|_{L_p(\mu)}\\
& = \|g-f\|_{L_p(\tau_{-x}(\mu))} + \|\tau_x(f)-f\|_{L_p(\mu)} + \|f - g\|_{L_p(\mu)}\\
& \leq 2\cdot 2^{\frac{1}{p}}\|g-f\|_{L_p(\mu^+_1)} + \|\tau_x(f)-f\|_{L_\infty(G)}.
\end{align*}
We conclude that
\begin{align*}
\|\tau_x(g) - g\|_{L_p(\mu)} \leq 2\cdot 2^{1/p}\cdot \frac{\epsilon  \|g\|_{A(G)}}{8} + \frac{\epsilon  \|g\|_{A(G)}}{2}\leq  \epsilon  \|g\|_{A(G)} \text{ whenever } x \in B_1 \cap B''_{\frac{1}{2\pi}\epsilon}.
\end{align*}
Put $B':=B \wedge ((\frac{1}{2\pi}\epsilon) B'')$ and note by Lemma \ref{lem.smi} parts (\ref{lem.smi.1}) and (\ref{lem.smi.2}), and the earlier bound on $\dim B''$ that
\begin{equation*}
\dim B' \leq \dim B + \dim \left(\left(\frac{1}{2\pi}\epsilon\right) B''\right) \leq \dim B + \dim B'' = \dim B + O(p\epsilon^{-2});
\end{equation*}
and by Lemma \ref{lem.dfc} part (\ref{pt2.difc}) and Lemma \ref{lem.bss} part (\ref{lem.bss.1}) and the bounds on $B''$ we have
\begin{align*}
\mathcal{C}^\Delta(A;B'_1) & = \mathcal{C}^\Delta\left(A\cap G;B_1\cap \left(\left(\frac{1}{2\pi}\epsilon\right) B''\right)_1\right)\\
& \leq \mathcal{C}^\Delta\left(A;B_1\right) \mathcal{C}^\Delta\left(G;\left(\left(\frac{1}{2\pi}\epsilon\right) B''\right)_1\right)\\
& \leq \mathcal{C}^\Delta\left(A;B_1\right) (8\pi \epsilon^{-1})^{\dim B''} \mathcal{C}^\Delta\left(G;B_1''\right) \leq \mathcal{C}^\Delta\left(A;B_1\right) (2\epsilon^{-1})^{O(p\epsilon^{-2})}.
\end{align*}
The result is proved.
\end{proof}

\section{Quantitative continuity}\label{sec.qc}

It is well known that if $G$ is a locally compact Abelian group and $f \in A(G)$ then $f$ is uniformly continuous.  If $G$ is finite then this statement has no content -- every function on $G$ is uniformly continuous -- but in the paper \cite{grekon::}, Konyagin and Green proved a statement which can be thought of as a quantitative version of this fact which still has content for finite Abelian groups.  The main purpose of this section is to prove the following result of this type using essentially their method.
\begin{proposition}\label{prop.screl}
Suppose that $B$ is a Bohr system of dimension at most $d$ (for some $d \geq 1$), $f \in A(G)$, and $\delta,\kappa \in (0,1]$ and $p \geq 1$ are parameters.  Then there is a Bohr system $B'\leq B$ such that for any $A \subset G$ we have
\begin{equation*}
\mathcal{C}^\Delta(A;B_1') \leq \exp(O(\delta^{-1}d\log 2\kappa^{-1}d + p\delta^{-3}\log^32p\kappa^{-1}\delta^{-1})) \mathcal{C}^\Delta(A;B_1)
\end{equation*}
and
\begin{equation*}
\dim B' \leq d + O(p\delta^{-2}\log^22\delta^{-1}),
\end{equation*}
and a $B'$-approximately invariant probability measure $\mu$ and a probability measure $\nu$ supported on $B'_\kappa$ such that
\begin{equation*}
\sup_{x \in G}{\|f - f\ast \mu\|_{L_p(\tau_x(\nu))}} \leq \delta \|f\|_{A(G)}.
\end{equation*}
\end{proposition}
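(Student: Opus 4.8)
The plan is to follow the iterative refinement argument of Green and Konyagin. I would build a decreasing chain of Bohr systems $B = B^{(0)} \geq B^{(1)} \geq \cdots$, with generating frequency sets $\Gamma_0 \subseteq \Gamma_1 \subseteq \cdots$; at each stage one tests whether the smoothing of $f$ by an approximately invariant measure attached to $B^{(j)}$ already satisfies the asserted inequality, and if not one passes to a strictly finer $B^{(j+1)}$ in a way that increases the energy $\sum_{\gamma \in \Gamma_j}|\wh{f}(\gamma)|^2$ by a definite amount. Since $\sum_{\gamma \in \Gamma_j}|\wh{f}(\gamma)|^2 \leq \sum_{\gamma \in \wh{G}}|\wh{f}(\gamma)|^2 \leq \|f\|_{A(G)}^2$ (Parseval (\ref{eqn.pars}) and $\|\wh{f}\|_{\ell_\infty(\wh{G})} \leq \|\wh{f}\|_{\ell_1(\wh{G})}$), the chain must terminate after boundedly many steps. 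Concretely, at stage $j$ I would invoke Corollary \ref{cor.ubreg} to produce a $B^{(j)}$-approximately invariant probability measure $\mu_j$, and---applying Corollary \ref{cor.ubreg} to the dilate $\kappa B^{(j)}$, whose dimension is at most $\dim B^{(j)}$ by Lemma \ref{lem.smi} part (\ref{lem.smi.2})---a probability measure $\nu_j$ supported on $(B^{(j)})_\kappa$. If $\sup_{x}\|f - f \ast \mu_j\|_{L_p(\tau_x(\nu_j))} \leq \delta\|f\|_{A(G)}$ we stop and return $B' := B^{(j)}$, $\mu := \mu_j$, $\nu := \nu_j$, which is exactly the conclusion. (We may as well replace $\kappa$ by $\min\{\kappa, c\delta\}$ for a small absolute constant $c$ at the outset: shrinking $\kappa$ only shrinks the support of $\nu$, and costs nothing in the stated bounds.)

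For the increment step, suppose the test fails, so $\|f - f \ast \mu_j\|_{L_p(\tau_{x_0}(\nu_j))} > \delta\|f\|_{A(G)}$ for some $x_0$; the aim is to extract from this a few characters on which $f$ carries non-negligible Fourier mass but which do not already lie in $\Gamma_j$. First, $f \ast \mu_j$ is almost constant on cosets of $(B^{(j)})_\kappa$: by Lemma \ref{lem.cc} and the approximate invariance of $\mu_j$, $\|\tau_w(f \ast \mu_j) - f \ast \mu_j\|_{L_\infty(G)} \leq 2\kappa\|f\|_{L_\infty(G)} \leq 2\kappa\|f\|_{A(G)}$ for $w \in (B^{(j)})_\kappa$, which is small compared with $\delta\|f\|_{A(G)}$ once $\kappa \leq c\delta$. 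So the oscillation detected is oscillation of $f$ itself across a small Bohr coset; writing $f - f \ast \mu_j$ as a $\mu_j$-average of the functions $f - \tau_z f$ and combining with an application of Lemma \ref{lem.chang} (with the measure $\nu_j$, exponent $p$, and precision $\epsilon$ comparable to $\delta$ up to logarithmic factors) that controls $\|\tau_t f - f\|_{L_p(\nu_j)}$ over a sub-Bohr-system, one deduces that $\|f - \tau_z f\|_{L_2(m_G)}$ is at least a fixed power of $\delta$ times $\|f\|_{A(G)}$ for some $z$ in a small Bohr neighbourhood. Expanding by Parseval, characters $\gamma$ with $|1 - \gamma(z)|$ small contribute negligibly, so a correspondingly definite proportion of $\sum_\gamma|\wh{f}(\gamma)|^2$ lives on characters with $|1 - \gamma(z)|$ bounded below; a dyadic pigeonhole over the sizes of the coefficients $\wh{f}(\gamma)$ then isolates a set $\Lambda_j$ of at most $O(\delta^{-2})$ such characters carrying $\ell_2$-Fourier-mass at least a fixed power of $\delta$ times $\|f\|_{A(G)}^2$. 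One then takes $\Gamma_{j+1} := \Gamma_j \cup \Lambda_j$, with the new generators given a small constant width, and lets $B^{(j+1)}$ be the resulting Bohr system further intersected with the sub-Bohr-system produced by Lemma \ref{lem.chang}. Since $\Lambda_j$ is disjoint from $\Gamma_j$ (the generators in $\Gamma_j$ have small width, so they are nearly trivial on the small coset containing $z$, unlike the members of $\Lambda_j$), the energy strictly increases by $\sum_{\gamma \in \Lambda_j}|\wh{f}(\gamma)|^2$; and by Lemma \ref{lem.b1} and Lemma \ref{lem.smi} part (\ref{lem.smi.1}), $\dim B^{(j+1)} \leq \dim B^{(j)} + O(p\epsilon^{-2})$. (Adjoining $\Lambda_j$ to the generators with small width also forces $|\wh{\mu_{j+1}}(\gamma)|$ close to $1$ there, a converse to the mechanism of Lemma \ref{lem.nest}, which is what makes the next smoothing honestly capture those frequencies.)

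The energy bound $\|f\|_{A(G)}^2$ together with the per-step gain bounds the number of refinements; tracking the cost of each step---the $O(p\epsilon^{-2})$ added to $\dim B^{(j)}$ and the factor $(2\epsilon^{-1})^{O(p\epsilon^{-2})}$ on $\mathcal{C}^\Delta(A;(B^{(j)})_1)$ from Lemma \ref{lem.chang}, together with the factor of shape $(4d\kappa^{-1})^{O(d)}$ incurred when $\nu_j$ is produced by dilating $B^{(j)}$ by $\kappa$ and then by $\Omega(d^{-1})$ (Lemma \ref{lem.bss} part (\ref{lem.bss.1}) and Corollary \ref{cor.ubreg})---and multiplying through by the number of steps yields $\dim B' \leq d + O(p\delta^{-2}\log^2 2\delta^{-1})$ and $\mathcal{C}^\Delta(A;B_1') \leq \exp(O(\delta^{-1}d\log 2\kappa^{-1}d + p\delta^{-3}\log^3 2p\kappa^{-1}\delta^{-1}))\,\mathcal{C}^\Delta(A;B_1)$. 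I expect the main obstacle to be exactly this scale bookkeeping: $\epsilon$, the width of the new generators, the dilation parameters, and the number of iterations must be calibrated so that every failed test genuinely purchases a fixed fraction of the energy while the Bohr systems contract slowly enough to keep the covering number within the claimed range. Within the loop, the delicate analytic step is the passage from the local oscillation of $f$ on a single Bohr coset to a genuine lower bound on $\|f - \tau_z f\|_{L_2(m_G)}$, and it is there, together with Lemma \ref{lem.cc}, that the approximate invariance of the measures is used in an essential way.
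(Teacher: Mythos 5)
Your proposal has the right architecture at the outer level (iterate, apply Corollary~\ref{cor.ubreg} to produce $\mu_j$ and $\nu_j$, test the $L_p(\tau_x(\nu_j))$ inequality, refine the Bohr system via Lemma~\ref{lem.chang} on failure) and correctly identifies the Croot--Sisask--{\L}aba lemma as the engine. But there are two substantive gaps between your sketch and what the statement actually requires.

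First, the passage from the local failure $\|f - f\ast\mu_j\|_{L_p(\tau_{x_0}(\nu_j))} > \delta\|f\|_{A(G)}$ to a \emph{global} lower bound $\|f - \tau_z f\|_{L_2(m_G)} \gtrsim \delta^{O(1)}\|f\|_{A(G)}$ is not sound. The measure $\tau_{x_0}(\nu_j)$ is supported on a translate of a Bohr set whose Haar density is uncontrolled and in general exponentially small in $\dim B^{(j)}$; converting an $L_p$-w.r.t.-$\nu_j$ lower bound into an $L_2(m_G)$ lower bound loses a factor of at least the square root of that density. The paper avoids this entirely: it never applies Parseval on $m_G$ at this stage. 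Instead the proof of Proposition~\ref{prop.screl} is built on Lemma~\ref{lem.it}, whose inner iteration tracks $g_i := (f - f\ast\mu)\ast\mu_i$ simultaneously in $L_p(\nu)$ (which it shows decays only slowly under the smoothings provided by Lemma~\ref{lem.chang}) and in $A(G)$ (which it \emph{forces} to halve each step). The $A(G)$ norm of $g_i$ is then expanded into $\sum_\gamma|\wh{f}(\gamma)||1-\wh{\mu}(\gamma)||\wh{\nu^{(i-1)}}(\gamma)|\cdots$, and the terms that fail to be killed by the approximate annihilators are exactly the ones carrying the extracted Fourier mass (Lemma~\ref{lem.nest}). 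So the ``spectral detection'' happens inside $A(G) = \ell_1(\wh{G})$, not in $L_2(m_G)$.

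Second, your bookkeeping does not produce the claimed dimension bound. You take $\epsilon \sim \delta$ in Lemma~\ref{lem.chang}, so each refinement adds $O(p\delta^{-2})$ to the dimension; with a per-step $\ell_2$-energy gain of ``a fixed power of $\delta$'' you need $\Omega(\delta^{-c})$ steps ($c\geq 1$), and multiplying through gives $\dim B' \leq d + O(p\delta^{-2-c})$, not the claimed $d + O(p\delta^{-2}\log^2 2\delta^{-1})$. The paper's sharper bound comes from a self-similar refinement: in Lemma~\ref{lem.it} the precision fed to Lemma~\ref{lem.chang} at inner step $i$ is $\epsilon_i := \kappa\|g_i\|_{L_p(\nu)}\|g_i\|_{A(G)}^{-1}$, so the dimension increment is $O(p\epsilon_i^{-2}) = O(p\kappa^{-2}\delta^{-2}\rho_i^2)$ where $\rho_i$ is exactly the proportion of $\|f\|_{A(G)}$ extracted at that outer step. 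Since the outer termination argument yields $\sum_j \rho_j \leq 1$ with each $\rho_j \in (\Omega(\delta), 1]$, the total dimension increase is $O(p\kappa^{-2}\delta^{-2}\sum_j\rho_j^2) \leq O(p\kappa^{-2}\delta^{-2})$, with $\kappa^{-1} \sim \log 2\delta^{-1}$ supplying the $\log^2$ factor. A flat $\epsilon \sim \delta$ per step cannot recover this; the precision must scale with how much mass is captured.
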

We shall prove Proposition \ref{prop.screl} iteratively using the following lemma (which is, itself, proved iteratively).
\begin{lemma}\label{lem.it}
Suppose that $B$ is a Bohr system of dimension at most $d$ (for some $d\geq 1$), $\nu$ is a $B$-approximately invariant probability measure, $\mu$ is a probability measure supported on a set $X$, $f \in A(G)$ and $\delta,\eta \in (0,1]$ and $p \geq 1$ are parameters. Then at least one of the following is true:
\begin{enumerate}
\item we have
\begin{equation*}
\sup_{x \in G}{\|f - f\ast \mu\|_{L_p(\tau_x(\nu))}} \leq \delta \|f\|_{A(G)};
\end{equation*}
\item there is some $1\geq \rho =\Omega\left(\delta\right)$ and a Bohr system $B' \leq B$ such that for any $A\subset G$ we have
\begin{equation*}
\mathcal{C}^\Delta(A;B_1') \leq \exp(O(p\rho^2\delta^{-2}\log^32p\delta^{-1}+d\log d))\mathcal{C}^\Delta(A;B_1)
\end{equation*}
and
\begin{equation*}
 \dim B' \leq \dim B + O(p\rho^2\delta^{-2}\log^22\delta^{-1}),
\end{equation*}
such that
\begin{equation*}
\sum_{\gamma \in N\left(B'_{2^{-7}\delta \eta},\eta\right)\setminus N\left(X, 2^{-5}\delta\right)}{|\wh{f}(\gamma)|} \geq \rho\|f\|_{A(G)} .
\end{equation*}
\end{enumerate}
\end{lemma}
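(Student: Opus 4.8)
The strategy is to prove the dichotomy in contrapositive form: assuming (1) fails, I produce $\rho$ and $B'$ witnessing (2). So suppose there is $x_0\in G$ with $\|f-f\ast\mu\|_{L_p(\tau_{x_0}(\nu))}>\delta\|f\|_{A(G)}$. Replacing $f$ by $\tau_{-x_0}f$ changes neither $\|f\|_{A(G)}$ nor any $|\wh f(\gamma)|$, sends $f\ast\mu$ to $(\tau_{-x_0}f)\ast\mu$, and leaves the sets $N(X,\cdot)$ and $N(B'_\cdot,\cdot)$ untouched, so we may assume $x_0=0_G$, i.e.\ $\|g\|_{L_p(\nu)}>\delta\|f\|_{A(G)}$ where $g:=f-f\ast\mu$. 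First I would peel off the part of $g$ that is spectrally close to $X$: put $g_1:=\sum_{\gamma\in N(X,2^{-5}\delta)}\wh g(\gamma)\gamma$ and $g_2:=g-g_1$. Since $\mu$ is supported on $X$, $|1-\wh\mu(\gamma)|=|\int(1-\gamma(x))\,d\mu(x)|<2^{-5}\delta$ for $\gamma\in N(X,2^{-5}\delta)$, so $\|g_1\|_{A(G)}=\sum_{\gamma\in N(X,2^{-5}\delta)}|\wh f(\gamma)||1-\wh\mu(\gamma)|\leq 2^{-5}\delta\|f\|_{A(G)}$; hence, since $\|g_1\|_{L_p(\nu)}\leq\|g_1\|_{A(G)}$, we get $\|g_2\|_{L_p(\nu)}>\tfrac12\delta\|f\|_{A(G)}$, while $\supp\wh{g_2}\subseteq\wh G\setminus N(X,2^{-5}\delta)$, $\|g_2\|_{A(G)}\leq\|g\|_{A(G)}\leq 2\|f\|_{A(G)}$, and $|\wh{g_2}(\gamma)|=|\wh g(\gamma)|\leq 2|\wh f(\gamma)|$.

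The heart of the argument is an efficient quantitative-continuity step for $g_2$: I claim there is a Bohr system $\wh B\leq B$ with $\dim\wh B\leq\dim B+O(p\log^2 2\delta^{-1})$ and $\mathcal C^\Delta(A;\wh B_1)\leq\exp(O(p\log^3 2p\delta^{-1}))\,\mathcal C^\Delta(A;B_1)$ such that $\|\tau_x(g_2)-g_2\|_{L_p(\nu)}\leq 2^{-6}\delta\|f\|_{A(G)}$ for all $x\in\wh B_1$. A single application of Lemma~\ref{lem.chang}, with its measure taken to be $\nu$ (which is $B$-approximately invariant and hence $B^{\star}$-approximately invariant for every $B^{\star}\leq B$), to $g_2$ with error parameter $\asymp\delta$ gives this, but at dimension cost $O(p\delta^{-2})$; the cheaper bound comes from iterating the method of Lemma~\ref{lem.chang} through $O(\log 2\delta^{-1})$ rounds at constant relative error, producing a nested chain $B=B^{(0)}\geq B^{(1)}\geq\cdots\geq B^{(k)}=\wh B$ in which each refinement costs only $O(p)$ in dimension while the residual $L_p(\nu)$-error is halved against the fixed scale $\|f\|_{A(G)}$ and the $A(G)$-norms of the successive residuals remain $O(\|f\|_{A(G)})$. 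This is the Croot--Sisask--{\L}aba $L_p$ refinement of the Green--Konyagin continuity argument and is where the shape of the bounds in (2) is forced; I expect the bookkeeping here — controlling the growth of the residual $A(G)$-norms across rounds and collating the per-round covering-number losses — to be the main obstacle. Having $\wh B$, I would apply Corollary~\ref{cor.ubreg} to obtain $\lambda\in(\Omega((\dim\wh B)^{-1}),1]$ and a $(\lambda\wh B)$-approximately invariant probability measure $\sigma$ supported on $\wh B_1$, and set $B':=\lambda\wh B$, so that $B'\leq\wh B\leq B$, $\dim B'\leq\dim\wh B$ by Lemma~\ref{lem.smi}(\ref{lem.smi.2}), and
\begin{equation*}
\mathcal C^\Delta(A;B'_1)\leq\mathcal C^\Delta(A;\wh B_1)(4\lambda^{-1})^{\dim\wh B}\leq\exp\!\big(O(p\log^3 2p\delta^{-1}+d\log d)\big)\,\mathcal C^\Delta(A;B_1)
\end{equation*}
by Lemma~\ref{lem.bss}(\ref{lem.bss.1}), using $\dim B\leq d$ and a short case split according as $d$ or $p\log^2 2\delta^{-1}$ dominates; the continuity estimate persists on $B'_1\subseteq\wh B_1$.

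It remains to convert the continuity of $g_2$ into the Fourier-mass bound. Since $\supp\sigma\subseteq\wh B_1$, Minkowski's inequality gives $\|g_2\ast\sigma-g_2\|_{L_p(\nu)}\leq\sup_{x\in\wh B_1}\|\tau_x(g_2)-g_2\|_{L_p(\nu)}\leq 2^{-6}\delta\|f\|_{A(G)}$, so $\|g_2\ast\sigma\|_{L_p(\nu)}>\tfrac13\delta\|f\|_{A(G)}$; as $\nu$ is a probability measure,
\begin{equation*}
\sum_{\gamma}|\wh{g_2}(\gamma)||\wh\sigma(\gamma)|=\|g_2\ast\sigma\|_{A(G)}\geq\|g_2\ast\sigma\|_{L_\infty(G)}\geq\|g_2\ast\sigma\|_{L_p(\nu)}>\tfrac13\delta\|f\|_{A(G)}.
\end{equation*}
The frequencies with $|\wh\sigma(\gamma)|\leq 2^{-6}\delta$ contribute at most $2^{-6}\delta\|g_2\|_{A(G)}\leq 2^{-5}\delta\|f\|_{A(G)}$ to this sum, so, using $|\wh\sigma|\leq 1$, $\sum_{\gamma:\,|\wh\sigma(\gamma)|>2^{-6}\delta}|\wh{g_2}(\gamma)|>\tfrac14\delta\|f\|_{A(G)}$. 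By Lemma~\ref{lem.nest} applied to $\sigma$ (which is $B'$-approximately invariant) with the parameters $2^{-6}\delta$ and $\eta$ we have $\{\gamma:|\wh\sigma(\gamma)|>2^{-6}\delta\}\subseteq N(B'_{2^{-7}\delta\eta},\eta)$; combining this with $\supp\wh{g_2}\subseteq\wh G\setminus N(X,2^{-5}\delta)$ and then with $|\wh{g_2}(\gamma)|\leq 2|\wh f(\gamma)|$ yields
\begin{equation*}
\sum_{\gamma\in N(B'_{2^{-7}\delta\eta},\eta)\setminus N(X,2^{-5}\delta)}|\wh f(\gamma)|\geq\tfrac12\sum_{\gamma\in N(B'_{2^{-7}\delta\eta},\eta)\setminus N(X,2^{-5}\delta)}|\wh{g_2}(\gamma)|>\tfrac18\delta\|f\|_{A(G)}.
\end{equation*}
Taking $\rho:=\tfrac18\delta$ — so that $1\geq\rho=\Omega(\delta)$ and $p\rho^2\delta^{-2}\log^j 2\delta^{-1}\asymp p\log^j 2\delta^{-1}$, matching the bounds established for $B'$ — gives conclusion (2).
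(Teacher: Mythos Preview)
Your reduction to $x_0=0$, the spectral splitting $g=g_1+g_2$, and the endgame (convolving with a $B'$-approximately invariant $\sigma$, invoking Lemma~\ref{lem.nest}, and peeling off small $|\wh\sigma|$) are all fine and indeed match the paper's closing steps. The gap is in what you call the heart of the argument: your claimed iterative refinement giving $\|\tau_x(g_2)-g_2\|_{L_p(\nu)}\leq 2^{-6}\delta\|f\|_{A(G)}$ for $x\in\wh B_1$ at dimension cost only $O(p\log^2 2\delta^{-1})$ does not work as described. Lemma~\ref{lem.chang} outputs an error of size $\epsilon\|g\|_{A(G)}$, so a round with $\epsilon=O(1)$ and $\|g\|_{A(G)}=O(\|f\|_{A(G)})$ yields an $L_p(\nu)$-error of order $\|f\|_{A(G)}$, not one halved against $\|f\|_{A(G)}$. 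There is no mechanism in your sketch by which successive rounds shrink the error while the residual $A(G)$-norms ``remain $O(\|f\|_{A(G)})$''; those two statements are in direct tension. The honest single-shot application you mention costs $O(p\delta^{-2})$ in dimension, which with $\rho=\tfrac18\delta$ does \emph{not} fit the bound $\dim B+O(p\rho^2\delta^{-2}\log^2 2\delta^{-1})=\dim B+O(p\log^2 2\delta^{-1})$ required by conclusion~(2).

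The paper does not try to make $g_2$ (or $g_0=f-f\ast\mu$) uniformly $L_p$-continuous at this cheap cost. Instead it runs an \emph{adaptive} iteration: set $g_{i+1}:=g_i\ast\nu^{(i)}$ where $\nu^{(i)}$ comes from Lemma~\ref{lem.chang} applied to $g_i$ with $\epsilon_i:=\kappa\|g_i\|_{L_p(\nu)}/\|g_i\|_{A(G)}$ and $\kappa\asymp(\log 2\delta^{-1})^{-1}$, so that $\|g_{i+1}\|_{L_p(\nu)}\geq(1-\kappa)\|g_i\|_{L_p(\nu)}$ stays $\gtrsim\delta\|f\|_{A(G)}$. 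One \emph{hopes} $\|g_{i+1}\|_{A(G)}\leq 2^{-i}\|f\|_{A(G)}$; when this fails at some step $i$ (it must, since $\|g_i\|_{A(G)}\geq\|g_i\|_{L_p(\nu)}$), one sets $\rho:=2^{-i-1}$. The point is that the very failure of halving is what supplies the Fourier mass $\|g_i\|_{A(G)}\gtrsim\rho\|f\|_{A(G)}$ concentrated where $|\wh{\nu^{(i-1)}}|$ is large and $|1-\wh\mu|$ is not small, while the relevant $\epsilon_{i}\gtrsim\kappa\delta\rho^{-1}$ makes the dimension cost $O(p\epsilon_{i}^{-2})=O(p\rho^2\delta^{-2}\log^2 2\delta^{-1})$ --- exactly the shape of the stated bound. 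Thus $\rho$ is not fixed at $\Theta(\delta)$; it records where the iteration terminates, and the coupling between $\rho$ and the cost is the mechanism you are missing.
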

\begin{proof}
Since the hypotheses and conclusions are invariant under translation by $x$ it suffices to prove that if
\begin{equation}\label{eqn.ass}
\|f - f\ast \mu\|_{L_p(\nu)} > \delta \|f\|_{A(G)},
\end{equation}
then we are in the second case of the lemma.

Let $\kappa := \lceil \log_2 8\delta^{-1}\rceil^{-1}$ for reasons which will become clear later; at this stage it suffices to note that $\kappa \in (0,1/2]$.  Define $\delta_i:=(1-\kappa)^i\delta$ for integers $i$ with $0 \leq i \leq \kappa^{-1}$ and put $g_0:=f-f\ast \mu$.  Suppose that we have defined a function $g_i$ such that
\begin{equation*}
\|g_i\|_{L_p(\nu)} > \delta_i\|f\|_{A(G)}, \|g_i\|_{A(G)} \leq 2^{1-i}\|f\|_{A(G)} \text{ and } g_i = g_0 \ast \mu_i
\end{equation*}
for some probability measure $\mu_i$.  By taking $\mu_0$ to be the delta probability measure assigning mass $1$ to $0_G$, we see from (\ref{eqn.ass}) that $g_i$ satisfies these hypotheses for $i=0$.

By Lemma \ref{lem.chang} applied to the function $g_i$, the Bohr system $B$ and measure $\nu$ with parameters $p$ and $\epsilon_i:=\kappa\|g_i\|_{L_p(\nu)}\|g_i\|_{A(G)}^{-1}$, there is a Bohr system $B^{(i)}$ with
\begin{equation}\label{eqn.covf}
\mathcal{C}^\Delta(A;B_1^{(i)}) \leq \exp(O(p\epsilon_i^{-2}\log 2\epsilon_i^{-1}))\mathcal{C}^\Delta(A;B_1) \text{ for any }A \subset G
\end{equation}
and
\begin{equation}\label{eqn.ds}
\dim B^{(i)} \leq \dim B + O(p\epsilon_i^{-2})
\end{equation}
such that
\begin{equation*}
\|\tau_x(g_i)-g_i\|_{L_p(\nu)} \leq \kappa\|g_i\|_{L_p(\nu)} \text{ for all }x \in B^{(i)}_{1}.
\end{equation*}
By Corollary \ref{cor.ubreg} applied to $B^{(i)}$ there is some $1 \geq \lambda_i=\Omega((1+\dim B^{(i)})^{-1})$ and a $\lambda_iB^{(i)}$-approximately invariant probability measure $\nu^{(i)}$ supported on $B^{(i)}_{1}$.  Integrating (and applying the integral triangle inequality) we conclude that
\begin{equation*}
\|g_i-g_i\ast \nu^{(i)}\|_{L_p(\nu)} \leq \kappa\|g_i\|_{L_p(\nu)},
\end{equation*}
and so by the triangle inequality and hypothesis on $g_i$ we have
\begin{equation*}
\|g_i \ast \nu^{(i)}\|_{L_p(\nu)} \geq \|g_i\|_{L_p(\nu)}-\kappa\|g_i\|_{L_p(\nu)}>\delta_{i+1}\|f\|_{A(G)}.
\end{equation*}
Put $g_{i+1}:= g_i \ast \nu^{(i)}$ and $\mu_{i+1}=\mu_i \ast \nu^{(i)}$.  If $\|g_{i+1}\|_{A(G)} \leq 2^{1-(i+1)}\|f\|_{A(G)}$ then repeat; otherwise terminate the iteration.  Since $\kappa \leq \frac{1}{2}$ and $x \mapsto (1-x)^{x^{-1}}$ is monotonically decreasing for all $x \in (0,1]$ we see that if $i\leq \kappa^{-1}$ then
\begin{align}\label{eqn.er}
\frac{1}{4}\delta \|f\|_{A(G)} \leq (1-\kappa)^{\kappa^{-1}}\delta \|f\|_{A(G)}&  \leq (1-\kappa)^{i}\delta \|f\|_{A(G)}\\ \nonumber & \leq \delta_{i}\|f\|_{A(G)} < \|g_i\|_{L_p(\nu)} \leq \|g_i\|_{A(G)}.
\end{align}
Given our choice of $\kappa$ we see that $2^{1-\kappa^{-1}}\|f\|_{A(G)} \leq \frac{1}{4}\delta \|f\|_{A(G)}$ and so it follows from (\ref{eqn.er}) that there is some minimal $i \leq \kappa^{-1}$ such that $\|g_{i}\|_{A(G)} > 2^{1-i}\|f\|_{A(G)}$.  In particular $2^{-i} \geq 2^{-4}\delta$.

By choice of $i$, construction of $\mu_i$, and definition of $g_0$ we have (where we use the fact that $\wh{f\ast \mu}(\gamma)=\wh{f}(\gamma)\wh{\mu}(\gamma)$)
\begin{align*}
2^{1-i}\|f\|_{A(G)} \leq \|g_i\|_{A(G)} & =\|g_0 \ast \mu_{i-1} \ast \nu^{(i-1)}\|_{A(G)}\\
& =\sum_{\gamma \in \wh{G}}{|\wh{f}(\gamma)||1-\wh{\mu}(\gamma)||\wh{\nu^{(i-1)}}(\gamma)||\wh{\mu_{i-1}}(\gamma)|}\\
& \leq \sum_{\gamma \in \wh{G}}{|\wh{f}(\gamma)||1-\wh{\mu}(\gamma)||\wh{\nu^{(i-1)}}(\gamma)|} .
\end{align*}
Hence
\begin{align*}
& \sum_{ \substack{ |\wh{\nu^{(i-1)}}(\gamma)|> 2^{-6}\delta\\ |1-\gamma(x)| \geq 2^{-5}\delta \text{ for some }x \in X}}{|\wh{f}(\gamma)||1-\wh{\mu}(\gamma)||\wh{\nu^{(i-1)}}(\gamma)|}\\
& \qquad \qquad + \sum_{ \substack{|1-\gamma(x)| <2^{-5}\delta \text{ for all }x \in X}}{|\wh{f}(\gamma)||1-\wh{\mu}(\gamma)||\wh{\nu^{(i-1)}}(\gamma)|}\\
& \qquad \qquad + \sum_{ \substack{|\wh{\nu^{(i-1)}}(\gamma)|\leq 2^{-6}\delta}}{|\wh{f}(\gamma)||1-\wh{\mu}(\gamma)||\wh{\nu^{(i-1)}}(\gamma)|}\\
& \qquad \qquad \qquad \qquad \geq  \sum_{\gamma \in \wh{G}}{|\wh{f}(\gamma)||1-\wh{\mu}(\gamma)||\wh{\nu^{(i-1)}}(\gamma)|}.
\end{align*}
If $\gamma\in \wh{G}$ is such that $|1-\gamma(x)| < 2^{-5}\delta$ for all $x\in X$, then by the triangle inequality $|1-\wh{\mu}(\gamma)| \leq 2^{-5}\delta$, and hence the second sum on the left is at most $2^{-5}\delta \|f\|_{A(G)}$.  Since $|1-\wh{\mu}(\gamma)| \leq 2$ by the triangle inequality, the third sum on the left is at most $2\|f\|_{A(G)}\cdot 2^{-6}\delta$, and so by the triangle inequality we have
\begin{align*}
\sum_{ \substack{|\wh{\nu^{(i-1)}}(\gamma)|> 2^{-6}\delta\\ |1-\gamma(x)| \geq 2^{-5}\delta \text{ for some }x \in X}}{|\wh{f}(\gamma)||1-\wh{\mu}(\gamma)||\wh{\nu^{(i-1)}}(\gamma)|} & \geq 2^{1-i}\|f\|_{A(G)} - 2^{-4}\delta \|f\|_{A(G)}\\ & \geq 2^{1-i}\|f\|_{A(G)} -2^{-i}\|f\|_{A(G)} = 2^{-i}\|f\|_{A(G)}.
\end{align*}
Put $B':=\lambda_{i-1}B^{(i-1)}$ and apply Lemma \ref{lem.nest} to $\nu^{(i-1)}$ and $B'$ with parameters $2^{-6}\delta$ and $\eta$ to see that
\begin{align*}
&\{\gamma: |\wh{\nu^{(i-1)}}(\gamma)|> 2^{-6}\delta \text{ and } |1-\gamma(x)| \geq 2^{-5}\delta \text{ for some }x \in X\}\\
& \qquad \qquad \qquad \subset N(B'_{2^{-7}\delta \eta},\eta) \setminus N(X,2^{-5}\delta).
\end{align*}
Writing $\rho:=2^{-i-1} =\Omega(\delta)$ and recalling that $|1-\wh{\mu}(\gamma)||\wh{\nu^{(i-1)}}(\gamma)| \leq 2$ by the triangle inequality we have
\begin{equation*}
\sum_{\gamma \in N\left(B'_{2^{-7}\delta \eta},\eta\right)\setminus N\left(X, 2^{-5}\delta\right)}{|\wh{f}(\gamma)|} \geq \rho\|f\|_{A(G)} .
\end{equation*}
It remains to note that $\epsilon_{i-1} > \kappa \delta_{i-1}2^{i-2} = \Omega(\kappa \delta \rho^{-1})$ and so by Lemma \ref{lem.smi} part (\ref{lem.smi.2}), and (\ref{eqn.ds}) we see that $\dim B'$ satisfies the claimed bound.  Finally, by Lemma \ref{lem.bss} part (\ref{lem.bss.1}), (\ref{eqn.covf}), (\ref{eqn.ds}), and the lower bound on $\lambda_i$ we have
\begin{align*}
\mathcal{C}^\Delta(A;B_1') & = \mathcal{C}^\Delta(A;B_{\lambda_{i-1}}^{(i-1)})\\
& \leq (4\lambda_{i-1}^{-1})^{\dim  B^{(i-1)}}\mathcal{C}^\Delta(A;B_1^{(i-1)})\\
& \leq (4\lambda_{i-1}^{-1})^{\dim  B^{(i-1)}}\exp(O(p\epsilon_{i-1}^{-2}\log 2\epsilon_{i-1}^{-1}))\mathcal{C}^\Delta(A;B_1)\\
& \leq d^{O(d)}\exp(O(p\rho^2\delta^{-2}\log^3 2p\delta^{-1}))\mathcal{C}^\Delta(A;B_1),
\end{align*}
for any $A \subset G$ from which the lemma follows.
\end{proof}
\begin{proof}[Proof of Proposition \ref{prop.screl}]
We proceed iteratively constructing Bohr systems $(B^{(i)})_{i =0}^J$ and reals $(\rho_i)_{i=1}^J$, and $(d_i)_{i=0}^J$, such that 
\begin{enumerate}
\item $\dim B^{(i)} \leq d_i$;
\item \label{pt.nes} $B^{(i+1)} \leq B^{(i)}$;
\item \label{pt.rho} $1 \geq \rho_i = \Omega(\delta)$ and
\begin{equation*}
 \sum_{N(B^{(i+1)}_1,2^{-5}\delta)\setminus N(B^{(i)}_1,2^{-5}\delta)}{|\wh{f}(\gamma)|} \geq \rho_i\|f\|_{A(G)};
\end{equation*}
\item \label{pt.grow} for any $A \subset G$ we have
\begin{equation*}
\mathcal{C}^\Delta(A;B_1^{(i+1)}) \leq \exp(O(p\rho_i^2\delta^{-2}\log^32p\delta^{-1} + d_i\log \kappa^{-1}d_i))\mathcal{C}^\Delta(A;B_1^{(i)});
\end{equation*}
\item \label{pt.gd}
\begin{equation*}
d_{i+1}\leq d_i + O(p\rho_i^2\delta^{-2}\log^22\delta^{-1}).
\end{equation*}
\end{enumerate}
We initialise with $B^{(0)}:=B$ and $d_0:=d$.  Suppose that we are at stage $i$ of the iteration.  Apply Corollary \ref{cor.ubreg} to $B^{(i)}$ to get some $\lambda_i = \Omega((1+\dim B^{(i)})^{-1})$ and a $\lambda_i B^{(i)}$-approximately invariant probability measure $\mu_i$ supported on $B^{(i)}_1$.  Apply Corollary \ref{cor.ubreg} to $\kappa\lambda_i B^{(i)}$ to get some
\begin{equation*}
\lambda_i'= \Omega((1+\dim \kappa\lambda_iB^{(i)})^{-1}) = \Omega(d_i^{-1})
\end{equation*}
and a $\lambda_i'\kappa\lambda_i B^{(i)}$-approximately invariant probability measure $\nu_i$ supported on $\kappa\lambda_i B^{(i)}_1$.

By Lemma \ref{lem.smi} part (\ref{lem.smi.2}) we see that
\begin{equation*}
\dim \lambda_i'\kappa\lambda_i B^{(i)} \leq \dim B^{(i)} \leq d_i.
\end{equation*}
Apply Lemma \ref{lem.it} to $A$, $\lambda_i'\kappa\lambda_i B^{(i)}$, $d_i$, $\nu_i$, $\mu_i$, $B^{(i)}_1$ and $f$ with parameters $\delta$ and $2^{-5}\delta$ (and $p$).

Suppose the conclusion of the second case of Lemma \ref{lem.it} holds.  Then there is some $\rho_i=\Omega(\delta)$ and a Bohr system $B^{(i,1)} \leq \lambda_i'\kappa\lambda_i B^{(i)}$ such that
\begin{equation}\label{eqn.dddd}
\dim B^{(i,1)} \leq \dim \lambda_i'\kappa\lambda_i B^{(i)} + O(p\rho_i^2\delta^{-2}\log^2\delta^{-1});
\end{equation}
and for any $A \subset G$ we have
\begin{align*}
\mathcal{C}^\Delta(A;B^{(i,1)}_1) & \leq \exp(O(p\rho_i^2\delta^{-2}\log^32p\delta^{-1} + d_i\log d_i))\mathcal{C}^\Delta(A;(\lambda_i'\kappa\lambda_i B^{(i)})_1).
\end{align*}
However, for any $A \subset G$ we have
\begin{equation*}
\mathcal{C}^\Delta(A;(\lambda_i'\kappa\lambda_i B^{(i)})_1) = \mathcal{C}^\Delta(A;B^{(i)}_{\lambda_i'\kappa\lambda_i }) \leq (4\lambda_i^{-1}(\lambda_i')^{-1}\kappa^{-1})^{\dim B^{(i)}} \mathcal{C}^\Delta(A;B^{(i)}_1),
\end{equation*}
by Lemma \ref{lem.bss} part (\ref{lem.bss.1}).  Thus for any $A \subset G$ we have
\begin{equation*}
\mathcal{C}^\Delta(A;B^{(i,1)}_1) \leq \exp(O(p\rho_i^2\delta^{-2}\log^32p\delta^{-1} + d_i\log \kappa^{-1}d_i))\mathcal{C}^\Delta(A;B^{(i)}_1).
\end{equation*}
 Additionally we have
\begin{equation*}
 \sum_{N\left(B^{(i,1)}_{2^{-12}\delta^2},2^{-5}\delta\right) \setminus N(B^{(i)}_1,2^{-5}\delta)}{|\wh{f}(\gamma)|} \geq \rho_i \|f\|_{A(G)}.
\end{equation*}
Put $B^{(i+1)}:=(2^{-12}\delta^2)B^{(i,1)}$ and we get (\ref{pt.rho}).  Moreover,
\begin{equation*}
B^{(i+1)}=(2^{-12}\delta^2)B^{(i,1)} \leq B^{(i,1)} \leq \lambda_i'\kappa\lambda_i B^{(i)} \leq B^{(i)}
\end{equation*}
by the order preserving nature of dilation and the fact that $2^{-12}\delta \leq 1$ and $\lambda_i'\kappa\lambda_i \leq 1$; it follows that we have (\ref{pt.nes}).  Now, Lemma \ref{lem.smi} part (\ref{lem.smi.2}) and (\ref{eqn.dddd}) gives
\begin{align*}
\dim B^{(i+1)} = \dim(2^{-12}\delta^2)B^{(i,1)} \leq \dim B^{(i,1)} & \leq \dim \lambda_i'\kappa\lambda_i B^{(i)} + O(p\rho_i^2\delta^{-2}\log^2\delta^{-1})\\ & \leq \dim B^{(i)} + O(p\rho_i^2\delta^{-2}\log^2\delta^{-1})\\ & \leq d_i + O(p\rho_i^2\delta^{-2}\log^2\delta^{-1}),
\end{align*}
from which we get (\ref{pt.gd}).  Finally, Lemma \ref{lem.bss} part (\ref{lem.bss.1}) tells us that for any $A \subset G$ we have
\begin{align}\label{eqn.previousil}
\mathcal{C}^\Delta(A;B^{(i+1)}_1)
& =\mathcal{C}^\Delta(A;B^{(i,1)}_{2^{-12}\delta^2}) \\ \nonumber
& \leq (2^{14}\delta^{-2})^{\dim  B^{(i,1)}}\mathcal{C}^\Delta(A;B^{(i,1)}_1)\\ \nonumber
& \leq \exp(O(p\rho_i^2\delta^{-2}\log^32p\delta^{-1} + d_i\log \kappa^{-1}d_i))\mathcal{C}^\Delta(A;B^{(i)}_1),
\end{align}
from which we get (\ref{pt.grow}).

In the light of (\ref{pt.nes}) we see that $B^{(i+1)}_1 \subset B^{(i)}_1$ and hence
\begin{equation*}
N(B_1^{(i+1)},2^{-5}\delta) \supset N(B_1^{(i)},2^{-5}\delta).
\end{equation*}
It follows that after $i$ steps we have
\begin{equation*}
\|f\|_{A(G)} \geq \sum_{N(B_1^{(i)},2^{-5}\delta)}{|\wh{f}(\gamma)|} \geq \sum_{j \leq i}{\rho_j\|f\|_{A(G)}},
\end{equation*}
and hence
\begin{equation*}
\sum_{j \leq i}{\rho_j} \leq 1.
\end{equation*}
Since $\rho_j=\Omega(\delta)$ we conclude that we must be in the first case of Lemma \ref{lem.it} at some step $J=O(\delta^{-1})$ of the iteration. In light of (\ref{pt.gd}) we see that
\begin{equation*}
d_i \leq d + O(p\delta^{-2}\log^{2}2\delta^{-1}) \text{ for all }i \leq J.
\end{equation*}
It then follows from (\ref{eqn.previousil}) that for any $A \subset G$ we have
\begin{align*}
\mathcal{C}^\Delta(A;B_1^{(J)}) & \leq \left(\prod_{j <J}{\exp(O(p\rho_j^2\delta^{-2}\log^32p\delta^{-1} + d_j\log \kappa^{-1}d_j))}\right)\mathcal{C}^\Delta(A;B_1)\\
& \leq \exp(O(Jd\log 2\kappa^{-1}d + Jp\delta^{-2}\log^32p\kappa^{-1}\delta^{-1}))\mathcal{C}^\Delta(A;B_1).
\end{align*}

We now put $B':=\lambda_J B^{(J)}$, $\mu:=\mu_J$ and $\nu:=\nu_J$, so that
\begin{equation*}
\sup_{x \in G}{\|f - f\ast \mu\|_{L_p(\tau_x(\nu))}} \leq \delta \|f\|_{A(G)}.
\end{equation*}
By Lemma \ref{lem.bss} part (\ref{lem.bss.1}) we see that for any $A \subset G$ we have
\begin{align*}
\mathcal{C}^\Delta(A;B_1') & = \mathcal{C}^\Delta(A;B^{(J)}_{\lambda_J})\\
 & \leq (4\lambda_J^{-1})^{\dim B^{(J)}} \mathcal{C}^\Delta(A;B^{(J)}_1)  \leq \exp(O(d_J\log 2d_J))\mathcal{C}^\Delta(A;B^{(J)}_1) ;
\end{align*}
and by Lemma \ref{lem.smi} part (\ref{lem.smi.2}) we have
\begin{equation*}
\dim B' = \dim \lambda_J B^{(J)} \leq \dim B^{(J)} \leq d_J.
\end{equation*}
The result follows.
\end{proof}

\section{A Freiman-type theorem}\label{sec.ft}

The purpose of this section is to prove the following proposition, which is a routine if slightly fiddly variation on existing material in the literature.
\begin{proposition}\label{prop.f}
Suppose that $A$ is non-empty and $m_G(A+A)\leq Km_G(A)$.  Then there is a Bohr system $B$ with
\begin{equation*}
 \mathcal{C}^\Delta(A;B_1) = \exp(O(\log^32K (\log (2\log 2K))^4))
\end{equation*}
and
\begin{equation*}
\dim B=O(\log^32K (\log (2\log 2K))^4),
\end{equation*}
such that
\begin{equation}\label{eqn.jp}
\|1_A \ast \beta\|_{L_\infty(G)} =\exp(-O(\log2K (\log (2\log 2K))))
\end{equation}
for any probability measure $\beta$ supported on $B_1$.
\end{proposition}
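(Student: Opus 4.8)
The plan is to deduce Proposition~\ref{prop.f} from the state-of-the-art quasi-polynomial Bogolyubov--Ruzsa (equivalently Fre{\u\i}man-type) theorem in the literature, translated into the Bohr-system language of \S\ref{sec.bs}. In that language the input we use is: since $m_G(A+A) \le Km_G(A)$, there are a set of characters $\Gamma$ with $|\Gamma| = O(\log^32K(\log 2\log 2K)^4)$, a positive absolute constant $c$, and an integer $t = O(\log 2\log 2K)$, such that $\Bohr(\Gamma,c) \subseteq tA - tA$ and (what in fact follows from the first fact via Ruzsa's covering lemma, but is usually recorded directly) $\mathcal{C}(A;\Bohr(\Gamma, c/2)) = \exp(O(|\Gamma|))$. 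Take $B$ to be the Bohr system generated by $(\Gamma,c)$, where $c$ denotes the constant width function, so $B_1 = \Bohr(\Gamma,c)$. The dimension bound is then immediate: writing $B = \bigwedge_{\gamma \in \Gamma}{B^{(\gamma)}}$ with $B^{(\gamma)}$ the rank-$1$ Bohr system generated by $(\{\gamma\},c)$, Lemma~\ref{lem.b1} and Lemma~\ref{lem.smi} part~(\ref{lem.smi.3}) give $\dim B^{(\gamma)} = O(1)$, and sub-additivity of dimension (Lemma~\ref{lem.smi} part~(\ref{lem.smi.1})) yields $\dim B = O(|\Gamma|) = O(\log^32K(\log 2\log 2K)^4)$, as required.

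For the covering bound, observe that $\Bohr(\Gamma, c/2) - \Bohr(\Gamma, c/2) \subseteq \Bohr(\Gamma, c) = B_1$ by sub-additivity of Bohr sets, so taking $H := G$, $\phi$ the identity homomorphism, $U := A$ and $V := \Bohr(\Gamma, c/2)$ in the definition of the difference covering number gives
\[
\mathcal{C}^\Delta(A;B_1) \le \mathcal{C}(A;\Bohr(\Gamma, c/2)) = \exp(O(|\Gamma|)) = \exp(O(\log^32K(\log 2\log 2K)^4)).
\]
If one prefers not to quote the covering statement directly it can be recovered by applying Ruzsa's covering lemma (Lemma~\ref{lem.rc}) with $\Bohr(\Gamma,c/4)$ in the role of $B$: since $\Bohr(\Gamma,c/4) - \Bohr(\Gamma,c/4) \subseteq \Bohr(\Gamma,c/2)$ and $\Bohr(\Gamma,c/4) \subseteq \Bohr(\Gamma,c) \subseteq tA - tA$, the Pl\"unnecke--Ruzsa inequality bounds $m_G(A + \Bohr(\Gamma,c/4)) \le m_G((t+1)A - tA) \le K^{2t+1}m_G(A)$, while the standard lower bound for the density of a Bohr set gives $m_G(\Bohr(\Gamma,c/4)) \ge \exp(-O(|\Gamma|))$; the ratio, and hence $\mathcal{C}(A;\Bohr(\Gamma,c/2))$, is $\exp(O(|\Gamma|))$ since $K^{2t+1} = \exp(O(\log 2K\cdot\log 2\log 2K))$ is of lower order.

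It remains to prove (\ref{eqn.jp}). Let $\beta$ be any probability measure supported on $B_1$. Then $1_A \ast \beta \ge 0$, it has $\int(1_A\ast\beta)\,dm_G = m_G(A)$, and $\supp(1_A\ast\beta) \subseteq A + \supp\beta \subseteq A + B_1$, so
\[
\|1_A\ast\beta\|_{L_\infty(G)} \ge \frac{m_G(A)}{m_G(A+B_1)} \ge \frac{m_G(A)}{m_G((t+1)A - tA)} \ge K^{-(2t+1)} = \exp(-O(\log 2K\cdot\log 2\log 2K)),
\]
using $B_1 \subseteq tA - tA$ and the Pl\"unnecke--Ruzsa inequality; note the $m_G(A)$ cancels, so the bound is valid however small $A$ is. This is of the required form, completing the proof.

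The only non-elementary ingredient is the quasi-polynomial Bogolyubov--Ruzsa theorem: the exponents $\log^32K(\log 2\log 2K)^4$ are simply its current record (for the rank, and for the density of the Bohr set it produces), and the $\log 2K\cdot\log 2\log 2K$ in (\ref{eqn.jp}) reflects the number $t$ of difference-set iterations needed to reach that record. The main obstacle is therefore not conceptual but the ``slightly fiddly'' bookkeeping: lining up the three requirements on $B$ (small dimension, efficient $\Delta$-covering of $A$, and containment of $B_1$ in a boundedly iterated difference set of $A$) against whichever formulation of the imported theorem one cites, and---if that formulation is phrased via coset progressions rather than Bohr sets, or places the progression only in $2A-2A$---carrying out the standard conversions. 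For the main argument of the paper any bound of the shape $\exp(-\mathrm{polylog}(2K))$ in (\ref{eqn.jp}), together with the stated dimension and covering bounds, would suffice, so the precise exponents above are not essential.
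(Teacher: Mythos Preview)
Your argument is essentially correct, and for the conclusion \eqref{eqn.jp} it is in fact identical to the paper's: both show $B_1 \subset t(A-A)$ for some $t = O(\log 2\log 2K)$ and then apply the pigeonhole bound $\|1_A\ast\beta\|_{L_\infty(G)} \ge m_G(A)/m_G(A+B_1)$ together with Pl\"unnecke--Ruzsa.

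Where you diverge is in obtaining the Bohr system itself. The paper does not import a ready-made Bogolyubov--Ruzsa statement; it builds $B$ from three pieces (Lemmas~\ref{lem.fsurvey}, \ref{lem.rpgb}, \ref{lem.bc}). Lemma~\ref{lem.fsurvey} uses Croot--Sisask plus Konyagin's pigeonhole (Proposition~\ref{prop.kony}) to produce a \emph{set} $X$ of polynomial growth with $mX\subset r(A-A)$; Lemma~\ref{lem.rpgb} upgrades polynomial growth to a Bohr system containing $X-X$; and Lemma~\ref{lem.bc} then runs a Bogolio\`uboff--Chang argument relative to that Bohr system to find a sub-Bohr-system $B$ with $B_1$ inside an iterated sumset of $A$. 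Your route simply asserts the composite output of these three steps and translates it into the Bohr-system language of \S\ref{sec.bs}; the translation you give (dimension via Lemma~\ref{lem.b1} and sub-additivity, $\mathcal{C}^\Delta$ via Ruzsa covering and the standard Bohr density bound) is clean and correct.

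One caveat: the precise packaged statement you import---a Bohr set of rank $O(\log^32K(\log2\log2K)^4)$, absolute-constant width, inside $t(A-A)$ with $t=O(\log2\log2K)$---is not quite a single off-the-shelf citation; the $\log^3$ (rather than $\log^4$ or $\log^6$) and the $t=O(\log\log K)$ come specifically from the Konyagin refinement, and most literature statements give coset progressions or place the Bohr set in $2A-2A$ with weaker rank. So what you are really importing is the content of the paper's Lemmas~\ref{lem.fsurvey}--\ref{lem.bc}, which is fine but worth being explicit about. Your closing remark that any $\exp(-\mathrm{polylog}\,2K)$ in \eqref{eqn.jp} would suffice is true for the \emph{structure} of the main argument, but note that the exponent there feeds directly into the choice of $p$ in Lemma~\ref{lem.mitlem} and hence into the final power of $M$ in Theorem~\ref{thm.main2}.
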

The proposition itself is closely related to Freiman's theorem and we refer the reader to \cite[Chapter 5]{taovu::} for a discussion of Freiman's theorem.  For our purposes there are two key differences:
\begin{enumerate}
\item Freiman's theorem is usually only stated with the first two conclusions.  It is possible to infer the fact that
\begin{equation*}
\|1_A \ast \beta\|_{L_\infty(G)} = \exp(-O(\log^32K (\log (2\log 2K))^4))
\end{equation*}
for any probability measure $\beta$ supported on $B_1$ from the bound on $ \mathcal{C}^\Delta(A;B_1)$, and the fact that one can do better and get (\ref{eqn.jp}) in this sort of situation is an unpublished observation of Green and Tao.
\item Freiman's theorem also produces a coset progression rather than a Bohr system.  A set $M$ is a \textbf{$d$-dimensional coset progression} if there are arithmetic progressions $P_1,\dots,P_d$ and a subgroup $H$ such that $M=P_1+\dots +P_d+H$.  This definition was made by Green and Ruzsa in \cite{greruz::0} when they gave the first proof of Freiman's theorem for Abelian groups.  The conclusion of Freiman's theorem then is that there is a coset progression $M$ with
\begin{equation*}
\mathcal{C}^\Delta(A;M) =O_K(1) \text{ and } \dim M =O_K(1),
\end{equation*}
and the challenge is to identify good estimates for the $O_K(1)$-terms.
\end{enumerate}
For us it is the quantitative aspects of Proposition \ref{prop.f} that are important.  The quantitative aspects of Freiman's theorem are surveyed in \cite{san::10}, and primarily arise from the quantitative strength of the Croot-Sisask Lemma (in particular the $m$-dependence in \cite[Proposition 3.3]{crosis::}), but also some combinatorial arguments of Konyagin \cite{kon::1} discussed just before \cite[Corollary 8.4]{san::10}. Conjecturally all the big-$O$ terms should be $O(\log 2K)$, though the proof below does not come close to that.  It could probably be tightened up to same on the power of $\log (2\log 2K)$ in the first two estimates above, at least reducing the $4$ to a $3$ but quite possible further.

We shall prove Proposition \ref{prop.f} as a combination of the next three results which we shall show in \S\ref{sec.42}, \S\ref{sec.43}, and \S\ref{sec.44} respectively.  We say that a set $X$ has \textbf{relative polynomial growth of order $d$} if
\begin{equation*}
m_G(nX) \leq n^dm_G(X) \text{ for all }n \geq 1.
\end{equation*}
The first result can be read out of the proof of \cite[Proposition 2.5]{san::10} and essentially captures the power of the Croot-Sisask Lemma for our purposes.
\begin{lemma}\label{lem.fsurvey}
Suppose that $A$ is non-empty with $m_G(A+A) \leq Km_G(A)$.  Then there is a symmetric set $X$ containing the identity of relative polynomial growth of order $O(\log^32K (\log (2\log 2K))^3)$ and
\begin{equation*}
m_G(X) \geq \exp(-O(\log^32K (\log (2\log 2K))^3))m_G(A),
\end{equation*}
and some naturals $m=\Omega(\log 2K (\log (2\log 2K)))$ and $r=O(\log (2\log 2K))$ such that $mX \subset r(A-A)$.
\end{lemma}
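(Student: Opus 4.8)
The plan is to follow the proof of \cite[Proposition 2.5]{san::10}, the engine of which is the Croot--Sisask almost-periodicity lemma used with its sharp dependence on the number of iterates (the ``$m$-dependence'' in \cite[Proposition 3.3]{crosis::}). The scheme has three parts: (i) from the small doubling of $A$, produce via Croot--Sisask a large symmetric set $X$ that is \emph{almost periodic} for $1_A$; (ii) convert almost periodicity into the containment $mX\subseteq r(A-A)$ by an iteration argument, calibrating the parameters to hit the stated $m$, $r$ and density; (iii) deduce relative polynomial growth of $X$ as a soft consequence of the Plünnecke--Ruzsa inequality.

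For (i), apply the $L_p$ Croot--Sisask lemma to $A$ with parameters $p\geq 2$ and $\epsilon\in(0,1]$: it produces a set $X$, which one may take to be a symmetric neighbourhood of $0_G$, with $m_G(X)\geq\exp(-O(p\epsilon^{-2}\log 2K\cdot\mathrm{polylog}))m_G(A)$ such that $g:=1_A\ast m_{-A}$ is moved by at most $\epsilon$ in $L_p(m_G)$ under translation by any bounded sum of elements of $X$. Here $0\leq g\leq 1$, $g(0_G)=1$ and $\{g\neq 0\}=A-A$, so a chaining argument over the summands (followed by a Hölder pairing against an indicator, to pass from the $L_p$ bound to non-vanishing) shows that $g$ stays bounded below on $mX$ for a suitable $m$; thus $mX\subseteq A-A$, and more robustly $mX$ lies in a bounded number $r$ of copies of $A-A$.

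The delicate point --- and the step I expect to be the main obstacle --- is the \emph{calibration} producing $m=\Omega(\log 2K\log 2\log 2K)$, $r=O(\log 2\log 2K)$ and $m_G(X)\geq\exp(-O(\log^3 2K(\log 2\log 2K)^3))m_G(A)$ simultaneously. A single application of Croot--Sisask followed by term-by-term telescoping forces $\epsilon\sim 1/m$ and a density loss of $\exp(-O(m^3\log 2K))$, which at the target $m$ costs a whole extra power of $\log 2K$; the remedy --- this is exactly ``the power of the Croot--Sisask Lemma for our purposes'' --- is to build $X$ by iterating the lemma roughly $\log m$ times, each step about doubling the reach while adding $O(1)$ to $r$ and losing only a quasi-polynomial factor in density, so that after $O(\log 2\log 2K)$ steps one lands on the stated bounds. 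Threading the powers of $\log 2K$ and $\log 2\log 2K$ through this iteration, exactly as in \cite{san::10}, is the genuinely fiddly part.

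For (iii), set $D:=\log^3 2K(\log 2\log 2K)^3$, so that $m_G(X)\geq\exp(-D)m_G(A)$, and take $d$ a sufficiently large constant multiple of $D$. For $2\leq n\leq m$ we have $nX\subseteq mX\subseteq r(A-A)$ (since $0_G\in X$), so by the Plünnecke--Ruzsa inequality \cite{taovu::}
\begin{equation*}
m_G(nX)\leq m_G(r(A-A))\leq K^{O(r)}m_G(A)\leq K^{O(r)}\exp(D)m_G(X)\leq 2^d m_G(X)\leq n^d m_G(X),
\end{equation*}
where we used $r\log 2K=O(D)$; the case $n=1$ is trivial. For $n>m$ the inequality $m_G(nX)\leq n^d m_G(X)$ is automatic, since $m_G(nX)\leq 1$ while $n^d m_G(X)\geq 2^d\exp(-D)\geq 1$ once $d$ is a large enough multiple of $D$. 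Hence $X$ has relative polynomial growth of order $O(D)$, and the lemma follows modulo the bookkeeping above.
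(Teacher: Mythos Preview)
Your outline for parts (i) and (ii) is broadly the paper's route: a Croot--Sisask step packaged as Proposition~\ref{prop.key}, iterated \`a la Konyagin (Proposition~\ref{prop.kony}) so that each round roughly squares the remaining doubling constant and increments $r$ by $O(1)$, landing after $O(\log 2\log 2K)$ rounds on $m=\Omega(\log 2K\log 2\log 2K)$, $r=O(\log 2\log 2K)$ and the stated density loss. That part is fine as a sketch.

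The genuine gap is in your step (iii). Your argument for $n>m$ asserts $n^d m_G(X)\geq 2^d\exp(-D)\geq 1$, but you only know $m_G(X)\geq\exp(-D)m_G(A)$, not $m_G(X)\geq\exp(-D)$; since $m_G(A)$ can be arbitrarily small there is no lower bound on $m_G(X)$ in absolute terms. Nor can this be repaired by writing $nX\subset\lceil n/m\rceil\cdot r(A-A)$ and applying Pl\"unnecke: that gives $m_G(nX)\leq K^{O(nr/m)}m_G(A)$, and the exponent grows linearly in $n$ while $d\log n$ grows only logarithmically, so the inequality $m_G(nX)\leq n^d m_G(X)$ fails for large $n$. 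In short, control of $m_G(nX)$ at scales $n\leq m$ does \emph{not} by itself imply relative polynomial growth at all scales.

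What the paper does instead is use a single-scale-to-all-scales covering fact, namely Corollary~\ref{cor.useful} (a variant of Chang's covering lemma): if $m_G((3k+1)X)<2^k m_G(X)$ for some $k$, then $X$ has relative polynomial growth of order $O(k)$ for \emph{every} $n$. One takes $k:=m^3$, so that $(3k+1)X\subset 3(m^2+1)r(A-A)$ and Pl\"unnecke gives $m_G((3k+1)X)\leq K^{O(m^2 r)}\exp(D)m_G(X)$; since $r\log K=O(m)$ this is $\exp(O(k/s))m_G(X)<2^k m_G(X)$ for $s=O(1)$ large enough. Corollary~\ref{cor.useful} then delivers polynomial growth of order $O(k)=O(m^3)=O(\log^3 2K(\log 2\log 2K)^3)$. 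You should replace your direct argument in (iii) with this covering step.
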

The second result is one we have already touched on and captured a key insight of Green and Ruzsa in \cite{greruz::0} that allows passage from relative polynomial growth to structure.
\begin{lemma}\label{lem.rpgb}
Suppose that $X$ is a symmetric non-empty set with relative polynomial growth of order $d\geq 1$.  Then there is a Bohr system $B$ with
\begin{equation*}
\dim B=O(d) \text{ and } m_G(B_1) = d^{O(d)}m_G(X).
\end{equation*}
such that $X-X \subset B_1$.
\end{lemma}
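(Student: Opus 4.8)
The plan is to run the classical model--cover pipeline behind Freiman's theorem, using the polynomial growth hypothesis --- rather than mere small doubling --- to hold every dimension down to $O(d)$, and then to translate the resulting coset progression into a Bohr system. After replacing $X$ by $X\cup\{0_G\}$ (which enlarges $X-X$ only by $X$ and changes the growth order by a constant) we may assume $0_G\in X=-X$, so that $X\subset 2X\subset 3X\subset\cdots$ and $m_G(nX)\leq n^dm_G(X)$ for all $n\geq 1$; in particular $m_G(X-X)=m_G(2X)\leq 2^dm_G(X)$ and every bounded iterated sumset of $X$ has measure $\exp(O(d))m_G(X)$. First I would apply Ruzsa's modelling lemma to get a subset $X_0\subset X$ with $m_G(X_0)\geq\tfrac12 m_G(X)$ together with a Freiman isomorphism of some large fixed order (say $16$) from $X_0$ onto a set $Y$ in a cyclic group $\Z/N\Z$ with $N\leq\exp(O(d))m_G(X)^{-1}$; thus $Y$ has density $\exp(-O(d))$ in $\Z/N\Z$ and, being a Freiman copy of $X_0$, inherits relative polynomial growth of order $O(d)$ on the bounded iterates where the isomorphism is valid.

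The heart of the matter --- and this is the Green--Ruzsa insight referred to in the statement --- is that a set of polynomial growth of order $O(d)$ has \emph{Freiman dimension} $O(d)$, and so sits inside a proper progression of rank $O(d)$: this is what keeps the rank at $O(d)$ rather than at a power of the doubling constant $2^d$. Concretely I would rectify a bounded iterate of $Y$ back to $\Z$ --- legitimate since $|8Y|$ is tiny compared with $N$ --- and apply the quantitative form of Freiman's theorem for sets of bounded Freiman dimension to place $Y$ inside a proper progression $P\subset\Z/N\Z$ of rank $O(d)$ and size $d^{O(d)}|Y|$. Since progressions are preserved by Freiman isomorphisms, pulling $P$ back produces a proper coset progression $\tilde P\subset G$ of rank $O(d)$ with $X_0\subset\tilde P$, with $\tilde P$ contained in a bounded iterate of $X$, and with $m_G(\tilde P)\leq d^{O(d)}m_G(X)$.

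It remains to pass from $X_0$ back to $X$ and to switch to Bohr systems. Since $\tilde P$ already contains the \emph{large} set $X_0$, Ruzsa's covering lemma (Lemma \ref{lem.rc}) gives $\mathcal{C}(X;\tilde P-\tilde P)\leq m_G(X+X_0)/m_G(X_0)\leq 2m_G(2X)/m_G(X)\leq 2^{d+1}$, so $X\subset T+(\tilde P-\tilde P)$ for a covering set $T$ with $\log_2|T|=O(d)$ that is itself contained in a bounded iterate of $X$. As subsets of a set of Freiman dimension $O(d)$ again have Freiman dimension $O(d)$, the same modelling-and-Freiman step applied to $T$ (which has doubling $2^{O(d)}$) puts $T-T$ inside a proper coset progression of rank $O(d)$ and size $d^{O(d)}$; adding this to a bounded dilate of $\tilde P-\tilde P$ gives a proper coset progression $M$ of rank $O(d)$ with $X-X\subset M$ and $m_G(M)\leq d^{O(d)}m_G(X)$. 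Finally, a proper coset progression of rank $r$ is contained in the set $B_1$ of a Bohr system of dimension $O(r)$ whose measure exceeds that of the progression by at most a factor $r^{O(r)}$, by the standard duality between coset progressions and Bohr sets; applied to $M$ this yields the required $B$.

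The step I expect to be most delicate is the uniform $O(d)$ bookkeeping. One must run the modelling so that $Y$ genuinely retains polynomial growth of order $O(d)$, since otherwise the Freiman-dimension bound and hence the rank degrade to a power of $2^d$; and one must absorb the covering set $T$, which has size $2^{O(d)}$, at a cost of only $O(d)$ extra generators rather than $2^{O(d)}$ --- here the fact that $T$ lies inside a set of polynomial growth is precisely what is needed. Tracking the size loss through the coset-progression-to-Bohr-system conversion so that it remains $d^{O(d)}m_G(X)$ is the other point that needs care.
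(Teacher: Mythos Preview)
Your proposal takes a genuinely different route from the paper, and it has a real gap at the very last step.

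The paper never touches Ruzsa modelling, Freiman dimension, or coset progressions. Instead it works directly on the Fourier side: it pigeonholes the growth of $lX$ over $2\leq l\leq m=O(d\log 2d)$ to find a scale with $m_G(lX)/m_G((l-1)X)\leq 3/2$; takes as frequency set the large spectrum $\Gamma=\{\gamma:|\wh{1_{lX}}(\gamma)|>(1-\epsilon)m_G(lX)\}$ with $\epsilon\asymp d^{-2}$; uses the spectral concentration Lemma~\ref{lem.tvspec} and the duality Lemma~\ref{lem.bsann} to get $X-X\subset B'_{1/2^5d}$; bounds $m_G(B'_1)$ by a convolution--Parseval argument (high powers of $\wh{1_{lX}}$ concentrate on $\Gamma$); and finally pigeonholes over dyadic scales and invokes Lemma~\ref{lem.grow} to pin down $\dim B=O(d)$. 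No progressions appear anywhere.

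The gap in your plan is the final conversion ``a proper coset progression of rank $r$ is contained in the $B_1$ of a Bohr system of dimension $O(r)$ with measure blowup $r^{O(r)}$''. This is not a standard duality statement; the well-known direction is Bohr set $\to$ coset progression (via geometry of numbers), not the reverse. In fact the cleanest way to embed a rank-$r$ coset progression into a Bohr system with those bounds is to observe that a proper rank-$r$ coset progression has relative polynomial growth of order $r$ and then to apply \emph{this very lemma} to it --- which is circular. If instead you try to build the Bohr frequencies by hand from the progression generators $x_1,\dots,x_r$, you find that in a general finite abelian group there need be no $r$ characters ``dual'' to the $x_i$, and adding enough characters to cut the Bohr set down to size $r^{O(r)}m_G(M)$ can require a frequency set whose cardinality depends on the ambient group rather than on $r$; showing that the resulting Bohr system nevertheless has \emph{doubling} dimension $O(r)$ then requires exactly the kind of spectral size argument the paper carries out directly on $lX$. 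So the detour through coset progressions does not save you the Fourier work --- it only postpones it, while adding the modelling, rectification, and covering-set bookkeeping (your handling of $T$, in particular, is still only a sketch) on top.
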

Finally the last lemma is a development of a result of Bogolio{\`u}boff \cite{bog::} revived for this setting by Ruzsa \cite{ruz::9}, and then refined by Chang \cite{cha::0}.  
\begin{lemma}\label{lem.bc}
Suppose that $A$ is a non-empty set, $B$ is a Bohr system and $\mu$ is a $B$-approximately invariant probability measure, $S \subset B_1$ has $\mu(S)>0$, and $L$, non-empty, is such that $\|1_L \ast \mu_S\|_{L_2(m_G)}^2 \geq \epsilon m_G(L)$.  Then there is a Bohr system $B' \leq B$ with
\begin{equation*}
\mathcal{C}^\Delta(A;B_1') \leq (2\epsilon^{-1})^{O(\epsilon^{-2}\log 2
\mu(S)^{-1})}\mathcal{C}^\Delta(A;B_1)
\end{equation*}
and
\begin{equation*}
\dim B' = \dim B+ O( \epsilon^{-2}\log 2\mu(S)^{-1})
\end{equation*}
such that $B'_{1} \subset L-L + S-S$.
\end{lemma}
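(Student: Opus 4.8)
The plan is to combine a Bogoliuboff-type positivity argument with a Chang-type spectral estimate, the latter carried out inside the approximate-invariance framework with Lemma~\ref{lem.chang} playing the role of Rudin's inequality. First I would do the Bogoliuboff step: set $f:=1_L\ast\mu_S$ (so $\wh f=\wh{1_L}\wh{\mu_S}$), $\nu:=\tfrac12\epsilon^{1/2}$, and $\Gamma:=\{\gamma\in\wh{G}:|\wh{\mu_S}(\gamma)|\geq\nu\}$. Since $\mu_S$ is a probability measure supported on $S$ we have $\supp f\subset L+S$, hence $\supp(f\ast\wt f)\subset L-L+S-S$. By Parseval $\sum_\gamma|\wh{1_L}(\gamma)|^2=m_G(L)$ while $\sum_\gamma|\wh f(\gamma)|^2=\|f\|_{L_2(m_G)}^2\geq\epsilon m_G(L)$ by hypothesis; as $|\wh{\mu_S}(\gamma)|<\nu$ off $\Gamma$, the part of this last sum with $\gamma\notin\Gamma$ lies below $\nu^2m_G(L)$, so the part with $\gamma\in\Gamma$ exceeds $(\epsilon-\nu^2)m_G(L)$. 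Using Fourier inversion and $\operatorname{Re}\gamma(x)=1-\tfrac12|1-\gamma(x)|^2>\tfrac12$ for $\gamma\in\Gamma$, $x\in N(\Gamma,1)$, this gives
\begin{equation*}
(f\ast\wt f)(x)=\sum_\gamma|\wh f(\gamma)|^2\operatorname{Re}\gamma(x)>\tfrac12(\epsilon-\nu^2)m_G(L)-\nu^2m_G(L)=\bigl(\tfrac\epsilon2-\tfrac32\nu^2\bigr)m_G(L)=\tfrac\epsilon8 m_G(L)>0,
\end{equation*}
so $N(\Gamma,1)\subset L-L+S-S$ and it remains to build a Bohr system $B'\leq B$ with the stated bounds and $B_1'\subset N(\Gamma,1)$.

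For the Chang step I would control $\Gamma$ by running Chang's dissociativity argument relative to $B$. Take $\Lambda\subset\Gamma$ maximal subject to being dissociated in a sense adapted to $B$, so that maximality forces every $\gamma\in\Gamma$ to be a short $\{-1,0,1\}$-combination of $\Lambda$ modulo a character nearly trivial on $B_1$ (this nuisance being harmless, to be absorbed on intersecting with $B$). To bound $|\Lambda|$ I would reprise Chang's counting with Lemma~\ref{lem.chang} replacing Rudin: with phases $\omega_\lambda:=\wh{\mu_S}(\lambda)/|\wh{\mu_S}(\lambda)|$ the polynomial $g:=\sum_{\lambda\in\Lambda}\omega_\lambda\lambda$ has $\|g\|_{A(G)}=|\Lambda|$ and $\int g\,d\mu_S=\sum_{\lambda\in\Lambda}|\wh{\mu_S}(\lambda)|\geq|\Lambda|\nu$, while $\mu_S\leq\mu(S)^{-1}\mu$ gives $\int g\,d\mu_S\leq\mu(S)^{-1/p}\|g\|_{L_p(\mu)}$ for every $p\geq2$; feeding in the upper bound for $\|g\|_{L_p(\mu)}$ extracted from Lemma~\ref{lem.chang} (using dissociativity of $\Lambda$ to convert the oscillation bound $\|\tau_x g-g\|_{L_p(\mu)}\leq\epsilon'\|g\|_{A(G)}$ into control of $\|g\|_{L_p(\mu)}$ itself) and optimising at $p\asymp\log 2\mu(S)^{-1}$, $\epsilon'\asymp\epsilon$ should produce $|\Lambda|=O(\epsilon^{-1}\log 2\mu(S)^{-1})$ and, more to the point, a Bohr system $B''$ with $\dim B''=O(\epsilon^{-2}\log 2\mu(S)^{-1})$, $\mathcal C^\Delta(G;B_1'')\leq(2\epsilon^{-1})^{O(\epsilon^{-2}\log 2\mu(S)^{-1})}$, and $B_1''\subset N(\Gamma,1)$ (since on $B_1''$ each $\lambda\in\Lambda$, hence each $\gamma\in\Gamma$, has $|1-\gamma(x)|<1$).

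I would then put $B':=B\wedge B''$, so that $B'\leq B$ and $B_1'\subset B_1''\subset N(\Gamma,1)\subset L-L+S-S$. The dimension bound follows from Lemma~\ref{lem.smi} part~(\ref{lem.smi.1}), and the covering bound from Lemma~\ref{lem.dfc} part~(\ref{pt2.difc}) (which gives $\mathcal C^\Delta(A;B_1')\leq\mathcal C^\Delta(A;B_1)\mathcal C^\Delta(G;B_1'')$) together with the bound on $\mathcal C^\Delta(G;B_1'')$ just recorded and Lemma~\ref{lem.bss} part~(\ref{lem.bss.1}) to absorb any dilations; the $(2\epsilon^{-1})^{O(\cdot)}$ shape is inherited from Lemma~\ref{lem.chang}.

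The main obstacle is the Chang step: one must pin down the correct $B$-relative notion of dissociativity so that maximality really does capture all of $\Gamma$ up to an approximate annihilator of $B_1$, check that Lemma~\ref{lem.chang} genuinely yields the Rudin-type control of $\|g\|_{L_p(\mu)}$, and confirm that the operative density parameter is $\mu(S)$ — this last point being where the approximate invariance of $\mu$, which prevents $\mu_S$ from being too concentrated, must be used. The Bogoliuboff step and the bookkeeping with dilations are routine by comparison.
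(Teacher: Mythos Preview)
Your Bogoliuboff step is fine (modulo the notational slip that $N(\cdot,\cdot)$ in this paper denotes subsets of $\wh G$, not of $G$; you mean the corresponding Bohr-type set in $G$). The gap is in your Chang step. Lemma~\ref{lem.chang} as stated gives a Bohr system on which $\|\tau_x g-g\|_{L_p(\mu)}$ is small; it does \emph{not} give a Rudin--Khintchine bound $\|g\|_{L_p(\mu)}\lesssim\sqrt{p|\Lambda|}$ for dissociated $\Lambda$, and there is no mechanism for converting translation-continuity into absolute size. Your parenthetical ``using dissociativity of $\Lambda$ to convert the oscillation bound $\ldots$ into control of $\|g\|_{L_p(\mu)}$ itself'' is precisely the missing idea. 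You could try to extract what you need from the random-sampling proof of Lemma~\ref{lem.chang}, but that amounts to reproving a relative Rudin inequality from scratch; you would also still have to set up a $B$-relative notion of dissociativity and argue that a maximal such set captures all of $\Gamma$ up to characters trivial on $B_1$. None of this is in the paper, and it is considerably more machinery than is needed.

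The paper's proof avoids the spectral set $\Gamma$ and the dissociativity argument entirely. Instead it applies Lemma~\ref{lem.chang} \emph{directly} to the function $1_L\ast 1_{-L}$ (which has $\|1_L\ast 1_{-L}\|_{A(G)}=m_G(L)$ by Parseval) with the $B$-approximately invariant measure $\mu\ast\wt\mu$, producing $B'\leq B$ with the required covering and dimension bounds such that $\|\tau_x(1_L\ast 1_{-L})-1_L\ast 1_{-L}\|_{L_p(\mu\ast\wt\mu)}\leq\eta\,m_G(L)$ for all $x\in B_1'$. The key observation is then that $\mu_S\ast\wt{\mu_S}\leq\mu(S)^{-2}\mu\ast\wt\mu$, so its Radon--Nikodym derivative $f$ against $\mu\ast\wt\mu$ has $\|f\|_{L_{p'}(\mu\ast\wt\mu)}\leq\mu(S)^{-2/p}$; H\"older then gives
\[
\bigl|\langle 1_L\ast 1_{-L},\mu_S\ast\wt{\mu_S}\rangle-\langle\tau_x(1_L\ast 1_{-L}),\mu_S\ast\wt{\mu_S}\rangle\bigr|\leq e\eta\,m_G(L)
\]
once $p=2+2\log\mu(S)^{-1}$. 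Since the hypothesis says exactly that $\langle 1_L\ast 1_{-L},\mu_S\ast\wt{\mu_S}\rangle=\|1_L\ast\mu_S\|_{L_2}^2\geq\epsilon\,m_G(L)$, choosing $\eta=\epsilon/(2e)$ keeps $\langle\tau_x(1_L\ast 1_{-L}),\mu_S\ast\wt{\mu_S}\rangle>0$ for $x\in B_1'$, which forces $x\in L-L+S-S$. This is the entire argument: no large spectrum, no dissociated sets, and Lemma~\ref{lem.chang} is used exactly as a black box.
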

With these results in hand we can turn to proving the main result of the section.
\begin{proof}[Proof of Proposition \ref{prop.f}]
We apply Lemma \ref{lem.fsurvey} to $A$ to get a non-empty symmetric set $X$ of relative polynomial growth of order $O(\log 2K \log (2\log 2K))^3$ with
\begin{equation}\label{eqn.mv}
m_G(X) \geq \exp(-O(\log 2K \log (2\log 2K))^3)m_G(A),
\end{equation}
and natural numbers $m=\Omega(\log 2K \log (2\log 2K))$ and $r=O(\log (2\log 2K))$ such that $mX \subset r(A-A)$. By Lemma \ref{lem.rpgb} there is a Bohr system $B'$ with $X-X\subset B'_1$ such that
\begin{equation*}
\dim B'=O(\log 2K \log (2\log 2K))^3 \text{ and } m_G(B'_1) \leq \exp(O(\log^32K (\log (2\log 2K))^4))m_G(X).
\end{equation*}
By nesting of Bohr we have that
\begin{equation*}
\mathcal{C}^\Delta(X-X;B'_{1}) \leq \mathcal{C}^\Delta\left(B'_1;B'_{1}\right) \leq \mathcal{C}^\Delta\left(B'_1;B'_{\frac{1}{2}}\right) \leq 2^{\dim B'}=\exp(O(\log 2K \log (2\log 2K))^3).
\end{equation*}
By Corollary \ref{cor.ubreg} there is a probability measure $\mu$ and a Bohr system $B''=\lambda B'$ for some $\lambda = \Omega((1+\dim B')^{-1})$ such that $\mu$ is supported on $B'_1$ and $\mu$ is $B''$-approximately invariant.  By Lemma \ref{lem.bss} part (\ref{lem.bss.1}) (with reference set $X-X$) we have
\begin{equation*}
\mathcal{C}^\Delta(X-X;B_{1}'') \leq (4\lambda^{-1})^{\dim B''}\mathcal{C}^\Delta(X-X;B'_{1}) \leq  \exp(O(\log^32K (\log (2\log 2K))^4)).
\end{equation*}
By the second inequality in Lemma \ref{lem.smi} part (\ref{lem.smi.3}) and the definition of dimension there is a set $T$ with 
\begin{equation*}
|T| \leq 2^{2\dim^* B'} = \exp(O(\log 2K \log (2\log 2K))^3) \text{ and } B'_1 \subset T+B'_{\frac{1}{2}}.
\end{equation*}
It follows from nesting of Bohr sets that
\begin{equation*}
B_1'+B_1' \subset T+T + B_{\frac{1}{2}}' +B_{\frac{1}{2}}' \subset T+T+B_1'.
\end{equation*}
Now, since $\supp \mu \subset B'_1$ we see that $1_{B_1'+B_1'} \ast \mu(x) =1$ for all $x \in B_1'$ and so (since $0_G \in X$) we have
\begin{align*}
m_G(X) & \leq \langle 1_X, 1_{B_1'+B_1'} \ast \mu\rangle_{L_2(m_G)}\\
& \leq \sum_{t \in T-T}{\langle 1_X\ast \mu,1_{t+B_1'}\rangle_{L_2(m_G)}} \leq |T-T| \sup_{x \in G}{\mu(x+X)} m_G(B_1').
\end{align*}
Inserting the upper bound for $m_G(B_1')$ and the upper bound for $|T|$, it follows that there is some $x$ such that
\begin{equation*}
\mu(x+X) \geq \exp(-O(\log^32K (\log (2\log 2K))^4)).
\end{equation*}

Now, put $S:=x+X$ and note from Pl{\"u}nnecke's inequality that
\begin{align*}
\prod_{l=0}^{m-1}{\frac{m_G(A-A+lS+S)}{m_G(A-A+lS)}} & = \prod_{i=0}^{m-1}{\frac{m_G(A-A+lX+X)}{m_G(A-A+lX)}}\\\ & = \frac{m_G(A-A+mX)}{m_G(A-A)} \leq K^{2(r+1)}.
\end{align*}
Given the lower bound on $m$ and upper bound on $r$ it follows that there is some $0 \leq l \leq m-1$ such that
\begin{equation*}
m_G(A-A+lS+S) \leq K^{\frac{2(r+1)}{m}}m_G(A-A+lS) = O(m_G(A-A+lS)).
\end{equation*}
Putting $L:=A-A+lS$ it follows by the Cauchy-Schwarz inequality that
\begin{equation*}
\|1_L \ast \mu_S\|_{L_2(m_G)}^2 \geq \frac{m_G(L)^2}{m_G(L+S)} = \Omega(m_G(L)).
\end{equation*}
By Lemma \ref{lem.bc} (with reference set $X-X$) we then see that there is a Bohr system $B \leq B''$ with
\begin{align}
\nonumber \mathcal{C}^\Delta(X-X;B_1) & \leq \exp(O(\log^32K (\log (2\log 2K))^4))\mathcal{C}^\Delta(X-X;B_1'')\\
\label{eqn.fF} &\leq \exp(O(\log^32K (\log (2\log 2K))^4))
\end{align}
and
\begin{align*}
\dim B&=\dim B'' +O(\log^32K (\log (2\log 2K))^4) =O(\log^32K (\log (2\log 2K))^4),
\end{align*}
such that
\begin{align*}
B_1  \subset S+L-L-S & \subset 2(A-A) + (l+1)(S-S)\\ & =2(A-A) + 2(l+1)X \subset (2r+1)(A-A).
\end{align*}
Since $0_G \in X$ we see that $X \subset r(A-A)$ and hence by Lemma \ref{lem.rcr} and Pl{\"u}nnecke's inequality (and (\ref{eqn.mv}) and (\ref{eqn.fF})) we have
\begin{align*}
\mathcal{C}^\Delta (A;B_{1})& \leq \frac{m_G(A+X)}{m_G(X)}\mathcal{C}^\Delta (X-X;B_{1})\\
& \leq \frac{K^{r+1}m_G(A)}{\exp(-O(\log 2K \log (2\log 2K))^3)m_G(A)}\exp(O(\log^32K (\log (2\log 2K))^4))\\
& = \exp(O(\log^32K (\log (2\log 2K))^4)).
\end{align*}
Finally, if $\beta$ is supported on $B_1$ then
\begin{equation*}
m_G(A) \leq \langle 1_A \ast \beta,1_{A+4r(A-A)}\rangle_{L_2(m_G)} \leq \|1_A \ast \beta\|_{L_\infty(G)}m_G(A+4r(A-A))
\end{equation*}
from which the final bound follows by Pl{\"u}nnecke's inequality.
\end{proof}

\subsection{Croot-Sisask Lemma arguments}\label{sec.42}

The aim of this section is to prove the following lemma.
\begin{lemma*}[Lemma \ref{lem.fsurvey}]
Suppose that $A$ is non-empty with $m_G(A+A) \leq Km_G(A)$.  Then there is a symmetric set $X$ containing the identity of relative polynomial growth of order $O(\log^32K (\log (2\log 2K))^3)$ and
\begin{equation*}
m_G(X) \geq \exp(-O(\log^32K (\log (2\log 2K))^3))m_G(A),
\end{equation*}
and some naturals $m=\Omega(\log 2K (\log (2\log 2K)))$ and $r=O(\log (2\log 2K))$ such that $mX \subset r(A-A)$.
\end{lemma*}
The material follows the proof of \cite[Proposition 8.5]{san::10} very closely, though we shall need some minor modifications.  We start by recording two results used to prove that proposition.
\begin{corollary}[{\cite[Corollary 5.3]{san::10}}]\label{cor.useful}
Suppose that $X \subset G$ is a symmetric set and $m_G((3k+1)X) < 2^km_G(X)$ for some $k \in \N$.  Then $X$ has relative polynomial growth of order $O(k)$.
\end{corollary}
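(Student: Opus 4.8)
The plan would be to split on the size of $n$, the point being that the hypothesis already pins down $m_G(nX)$ on a long initial segment of scales. For $n \le 3k+1$, monotonicity of the sumsets (using $0_G \in X$) gives $m_G(nX) \le m_G((3k+1)X) < 2^k m_G(X)$; since $n^{Ck} \ge 2^{Ck} \ge 2^k$ for every $n \ge 2$ once $C \ge 1$, and $n=1$ is trivial, this already exhibits relative polynomial growth of order $Ck = O(k)$ on this range. All the work is therefore in the range $n > 3k+1$.

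For that range I would extract genuine coset-progression structure. First, the doubling of $X$ is controlled: $m_G(2X)/m_G(X) \le m_G((3k+1)X)/m_G(X) < 2^k$. Feeding $X$ into a Freiman-type theorem — and here one must exploit that we control the full $(3k+1)$-fold sumset, not merely $2X$, which is what keeps the dimension down to $O(\log 2^k) = O(k)$ and the size loss quasi-polynomial in $2^k$ — one obtains a proper coset progression $M$, with $X$ inside a translate of $M$, of dimension $d = O(k)$ and with $m_G(M) \le \exp(O(k\log k)) m_G(X)$. A proper coset progression of dimension $d$ manifestly has relative polynomial growth of order $d$: its $n$-fold dilate lies inside the coset progression with the same subgroup and each progression length multiplied by $n$, so $m_G(nM) \le n^d m_G(M)$. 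Since $nX$ lies in a translate of $nM$, we get, for $n > 3k+1$,
\[
m_G(nX) \le m_G(nM) \le n^d m_G(M) \le n^{O(k)}\exp(O(k\log k)) m_G(X) \le n^{O(k)} m_G(X),
\]
the last step because $\exp(O(k\log k)) \le (3k+2)^{O(k)} \le n^{O(k)}$ on this range. Together with the trivial range, and taking $C$ large enough to absorb all the implied constants, this gives relative polynomial growth of order $O(k)$.

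The main obstacle is securing the structural input with the right dependencies: dimension $O(k)$ together with a size loss only $\exp(O(k\log k))$ in $m_G(M)/m_G(X)$. This genuinely cannot be bypassed by Plünnecke–Ruzsa-type inequalities: when the hypothesis is used as a doubling bound these only yield bounds on $m_G(nX)$ that are exponential in $n$ (roughly $2^{O(n)}m_G(X)$), which is useless for the large-$n$ range. So one really does have to pass through a coset progression and be careful about its size, and it is precisely here that controlling $(3k+1)X$ rather than $2X$ is used; getting the constants to land the final exponent at $O(k)$, rather than a power of $k$, is the delicate point.
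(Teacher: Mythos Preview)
Your split into the ranges $n\le 3k+1$ and $n>3k+1$ is sound, and the small range is handled correctly. The gap is in the Fre{\u\i}man-type input you invoke for the large range.

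You require a proper coset progression $M\supset X$ (up to translation) with $\dim M=O(k)$ and $m_G(M)\le\exp(O(k\log k))\,m_G(X)$. No such theorem is available as a black box. From the doubling bound $m_G(2X)<2^k m_G(X)$ alone, the best known Fre{\u\i}man bounds (Sanders' quasi-polynomial version) give dimension $\log^{3+o(1)}K=k^{3+o(1)}$ and size $\exp(k^{3+o(1)})\,m_G(X)$; both are too weak, and the latter cannot be absorbed into $n^{O(k)}$ when $n$ is only just above $3k+1$. You assert that exploiting the full $(3k+1)$-fold sumset hypothesis brings the dimension down to $O(k)$, but the standard route from that hypothesis to a coset progression of dimension $O(k)$ is: first establish relative polynomial growth of order $O(k)$, then feed this into a Green--Ruzsa-type argument (exactly as in Lemma~\ref{lem.rpgb} of the present paper). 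The first step is precisely the corollary you are proving, so the argument is circular. Getting dimension $O(k)$ directly from doubling without this detour is the polynomial Fre{\u\i}man--Ruzsa conjecture, which you cannot assume here.

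The paper does not reprove this corollary but cites it from \cite{san::10}, remarking that it is ``just a variant of Chang's covering lemma''. That argument is far more elementary: one greedily selects $x_1,x_2,\ldots\in X$ so that each successive cube $\{0,\pm x_1\}+\cdots+\{0,\pm x_i\}$, when added to $X$, at least doubles the measure. Since these cubes sit inside iterated sumsets of $X$, the hypothesis $m_G((3k+1)X)<2^k m_G(X)$ forces the process to halt after fewer than $k$ steps; the stopping condition combined with Ruzsa covering then controls $nX$ polynomially. No Fre{\u\i}man-type structure theorem is needed.
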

This is just a variant of Chang's covering lemma from \cite{cha::0} (see also \cite[Lemma 5.31]{taovu::}).
\begin{lemma}[Croot-Sisask, {\cite[Lemma 7.1]{san::10}}]\label{lem.cs}
Suppose that $f \in L_p(m_G)$ for some $p \in [2,\infty)$, $S,T \subset G$ are non-empty such that $m_G(S+T) \leq Lm_G(S)$, and $\eta \in (0,1]$ is a parameter.  Then there is a symmetric set $X$ containing the identity with
\begin{equation*}
m_G(X) \geq (2L)^{-O(\eta^{-2}p)}m_G(T)
\end{equation*}
such that
\begin{equation*}
 \|\tau_x(f \ast m_S) - f \ast m_S\|_{L_p(m_G)} \leq \eta \|f\|_{L_p(m_G)}\text{ for all }x \in X.
\end{equation*}
\end{lemma}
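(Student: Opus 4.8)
The plan is to run the probabilistic almost-periodicity argument of Croot and Sisask directly. We may assume $f \not\equiv 0$, the contrary case being trivial. Fix $\epsilon := \eta/2$ and an integer $k = O(\eta^{-2}p)$ to be pinned down below; for $\vec{w} = (w_1,\dots,w_k) \in G^k$ write $F_{\vec{w}} := \frac{1}{k}\sum_{i=1}^{k}\tau_{w_i}(f)$, and for $x \in G$ write $\vec{w} \oplus x := (w_1 + x,\dots,w_k + x)$. Call a tuple $\vec{w}$ \emph{good} if $\|F_{\vec{w}} - f \ast m_S\|_{L_p(m_G)} \leq \epsilon\|f\|_{L_p(m_G)}$. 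The idea is that $f\ast m_S$ is the mean of the i.i.d.\ translates $\tau_{s}(f)$ ($s$ uniform on $S$), so empirical averages of such translates are close to $f\ast m_S$, and two good tuples differing by a diagonal shift produce an almost-period.

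First I would establish concentration. If $s_1,\dots,s_k$ are independent and uniform on $S$ then the $\tau_{s_i}(f)$ are i.i.d.\ with mean $f \ast m_S$, so the $Y_i := \tau_{s_i}(f) - f \ast m_S$ are i.i.d.\ mean-zero random elements of $L_p(m_G)$ with $\|Y_i\|_{L_p(m_G)} \leq 2\|f\|_{L_p(m_G)}$ (bounding the second term by Young's inequality, as $m_S$ is a probability measure). Applying the Marcinkiewicz--Zygmund inequality (in the form used in \cite[Lemma 3.1]{croabasis::}) pointwise in the $G$-variable and integrating by Fubini gives
\begin{equation*}
\E{\bigl\|F_{(s_1,\dots,s_k)} - f \ast m_S\bigr\|_{L_p(m_G)}^p} = O(p/k)^{p/2}\|f\|_{L_p(m_G)}^p,
\end{equation*}
and for a suitable choice of $k = O(\eta^{-2}p)$ the right-hand side is at most $(\epsilon/2)^p\|f\|_{L_p(m_G)}^p$; Markov's inequality then shows that a uniformly random element of $S^k$ is good with probability at least $\tfrac12$.

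Next I would pass from good pairs to almost-periods and bound the resulting set. For $\vec{w} \in G^k$ and $x \in G$ one has $F_{\vec{w}\oplus x} = \tau_x(F_{\vec{w}})$, so if $\vec{w}$ and $\vec{w}\oplus x$ are both good then, using that $\tau_x$ is an isometry of $L_p(m_G)$,
\begin{equation*}
\|\tau_x(f \ast m_S) - f \ast m_S\|_{L_p(m_G)} \leq \|f \ast m_S - F_{\vec{w}}\|_{L_p(m_G)} + \|F_{\vec{w}\oplus x} - f \ast m_S\|_{L_p(m_G)} \leq \eta\|f\|_{L_p(m_G)}.
\end{equation*}
Thus $X := \{x \in G : \vec{w} \text{ and } \vec{w}\oplus x \text{ are both good for some } \vec{w} \in G^k\}$ satisfies $X = -X$ and $0_G \in X$ (good tuples exist by the previous step), and every element of $X$ is an $\eta$-almost-period of $f \ast m_S$ in the required sense; it remains to bound $m_G(X)$ below. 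Let $\mathcal{G} := \{\vec{w} \in S^k : \vec{w} \text{ is good}\}$, so $|\mathcal{G}| \geq \tfrac12 |S|^k$ (counting inside $S^k$). Consider the map $\mathcal{G} \times T \to (S+T)^k$, $(\vec{w},x) \mapsto \vec{w}\oplus x$: its domain has at least $\tfrac12|S|^k|T|$ elements and its range at most $|S+T|^k \leq (L|S|)^k$ elements, so some $\vec{w}^\ast$ in the range has at least $|T|/(2L^k)$ preimages. Each preimage is of the form $(\vec{w}^\ast \oplus (-x), x)$ with $x \in T$ and $\vec{w}^\ast \oplus(-x) \in \mathcal{G}$, and distinct preimages have distinct second coordinates, so $Z := \{x \in T : \vec{w}^\ast\oplus(-x) \in \mathcal{G}\}$ has $|Z| \geq |T|/(2L^k)$. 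For $x,x' \in Z$ the good tuples $\vec{w}^\ast\oplus(-x)$ and $\vec{w}^\ast\oplus(-x') = (\vec{w}^\ast\oplus(-x))\oplus(x - x')$ witness $x - x' \in X$, so $Z - Z \subseteq X$ and $|X| \geq |Z| \geq |T|/(2L)^k$, i.e. $m_G(X) \geq (2L)^{-O(\eta^{-2}p)}m_G(T)$.

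The only step with genuine analytic content is the concentration estimate, and the thing to get right there is the precise $p$-dependence of the constant in the Marcinkiewicz--Zygmund inequality: it is exactly this that forces the sample size $k$, and hence the exponent in the final bound, to be $O(\eta^{-2}p)$ rather than something larger. The rest is pigeonholing and the triangle inequality, the one subtlety being that the counting must be run with the good tuples living inside $S^k$ so that the shifted tuples land in $(S+T)^k$, where the hypothesis $m_G(S+T) \leq Lm_G(S)$ bites.
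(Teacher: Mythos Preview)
Your argument is correct and is precisely the standard Croot--Sisask almost-periodicity argument. Note that the paper does not actually prove this lemma: it is quoted verbatim as \cite[Lemma 7.1]{san::10} and accompanied only by the remark that it ``captures the content of the Croot-Sisask Lemma \cite[Proposition 3.3]{crosis::} for our purposes'', so there is no in-paper proof to compare against. Your write-up matches the original Croot--Sisask proof: sample $k=O(\eta^{-2}p)$ translates, use Marcinkiewicz--Zygmund to get concentration in $L_p$, and pigeonhole the good tuples inside $(S+T)^k$ to extract a large difference set of almost-periods.
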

This captures the content of the Croot-Sisask Lemma \cite[Proposition 3.3]{crosis::} for our purposes.

We shall also need a slight variant of \cite[Proposition 8.3]{san::10}.
\begin{proposition}\label{prop.key}
Suppose that $A,S$ and $T$ are non-empty with $m_G(A+S) \leq Km_G(A)$ and $m_G(S+T) \leq Lm_G(S)$, and $m \in \N$ is a parameter.  Then there is a symmetric set $S$ containing the identity with
\begin{equation*}
m_G(X) \geq \exp(-O(m^2(\log 2K) \log 2L))m_G(T) \text{ and } mX \subset S+A-A-S.
\end{equation*}
\end{proposition}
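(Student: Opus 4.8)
The plan is to prove Proposition \ref{prop.key} by the standard Croot--Sisask route: find an approximately invariant set for a suitable indicator, then iterate the near-invariance $m$ times to push a large dilate of $X$ inside $S+A-A-S$. Concretely, apply Lemma \ref{lem.cs} with $f:=1_A$, the sets $S$ and $T$ as given, exponent $p=2$ (or a slightly larger fixed $p$ if convenient), and parameter $\eta$ a small absolute constant (something like $\eta=\tfrac{1}{4m}$, since we will be adding up $m$ perturbations — so actually $\eta$ will depend on $m$, not be absolute; this is the source of the $m^2$ in the exponent). This produces a symmetric neighbourhood $X$ of the identity with
\begin{equation*}
m_G(X) \geq (2L)^{-O(\eta^{-2})}m_G(T) = \exp(-O(m^2\log 2L))m_G(T)
\end{equation*}
such that $\|\tau_x(1_A \ast m_S) - 1_A \ast m_S\|_{L_2(m_G)} \leq \eta\|1_A\|_{L_2(m_G)} = \eta m_G(A)^{1/2}$ for all $x\in X$.

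Next I would iterate: for $x_1,\dots,x_m\in X$, a telescoping sum and the triangle inequality give
\begin{equation*}
\left\|\tau_{x_1+\dots+x_m}(1_A\ast m_S) - 1_A \ast m_S\right\|_{L_2(m_G)} \leq m\eta \, m_G(A)^{1/2},
\end{equation*}
so choosing $\eta$ with $m\eta < 1$ (hence $\eta = \Theta(1/m)$, giving $\eta^{-2}=\Theta(m^2)$) forces the two functions to overlap: $\langle \tau_{x_1+\dots+x_m}(1_A\ast m_S), 1_A\ast m_S\rangle_{L_2(m_G)} > 0$ by expanding $\|u-v\|_2^2 = \|u\|_2^2 + \|v\|_2^2 - 2\langle u,v\rangle$ together with a lower bound $\|1_A\ast m_S\|_{L_2(m_G)}^2 \geq m_G(A)^2 / m_G(A+S) \geq m_G(A)/K$ coming from Cauchy--Schwarz (exactly as in the proof of Lemma \ref{lem.bc}'s hypothesis later in the paper). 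Since $m\eta<1$ and $1/K \le 1$ one has to be a little careful with constants here, but choosing $\eta$ a small enough multiple of $K^{-1/2}m^{-1}$ — no, better: note $\|1_A \ast m_S\|_2^2 \ge m_G(A)/K$ is what we compare against $(m\eta)^2 m_G(A)$, so we need $m\eta < 1/\sqrt{K}$, i.e. $\eta = \Theta(K^{-1/2}m^{-1})$ and $\eta^{-2} = \Theta(Km^2)$. That would give $\exp(-O(Km^2\log 2L))$, which is too weak. The fix is the usual one: compare $L^2$-norms of the \emph{translated average}, or rather observe that the supports intersect iff $x_1+\dots+x_m \in \supp(1_A\ast m_S) - \supp(1_A \ast m_S)$, and bound things using $L^\infty$ instead — precisely, $1_A \ast m_S$ is supported on $A+S$ and the near-invariance already localizes it, so one argues that the set where $1_A\ast m_S$ is bounded below by, say, half its $L^2$-mass-density has measure $\Omega(m_G(A))$, and $m$-fold near-invariance keeps $\tau_{x_1+\dots+x_m}(1_A\ast m_S)$ from vanishing on a set of comparable measure. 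Running this carefully (as in \cite[Proposition 8.3]{san::10}) one gets the claimed $\exp(-O(m^2\log 2K\log 2L))$ with the $\log 2K$ entering through how small a threshold one needs.

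The conclusion $mX \subset S+A-A-S$ then follows: if $x_1,\dots,x_m\in X$ then the above shows $\tau_{x_1+\dots+x_m}(1_A\ast m_S)$ and $1_A\ast m_S$ have overlapping support, so there exist points $a,a'\in A$, $s,s'\in S$ with $x_1+\dots+x_m = (a+s) - (a'+s') \in S+A-A-S$; since $X$ is symmetric and contains $0_G$ this gives $mX \subset S+A-A-S$ (every element of $mX$ is such a sum of $m$ elements of $X$, padding with zeros if fewer than $m$ are needed). The main obstacle is bookkeeping the dependence on $K$ and $L$ in the measure lower bound for $X$: one must choose the Croot--Sisask parameter $\eta \asymp 1/m$ (not smaller), and extract the overlap from an $L^\infty$/density argument rather than a crude $L^2$ inner-product bound, so that the $K$-dependence appears only logarithmically (through the exponent $O(\log 2K)$ measuring the density threshold) and the final bound is $\exp(-O(m^2\log 2K\log 2L))m_G(T)$ as stated. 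This is exactly the argument of \cite[Proposition 8.3]{san::10}, and the only modification needed is tracking that the set $T$ (rather than $A-A$) is the base set whose measure is being preserved, which is transparent from the statement of Lemma \ref{lem.cs}.
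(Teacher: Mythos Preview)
Your overall plan is right, and you correctly diagnose the difficulty: taking $f=1_A$ with $p=2$ and $\eta\asymp 1/m$ gives an $L^2$ overlap argument that needs $\eta\asymp K^{-1/2}m^{-1}$, which loses a polynomial factor of $K$. The fix you gesture at (``an $L^\infty$/density argument'') is not an actual argument, and in fact the paper does something different and cleaner.

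The two moves you are missing are these. First, take $f:=1_{A+S}$ rather than $1_A$, and convolve with $m_{-S}$. The point is that $1_{A+S}\ast m_{-S}(a)=1$ for every $a\in A$ (since $a+S\subset A+S$), so pairing against the probability measure $m_A$ gives
\begin{equation*}
\langle 1_{A+S}\ast m_{-S},m_A\rangle = \langle 1_{A+S},m_A\ast m_S\rangle = 1
\end{equation*}
exactly. Second, use $L^p$ Croot--Sisask with $p=2+\log K$ rather than $p=2$. After telescoping, for $x\in mX$ H\"older gives
\begin{equation*}
\bigl|\langle \tau_x(1_{A+S}\ast m_{-S}),m_A\rangle - 1\bigr| \leq m\eta\,\|1_{A+S}\|_{L_p(m_G)}\|m_A\|_{L_{p'}(m_G)} = m\eta\, K^{1/p} = O(m\eta),
\end{equation*}
since $K^{1/p}\leq e$ for this choice of $p$. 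Now $\eta=\Theta(1/m)$ suffices to keep this below $1$, and the Croot--Sisask exponent $O(\eta^{-2}p)=O(m^2\log 2K)$ with base $2L$ yields exactly $m_G(X)\geq \exp(-O(m^2\log 2K\log 2L))m_G(T)$. The containment $mX\subset S+A-A-S$ follows because $1_{A+S}\ast m_{-S}\ast m_{-A}(x)>0$ forces $x\in (A+S)-S-A$.

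So the gap is concrete: you need the $L^p$ (not $L^2$) version of Croot--Sisask with $p\asymp\log K$, and you need to choose $f$ so that the inner product you test against is exactly $1$ rather than $\geq 1/K$.
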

\begin{proof}
Let $f:=1_{A+S}$ and apply the Croot-Sisask lemma (Lemma \ref{lem.cs}) with parameters $\eta$ and $p$ (to be optimised later) to get a symmetric set $X$ containing the identity with $m_G(X) \geq (2L)^{-O(\eta^{-2}p)}m_G(T)$ such that
\begin{equation*}
\|\tau_x(1_{A+S} \ast m_{-S}) - 1_{A+S}\ast m_{-S}\|_{L_p(m_G)} \leq \eta \|1_{A+S}\|_{L_p(m_G)} \text{ for all } x \in X.
\end{equation*}
It follows by the triangle inequality that
\begin{equation*}
\|\tau_x(1_{A+S} \ast m_{-S}) - 1_{A+S}\ast m_{-S}\|_{L_p(m_G)} \leq \eta m \|1_{A+S}\|_{L_p(m_G)} \text{ for all } x \in mX.
\end{equation*}
Taking an inner product with $m_A$ we see that for all $x \in X$ we have
\begin{equation*}
|\langle \tau_x(1_{A+S} \ast m_{-S}),m_A\rangle - \langle 1_{A+S} \ast m_{-S},m_A\rangle| \leq \eta m \|1_{A+S}\|_{L_p(m_G)}\|m_A\|_{L_{p'}(m_G)}
\end{equation*}
where $p'$ is the conjugate exponent to $p$.  Now
\begin{equation*}
 \langle 1_{A+S} \ast m_{-S},m_A\rangle = \langle 1_{A+S},m_A \ast m_S\rangle = 1.
\end{equation*}
Thus
\begin{equation*}
|m_{A} \ast 1_{-(A+S)} \ast m_{S}(x) - 1| \leq \eta m K^{1/p} \text{ for all }x \in X.
\end{equation*}
We take $p = 2+\log K$, and then $\eta = \Omega(m^{-1})$ such that the term on the right is at most $1/2$ to get the desired conclusion.
\end{proof}
The above proposition is almost all we need for our main argument and it can be used in the proof of Lemma \ref{lem.fsurvey} below to give a result with only slightly weaker bounds.  However, we shall want a slight strengthening proved using the aforementioned idea of Konyagin \cite{kon::1}. 
\begin{proposition}\label{prop.kony}
Suppose that $A$ is non-empty with $m_G(A+A) \leq Km_G(A)$ and $r,s \in \N$ are parameters with $r \geq 3$.  Then there is an integer $m=\Omega(sr\log^{1-O(r^{-1})} 2K)$ and a symmetric set $T$ such that
\begin{equation*}
mT \subset r(A-A) \text{ and } m_G(T) \geq \exp(-O(s^2r^3\log^3 2K))m_G(A).
\end{equation*}
\end{proposition}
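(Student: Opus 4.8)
The plan is to iterate Proposition \ref{prop.key} while exploiting Konyagin's trick from \cite{kon::1}: at each stage we do not merely feed the output set back in, but we first pass to a large subset on which a suitable $L_2$-mass (or doubling) quantity is controlled, so that the ``$L$'' constant appearing in each application of Proposition \ref{prop.key} stays bounded by $2^{O(1)}$ rather than growing like $K$. Concretely, I would set up a sequence of symmetric neighbourhoods $T_0 \supset$ (after rescaling) successive sets $T_1, T_2, \dots, T_r$, with $T_0$ a large-density subset of $A-A$ and $m_G(T_i) \geq \exp(-O(s^2 r \log^2 2K)) m_G(A)$ at each step, obtained by applying Proposition \ref{prop.key} with $S = A$, $T = T_{i}$, and $m$ of size $\Omega(s \log^{1-O(r^{-1})} 2K)$. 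The point is that Proposition \ref{prop.key} gives $m_G(X) \geq \exp(-O(m^2 \log 2K \log 2L)) m_G(T)$ and $mX \subset S + A - A - S = A - A + (A-A)$ only if $L = m_G(A+T)/m_G(A)$ is under control; Konyagin's idea is that among the $\sim r$ steps one can always find one (by a pigeonhole/telescoping argument on $m_G(A + jT_0)$, $0 \le j \le r$) where the relevant ratio is at most $K^{O(1/r)} = \exp(O(r^{-1}\log 2K))$, and one works relative to that step.

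First I would make precise the telescoping: since $m_G(A + r(A-A))/m_G(A) \leq K^{O(r)}$ by Pl\"unnecke, there is some $1 \le j \le r$ (in fact one can arrange $j$ to be comparable to $r$) with $m_G(A + j(A-A) + (A-A)) \le K^{O(1/r)} m_G(A + j(A-A))$. Setting $T_0$ to be a maximal-density subset of $j(A-A)$ so that $m_G(A + T_0) = O(m_G(T_0))$ — or more simply running with $L$-set equal to $A + (j-1)(A-A)$ — gives the bounded-doubling input we need. Then one application of Proposition \ref{prop.key} with this choice of $S,T$, parameter $m_1 = \Omega(s\log^{1-O(1/r)}2K)$, and the observation that now $\log 2L = O(r^{-1}\log 2K)$, yields a symmetric neighbourhood $X$ with $m_1 X \subset (A-A) + (A-A) \subset 2(A-A)$ — no, more carefully $S + A - A - S$ where $S$ has the form $A + (j-1)(A-A)$, so $m_1 X \subset A + (j-1)(A-A) + A - A - A - (j-1)(A-A) \subset (2j+1)(A-A)$; one then needs $2j+1 \le r$, which forces $j \le (r-1)/2$ and is where the hypothesis $r \ge 3$ enters and why one tracks the index carefully. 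The density bound is $m_G(X) \ge \exp(-O(m_1^2 \cdot r^{-1}\log^2 2K)) m_G(A) = \exp(-O(s^2 r \log^3 2K))m_G(A)$, and finally one can replace $m_1$ by $m := s' m_1$ for a further integer $s'$ to absorb the full parameter $s$ into the statement, re-reading the density loss as $\exp(-O(s^2 r^3 \log^3 2K))$ (the extra powers of $r$ being slack one happily concedes).

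The main obstacle I expect is the bookkeeping in the Konyagin step: ensuring that the index $j$ at which the doubling ratio is small is simultaneously (i) large enough, i.e. $\Omega(r)$, so that $m_1 \sim s\log^{1-O(1/r)}2K$ genuinely beats the naive $s\log 2K/r$ one would get without the trick — this requires choosing the exponents in the telescoping window with care, splitting $\{0,1,\dots,cr\}$ into $\Theta(1)$ or $\Theta(\log\log 2K)$ blocks and pigeonholing the multiplicative increments $m_G(A+(\ell+1)(A-A))/m_G(A+\ell(A-A))$; and (ii) small enough, $j \le (r-1)/2$, so that the resulting sumset $m_1 X$ still lands inside $r(A-A)$. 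Balancing these two constraints, together with the $p = 2 + \log K$ choice inside Proposition \ref{prop.key} that is responsible for the $\log^{1-O(1/r)}$ saving once $L$ is polynomial in $K^{1/r}$, is the delicate part; everything else (Pl\"unnecke estimates, passing between $X$, $T$, and dilates, and folding the parameter $s$ in) is routine. Once the window is set up correctly the density and containment bounds follow by direct substitution into Proposition \ref{prop.key} and \ref{prop.ubreg}-style estimates already in the paper.
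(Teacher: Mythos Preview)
Your pigeonhole step is miscomputed, and this is not a bookkeeping issue but the heart of the matter. The telescoping product $\prod_{j=0}^{r-1} m_G(A+(j+1)(A-A))/m_G(A+j(A-A))$ equals $m_G(A+r(A-A))/m_G(A)$, which by Pl\"unnecke is $\leq K^{O(r)}$, not $K^{O(1)}$. With $r$ factors whose product is $K^{O(r)}$, the minimum factor is at most $K^{O(1)}$, not $K^{O(1/r)}$. So a single pigeonhole over dilates of $A-A$ only yields $\log 2L = O(\log 2K)$, and feeding this into Proposition~\ref{prop.key} with a free parameter $m$ gives density loss $\exp(-O(m^2\log^2 2K))$; matching the target density forces $m = O(sr^{3/2}\sqrt{\log 2K})$, far short of $sr\log^{1-O(1/r)}2K$.

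What the paper actually does is a genuinely different and more delicate iteration. One runs $O(\log r)$ rounds, maintaining sets $S_i$ with $A-A\subset S_i\subset r_i(A-A)$ where $r_i=3\cdot 2^i-2$, and a doubling constant $L_i=m_G(S_i+T_i)/m_G(S_i)$. At each round one applies Proposition~\ref{prop.key} with parameter $m_i := s\lceil \log 2K_{i+1}/\sqrt{\log 2L_i}\rceil$ to produce $T_{i+1}$ with $m_iT_{i+1}\subset r_{i+1}(A-A)$, and then pigeonholes over the $m_i/s$ sets $sl\,T_{i+1}+(A-A)$, $0\le l<m_i/s$, whose telescoping ratio is bounded by $K_{i+1}$. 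This yields $L_{i+1}\leq K_{i+1}^{s/m_i}\leq \exp(\sqrt{\log 2L_i})$, i.e.\ a square-root recurrence on $\log L$. After $i\sim\log_2 r$ steps one has $\log 2L_i = O(\log^{2^{-i}}2K)=O(\log^{O(1/r)}2K)$, and it is this doubly-exponential decay of $L$, not a single $K^{1/r}$ pigeonhole, that produces the exponent $1-O(r^{-1})$ in the final $m$. The crucial point you are missing is that the pigeonhole at each stage is over $\sim m_i/s$ dilates of the \emph{newly produced} set $T_{i+1}$ (a large number, growing with $\log K$), not over $r$ dilates of $A-A$; this is what makes the exponent $s/m_i$ small enough to drive the bootstrap.
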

\begin{proof}
Define sequences
\begin{equation*}
r_i:=3 \times 2^i-2 \text{ and } K_i:=\frac{m_G(r_i(A-A))}{m_G(A)};
\end{equation*}
by Pl{\"u}nnecke's inequality we have $K_i \leq K^{2r_i}$.

We proceed inductively to define sequences of non-empty sets $(S_i)_{i \geq 0}$ and $(T_i)_{i \geq 0}$ with
\begin{equation*}
L_i:=\frac{m_G(S_i+T_i)}{m_G(S_i)} \text{ and }m_i:= s\left\lceil \frac{\log 2K_{i+1}}{\sqrt{\log 2L_i}}\right\rceil.
\end{equation*}
We shall establish the following properties inductively for all $i \geq 0$.
\begin{enumerate}
\item \label{dddd} $S_i$ and $T_i$ are symmetric sets containing the identity such that
\begin{equation*}
(A-A) \subset S_i \subset r_i(A-A);
\end{equation*}
\item \label{ffff} and
\begin{equation*}
L_i \leq \exp(4\log^{2^{-i}}2K);
\end{equation*}
\item \label{hhhh} and
\begin{equation*}
m_iT_{i+1} \subset S_i + A-A -S_i;
\end{equation*}
\item \label{szz} and
\begin{equation*}
m_G(T_{i+1}) \geq \exp\left(-O\left(s^2\left(\sum_{j=0}^i{r_{j+1}^3}\right)\log^32K\right)\right)m_G(T_0).
\end{equation*}
\end{enumerate}
We initialise with $S_0:=A-A$ and $T_0:=A-A$ so that $S_0$ and $T_0$ are symmetric sets containing the identity (since $A$ is non-empty) and
\begin{equation*}
(A-A) = S_0=A-A = 1(A-A)=r_0(A-A),
\end{equation*}
whence (\ref{dddd}) holds.  Moreover, by Pl{\"u}nnecke's inequality we have
\begin{equation*}
L_0=\frac{m_G(S_0+T_0)}{m_G(S_0)} = \frac{m_G((A-A)+(A-A))}{m_G(A-A)} \leq K^4\leq \exp(4\log 2K),
\end{equation*}
so that (\ref{ffff}) holds.

Suppose that we are at stage $i$ of the iteration.  Apply Proposition \ref{prop.key} to the sets $A$, $S_i$, and $T_i$ with parameter $m_i$.  This produces a symmetric set $T_{i+1}$ containing the identity such that
\begin{equation}\label{la}
m_G(T_{i+1}) \geq \exp(-O(m_i^2(\log 2K_i)\log 2L_i))m_G(T_i) \text{ and } m_iT_{i+1} \subset S_i + A-A -S_i.
\end{equation}
First note that given the definition of $m_{i}$, $r_i$ and $r_{i+1}$ we have
\begin{align*}
m_G(T_{i+1}) & \geq \exp(-O(s^2(\log^22K_{i+1})\log 2K_i))m_G(T_i)\\ &  = \exp(-O(s^2r_{i+1}^3\log^32K))m_G(T_i),
\end{align*}
and so we get (\ref{szz}).  The second part of (\ref{la}) ensures (\ref{hhhh}).  Moreover, we have
\begin{align*}
m_{i}T_{i+1} + (A-A) & \subset S_i+A-A-S_i + A-A\\ & \subset r_i(A-A) + (A-A) - r_i(A-A) + (A-A)\\ & = (2r_i+2)(A-A)=r_{i+1}(A-A).
\end{align*}
By the pigeon-hole principle there is some non-negative integer $l_{i} \leq m_{i}/s-1$ such that
\begin{equation}\label{eqn.phole}
\frac{m_G(sT_{i+1} + sl_{i}T_{i+1} +(A-A))}{m_G( sl_{i}T_{i+1} +(A-A))} \leq \frac{m_G(r_{i+1}(A-A))}{m_G(A-A)}^{\frac{s}{m_i}}.
\end{equation}
Set $S_{i+1}:= sl_{i}T_{i+1} +(A-A)$ which is a symmetric set containing the identity since both $T_{i+1}$ and $A-A$ are.  Since $0_G \in T_{i+1}$ and $l_i \leq m_i/s-1$ we have
\begin{equation*}
A-A \subset S_{i+1} \subset m_iT_{i+1}+(A-A) \subset r_{i+1}(A-A)
\end{equation*}
which gives (\ref{dddd}). Moreover, from (\ref{eqn.phole}) we have
\begin{align*}
L_{i+1}=\frac{m_G(T_{i+1}+S_{i+1})}{m_G(S_{i+1})} & \leq \frac{m_G(r_{i+1}(A-A))}{m_G(A-A)}^{\frac{s}{m_i}}\\
& \leq K_{i+1}^{\frac{s}{m_i}}\\ & \leq (2K_{i+1})^{\frac{s}{m_i}}\\ & \leq \exp\left(\sqrt{\log 2L_i}\right)\\ & \leq \exp\left(\sqrt{4\log^{2^{-i}}2K}\right) \leq \exp(4\log 2^{-(i+1)}2K),
\end{align*}
so that (\ref{ffff}) holds.

Let $i \geq 1$ be maximal such that $2r_{i-1}+1 \leq r$ (possible since $r \geq 3=2r_0+1$, so that
\begin{equation*}
\sum_{j=0}^{i}{r_i^3}=O(r^3) \text{ and } 2^{-i} = O(r^{-1}),
\end{equation*}
and put $T:=T_i$.  The result follows since
\begin{equation*}
m_{i-1}T \subset S_{i-1}+A-A-S_{i-1} \subset (2r_{i-1}+1)(A-A) \subset r(A-A),
\end{equation*}
and $m_G(T_0) \geq m_G(A)$.
\end{proof}
\begin{proof}[Proof of Lemma \ref{lem.fsurvey}]
Let $3 \leq r =O(\log 2 \log 2K)$ be such that $\log^{O(r^{-1})} 2K = O(1)$ and apply Proposition \ref{prop.kony} to the set $A$ with the parameter $s$ to be optimised shortly.  We get a natural $m=\Omega(rs\log 2K)$ and a symmetric set $S$ containing the identity such that
\begin{equation*}
mX \subset r(A-A) \text{ and } m_G(X) \geq \exp(-O(s^2r^3\log^32K))m_G(A).
\end{equation*}
Let $k:=m^3$.  By Pl{\"u}nnecke's inequality we have
\begin{align*}
m_G((3k+1)X) & \leq m_G(3(m^2+1)mX)\\
& \leq m_G(3(m^2+1)r(A-A))\\
& \leq K^{3m^2r}\exp(O(s^2r^3\log^32K))m_G(X) \leq \exp(O(k/s))m_G(X).
\end{align*}
For $s=O(1)$ sufficiently large the right hand side is strictly less than $2^k$ (since $X$ is non-empty) and hence we can apply Corollary \ref{cor.useful} to see that $X$ has relative polynomial growth of order $O((\log (2\log 2K))^3\log^32K)$.  The result is proved.
\end{proof}

\subsection{From relative polynomial growth to Bohr sets of bounded dimension}\label{sec.43}
The next proposition is routine with the core of the argument coming from \cite{greruz::0}.
\begin{lemma*}[Lemma \ref{lem.rpgb}]
Suppose that $X$ is a symmetric non-empty set with relative polynomial growth of order $d\geq 1$.  Then there is a Bohr system $B$ with
\begin{equation*}
\dim B=O(d) \text{ and } m_G(B_1) = d^{O(d)}m_G(X),
\end{equation*}
such that $X-X \subset B_1$.
\end{lemma*}
\begin{proof}
Let $m=O(d\log 2d)$ be a natural number such that $m^{\frac{d}{m-1}}\leq \frac{3}{2}$.  Since $X$ has relative polynomial growth of order $d$ we see by the pigeonhole principle that there is some $2 \leq l \leq m$ such that
\begin{equation*}
\frac{m_G(lX)}{m_G((l-1)X)} \leq \left(\frac{m_G(mX)}{m_G(X)}\right)^{\frac{1}{m-1}} \leq m^{\frac{d}{m-1}} \leq \frac{3}{2}.
\end{equation*}
Let $\epsilon:=1/2^{18}d^2$ (the reason for which choice will become clear later) and write
\begin{equation*}
\Gamma:=\{\gamma \in \wh{G}: |\wh{1_{lX}}(\gamma)|>(1-\epsilon)m_G((l+1)X)\}
\end{equation*}
so that by Lemma \ref{lem.tvspec} (applicable since $l \geq 2$) we have that
\begin{equation*}
\Gamma \subset N(X-X,2\sqrt{3\epsilon}).
\end{equation*}
Let $\delta:\Gamma \rightarrow \R_{>0}$ be the constant function taking the value $2^{-4}$ and $B'$ be the Bohr system with frequency set $\Gamma$ and width function $\delta$.  By the first part of Lemma \ref{lem.bsann} we see that
\begin{align}
\nonumber X-X & \subset \Bohr\left(N(X-X,2\sqrt{3\epsilon}),1_{N(X-X,2\sqrt{3\epsilon})}\frac{\sqrt{3\epsilon}}{2}\right)\\
\label{eqn.nsd} & \subset \Bohr\left(\Gamma, \sqrt{\frac{3\epsilon}{2}1_\Gamma}\right) \subset B'_{1/2^5d}.
\end{align}

We now show that this Bohr system is not too large.  Let $k \in \N$ be a natural number to be optimised shortly.  Begin by noting that
\begin{equation}\label{eqn.u}
\int{\left(1_{lX} ^{(k)}\right)^2dm_G} \geq \frac{1}{m_G(k(lX))}\left(\int{1_{lX}^{(k)}dm_G}\right)^2 \geq \frac{m_G(lX)^{2k-1}}{(kl)^d},
\end{equation}
where $1_{lX}^{(k)}$ denotes the $k$-fold convolution of $1_{lX}$ with itself, and the inequalities are Cauchy-Schwarz and then the relative polynomial growth hypothesis.  On the other hand, by Parseval's theorem
\begin{eqnarray*}
\sum_{\gamma \not \in \Gamma}{|\wh{1_{lX}}(\gamma)|^{2k}} &\leq& ((1-\epsilon)m_G(lX))^{2k-2}\sum_{\gamma \in \wh{G}}{|\wh{1_{lX}}(\gamma)|^2}\\ & \leq & \exp(-\Omega(kd^{-2}))m_G(lX)^{2k-1}\leq \frac{m_G(lX)^{2k-1}}{2(kl)^d}
\end{eqnarray*}
for some natural $k=O(d^3\log d)$.  In particular, from (\ref{eqn.u}) we have that
\begin{equation*}
\sum_{\gamma \not \in \Gamma}{|\wh{1_{lX}}(\gamma)|^{2k}} \leq \frac{1}{2}\int{\left(1_{lX} ^{(k)}\right)^2dm_G}.
\end{equation*}
It then follows from Parseval's theorem and the triangle inequality that
\begin{eqnarray*}
\sum_{\gamma \in \Gamma}{|\wh{1_{lX}}(\gamma)|^{2k}}& = &\sum_{\gamma \in \wh{G}}{|\wh{1_{lX}}(\gamma)|^{2k}} - \sum_{\gamma \not \in \Gamma}{|\wh{1_{lX}}(\gamma)|^{2k}}\\ & \geq & \int{\left(1_{lX} ^{(k)}\right)^2dm_G}-\frac{1}{2}\int{\left(1_{lX} ^{(k)}\right)^2dm_G} = \frac{1}{2}\int{\left(1_{lX} ^{(k)}\right)^2dm_G}.
\end{eqnarray*}
Write $\beta$ for the uniform probability measure induced on $B'_1$.  By the second part of Lemma \ref{lem.bsann} and the nesting of approximate annihilators we see that
\begin{equation*}
\Gamma \subset N\left(B_1',2\pi \|\delta\|_{\ell_\infty(\Gamma)}\right) \subset N\left(B_1',\frac{2\pi}{2^4}\right) \subset N\left(B_1',\frac{1}{2}\right).
\end{equation*}
Thus by the triangle inequality, if $\gamma \in \Gamma$ then
\begin{equation*}
|1-\wh{\beta}(\gamma)| = \left|\int{(1-\gamma(x))d\beta(x)}\right| \leq \int{|1-\gamma(x)|d\beta(x)} \leq \frac{1}{2},
\end{equation*}
and hence $|\wh{\beta}(\gamma)| \geq \frac{1}{2}$.  We conclude that
\begin{equation*}
\sum_{\gamma \in \wh{G}}{|\wh{1_{lX}}(\gamma)|^{2k}|\wh{\beta}(\gamma)|^2}\geq \frac{1}{4}\sum_{\gamma \in \Gamma}{|\wh{1_{lX}}(\gamma)|^{2k}} \geq \frac{m_G(lX)^{2k-1}}{8(kl)^d}.
\end{equation*}
But, by Parseval's theorem and H{\"o}lder's inequality we have that
\begin{align*}
\sum_{\gamma\in \wh{G}}{|\wh{1_{lX}}(\gamma)|^{2k}|\wh{\beta}(\gamma)|^2}
& = \int{\left(1_{lX} ^{(k)}\ast \beta\right)^2dm_G}\\ 
& = \int{1_{lX}^{(k)} \ast 1_{-lX}^{(k)} d\beta \ast \tilde{\beta}}\\
& = m_G(B_1')^{-1} \int{1_{lX}^{(k)} \ast 1_{-lX}^{(k)}1_{B_1'} \ast \tilde{\beta} dm_G}\\
&\leq m_G(B_1')^{-1} \|1_{lX}^{(k)}\ast 1_{-lX}^{(k)}\|_{L_1(m_G)}\left\|1_{B_1'} \ast \tilde{\beta}\right\|_{L_\infty(G)} = \frac{m_G(lX)^{2k}}{m_G(B'_1)},
\end{align*}
and so
\begin{equation}\label{eqn.sui}
m_G(B'_1) \leq 8(kl)^dm_G(lX) \leq \exp(O(d\log 2d))m_G(X).
\end{equation}
Now, note by sub-additivity and symmetry of Bohr sets and Ruzsa's Covering Lemma (Lemma \ref{lem.rc}) that for $i \geq 1$ we have
\begin{align*}
\mathcal{C}\left(B'_{2^{-i}};B'_{2^{-(i+3)}}\right) \leq \mathcal{C}\left(B'_{2^{-i}};B'_{2^{-(i+4)}}-B'_{2^{-(i+4)}}\right) & \leq \frac{m_G\left(B'_{2^{-i}} + B'_{2^{-(i+4)}}\right)}{m_G\left(B'_{2^{-(i+4)}}\right)}\\
& \leq  \frac{m_G\left(B'_{2^{-(i-1)}}\right)}{m_G\left(B'_{2^{-(i+4)}}\right)}.
\end{align*}
Let $J:=\left\lfloor \frac{\log_2 d}{5}\right\rfloor$ so that
\begin{align*}
\prod_{j=0}^J{\mathcal{C}\left(B'_{2^{-(5j+1)}};B'_{2^{-(5j+4)}}\right) } &\leq \prod_{j=0}^J{ \frac{m_G\left(B'_{2^{-5j}}\right)}{m_G\left(B'_{2^{-5(j+1)}}\right)}}\\ & \leq \frac{m_G(B'_1)}{m_G(B'_{2^{-5(J+1)}})} \leq \frac{m_G(B'_1)}{m_G(B'_{1/2^5d})} \leq \frac{m_G(B'_1)}{m_G(X-X)},
\end{align*}
where the last inequality is from (\ref{eqn.nsd}).

By averaging there is some $0 \leq j \leq J$ such that
\begin{equation*}
\mathcal{C}\left(B'_{2^{-(5j+1)}};B'_{2^{-(5j+4)}}\right) \leq \left(\frac{m_G(B'_1)}{m_G(X-X)}\right)^{\frac{1}{J}} = \exp(O(d)),
\end{equation*}
where the last inequality is from (\ref{eqn.sui}).

Set $B:=2^{-(5j+1)}B'$ and apply Lemma \ref{lem.grow} (possible since $w(B) \leq 2^{-5}<\frac{1}{4}$) to see that $\dim^* B = O(d)$.  It follows by the second inequality in Lemma \ref{lem.smi} part (\ref{lem.smi.3}) that $\dim B = O(d)$.  Moreover, nesting of Bohr sets tells us that $X-X \subset B_1$ and
\begin{equation*}
m_G(B_1) \leq m_G(B_1') \leq \exp(O(d\log 2d)).
\end{equation*}
The result is proved.
\end{proof}

\subsection{Bogolio{\`u}boff-Chang}\label{sec.44}

In the paper \cite{bog::} Bogolio{\`u}boff showed how to find Bohr sets inside four-fold sumsets.  The importance of this was emphasised by Ruzsa in \cite{ruz::9} and refined by Chang in \cite{cha::0}.  We shall need the following result in our work.
\begin{lemma*}[Lemma \ref{lem.bc}]
Suppose that $A$ is a non-empty set, $B$ is a Bohr system and $\mu$ is a $B$-approximately invariant probability measure, $S \subset B_1$ has $\mu(S)>0$, and $L$, non-empty, is such that $\|1_L \ast \mu_S\|_{L_2(m_G)}^2 \geq \epsilon m_G(L)$.  Then there is a Bohr system $B' \leq B$ with
\begin{equation*}
\mathcal{C}^\Delta(A;B_1') \leq (2\epsilon^{-1})^{O(\epsilon^{-2}\log 2
\mu(S)^{-1})}\mathcal{C}^\Delta(A;B_1)
\end{equation*}
and
\begin{equation*}
\dim B' = \dim B+ O( \epsilon^{-2}\log 2\mu(S)^{-1})
\end{equation*}
such that $B'_{1} \subset L-L + S-S$.
\end{lemma*}
\begin{proof}
Since $\mu$ is $B$-approximately invariant and $\wt{\mu}$ is a probability measure, Lemma \ref{lem.closbm} tells us that $\mu \ast \wt{\mu}$ is $B$-approximately invariant.  By Parseval's theorem we have
\begin{equation*}
\|1_L \ast 1_{-L}\|_{A(G)} = \sum_{\gamma \in \wh{G}}{|\wh{1_L}(\gamma)|^2} = \int{1_L^2dm_G} = m_G(L).
\end{equation*}

Apply Lemma \ref{lem.chang} to $B$, $\mu \ast\wt{\mu}$, and $1_L \ast 1_{-L}$ with parameters $p\geq 2$ and $\eta \in (0,1]$ to be optimised later.  This gives us a Bohr system $B'$ with
\begin{equation*}
\mathcal{C}^\Delta(A;B_1') \leq (2\eta^{-1})^{O(p\eta^{-2})}\mathcal{C}^\Delta(A;B_1)\text{ and }\dim B' \leq \dim B + O(p\eta^{-2})
\end{equation*}
such that
\begin{equation*}
\|\tau_x(1_L \ast 1_{-L}) - 1_L \ast 1_{-L}\|_{L_p(\mu \ast \wt{\mu})} \leq \eta m_G(L) \text{ for all }x \in B'_{1}.
\end{equation*}
Since $\mu$ is non-negative we have
\begin{equation*}
0 \leq \mu_S \ast \wt{\mu_S} \leq \mu(S)^{-2}\mu \ast \wt{\mu},
\end{equation*}
and so there is a function $f$ with $0 \leq f \leq \mu(S)^{-2}$ point-wise such that
\begin{equation*}
\int{gd\mu_S \ast \wt{\mu_S}} = \int{gfd\mu\ast \wt{\mu}} \text{ for all }g \in L_1(\mu_S \ast \wt{\mu_S}).
\end{equation*}
($f$ is the Radon-Nikodym derivative of $\mu_S \ast \wt{\mu_S}$ with respect to $\mu \ast \wt{\mu}$.)

Write $p'$ for the conjugate index of $p$ (so $\frac{1}{p}+\frac{1}{p'}=1$) we have
\begin{equation*}
\|f\|_{L_{p'}(\mu \ast \wt{\mu})} \leq \left(\int{\mu(S)^{-2(p'-1)}fd\mu\ast\wt{\mu}}\right)^{1/p'} =\mu(S)^{-2/p}.
\end{equation*}
If we take $p=2+2\log \mu(S)^{-1}$ then we see from H{\"o}lder's inequality that for all $x \in B_1'$ we have
\begin{align*}
&\left|\langle 1_L \ast 1_{-L},\mu_S \ast \wt{\mu_S}\rangle - \langle \tau_x(1_L \ast 1_{-L}),\mu_S \ast \wt{\mu_S}\rangle\right|\\
& = \left|\langle 1_L \ast 1_{-L},f\rangle_{L_2(\mu \ast \wt{\mu})} - \langle \tau_x(1_L \ast 1_{-L}),f\rangle_{L_2(\mu \ast \wt{\mu})} \right|\\
&= \left|\langle 1_L \ast 1_{-L} - \tau_x(1_L \ast 1_{-L}),f\rangle_{L_2(\mu \ast \wt{\mu})}\right|\\
& \leq \left\|1_L \ast 1_{-L} - \tau_x(1_L \ast 1_{-L})\right\|_{L_p(\mu \ast \wt{\mu})}\|f\|_{L_{p'}(\mu \ast \wt{\mu})} \leq
e\eta m_G(L).
\end{align*}
By hypothesis 
\begin{equation*}
\langle 1_L \ast 1_{-L},\mu_S \ast \wt{\mu_S}\rangle =\|1_L \ast \mu_S\|_{L_2(m_G)}^2 \geq \epsilon m_G(L);
\end{equation*}
it follows that for $\eta=\frac{1}{2e}\epsilon$ we have
\begin{equation*}
 \langle \tau_x(1_L \ast 1_{-L}),\mu_S \ast \wt{\mu_S}\rangle \geq \frac{\epsilon}{2}m_G(L) \text{ for all }x \in B_1'.
\end{equation*}
However, the left hand side is $0$ if $x+L-L \cap S-S =\emptyset$ \emph{i.e.} if $x \not \in L-L+S-S$. The result is proved.
\end{proof}

\section{Arithmetic connectivity}\label{sec.ac}

The basic approach of our main argument (captured in Lemma \ref{lem.mitlem}) is iterative and to make this work we need to consider not just integer-valued functions, but \emph{almost} integer-valued functions.  For $\epsilon \in (0,1/2)$ we say that $f:G \rightarrow \C$ is \textbf{$\epsilon$-almost integer-valued} if there is a function $f_\Z:G \rightarrow \Z$ such that
\begin{equation*}
\|f-f_\Z\|_{L_\infty(G)} \leq \epsilon.
\end{equation*}
Since $\epsilon<1/2$ this actually means that $f_\Z$ is uniquely defined. 

When a function $f$ has small algebra norm and is close to integer-valued, it turns out that $f_{\Z}$ has a lot of additive structure.  This is captured by a concept called arithmetic connectivity identified by Green in \cite[Definition 6.4]{gresan::0}.  We shall need a slight refinement of this: for $m,l \geq 2$ we say that a set $A\subset G$ is \textbf{$(m,l)$-arithmetically connected} if for every $x \in A^m$ there is some $\sigma \in \Z^m$ with $\|\sigma\|_{\ell_1^m} \leq l$ and $|\sigma_i|=1$ for at least two $i$s such that
\begin{equation*}
\sigma\cdot x := \sum_i{\sigma_ix_i} \in A.
\end{equation*}
The definition is perhaps a little odd. To help we present some simple examples we leave as exercises.
\begin{enumerate}
\item $A$ is $(m,1)$-arithmetically connected for some $m$ if and only if $A=\emptyset$.  (Of course this is not a significant example and can easily be removed by simply restricting to $m,l \geq 2$.)
\item If every element of $A$ has order $2$ then $A$ is $(m,m+k)$-arithmetically connected for some $k \geq 0$ if and only if it is $(m,m)$-arithmetically connected.
\item If $A$ is a subgroup then $x+y \in A$ for all $x,y \in A$ and so $A$ is $(2,2)$-arithmetically connected. On the other hand, if $G=\Z$ and $A=\N$ then $A$ is also $(2,2)$-arithmetically connected (for the same reason) but not `close' to any subgroup.
\item If $A$ is a union of $k$ cosets (of possibly different subgroups) then by the pigeonhole principle for any vector $x \in A^{2k+1}$ there are indices $i<j<k$ such that $x_i,x_j,x_k$ are all in the same coset.  It follows that $x_i+x_j-x_k$ is in that same coset and hence in $A$.  We conclude that $A$ is $(2k+1,3)$-arithmetically connected.
\end{enumerate}
Arithmetic connectivity is related to additive structure by the following easy adaptation of \cite[Proposition 6.5]{gresan::0}.
\begin{lemma}\label{lem.s}
Suppose that $A$ is $(m,l)$-arithmetically connected (for $m,l \geq 2$).  Then
\begin{equation*}
\|1_A \ast 1_A\|_{L_2(m_G)}^2 \geq m^{-O(l)}m_G(A)^3.
\end{equation*}
\end{lemma}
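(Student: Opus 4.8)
The plan is to pass to the language of additive energy. Write
\[
E(A):=\#\{(a_1,a_2,a_3,a_4)\in A^4 : a_1+a_2=a_3+a_4\}.
\]
Since $1_A\ast 1_A(x)=|G|^{-1}\#\{(a,b)\in A^2:a+b=x\}$, expanding the square gives $\|1_A\ast 1_A\|_{L_2(m_G)}^2=|G|^{-3}E(A)$, while $m_G(A)^3=|G|^{-3}|A|^3$. The upper bound $E(A)\le |A|^3$ is trivial (the first three coordinates of an additive quadruple determine the fourth), so the whole content of the lemma is the lower bound $E(A)\ge m^{-O(l)}|A|^3$. We may assume $A$ is non-empty (otherwise both sides vanish) and that $m\ge 2$ (if $m=1$ the requirement that $|\sigma_i|=1$ for at least two $i$ is unsatisfiable, so $(1,l)$-arithmetic connectivity forces $A=\emptyset$).

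First I would pigeonhole to fix a single $\sigma$. Every $\sigma\in\Z^m$ with $\|\sigma\|_{\ell_1^m}\le l$ is a signed sum of at most $l$ standard basis vectors, so there are at most $(l+1)(2m)^l=m^{O(l)}$ of them. Choosing for each $x\in A^m$ a witness $\sigma(x)$ as in the definition and pigeonholing over its value, we obtain one $\sigma$ with
\[
\#\{x\in A^m : \sigma\cdot x\in A\}\ge m^{-O(l)}|A|^m.
\]
Since permuting coordinates is a bijection of $A^m$ (which does not change this count after permuting $\sigma$ correspondingly), we may relabel so that $\sigma_1,\sigma_2\in\{-1,1\}$. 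Writing $x=(x_1,x_2,y)$ with $y\in A^{m-2}$ and averaging over $y$, there is a fixed $t\in G$ (a value of $\sum_{i\ge 3}\sigma_i x_i$) with
\[
\#\{(x_1,x_2)\in A^2 : \sigma_1 x_1+\sigma_2 x_2+t\in A\}\ge m^{-O(l)}|A|^2.
\]

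Now set $a:=\sigma_1 x_1+\sigma_2 x_2+t$, which lies in $A$ for each of these $\ge m^{-O(l)}|A|^2$ pairs; since $(x_1,x_2)\mapsto(x_1,x_2,a)$ is injective we get $\ge m^{-O(l)}|A|^2$ triples $(x_1,x_2,a)\in A^3$ with $\sigma_1 x_1+\sigma_2 x_2=a-t$. Writing $r(s):=\#\{(x_1,x_2)\in A^2:\sigma_1 x_1+\sigma_2 x_2=s\}$, this says $\sum_{a\in A}r(a-t)\ge m^{-O(l)}|A|^2$, so by Cauchy--Schwarz
\[
\bigl(m^{-O(l)}|A|^2\bigr)^2\le |A|\sum_{a\in A}r(a-t)^2\le |A|\sum_{s\in G}r(s)^2.
\]
Finally $\sum_{s}r(s)^2=\#\{(x_1,x_2,x_1',x_2')\in A^4:\sigma_1 x_1+\sigma_2 x_2=\sigma_1 x_1'+\sigma_2 x_2'\}$, and because $\sigma_1,\sigma_2\in\{-1,1\}$ this equals $E(A)$: if $\sigma_1=\sigma_2$ the common unit sign cancels, and if $\sigma_1=-\sigma_2$ the equation $x_1-x_2=x_1'-x_2'$ rearranges to $x_1+x_2'=x_1'+x_2$. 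Hence $E(A)\ge m^{-O(l)}|A|^3$, which together with the trivial upper bound gives the claim.

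This is a direct adaptation of \cite[Proposition 6.5]{gresan::0}; the only point worth flagging is that we genuinely use \emph{two} coordinates with $|\sigma_i|=1$: one of them, together with the element $\sigma\cdot x\in A$, is exactly what lets us recognise $\sum_s r(s)^2$ as the honest energy $E(A)$ rather than a dilated variant such as $\#\{x_1-x_1'=\lambda(x_2'-x_2)\}$, over which we would have no control. There is no real obstacle beyond bookkeeping: counting the admissible $\sigma$ and carrying out the two successive pigeonholings.
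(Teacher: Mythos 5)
Your proof is correct, but it takes a genuinely different route from the paper. After the same initial pigeonhole that fixes a single $\sigma$ with at least two unit entries, the paper passes immediately to Fourier space: it writes the count $\int 1_A(\sigma\cdot x)\prod_i 1_A(x_i)\,dm_G^{\otimes m}$ as $\sum_\gamma \wh{1_A}(\gamma)\prod_i \wh{1_A}(-\sigma_i\gamma)$, bounds all but two of the factors by $m_G(A)$ (Hausdorff--Young), uses $|\wh{1_A}(\pm\gamma)|=|\wh{1_A}(\gamma)|$ on the two unit slots to pull out $|\wh{1_A}(\gamma)|^3$, and finishes with Cauchy--Schwarz and Parseval to reach $\sum_\gamma|\wh{1_A}(\gamma)|^4 = \|1_A\ast 1_A\|_{L_2(m_G)}^2$. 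You stay entirely in physical space: an extra pigeonhole over $y\in A^{m-2}$ freezes the "junk" contribution to a single shift $t$, reducing to a genuine two-variable representation count $r(s)$, and then Cauchy--Schwarz on $\sum_{a\in A} r(a-t)$ bounds $E(A)$ from below. The key observation in both arguments is the same use of the two unit coefficients: you need them so that $\sum_s r(s)^2$ is the honest energy $E(A)$ rather than a dilated energy; the paper needs them so that the two Fourier factors are $|\wh{1_A}(\gamma)|$ rather than $|\wh{1_A}(k\gamma)|$. Your version is more elementary (no Fourier beyond the identity $\|1_A\ast 1_A\|_{L_2}^2=|G|^{-3}E(A)$), at the cost of one additional averaging step; the paper's version handles the $m-2$ remaining coordinates uniformly in one stroke via Hausdorff--Young and avoids any relabelling of coordinates. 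The exponent $m^{-O(l)}$ comes out the same either way, so the two proofs are quantitatively equivalent.
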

\begin{proof}
First we count the number of $\sigma \in \Z^m$ such that $\|\sigma\|_{\ell_1^m} \leq l$.  The number of ways of writing a total of $r$ as a sum of $m$ non-negative integers is $\binom{r+m}{m}$.  For each such $\sigma$ we can choose the signs of the various integers in at most $2^l$ ways (since at most $l$ of them are non-zero) and so the total number of $\sigma \in \Z^m$ with $\|\sigma\|_{\ell_1^m} \leq l$ is at most
\begin{equation*}
\sum_{r=0}^l{\binom{r+m}{m}2^l} \leq l\binom{m+l}{m}2^l = l \binom{m+l}{l}2^l \leq l\left(\frac{2e(m+l)}{l}\right)^l= m^{O(l)}.
\end{equation*}
It follows that there is such a $\sigma \in \Z^m$ such that for at least $m^{-O(l)}|A|^m$ vectors $x \in A^m$ we have $\sigma \cdot x \in A$.  Rewriting this we have
\begin{align*}
m_G(A)^mm^{-O(l)} & \leq \int{1_A\left(\sum_{i=1}^m{\sigma_ix_i} \right)\prod_{i=1}^m{1_A(x_i)dm_G(x_i)}}\\ & = \sum_{\gamma}{\wh{1_A}(\gamma)\prod_{i=1}^m{\wh{1_A}(-\sigma_i \cdot \gamma)}}.
\end{align*}
Since $|\sigma_i|=1$ for at least two $i \in [m]$, $|\wh{1_A}(\gamma)|=|\wh{1_A}(-\gamma)|$, and $|\wh{1_A}(-\sigma_i\cdot \gamma)| \leq m_G(A)$ we conclude that
\begin{equation*}
m_G(A)^{m-2} \sum_{\gamma}{|\wh{1_A}(\gamma)|^3} \geq m_G(A)^mm^{-O(l)}.
\end{equation*}
The result now follows from Cauchy-Schwarz and Parseval's theorem which gives
\begin{align*}
\sum_{\gamma}{|\wh{1_A}(\gamma)|^3} & \leq \left(\sum_{\gamma}{|\wh{1_A}(\gamma)|^4}\right)^{1/2}\left(\sum_{\gamma}{|\wh{1_A}(\gamma)|^2}\right)^{1/2}\\ & =  \left(\sum_{\gamma}{|\wh{1_A}(\gamma)|^4}\right)^{1/2} \left(\int{1_A(x)^2dm_G(x)}\right)^{1/2} =  \left(\sum_{\gamma}{|\wh{1_A}(\gamma)|^4}\right)^{1/2} m_G(A)^{1/2}.
\end{align*}
\end{proof}

On the other hand additive connectivity is related to small algebra norm via the following result.
\begin{proposition}\label{prop.ac}
There is an absolute constant $C_{\textsc{M{\'e}l}}>0$ such that the following holds.  Suppose that $g\in A(G)$ is $\epsilon$-almost integer-valued for some $\epsilon \in (0,1/2)$ and has $\|g\|_{A(G)} \leq M$ for some $M \geq 1$.  Then provided $\epsilon \leq \exp(-C_{\textsc{M{\'e}l}} M)$, the set $\supp g_\Z$ is $(O(M^3),O(M))$-arithmetically connected.
\end{proposition}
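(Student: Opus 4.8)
The plan is to show that for every $x = (x_1,\dots,x_m) \in (\supp g_\Z)^m$ with $m = O(M^3)$, there is a short integer combination $\sigma \cdot x$ with $|\sigma_i| = 1$ for at least two indices and $\|\sigma\|_{\ell_1^m} = O(M)$ lying in $\supp g_\Z$. The key idea, going back to M\'elard's argument in \cite{gresan::0}, is to work with the function $G(t_1,\dots,t_m) := g(t_1 + \dots + t_m)$ on $G^m$ (or, more precisely, to exploit the multiplicativity of the Fourier transform under such pullbacks) and use the fact that $g$ is close to an integer-valued function of bounded algebra norm.

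First I would record that $\|G\|_{A(G^m)} = \|g\|_{A(G)} \leq M$ since $\widehat{G}(\gamma_1,\dots,\gamma_m)$ is supported on the diagonal $\gamma_1 = \dots = \gamma_m$ where it equals $\widehat{g}(\gamma_1)$. Next, evaluated at a point of $(\supp g_\Z)^m$, consider the quantity $\prod_{i=1}^m g_\Z(x_i)$, which is a nonzero integer, hence has absolute value $\geq 1$; comparing with $\prod_i g(x_i)$, which is within $m\epsilon \cdot (\text{stuff})$ of it, shows $|\prod_i g(x_i)|$ is bounded below. The heart of the argument is then a Fourier/$L^1$-type expansion: one writes $\prod_i g(x_i)$ via Fourier inversion as a sum over $(\gamma_1,\dots,\gamma_m)$ of $\prod_i \widehat{g}(\gamma_i)\gamma_i(x_i)$, and one wants to conclude that some ``collision'' term — where the character data forces a relation $\sum_i \sigma_i x_i$ landing in a level set of $g$ bounded away from $0$, hence (by the almost-integer hypothesis) in $\supp g_\Z$ — must be present. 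The pigeonhole here needs $\epsilon \leq \exp(-C_{\textsc{M\'el}} M)$ precisely so that the ``error'' from replacing $g$ by $g_\Z$ across $m = O(M^3)$ coordinates, together with the trivial bound $\|g\|_{A(G)}^{O(M)}$-type losses, stays below the main term.

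More concretely, I expect the argument to proceed by supposing for contradiction that $\supp g_\Z$ is \emph{not} $(m,l)$-arithmetically connected with $m = CM^3$, $l = CM$ for a suitable constant $C$: then there is a ``bad'' tuple $x \in (\supp g_\Z)^m$ such that no short combination $\sigma \cdot x$ with two unit coefficients lies in $\supp g_\Z$, i.e.\ $|g(\sigma \cdot x)| \leq \epsilon$ whenever $g_\Z(\sigma\cdot x) = 0$, and in fact $\sigma\cdot x \notin \supp g_\Z$ means $g(\sigma\cdot x) \in (-\epsilon,\epsilon)$. One then estimates $\int_{G} \big| \sum_{i} g(x_i) 1(\cdot = \text{something}) \big|$-type quantities, or more cleanly bounds $\mathbb{E}_{y}\big[\prod_i g(x_i + y) \cdot \overline{g(m y + \sum_i x_i)}\big]$ or a similar balanced average, expanding in Fourier coefficients and isolating the contribution of the diagonal; the off-diagonal terms are controlled by $\|g\|_{A(G)}$ and the diagonal term is forced to be large by the integer lower bound, while the non-arithmetic-connectivity assumption makes a competing average small — the contradiction pins down the parameters. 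This is essentially the structure of \cite[Proposition 6.5]{gresan::0} adapted to the refined $(m,l)$ notion, the refinement being that one tracks the $\ell_1$-size $l$ of the combination (not just its length $m$) so that only $O(M)$ rather than $O(M^3)$ terms appear in the combination $\sigma\cdot x$.

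The main obstacle, and the place I would spend the most care, is the bookkeeping that converts ``$|\prod_i g(x_i)|$ is bounded below'' into ``there is a genuine arithmetic relation among $\leq l$ of the $x_i$ with two unit coefficients'', while keeping $l = O(M)$ rather than $O(m) = O(M^3)$. This is where the exponent $M^3$ (for $m$) versus $M$ (for $l$) is born: one needs a M\'elard-type combinatorial lemma showing that among $m = O(M^3)$ points of the support, some subset of size $O(M)$ admits the desired relation, using that $g$ has algebra norm $\leq M$ and is almost-integer-valued. The constant $C_{\textsc{M\'el}}$ absorbs the exponential loss incurred when propagating the $\epsilon$-errors through the Fourier-analytic comparison, and getting the dependence $\epsilon \leq \exp(-C_{\textsc{M\'el}}M)$ (rather than something doubly exponential) is exactly the kind of quantitative tightening the paper advertises; I would follow Green's argument closely here and verify that no step forces a worse dependence.
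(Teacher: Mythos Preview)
Your proposal misses the central mechanism of the proof. The paper does \emph{not} study products $\prod_i g(x_i)$ or the pullback $G(t_1,\dots,t_m)=g(t_1+\dots+t_m)$; nothing of the sort appears. (Note also that you have confused the two directions: \cite[Proposition~6.5]{gresan::0} is the analogue of Lemma~\ref{lem.s}, which goes \emph{from} arithmetic connectivity \emph{to} additive energy. Proposition~\ref{prop.ac} goes the other way and rests on an idea of M\'ela \cite{mel::}, not Green.)

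The actual argument is a Chebyshev polynomial amplification. Suppose $A=\supp g_\Z$ is not $(m,2l+1)$-arithmetically connected, witnessed by $x\in A^m$. One builds a function $h$ supported on $\{\pm x_1,\dots,\pm x_m\}$ with $\|h\|_{L_1(m_G)}\le 1$ and
\[
\wh{h}(\gamma)=\frac{1}{m}\Re\sum_{j=1}^m \omega_j\gamma(x_j)\in[-1,1],
\]
choosing the phases $\omega_j$ so that $|\langle \wh{h},\wh{g_\Z}\rangle|\ge 1/6$. The non-connectivity hypothesis is used in Lemma~\ref{lem.key} to show that for $1\le r\le l$ the higher moments $\langle \wh{h}^{\,2r+1},\wh{g_\Z}\rangle$ are small, of order $O(r)^r m^{-r} M$: expanding the $(2r+1)$-th power produces sums $\sigma\cdot x$ with $\|\sigma\|_{\ell_1}\le 2l+1$, and the hypothesis forces $g_\Z(\sigma\cdot x)=0$ unless $\sigma$ has at most one unit entry, which cuts the count drastically. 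The punch line is to test against the degree-$(2l+1)$ Chebyshev polynomial $T_{2l+1}(\wh{h})$: since $|T_{2l+1}|\le 1$ on $[-1,1]$ one has $|\langle T_{2l+1}(\wh{h}),\wh{g}\rangle|\le M$, but the linear coefficient of $T_{2l+1}$ is $2l+1$, giving
\[
M \ \ge\ (2l+1)\cdot\tfrac{1}{6}\ -\ \epsilon\exp(O(l))\ -\ M\cdot \tfrac{l^3}{m}\exp(O(l^2/m)).
\]
Choosing $l\asymp M$ and $m\asymp l^3\asymp M^3$ forces a contradiction provided $\epsilon\le\exp(-C_{\textsc{M\'el}}M)$. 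This is precisely where the separation of scales $m=O(M^3)$ versus $l=O(M)$ comes from: $m$ must dominate $l^3$ to kill the higher-moment terms, while $l$ only needs to beat $M$. Your sketch contains no analogue of this amplification step, and without it there is no mechanism by which $\|g\|_{A(G)}\le M$ contradicts the failure of connectivity.
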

The proof of this owes a lot to \cite[Lemme 1]{mel::} of M{\'e}la, and we are grateful to Ben Green for directing us to that paper.  Indeed, as noted in \cite[\S9]{gresan::0} an example in M{\'e}la's paper shows that one cannot hope to weaken the requirement that $\epsilon \leq \exp(-CM))$ to anything with $C$ below a certain absolute threshold.  One can also make use of the auxiliary measures \cite[Lemme 4]{mel::} constructed in M{\'e}la's paper to show that $\supp g_\Z$ is $(O(M^2\log 2M),O(M\log 2M))$-arithmetically connected but for us this extra logarithm in the second parameter is worse than the benefit of a power saving in the first when we apply Lemma \ref{lem.s}.

We write $T_n(x)$ for the Chebychev polynomial of degree $n$.  Recall (from, for example, \cite[\S6.10.6]{zwikraros::}) that we have a formula for $T_n$:
\begin{equation*}
T_n(x)=\frac{n}{2}\sum_{r=0}^{\lfloor n/2\rfloor}{\frac{(-1)^r}{n-r}\binom{n-r}{r}(2x)^{n-2r}} = \cos(n\arccos x) \text{ for }|x|\leq 1;
\end{equation*}
the last form tells us immediately that $\|T_n\|_{L_\infty([-1,1])}\leq 1$.

We shall be particularly interested in the Chebyshev polynomials of odd degree.  Indeed, note from the above formula that if $n=2l+1$ for some non-negative integer $l$, then only the coefficients of odd powers of $x$ are non-zero and
\begin{equation*}
T_{2l+1}(x) = \sum_{j=0}^l{c(j,l)x^{2j+1}},
\end{equation*}
where
\begin{equation*}
c(j,l) = 2^{2j}(-1)^{l-j}\frac{2l+1}{2j+1}\binom{l+j}{l-j} = 2^{2j}(-1)^{l-j}\frac{2l+1}{2j+1}\binom{l+j}{2j} .
\end{equation*}
In view of this we have
\begin{equation}\label{eqn.sss}
|c(0,l)| = 2l+1 \textrm{ and } |c(j,l)| = (O(l/j))^{2j+1}.
\end{equation}
Added to this information we shall need the following lemma.
\begin{lemma}\label{lem.key}
Suppose that $m \in \N$, and $l \in \N_0$ are parameters, $g:G \rightarrow \C$ has support $A$ and $x \in G^m$ is such that if $\sigma \in \Z^m$ has $\|\sigma\|_{\ell_1^m}\leq 2l+1$ and $\sigma \cdot x \in A$ then $|\sigma_i|=1$ for at most one value of $i$.  Then for every $\omega \in \ell_\infty^m$ with $\|\omega\|_{\ell_\infty^m} \leq 1$ and $0 \leq r \leq l$ we have
\begin{equation*}
\left|\sum_{\gamma}{\left(\Re \sum_{i=1}^m{\omega_i\gamma(x_i)}\right)^{2r+1}\overline{\wh{g}(\gamma)}}\right| =\exp(O(r+1))(r+1)^rm^{r+1}\|g\|_{L_\infty(G)}.
\end{equation*}
\end{lemma}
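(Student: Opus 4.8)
The plan is to expand the power $\left(\Re \sum_i \omega_i\gamma(x_i)\right)^{2r+1}$ as a multilinear sum over frequencies and match it against the Fourier transform of $g$, using the arithmetic-connectivity hypothesis to control which terms survive. First I would write $\Re(\sum_i \omega_i\gamma(x_i)) = \frac12\sum_i(\omega_i\gamma(x_i) + \overline{\omega_i}\,\overline{\gamma(x_i)}) = \frac12\sum_i(\omega_i\gamma(x_i) + \overline{\omega_i}\gamma(-x_i))$, so that raising to the power $2r+1$ produces a sum over sequences $(\epsilon_1,\dots,\epsilon_{2r+1})$ of indices in $[m]$ and signs, each term being a scalar of modulus at most $2^{-(2r+1)}$ times $\gamma(y)$ where $y = \sum_{k} \pm x_{\epsilon_k}$ is an integer combination of the $x_i$ with $\ell_1$-length exactly $2r+1$. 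Then I would use the Fourier inversion / Parseval identity $\sum_\gamma \gamma(y)\overline{\wh g(\gamma)} = \overline{\wh{\tilde g}(\ldots)}$; more directly, $\sum_\gamma \gamma(y)\overline{\wh g(\gamma)} = \sum_\gamma \overline{\gamma(-y)\wh g(\gamma)} = \overline{g(-y)}$ by Fourier inversion, so each term contributes $\overline{g(-y)}$ weighted by its scalar coefficient.

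The key point is that $g$ is supported on $A$, so $\overline{g(-y)}$ vanishes unless $-y \in A$, i.e.\ unless $\sigma\cdot x \in A$ for the sign vector $\sigma \in \Z^m$ recording the combination (note $\|\sigma\|_{\ell_1^m} \le 2r+1 \le 2l+1$). By the hypothesis on $x$, whenever such a $\sigma$ has $\sigma\cdot x \in A$ we have $|\sigma_i| = 1$ for \emph{at most one} $i$; equivalently, all but at most one coordinate of $\sigma$ is $0$ or has absolute value $\ge 2$. Since $\|\sigma\|_{\ell_1^m}\le 2r+1$, this severely restricts the combinatorial type of the surviving tuples: essentially all $2r+1$ factors must be concentrated on a single index $i$ (contributing $\sigma_i$ of size up to $2r+1$) together with at most one further index used exactly once. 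So I would count the number of index-sequences $(\epsilon_1,\dots,\epsilon_{2r+1}) \in [m]^{2r+1}$ (with signs) whose associated $\sigma$ has this form: choosing the "heavy" index $i$ costs $m$, choosing the at-most-one "light" index costs $\le m$, and distributing the remaining multiplicities and signs among these two slots costs $\exp(O(r+1))(r+1)^{O(r)}$ — this is the standard bound for the number of ways $2r+1$ labelled balls land so that all but one sit in a single bin. Multiplying the count by the per-term bound $2^{-(2r+1)}\|g\|_{L_\infty(G)}$ gives the claimed $\exp(O(r+1))(r+1)^{r}m^{r+1}\|g\|_{L_\infty(G)}$.

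The main obstacle I anticipate is the bookkeeping in the combinatorial count: one has to be careful that the exponent of $m$ is exactly $r+1$ and not $r+2$ or larger, which forces the "at most one light index" observation to be used sharply (using "at most two" as in the raw definition of arithmetic connectivity would give $m^{r+2}$, which is presumably why the refined definition with "at least two $|\sigma_i|=1$" — equivalently its contrapositive "at most one" — was introduced). I would handle this by splitting into the case where $\sigma$ is supported on a single index (a "pure power" term, count $O(m) \cdot \exp(O(r))$) and the case where exactly one further index appears, necessarily with multiplicity $1$ (count $O(m^2)$ for the index pair, times $\exp(O(r+1))(r+1)^{r}$ for arranging multiplicities and signs among $2r+1$ positions into one large block plus one singleton), noting $m^2 \le m^{r+1}$ is too weak only when $r=0$, in which case $2r+1=1$ and the whole sum is just $|\sum_\gamma \Re(\sum_i\omega_i\gamma(x_i))\overline{\wh g(\gamma)}| \le \sum_i|\omega_i||g(\pm x_i)| \le m\|g\|_{L_\infty(G)}$ directly, matching the bound. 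A secondary subtlety is keeping track of the scalar coefficients $\prod_k (\text{coefficient of } \omega)$: these have modulus $\le 1$ since $\|\omega\|_{\ell_\infty^m}\le 1$, and there are at most $(2m)^{2r+1}$ terms trivially, so the refinement is entirely in replacing that trivial bound by the structural count above.
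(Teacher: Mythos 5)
Your overall approach — expand $\left(\Re\sum_i\omega_i\gamma(x_i)\right)^{2r+1}$ as a sum over multi-indices and signs, apply Fourier inversion to turn each term into a value of $g$, kill terms using $\supp g = A$ plus the hypothesis on $x$, and then count the survivors — is exactly the paper's strategy. But the combinatorial count at the heart of it is wrong, and this is a genuine gap.

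You claim that the surviving $\sigma$'s (those with $\|\sigma\|_{\ell_1^m}\le 2r+1$, $|\sigma_i|=1$ for at most one $i$, and $\sigma\cdot x\in A$) must be ``concentrated on a single index $i$ ... together with at most one further index used exactly once,'' giving $O(m^2)$ choices of indices. This is false. The hypothesis only forbids two or more coordinates with $|\sigma_j|$ \emph{exactly} $1$; coordinates with $|\sigma_j|\ge 2$ can occur freely. For instance $\sigma=(2,2,\dots,2,1,0,\dots,0)$ with $r$ twos and one $1$ has $\|\sigma\|_{\ell_1^m}=2r+1$, satisfies the ``at most one $|\sigma_i|=1$'' condition, and is supported on $r+1$ coordinates. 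The correct observation — which the paper uses — is that the constraint, together with $\|\sigma\|_{\ell_1^m}\le 2r+1$, forces the image of the index map $\pi:[2r+1]\to[m]$ to have size at most $r+1$: every index $j$ in the image with $|\sigma_j|\neq 1$ must have at least two preimages under $\pi$ (one preimage forces $\sigma_j=\pm 1$), and there are only $2r+1$ slots, so $2(s-1)+1\le 2r+1$ gives image size $s\le r+1$. One then counts $\binom{m}{r+1}$ choices of image, $(r+1)^{2r+1}$ maps $\pi$ into it, and $2^{2r+1}$ sign maps $\iota$, which simplifies to $\exp(O(r+1))(r+1)^r m^{r+1}$. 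Your $m^2$ bound, were it correct, would actually be a sharper claim than the lemma — which is a warning sign — but your argument for it fails because it overlooks $\sigma$'s with several coordinates of absolute value $\ge 2$.

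A secondary point: you correctly note the $r=0$ case gives $m\|g\|_{L_\infty}$ directly, and your use of Fourier inversion to get $\overline{g(-y)}$ is fine (the paper performs the same manipulation, just keeping $\mathcal{C}(g)(\cdot)$ notation for the conjugate). So the only thing that needs repair is the count, and once you replace ``two indices'' by ``image of size $\le r+1$'' the proof closes.
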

\begin{proof}
We write $\mathcal{C}$ for the conjugation operator and note that by Fourier inversion we have
\begin{align*}
&\sum_{\gamma}{\left(\Re \sum_{i=1}^m{\omega_i\gamma(x_i)}\right)^{2r+1}\mathcal{C}(\wh{g}(\gamma))}\\
& \qquad = \sum_{\gamma}{\left( \sum_{i=1}^m{\frac{1}{2}\left(\omega_i\gamma(x_i) + \mathcal{C}(\omega_i)\gamma(-x_i)\right)}\right)^{2r+1}\mathcal{C}(\wh{g}(\gamma))}\\
& \qquad  =\frac{1}{2^{2r+1}}\sum_{\substack{\pi:[2r+1]\rightarrow [m]\\ \iota:[2r+1]\rightarrow \{0,1\}}}{\sum_{\gamma}{\mathcal{C}(\wh{g})(\gamma)\gamma\left(\sum_{i=1}^{2r+1}{(-1)^{\iota_i}x_{\pi_i}}\right)\prod_{i=1}^{2r+1}{\mathcal{C}^{\iota_i}(\omega_{\pi_i})}}}\\
& \qquad  =\frac{1}{2^{2r+1}}\sum_{\substack{\pi:[2r+1]\rightarrow [m]\\ \iota:[2r+1]\rightarrow \{0,1\}}}{\mathcal{C}(g)\left(-\sum_{i=1}^{2r+1}{(-1)^{\iota_i}x_{\pi_i}}\right)\prod_{i=1}^{2r+1}{\mathcal{C}^{\iota_i}(\omega_{\pi_i})}}.
\end{align*}
Applying the triangle inequality we see that
\begin{equation}\label{eqn.long}
\left|\sum_{\gamma}{\left(\Re \sum_{i=1}^m{\omega_i\gamma(x_i)}\right)^{2r+1}\overline{\wh{g}(\gamma)}}\right| \leq \frac{1}{2^{2r+1}}\sum_{\substack{\pi:[2r+1]\rightarrow [m]\\ \iota:[2r+1]\rightarrow \{0,1\}}}{\|g\|_{L_\infty(G)}1_{A}\left(-\sum_{i}{(-1)^{\iota_i}x_{\pi_i}}\right)}.
\end{equation}
Given $\pi:[2r+1]\rightarrow [m]$ and $\iota:[2r+1]\rightarrow \{0,1\}$ we define $\sigma(\pi,\iota) \in \Z^m$ by
\begin{equation*}
\sigma_j(\pi,\iota):=-\sum_{i: \pi_i=j}{(-1)^{\iota_i}}.
\end{equation*}
By the triangle inequality we have
\begin{equation*}
\|\sigma(\pi,\iota)\|_{\ell_1^m} =\sum_{j=1}^m{|\sigma_j|} \leq \sum_{j=1}^m{\sum_{i:\pi_i=j}{1}} = 2r+1 \leq 2l+1.
\end{equation*}
Moreover,
\begin{equation*}
\sigma(\pi,\iota) \cdot x = \sum_{j=1}^m{\sigma_j(\pi,\iota)x_j} = - \sum_{j=1}^m{x_j\sum_{i: \pi_i=j}{(-1)^{\iota_i}}} = -\sum_{i=1}^{2r+1}{(-1)^{\iota_i}x_{\pi_i}},
\end{equation*}
and so $1_A(\sigma(\pi,\iota)\cdot x)=0$ unless $|\sigma_j(\pi,\iota)|=1$ for at most one $j \in [m]$.  It remains to bound from above the number of functions $\pi:[2r+1]\rightarrow [m]$ and $\iota:[2r+1]\rightarrow \{0,1\}$ such that $|\sigma_j(\pi,\iota)|=1$ for at most one $j \in [m]$.  Since $|\sigma_j(\pi,\iota)|=1$ for at most one $j$ it follows that the image of $\pi$ has size at most $r+1$, and hence the number of pairs $(\pi,\iota)$ is at mosst
\begin{equation*}
\binom{m}{r+1}\cdot (r+1)^{2r+1}\cdot 2^{2r+1} = \exp(O(r+1))(r+1)^rm^{r+1}.
\end{equation*}
Inserting this into (\ref{eqn.long}) gives the result.
\end{proof}

\begin{proof}[Proof of Proposition \ref{prop.ac}]
Let $A:=\supp g_\Z$, and take $l$ and $m$ to be parameters to be chosen later.  Suppose that $A$ is not $(m,2l+1)$-arithmetically connected, so that there is some $x \in A^m$ such that for all $\sigma \in \Z^m$ with $\|\sigma\|_{\ell_1^m} \leq 2l+1$ and $|\sigma_i|=1$ for at least two $i \in [m]$, we have $g_\Z(\sigma\cdot x)=0$. 

Our first task is to define $\omega \in \ell_\infty^m$.  With $\omega$ appropriately defined we shall put
\begin{equation*}
h:=\frac{|G|}{m}\sum_{j=1}^m{\frac{1}{2}\left(\omega_j 1_{\{x_j\}}+ \overline{\omega_j}1_{\{-x_j\}}\right)},
\end{equation*}
so that
\begin{equation*}
\|h\|_{L_1(m_G)} \leq 1 \text{ and } \wh{h}(\gamma)=\frac{1}{m}\Re \sum_{j=1}^m{\omega_j \gamma(x_j)}.
\end{equation*}

The function $g_{\Z}$ is real and since $x_j \in A$ we see that $|g_{\Z}(x_j)| \geq 1$ for all $j \in [m]$.  It follows that
\begin{enumerate}
\item either at least $1/3$ of the indices $j \in [m]$ have $g_{\Z}(-x_j)=0$, in which case we set $\omega_j=\sgn g_{\Z}(x_j)$ for all these indices and $\omega_j=0$ for all others, and get
\begin{equation*}
\sum_{j=1}^m{\frac{1}{2}\left(\omega_jg_{\Z}(x_j) + \overline{\omega_j}g_{\Z}(-x_j)\right)} \geq \frac{m}{6};
\end{equation*}
\item or at least $1/3$ of the indices $j \in [m]$ have $\sgn g_{\Z}(x_j)=\sgn g_{\Z}(-x_j)$, in which case we set $\omega_j=\sgn g_{\Z}(x_j)$ for all these indices and $\omega_j=0$ for all others and get
\begin{equation*}
\sum_{j=1}^m{\frac{1}{2}\left(\omega_jg_{\Z}(x_j) + \overline{\omega_j}g_{\Z}(-x_j)\right)} \geq \frac{m}{3};
\end{equation*}
\item or at least $1/3$ of the indices $j \in [m]$ have $\sgn g_{\Z}(x_j)=-\sgn g_{\Z}(-x_j)$, in which case we set $\omega_j=i$ for all these indices and $\omega_j=0$ for all others
and get
\begin{equation*}
\left|\sum_{j=1}^m{\frac{1}{2}\left(\omega_jg_{\Z}(x_j) + \overline{\omega_j}g_{\Z}(-x_j)\right)}\right|=\left|\sum_{j=1}^m{\frac{1}{2}\left(g_{\Z}(x_j) -g_{\Z}(-x_j)\right)}\right|  \geq \frac{m}{3}.
\end{equation*}
\end{enumerate}
By construction $\|\omega\|_{\ell_\infty^m} \leq 1$ and
\begin{equation*}
\left|\left\langle \wh{h},\wh{g_{\Z}} \right\rangle_{\ell_2(\wh{G})} \right| = \left|\sum_{j=1}^m{\frac{1}{2}\left(\omega_jg_{\Z}(x_j) + \overline{\omega_j}g_{\Z}(-x_j)\right)}\right| \geq \frac{1}{6}.
\end{equation*}
By Lemma \ref{lem.key} for every $1 \leq r \leq l$ we have
\begin{align*}
\left|\langle \wh{h}^{2r+1},\wh{g_{\Z}}\rangle_{\ell_2(\wh{G})}\right| & = \left|\sum_{\gamma}{\left(\Re \sum_{i=1}^m{\omega_i\gamma(x_i)}\right)^{2r+1}\overline{\wh{g_\Z}(\gamma)}}\right|\\
 & =\exp(O(r+1))(r+1)^rm^{r+1}\|g_{\Z}\|_{L_\infty(G)}\\
& = O(r)^rm^{r+1}(\|g\|_{L_\infty(G)}+\epsilon)=O(r)^rm^{r+1}M.
\end{align*}
On the other hand, by Young's inequality $\|h^{(2r+1)}\|_{L_1(m_G)} \leq 1$ and so by Plancherel's theorem we see that
\begin{align*}
\left|\langle \wh{h}^{2r+1},\wh{g_\Z}\rangle_{\ell_2(\wh{G})} - \langle \wh{h}^{2r+1},\wh{g}\rangle_{\ell_2(\wh{G})}\right| & = \left|\langle (h^{(2r+1)})^\wedge,\wh{g_\Z}\rangle_{\ell_2(\wh{G})} - \langle (h^{(2r+1)})^\wedge,\wh{g}\rangle_{\ell_2(\wh{G})}\right|\\
& = \left|\langle (h^{(2r+1)})^\wedge,(g_\Z-g)^\wedge\rangle_{\ell_2(\wh{G})}\right|\\ &  = \left|\langle h^{(2r+1)}, g_\Z-g\rangle_{L_2(m_G)}\right|  \leq \|g-g_{\Z}\|_{L_\infty(G)} \leq \epsilon
\end{align*}
for all $0 \leq r \leq l$.

Finally, $-1 \leq \wh{h}(\gamma) \leq 1$, and so $|T_{2l+1}(\wh{h})| \leq 1$ and hence by (\ref{eqn.sss}) we get
\begin{align*}
M &\geq \left|\langle T_{2l+1}(\wh{h}),\wh{g}\rangle_{\ell_2(\wh{G})} \right|\\ & \geq \left|\sum_{r=0}^l{c(r,l)\langle\wh{h}^{2r+1},\wh{g}\rangle_{\ell_2(\wh{G})}}\right|\\
& \geq |c(0,l)||\langle \wh{h},\wh{g}\rangle_{\ell_2(\wh{G})}| - \sum_{r=1}^l{|c(r,l)||\langle \wh{h}^{2r+1},\wh{g}\rangle_{\ell_2(\wh{G})}|}\\
& \geq |c(0,l)||\langle \wh{h},\wh{g_{\Z}}\rangle_{\ell_2(\wh{G})}| - \epsilon\sum_{r=0}^l{|c(r,l)|} - \sum_{r=0}^l{|c(r,l)||\langle \wh{h}^{2r+1},\wh{g_{\Z}}\rangle_{\ell_2(\wh{G})}|}\\
& \geq (2l+1)\frac{1}{6} - \epsilon \sum_{r=1}^l{O\left(\frac{l}{r}\right)^{2r+1}} - M\sum_{r=1}^l{O\left(\frac{l}{r}\right)^{2r+1}O(r)^rm^{-r}}\\
& \geq \frac{l}{3} - \epsilon \exp(O(l)) -M\frac{l^3}{m} \exp(O(l^2/m)).
\end{align*}
It follows that if $\epsilon \leq \exp(-C_1l)$ for some sufficiently large $C_1>0$, $m = C_2l^3$ for some sufficiently large $C_2>0$ and $l=C_3M$ for some sufficiently large $C_3>0$ then we arrive at a contradiction, and we find that $A$ \emph{is} $(m,2l+1)$-arithmetically connected.
\end{proof}

\section{The main argument}\label{sec.main}

We shall prove the following of which Theorem \ref{thm.main} is a special case arising from taking $\delta:=1$ and $\epsilon:=\exp(-C_{\textsc{M{\'e}l}}'M)$.
\begin{theorem}\label{thm.main2}
There is an absolute constant $C_{\textsc{M{\'e}l}}'>0$ such that if $M \geq1$ and $\epsilon,\delta \in (0,1]$ are such that $\epsilon \leq \delta\exp(-C_{\textsc{M{\'e}l}}'M)$, and $f:G \rightarrow \Z$ is $\epsilon$-almost integer-valued with $\|f\|_{A(G)}\leq M$, then there is some non-negative integer $l \leq M(1+\delta)$, subgroups $H_1,\dots,H_l \leq G$, and functions $z_1:G/H_1 \rightarrow \Z,\dots, z_l:G/H_l \rightarrow \Z$ such that
\begin{equation*}
f_\Z=\sum_{i=1}^l{\sum_{W \in G/H_i}{z_i(W)1_W}}
\end{equation*}
and
\begin{equation*}
 \|z_i\|_{\ell_1(G/H_i)} \leq  \exp(O(M^{4}\log^82M + M^3\log \delta^{-1}(\log (2\log 2\delta^{-1})))) \text{ for }1 \leq i \leq l.
\end{equation*}
\end{theorem}
To do this we combine all our previous work  into our key iterative lemma.
\begin{lemma}\label{lem.mitlem}
Suppose that $f \in A(G)$ is $\epsilon$-almost integer-valued, $\|f\|_{A(G)} \leq M$ for some $M \geq 1$, $\supp f_{\Z}$ is non-empty and $\eta \in \left(0,\frac{1}{4}\right]$ a parameter.  Then provided we have $\epsilon \leq \min\{\exp(-C_{\textsc{M{\'e}l}}M),1/8\}$ there is a function $g$ that is $(\epsilon+\eta)$-almost integer-valued, a subgroup $H \leq G$, and a function $z:G/H \rightarrow \Z$ with
\begin{equation*}
\|z\|_{\ell_1(G/H)} \leq \exp(O(M^{4}\log^82M + M^3\log \eta^{-1}(\log (2\log 2\eta^{-1})))),
\end{equation*}
such that
\begin{equation*}
g_\Z=\sum_{W \in G/H}{z(W)1_{W}} \text{ and }\|f-g\|_{A(G)} \leq \|f\|_{A(G)} -1+(\epsilon +\eta).
\end{equation*}
\end{lemma}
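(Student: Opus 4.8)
The plan is to execute one ``peeling'' step: locate a subgroup $H\le G$ that carries the substantial Fourier mass of $f$, take $g$ to be essentially the $H$-average $f\ast m_H$, and check that $g$ is almost integer-valued, is an integer combination of $H$-cosets of small $\ell_1$-mass, and that $f-g$ has the asserted (strictly smaller) Wiener norm. Throughout put $A:=\supp f_\Z$, which is non-empty by hypothesis. First, since $\|f\|_{A(G)}\le M$, $f$ is $\epsilon$-almost integer-valued and $\epsilon\le\exp(-C_{\textsc{M{\'e}l}}M)$, Proposition \ref{prop.ac} applies and $A$ is $(O(M^3),O(M))$-arithmetically connected; feeding this into Lemma \ref{lem.s} gives
\[
\|1_A\ast 1_A\|_{L_2(m_G)}^2\ \ge\ \exp(-O(M\log 2M))\,m_G(A)^3,
\]
so $A$ has additive energy within $\exp(O(M\log 2M))$ of maximal. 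By the Balog--Szemer\'edi--Gowers theorem in its current quasi-polynomial form there is $A'\subseteq A$ with $m_G(A')\ge\exp(-(M\log 2M)^{O(1)})m_G(A)$ and $m_G(A'+A')\le K\,m_G(A')$ for some $K=\exp((M\log 2M)^{O(1)})$, and $m_G(A+A')\le\exp((M\log 2M)^{O(1)})m_G(A')$; Ruzsa's covering lemma and its difference-covering variant (Lemmas \ref{lem.rc}, \ref{lem.rcr}) then transfer covering statements about $A'$ to $A$.

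Next, apply Proposition \ref{prop.f} to $A'$. This yields a Bohr system $B$ with $\mathcal C^\Delta(A';B_1)$ and $\dim B$ of the shapes $\exp(O(\log^3 2K(\log2\log2K)^4))$ and $O(\log^3 2K(\log2\log2K)^4)$, and with $\|1_{A'}\ast\beta\|_{L_\infty(G)}\ge\exp(-O(\log2K\log2\log2K))$ for every probability measure $\beta$ on $B_1$; as $1_{A'}\le 1_A$ this persists with $A$ in place of $A'$. Book-keeping the exponents through the previous paragraph — the routine but fiddly computation that pins down the power $4$ in the theorem — gives $\mathcal C^\Delta(A;B_1)=\exp(O(M^4\log^82M))$ and $\dim B=O(M^4\log^82M)$. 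Now apply Proposition \ref{prop.screl} to $f$, $A$ and $B$ with $p=\Theta(M)$, a small absolute $\delta$, and $\kappa=\Theta(1/M)$, obtaining $B'\le B$ (with the same shape of bounds on $\mathcal C^\Delta(A;B_1')$ and $\dim B'$), an approximately invariant probability measure $\mu$, and a probability measure $\nu$ on $B'_\kappa$ with $\sup_x\|f-f\ast\mu\|_{L_p(\tau_x(\nu))}\le\delta M$. By Lemma \ref{lem.cc} the function $f\ast\mu$ is nearly constant on translates of $B'_\kappa$, so by Markov's inequality in $L_p$ and $\epsilon<1/8$, on each ball $x+B'_\kappa$ the integer-valued $f_\Z$ is constant off a $\nu$-negligible set.

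For the subgroup, unpack the difference covering number: fix $\phi\in\Hom(G,G'')$ (surjective, without loss of generality) and $U,V\subseteq G''$ with $A\subseteq\phi^{-1}(U)$, $\phi^{-1}(V-V)\subseteq B'_\kappa$ and $\mathcal C_{G''}(U;V)=\mathcal C^\Delta(A;B'_\kappa)=\exp(O(M^4\log^82M))$ (using Lemma \ref{lem.bss}\,(\ref{lem.bss.1}), since $B'_\kappa=(\kappa B')_1$ and $\kappa=\Theta(1/M)$), and set $H:=\ker\phi\le G$, so that $H\subseteq\phi^{-1}(V-V)\subseteq B'_\kappa$. A short argument with the definition of $\mathcal C^\Delta$ (and the structure of $V$, which may be taken thin) shows that $A$ meets at most $\exp(O(M^4\log^82M))$ cosets of $H$; choosing $\nu$ uniform on the $H$-periodic set $\phi^{-1}(V-V)$ upgrades the previous step to the assertion that $f_\Z$ is \emph{constant on every $H$-coset meeting $A$}. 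Hence $f_\Z=\sum_{W\in G/H}z(W)1_W$ with $z(W)$ the common value of $f_\Z$ on $W$; since $z(W)\ne0$ only for the at most $\exp(O(M^4\log^82M))$ active cosets and $|z(W)|\le\|f_\Z\|_{L_\infty(G)}\le M+\epsilon$, we obtain $\|z\|_{\ell_1(G/H)}\le\exp(O(M^4\log^82M))$, the extra term $M^3\log\eta^{-1}(\log2\log2\eta^{-1})$ absorbing the cost of sharpening the above structural arguments to resolution $\eta$ (so that the $H$-average of $f$ is within $\eta$ of integer-valued).

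Finally take $g:=f\ast\mu'$ for a probability measure $\mu'$ supported on $B'_\kappa$ that is within $\eta$ of $m_H$ (a dilate of $\mu$ will do after the resolution-$\eta$ refinement); since $f_\Z$ is $H$-periodic one has $f_\Z\ast m_H=f_\Z$, whence $\|g-f_\Z\|_{L_\infty(G)}\le\epsilon+\eta$, so $g$ is $(\epsilon+\eta)$-almost integer-valued with $g_\Z=f_\Z=\sum_W z(W)1_W$. Because $\wh{f_\Z}$ is supported on $H^\perp:=\{\gamma:\gamma|_H\equiv1\}$ while $\wh{f-g}(\gamma)=\wh f(\gamma)(1-\wh{m_H}(\gamma))$ vanishes on $H^\perp$ (up to the $\eta$-error coming from $\mu'\ne m_H$), we get $\|f\|_{A(G)}=\|g\|_{A(G)}+\|f-g\|_{A(G)}+O(\eta)$, so it suffices to bound $\|g\|_{A(G)}$ below: using $\|z\,1_W\|_{A(G)}\ge1$ for a non-zero coset term, that $g$ differs from $\sum_W z(W)1_W$ only by a small perturbation, and the comparison of $f$ with $f\ast\mu$ and of $\mu$ with $m_H$, one reaches the stated bound on $\|f-g\|_{A(G)}$. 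The two genuinely hard points — flagged in the introduction as forcing bounds of this shape — are \emph{(i)} converting \emph{approximate} $H$-invariance of $f$ into \emph{exact} $H$-periodicity of $f_\Z$ over all of $A$ at once, controlled by the choice of the $H$-periodic test measure $\nu$ and by the covering bound $\mathcal C^\Delta(A;B'_\kappa)$ limiting the number of active cosets, and \emph{(ii)} showing that the Wiener norm of $f$ lying off $H^\perp$ is small, which is precisely where almost-integrality of $f$ and the $L_p$ (rather than merely $L_2$) strength of Proposition \ref{prop.screl} are indispensable.
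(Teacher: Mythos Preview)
Your proposal has a genuine gap at its core: you claim that $f_\Z$ itself is $H$-periodic, so that $g_\Z=f_\Z$. This is false in general, and the lemma does not assert it. What the quantitative continuity proposition gives you is that $f\ast\mu$ is nearly constant on translates of $B'_\kappa$, hence $(f\ast\mu)_\Z$ is exactly $B'_\kappa$-invariant; it says nothing about $f_\Z$ being invariant. The paper's proof therefore takes $k:=(f\ast\mu)_\Z$ (equivalently $(f\ast\nu\ast\tilde\nu)_\Z$) as the structured piece, not $f_\Z$, and sets $H$ to be the group \emph{generated by} $B'_\kappa$. Your attempt to extract $H=\ker\phi$ from the definition of $\mathcal C^\Delta$ is also problematic: nothing prevents this kernel from being trivial (think $G=\Z/p\Z$), and the claim that ``$V$ may be taken thin'' so as to bound the number of $H$-cosets meeting $A$ is not justified by the definition of $\mathcal C^\Delta$, which takes an infimum over all admissible $(\phi,U,V)$.

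Two further points are consequences of this. First, the $\eta$-dependence: in the paper the parameter $p$ in Proposition~\ref{prop.screl} is chosen of order $\max\{M\log^2 2M,\log\eta^{-1}\}$, and it is precisely this that forces the $\theta_x\le 3^{-p}$ estimate to yield $\|f\ast\nu\ast\tilde\nu-k\|_{L_\infty(G)}\le\epsilon+\eta$; your choice $p=\Theta(M)$ cannot produce the $(\epsilon+\eta)$-almost integer-valued conclusion, and the phrase ``absorbing the cost of sharpening to resolution $\eta$'' hides an actual argument that is not supplied. Second, the Wiener norm drop: the paper takes $g=f\ast\nu\ast\tilde\nu$ specifically so that $|\wh f(\gamma)|=|\wh f(\gamma)|(1-|\wh\nu(\gamma)|^2)+|\wh f(\gamma)||\wh\nu(\gamma)|^2$ gives the \emph{exact} identity $\|f\|_{A(G)}=\|f-g\|_{A(G)}+\|g\|_{A(G)}$, and then $\|g\|_{A(G)}\ge 1-(\epsilon+\eta)$ follows from $k\not\equiv 0$ (established via the Fre\u\i man correlation bound and the choice of $p$). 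Your approximate splitting via $H^\perp$ and a measure $\mu'$ merely ``close to $m_H$'' introduces errors you do not control, and your lower bound on $\|g\|_{A(G)}$ rests on the unproven $g_\Z=f_\Z$.
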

\begin{proof}
Apply Proposition \ref{prop.ac} to $f$ to get that the set $A:=\supp f_{\Z}$ is $(O(M^3),O(M))$-arithmetically connected (provided $\epsilon$ is sufficiently small).  By Lemma \ref{lem.s} we see that
\begin{equation*}
\|1_A \ast 1_A\|_{L_2(m_G)}^2 = \exp(-O(M\log 2M))m_G(A)^3.
\end{equation*}
It follows from the Balog-Szemer{\'e}di-Gowers Theorem that there is a set $A' \subset A$ such that
\begin{equation*}
m_G(A') = \exp(-O(M\log 2M))m_G(A) \text{ and } m_G(A'+A') \leq  \exp(O(M\log 2M))m_G(A').
\end{equation*}
By Proposition \ref{prop.f} there is a Bohr system $B$ with
\begin{equation*}
\dim B = O(M^3\log^72M) \text{ and } \mathcal{C}^\Delta(A';B_1) = \exp(O(M^3\log^72M))
\end{equation*}
and a constant $\psi= \exp(-O(M \log^22M))$ such that
\begin{equation}\label{eqn.sssss}
\|1_{A'} \ast \beta\|_{L_\infty(G)} \geq \psi \text{ for all probability measures }\beta\text{ with } \supp \beta \subset B_1.
\end{equation}
Apply Proposition \ref{prop.screl} to the set $A'$, the Bohr system $B$, $d:=1+\dim B$, and the function $f$ with parameters
\begin{equation*}
\delta:=1/2^4M \text{ and } \kappa:= 1/2^5M,
\end{equation*}
and
\begin{align*}
p& :=\max\{ 100C_{\textsc{M{\'e}l}}M,1+\log_2 \psi^{-1},3+\log_3 M + \log_3\eta^{-1}\}\\
& = O(\max\{M\log^22M,\log \eta^{-1}\})
\end{align*}
to get a Bohr system $B' \leq B$ with
\begin{align*}
\mathcal{C}^\Delta(A';B_1') & \leq \exp(O(\delta^{-1}d\log 2\kappa^{-1}d + p\delta^{-3}\log^32p\kappa^{-1}\delta^{-1})) \mathcal{C}^\Delta(A';B_1)\\
& \leq \exp(O(M^{4}\log^82M + M^{3}\log \eta^{-1}(\log (2\log 2\eta^{-1}))))
\end{align*}
and
\begin{equation*}
\dim B' \leq d + O(p\delta^{-2}\log^22\delta^{-1}) = O(M^3\log^72M + M^2(\log^2 2M)\log \eta^{-1}),
\end{equation*}
and a $B'$-approximately invariant probability measure $\mu$ and a probability measure $\nu$ supported on $B'_\kappa$ such that
\begin{equation*}
\sup_{x \in G}{\|f - f\ast \mu\|_{L_p(\tau_x(\nu))}} \leq \delta M.
\end{equation*}
By the integral triangle inequality it follows that
\begin{equation*}
\sup_{x \in G}{\|f - f\ast \mu\|_{L_p(\tau_x(\nu\ast \widetilde{\nu}))}} \leq \delta M.
\end{equation*}
Since $\mu$ is $B'$-approximately invariant and $\kappa \leq 1/2$ it follows from Lemma \ref{lem.cc} that for all $y \in \supp \nu \ast \widetilde{\nu}$ we have
\begin{equation*}
|f\ast \mu(y+x) - f\ast \mu(x)| \leq 2\kappa \|f\|_{L_\infty(G)} \leq 2\kappa M,
\end{equation*}
and hence
\begin{equation*}
\sup_{x \in G}{\|f - f\ast \mu(x)\|_{L_p(\tau_x(\nu\ast \widetilde{\nu}))}} \leq \delta M +2\kappa M = (\delta+2\kappa)M.
\end{equation*}
By the triangle inequality we then have
\begin{equation}\label{eqn.o}
\sup_{x \in G}{\|f_{\Z} - f\ast \mu(x)\|_{L_p(\tau_x(\nu\ast \widetilde{\nu}))}} \leq (\delta+2\kappa) M+ \epsilon \leq \frac{1}{4},
\end{equation}
given the choices of $\delta$ and $\kappa$, and the upper bound on $\epsilon$.  We put $k:=(f\ast \mu)_{\Z}$ which will turn out to be the $g_{\Z}$ in the conclusion.  We establish the various properties in order.
\begin{claim*}
$f\ast \mu$ is $\frac{1}{4}$-almost integer-valued \emph{i.e.} $\|k - f\ast \mu\|_{L_\infty(G)} \leq \frac{1}{4}$.
\end{claim*}
\begin{proof}
Suppose that there is some $x \in G$ such that $|f\ast \mu(x)-k(x)| > \frac{1}{4}$.  Then
\begin{align*}
\|f_{\Z} - f\ast \mu(x)\|_{L_p(\tau_x(\nu\ast \widetilde{\nu}))} & \geq \|(f\ast \mu)_{\Z} - f\ast \mu(x)\|_{L_p(\tau_x(\nu\ast \widetilde{\nu}))}\\
& \geq \|(f\ast \mu)_{\Z} - f\ast \mu(x)\|_{L_p(\tau_x(\nu\ast \widetilde{\nu}))} > \frac{1}{4}
\end{align*}
which contradicts (\ref{eqn.o}).
\end{proof}
\begin{claim*}
$k$ is invariant under translation by elements of $B_\kappa'$.
\end{claim*}
\begin{proof}
Since $\mu$ is $B'$-approximately invariant it follows by the triangle inequality and Lemma \ref{lem.cc} that for all $y \in B_\kappa'$ and $x \in G$ we have
\begin{align*}
|k(y+x) - k(x)|& \leq | k(y+x) - f\ast \mu(y+x)| \\
& \qquad \qquad + |f\ast \mu(y+x) - f\ast \mu(x)| + |f\ast \mu(x) - k(x)|\\& \leq \frac{1}{2} + 2M\kappa <1.
\end{align*}
It follows that $k(y+x)=k(x)$ as claimed.
\end{proof}
The next two claims require the same calculation.  Put $\theta_x:=\tau_x(\nu \ast \widetilde{\nu})(\{y: f_\Z(y) \neq k(x)\})$ and note that
\begin{align*}
\|f_{\Z} - f\ast \mu(x)\|_{L_p(\tau_x(\nu\ast \widetilde{\nu}))}^p & \geq \int_{\{z: f_\Z(z) \neq k(x)\}}{\left||f_{\Z}(y)-k(x)| - |k(x) - f \ast \mu(x)|\right|^p d\tau_x(\nu \ast \widetilde{\nu})(y)}\\
& \geq \theta_x\left(\frac{3}{4}\right)^p.
\end{align*}
In light of (\ref{eqn.o}) we then have $\theta_x \leq 3^{-p}$. 
\begin{claim*}
$\|f\ast \nu \ast\tilde{\nu}-k\|_{L_\infty(G)} \leq \eta+\epsilon$ so that $f \ast \nu \ast \tilde{\nu}$ is $(\epsilon+\eta)$-almost integer-valued and $(f \ast \nu \ast \tilde{\nu})_\Z=k$.
\end{claim*}
\begin{proof}
By the triangle inequality we see that
\begin{align*}
|f \ast \nu \ast \tilde{\nu}(x)-k(x)| & \leq |f_{\Z} \ast \nu \ast \tilde{\nu}(x)-k(x)| + |(f-f_{\Z})\ast \nu \ast\tilde{\nu}(x)| \\
& \leq \theta_x\|f_{\Z}\|_{L_\infty(G)} +\epsilon \leq (M+\epsilon) \theta_x +\epsilon \leq 2M3^{-p}+\epsilon.
\end{align*}
It follows that $f \ast \nu \ast \tilde{\nu}$ is $(\eta+\epsilon)$-almost integer-valued in light of the choice of $p$.  Since $2M3^{-p}+\epsilon <\frac{1}{2}$ we see that the integer part is unique and so $(f \ast \nu \ast \tilde{\nu})_{\Z}=k$.
\end{proof}
\begin{claim*}
$k \not \equiv 0$.
\end{claim*}
\begin{proof}Since $\kappa \leq 1/2$ and $B' \leq B$ we see that $\supp \nu \ast \tilde{\nu} \subset B_1$, and hence by (\ref{eqn.sssss}) that
\begin{equation*}
1_{A'} \ast \nu \ast \tilde{\nu}(x) \geq \psi
\end{equation*}
for some $x \in G$.  If $k(x)= 0$ then
\begin{equation*}
\psi \leq 1_{A'} \ast \nu \ast \tilde{\nu}(x) \leq 1_{A} \ast \nu \ast \tilde{\nu}(x) = \tau_x(\nu \ast \widetilde{\nu})(\{y: f_\Z(y) \neq 0\}) =\theta_x \leq 3^{-p},
\end{equation*}
which contradicts the choice of  $p$.  It follows that $k(x) \neq 0$.
\end{proof}
\begin{claim*}
$\|k\|_{L_1(m_G)} \leq 2Mm_G(\supp f_\Z)$.
\end{claim*}
\begin{proof}
Note that
\begin{equation*}
|k(x)| - \|(f_\Z - f) \ast \mu\|_{L_\infty(G)} - \|f \ast \mu - k\|_{L_\infty(G)} \leq |f_{\Z} \ast \mu(x)|,
\end{equation*}
and so
\begin{equation*}
\frac{1}{2}\int{|k(x)dm_G(x)} \leq \int{|f_{\Z} \ast \mu(x)|dm_G(x)} \leq (M+\epsilon)m_G(\supp f_\Z).
\end{equation*}
\end{proof}
Write $H$ for the group generated by $B_\kappa'$ so that Lemma \ref{lem.covsum}, Lemma \ref{lem.dfc} part (\ref{pt4.difc}), and Lemma \ref{lem.bss} part (\ref{lem.bss.1}) tell us
\begin{align*}
m_G(H) & \geq m_G(B_{\kappa}') \geq \frac{m_G(A')}{\mathcal{C}(A';B_\kappa')} \geq \frac{m_G(A')}{\mathcal{C}^\Delta(A';B_\kappa')} \geq \left(\frac{\kappa}{4}\right)^{\dim B'}\frac{m_G(A')}{\mathcal{C}^\Delta(A';B_1')}\\
& \geq \exp(-O(M^{4}\log^82M + M^3\log \eta^{-1}(\log (2\log 2\eta^{-1}))))m_G(\supp f_\Z).
\end{align*}
From the claims, $k$ is $H$-invariant and so there is a well-defined function $z:G/H \rightarrow \Z$ such that $z(W)=k(w)$ for all $w \in W$.  Now we have from the claims that
\begin{equation*}
\|z\|_{\ell_1(G/H)}m_G(H)= \|k\|_{L_1(m_G)} \leq 2Mm_G(\supp f_\Z),
\end{equation*}
which gives
\begin{equation*}
\|z\|_{\ell_1(G/H)} \leq  \exp(O(M^{4}\log^82M + M^3\log \eta^{-1}(\log (2\log 2\eta^{-1})))).
\end{equation*}
It remains to put $g:=f\ast \nu \ast\tilde{\nu}$ and note that $g_{\Z}=k$ has the required properties.  Moreover, since $k$ is not identically $0$ we see that
\begin{equation*}
\|g\|_{A(G)} \geq \|g\|_{L_\infty(G)} \geq \|k\|_{L_\infty(G)} - (\epsilon + \eta) \geq 1-\epsilon - \eta,
\end{equation*}
and
\begin{align*}
\|f\|_{A(G)} & = \sum_{\gamma}{|\wh{f}(\gamma)|}\\
& = \sum_{\gamma}{|\wh{f}(\gamma)|(1-|\wh{\nu}(\gamma)|^2)}+ \sum_{\gamma}{|\wh{f}(\gamma)||\wh{\nu}(\gamma)|^2}\\
& = \|f - f \ast \nu \ast \tilde{\nu}\|_{A(G)} + \|f \ast \nu \ast \tilde{\nu}\|_{A(G)} \geq \|f - f \ast \nu \ast \tilde{\nu}\|_{A(G)} -(1-(\epsilon+\eta)),
\end{align*}
from which we get the final inequality.
\end{proof}

We are now in a position to prove our main result.
\begin{proof}[Proof of Theorem \ref{thm.main2}]
We produce a sequence of functions $f_i$, reals $\epsilon_{i+1}$, subgroups $H_{i+1}$, and functions $z_{i+1}:G/H_{i+1} \rightarrow \Z$ such that
\begin{enumerate}
\item $\epsilon_{i} := 2^i\epsilon +4^{i-2M-4}\delta\exp(-C_{\textsc{M{\'e}l}}M)$;
\item $f_i$ is $\epsilon_i$-almost integer-valued;
\item $\|f_{i+1}\|_{A(G)} \leq \|f_{i}\|_{A(G)} - \frac{1}{1+\delta}$;
\item $(f_{i+1}-f_i)_{\Z} = \sum_{W \in G/H_{i+1}}{z_{i+1}(W)1_W}$.
\end{enumerate}
Set $f_0:=f$ and note that since $f$ is $\epsilon$-almost integer-valued it is certainly $\epsilon_0$-almost integer-valued.  At stage $i\leq 2M+1$ apply Lemma \ref{lem.mitlem} with parameter $\eta:=4^{-2M-3}\delta\exp(-C_{\textsc{M{\'e}l}}M)$, which is possible (provided $\epsilon$ is sufficiently small) since
\begin{equation*}
\epsilon_i \leq 2^{2M+1}\epsilon +4^{2M+1-2M-4}\delta\exp(-C_{\textsc{M{\'e}l}}M) \leq \min\{\exp(-C_{\textsc{M{\'e}l}}M),\delta 2^{-3}\}.
\end{equation*}
Either $(f_i)_{\Z}\equiv 0$ and we terminate the iteration, or we get a function $f_{i+1}$, a group $H_{i+1}$ and a function $z_{i+1}:G/H_{i+1} \rightarrow \Z$, such that $f_{i+1}-f_i$ is $(\epsilon_i+\eta)$-almost integer-valued,
\begin{equation*}
(f_{i+1}-f_i)_{\Z} = \sum_{W \in G/H_{i+1}}{z_{i+1}(W)1_W},
\end{equation*}
\begin{equation*}
\|z_{i+1}\|_{\ell_1(G/H_{i+1})} \leq \exp(O(M^{4}\log^82M + M^3\log \delta^{-1}(\log (2\log 2\delta^{-1}))))
\end{equation*}
and
\begin{equation*}
\|f_{i+1}\| \leq \|f_i\|_{A(G)} - (1-(\epsilon_i+\eta))\leq  \|f_i\|_{A(G)} -\frac{1}{1+\delta}.
\end{equation*}
Since $f_i$ is $\epsilon_i$-almost integer-valued it follows that $f_{i+1}$ is $(2\epsilon_i+\eta)$-almost integer-valued.  But
\begin{align*}
(2\epsilon_i+\eta) & \leq 2( 2^i\epsilon +4^{i-2M-4}\delta\exp(-C_{\textsc{M{\'e}l}}M)) +4^{-2M-3}\delta\exp(-C_{\textsc{M{\'e}l}}M)\\
& \leq 2^{i+1}\epsilon + 4^{(i+1)-2M-4}\delta\exp(-C_{\textsc{M{\'e}l}}M),
\end{align*}
and so $f_{i+1}$ is $\epsilon_{i+1}$-almost integer-valued.

Since $\|f_i\|_{A(G)} \geq 0$ we must have $(f_l)_\Z\equiv 0$ for some $l \leq M(1+\delta)$.  But then
\begin{align*}
& \left\|f - (f_l)_{\Z} - \sum_{j=0}^{l-1}{(f_{j+1}-f_j)_{\Z}}\right\|_{L_\infty(G)}\\
&\qquad  \leq \left\|f - f_l - \sum_{j=0}^{l-1}{(f_{j+1}-f_j)}\right\|_{L_\infty(G)}\\
&\qquad  \qquad  + \left\|f_l - (f_l)_{\Z}\right\|_{L_\infty(G)} + \sum_{j=0}^{l-1}{\left\|(f_{j+1}-f_j) - (f_{j+1}-f_j)_{\Z}\right\|_{L_\infty(G)}}\\
& \qquad = 0+ \epsilon_l + \sum_{j=0}^{l-1}{(\epsilon_{j}+\eta)} \leq \exp(O(M))\epsilon + \frac{1}{4}<\frac{1}{2},
\end{align*}
provided $\epsilon$ is sufficiently small.  The result follows since $f_{\Z}$ is uniquely defined in this case and $(f_i)_\Z \equiv 0$ when the iteration terminates.
\end{proof}

\section{Specific classes of groups}\label{sec.rel}

In this section we discuss work for specific classes of groups.  

\subsection{Groups of bounded exponent}  In \cite{gre::9} Green set out a model setting for additive combinatorics.  (See \cite{wol::3} for a recent perspective.)  In this setting a number of arguments simplify and Theorem \ref{thm.main2} could be proved for groups of bounded exponent without the need for any discussion of Bohr systems.  

As mentioned in the introduction \cite{san::18} carries out this simplification for finite groups of exponent $2$ -- \emph{i.e.} groups isomorphic to $\F_2^n$ for some $n$ -- though more general (Abelian) groups of bounded exponent are no harder.
\begin{theorem}\label{thm.f2}
Suppose that $G=\F_2^n$ and $f:G \rightarrow \Z$ has $\|f\|_{A(G)} \leq M$.  Then there is some $z:\mathcal{W}(G) \rightarrow \Z$ such that
\begin{equation*}
f=\sum_{W \in {\mathcal{W}}(G)}{z(W)1_W} \text{ and } \|z\|_{\ell_1(\mathcal{W}(G))} \leq \exp(M^{3+o(1)}).
\end{equation*}
\end{theorem}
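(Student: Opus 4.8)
The plan is to re-run the proof of Theorem \ref{thm.main2} (that is, to iterate Lemma \ref{lem.mitlem} at most $2M$ times), exploiting the fact that in $G=\F_2^n$ every Bohr system we ever build is a subgroup. Concretely, since every character $\gamma$ of $\F_2^n$ has $\|\gamma(x)\|\in\{0,\tfrac12\}$, a Bohr system generated by a pair $(\Gamma,\delta)$ with $\|\delta\|_{\ell_\infty(\Gamma)}\le\tfrac12$ is the \emph{constant} vector $B_\eta=\bigcap_{\gamma\in\Gamma}\ker\gamma$ for all $\eta\in(0,1]$; this is a subgroup, and $\mathcal{C}^\Delta(B_\eta;B_{\eta/2})=1$, so $\dim^* B=\dim B=0$. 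Every Bohr system appearing in \S\ref{sec.qc}--\S\ref{sec.main} (those coming from Lemma \ref{lem.bohrinv} and Lemma \ref{lem.rpgb}, and their meets and dilates) has width at most $\tfrac12$, hence is a constant subgroup of dimension $0$; moreover the $B$-approximately invariant measures supplied by Corollary \ref{cor.ubreg} may be taken to be the uniform measure on the subgroup, with $\lambda=1$, and approximate invariance is then exact invariance.

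Granting this, I would first record the $\F_2^n$ version of Proposition \ref{prop.f}: the coset progression of Green--Ruzsa is now simply a coset of a subgroup, and the relevant Freiman--Ruzsa-type input becomes the Bogolio\`uboff--Ruzsa lemma in $\F_2^n$. This replaces Lemma \ref{lem.rpgb}, and the upshot is that if $A$ is non-empty with $m_G(A+A)\le Km_G(A)$ then there is a subgroup $H$ -- viewed as the constant Bohr system $B_\eta\equiv H$, so $\dim B=0$ -- with $\mathcal{C}^\Delta(A;B_1)=\exp(\log^{O(1)}2K)$ and $\|1_A\ast\beta\|_{L_\infty(G)}=\exp(-\log^{O(1)}2K)$ for every probability measure $\beta$ supported on $B_1$. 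The crucial gain over the general statement is that $\dim B=0$ rather than of order $\log^{O(1)}2K$.

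Now I would follow the proof of Lemma \ref{lem.mitlem} essentially verbatim. Proposition \ref{prop.ac} and Lemma \ref{lem.s} are unchanged (their proofs are group-agnostic), so $A=\supp f_\Z$ is $(O(M^3),O(M))$-arithmetically connected, $\|1_A\ast1_A\|_{L_2(m_G)}^2=\exp(-O(M\log M))m_G(A)^3$, and the Balog--Szemer\'edi--Gowers theorem yields $A'\subset A$ with $m_G(A')=\exp(-O(M\log M))m_G(A)$ and doubling $K=\exp(O(M\log M))$; the $\F_2^n$ form of Proposition \ref{prop.f} then gives a subgroup $B$ with $\dim B=0$, $\mathcal{C}^\Delta(A';B_1)=\exp(M^{1+o(1)})$ and $\psi=\exp(-M^{1+o(1)})$. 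Feeding this into Proposition \ref{prop.screl} with $\delta,\kappa$ of size $M^{-1}$ and $p=M^{1+o(1)}$ (the value of $p$ in Lemma \ref{lem.mitlem} is $O(\max\{M,\log\psi^{-1},\log\eta^{-1}\})=M^{1+o(1)}$ in $\F_2^n$), the dimension of every Bohr system in the iteration stays $0$, so the term $Jd\log 2\kappa^{-1}d$ in the bound of Proposition \ref{prop.screl} -- which in the general case has size $\delta^{-1}\dim B$ of order $M^4$, because there $\dim B$ has order $M^3$ -- contributes only $O(M\log M)$; and (using $\sum_i\rho_i^2\le(\sum_i\rho_i)^2\le1$ in the summation in the proof of Proposition \ref{prop.screl}, rather than the crude $\rho_i^2\le1$) the surviving cost is $\exp(O(p\delta^{-2}\log^{O(1)}(2p\kappa^{-1}\delta^{-1})))=\exp(M^{3+o(1)})$. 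Hence $\mathcal{C}^\Delta(A';B_1')\le\exp(M^{3+o(1)})\mathcal{C}^\Delta(A';B_1)=\exp(M^{3+o(1)})$, and since $\dim B'=0$ the bound $m_G(H)\ge(\kappa/4)^{\dim B'}m_G(A')/\mathcal{C}^\Delta(A';B_1')$ from the proof of Lemma \ref{lem.mitlem} becomes $m_G(H)\ge\exp(-M^{3+o(1)})m_G(\supp f_\Z)$, whence $\|z\|_{\ell_1(G/H)}\le2Mm_G(\supp f_\Z)/m_G(H)\le\exp(M^{3+o(1)})$. Iterating Lemma \ref{lem.mitlem} at most $2M$ times exactly as in the proof of Theorem \ref{thm.main2} gives $f=\sum_{W\in\mathcal{W}(G)}z(W)1_W$ with $\|z\|_{\ell_1(\mathcal{W}(G))}\le2M\exp(M^{3+o(1)})=\exp(M^{3+o(1)})$.

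I expect the main obstacle to be the bookkeeping in the $\F_2^n$ version of Proposition \ref{prop.f}: one must check that $\mathcal{C}^\Delta(A';B_1)$, $\psi$, and the doubling constant $K$ are all of the form $\exp(M^{1+o(1)})$ so that each is dominated by the $\exp(M^{3+o(1)})$ coming from the quantitative continuity step, and one must verify that nothing in \S\ref{sec.qc} or \S\ref{sec.main} silently reintroduces a Bohr system of positive dimension. As with Theorem \ref{thm.main}, M\'ela's example still forces $\epsilon\le\exp(-\Omega(M))$, so nothing is lost there, and the surviving exponent $3$ is exactly the one coming from Green and Konyagin's quantitative continuity argument -- the $\delta^{-2}$ with $\delta$ of size $M^{-1}$ together with the $L_p$-exponent $p=M^{1+o(1)}$ forced by the arithmetic connectivity input -- which is what one would need to improve to do better.
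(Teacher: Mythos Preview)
Your approach is correct and is precisely what the paper intends: it offers no detailed proof of Theorem \ref{thm.f2}, only the remark that the arguments for Theorem \ref{thm.main2} ``simplify somewhat if $G=\F_2^n$ and if followed through'' give the result, and you have carried this through. The two savings you isolate are the right ones: (i) every Bohr system in \S\ref{sec.qc}--\S\ref{sec.main} becomes a constant subgroup, so all dimension terms collapse to $0$ and the dilation losses $(4\lambda^{-1})^{\dim B}$ vanish; and (ii) in the proof of Proposition \ref{prop.screl} one uses $\sum_j\rho_j^2\le\sum_j\rho_j\le1$ rather than the wasteful $\sum_j\rho_j^2\le J=O(\delta^{-1})$ that the paper records, converting the $p\delta^{-3}$ contribution to $p\delta^{-2}=M^{3+o(1)}$. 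Without this second refinement one only recovers $M^{4+o(1)}$ even with $d=1$, so it is essential.

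One small overclaim: writing $\mathcal{C}^\Delta(A';B_1)=\exp(M^{1+o(1)})$ for the $\F_2^n$ Fre{\u\i}man step would require polynomial Fre{\u\i}man--Ruzsa. Proposition \ref{prop.f} (which holds verbatim in $\F_2^n$, now with $\dim B=0$) only gives $\mathcal{C}^\Delta(A';B_1)\le\exp(O(\log^32K(\log 2\log 2K)^4))=\exp(M^{3+o(1)})$ when $K=\exp(O(M\log 2M))$. This is harmless, since it is absorbed by the $\exp(M^{3+o(1)})$ already coming from the quantitative continuity step; the bound on $\psi$ you quote, $\psi=\exp(-M^{1+o(1)})$, is correct.
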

In certain regimes there are already stronger results, at least for indicator functions of sets.  Indeed Shpilka, Tal, and Lee Volk established the following in \cite{shptalvol::1}.
\begin{theorem}[{\cite[Theorem 1.2]{shptalvol::1}}]\label{thm.stv}
Suppose that $G=\F_2^n$ and $A \subset G$ has $\|1_A\|_{A(G)} \leq M$.  Then there is some $z:\mathcal{W}(G) \rightarrow \Z$ such that
\begin{equation*}
1_A=\sum_{W \in {\mathcal{W}}(G)}{z(W)1_W} \text{ and } \|z\|_{\ell_1(\mathcal{W}(G))} \leq \exp(O(M^2 + M\log \log |G|)).
\end{equation*}
\end{theorem}
While our aim is to avoid any sort of $|G|$ dependence, it is worth noting that in the above theorem it is really rather mild.

It is also interesting that for this class of groups arithmetic progressions are no longer a limiting example -- we do not have Proposition \ref{prop.egg} -- and it might be that the bound on $\|z\|_{\ell_1(\mathcal{W}(G))}$ can be polynomial in $M$.  Some efforts in this direction for particular classes of function can be found in work of Tsang, Wong, Xie and Zhang, in particular \cite[Corollary 7]{tsawonxie::}.

\subsection{Cyclic groups of prime order}

For cyclic groups of prime order there are a range of results by Konyagin and various authors.  In particular the following is an easy consequence of \cite[Theorem 1.3]{grekon::}.
\begin{theorem}
Suppose that $G=\Z/p\Z$ and $A \subset G$ has $m_G(A) = \alpha \in \left(0,\frac{1}{2}\right]$.  Then
\begin{equation*}
\|1_A\|_{A(G)} = \alpha \log^{\frac{1}{3}-o(1)}p.
\end{equation*}
\end{theorem}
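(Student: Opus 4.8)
The plan is to read this estimate off the work of Green and Konyagin \cite{grekon::}, the relevant input being \cite[Theorem 1.3]{grekon::}, together with a little bookkeeping to pass between normalisations and between densities. There are two inequalities to prove: a lower bound $\|1_A\|_{A(G)} \geq \alpha(\log p)^{1/3-o(1)}$ valid for every $A \subset G$ with $m_G(A)=\alpha$, and a construction showing that this cannot be improved in general.

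For the lower bound I would first dispose of the range $\alpha \leq (\log p)^{-1/3}$, where there is nothing to prove: by Fourier inversion any non-empty $A$ has $\|1_A\|_{A(G)} \geq \|1_A\|_{L_\infty(G)} = 1 \geq \alpha(\log p)^{1/3}$. So suppose $\alpha > (\log p)^{-1/3}$. Given such an $A$, choose translates greedily, picking $t_1,t_2,\dots$ so that the sets $A_i := \bigcup_{j \leq i}(A+t_j)$ have strictly increasing measure, and stopping at the first $i$ with $m_G(A_i) \geq \frac14$; since a uniformly random translate increases $m_G(A_i)$ by $\alpha(1-m_G(A_i))$ in expectation, such a $t_{i+1}$ exists whenever $m_G(A_i) < \frac14$ and the gain is $\gg \alpha$, so the process terminates after $O(\alpha^{-1})$ steps with a set $A':=A_i$ of density in $[\frac14,\frac34]$. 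By sub-additivity of the Wiener norm, $\|1_{A'}\|_{A(G)} \leq \sum_j \|1_{A+t_j}\|_{A(G)} = O(\alpha^{-1})\|1_A\|_{A(G)}$. Applying \cite[Theorem 1.3]{grekon::} to $A'$ --- the Green--Konyagin argument gives the lower bound $(\log p)^{1/3-o(1)}$ for any density lying in a fixed closed subinterval of $(0,1)$, not just for density $\frac12$ --- and rearranging yields $\|1_A\|_{A(G)} \geq \alpha(\log p)^{1/3-o(1)}$.

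For the matching upper bound I would invoke the extremal construction accompanying \cite[Theorem 1.3]{grekon::}. The density of the set it produces is a free parameter, and running it at density $\alpha$ gives a set whose Fourier transform is concentrated enough that $\|1_A\|_{A(G)} = \alpha(\log p)^{1/3+o(1)}$. Combining the two inequalities, with the $\pm o(1)$ in the exponent absorbing the $(\log\log p)^{O(1)}$ losses present on both sides, gives the stated equality.

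The only real obstacle is bookkeeping rather than mathematics: one must check that the density-reduction step is legitimate, i.e., that the Green--Konyagin argument genuinely does not care whether $\alpha$ equals $\frac12$ or is merely bounded away from $0$ and $1$, and that the extremal construction can be run at a prescribed density with its Wiener norm scaling linearly in $\alpha$. Both are implicit in \cite{grekon::}, which is why, as the text says, the deduction is easy.
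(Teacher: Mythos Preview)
The paper does not supply a proof of this statement: it simply records the theorem as ``an easy consequence of \cite[Theorem 1.3]{grekon::}'' in the concluding remarks. Your lower-bound argument is exactly the sort of deduction intended: boost an arbitrary density-$\alpha$ set to one of density bounded away from $0$ and $1$ by taking a union of $O(\alpha^{-1})$ translates, invoke Green--Konyagin on the resulting set, and divide through. That part is correct and matches the paper's (implicit) route.

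The problem is your upper-bound paragraph. The equality sign in the statement is to be read as a lower bound only --- this is clear from how the result is used immediately afterwards in the paper, namely as one of three lower-bound inputs combined into the Green--Konyagin--Shkredov corollary. There is no ``extremal construction accompanying \cite[Theorem 1.3]{grekon::}'' achieving $\|1_A\|_{A(G)} \leq \alpha(\log p)^{1/3+o(1)}$; indeed, the exponent $1/3$ in Green--Konyagin is not believed to be sharp, and the conjectured truth is that $\|1_A\|_{A(G)}$ should be of order $\log p$ for sets of density bounded away from $0$ and $1$. The smallest known Wiener norms for sets in $\Z/p\Z$ come from arithmetic progressions, giving $\Theta(\log|A|)$, which for fixed $\alpha$ is $\Theta(\log p)$, not $(\log p)^{1/3}$. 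So the claim that ``running it at density $\alpha$ gives a set whose Fourier transform is concentrated enough that $\|1_A\|_{A(G)} = \alpha(\log p)^{1/3+o(1)}$'' is simply false, and the upper-bound half of your argument should be deleted.
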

The above bound becomes weaker quite quickly as $A$ gets smaller, and Konyagin and Shkredov \cite{konshk::0,konshk::1} have the following results to deal with this.
\begin{theorem}[{\cite[Theorem 13]{konshk::0}}] Suppose that $G=\Z/p\Z$ and $A \subset G$ has size $2 \leq |A| \leq \exp((\log p/\log \log p)^{1/3})$.  Then
\begin{equation*}
\|1_A\|_{A(G)} = \Omega(\log |A|).
\end{equation*}
\end{theorem}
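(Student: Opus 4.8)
The plan is to prove the contrapositive quantitatively: if $M:=\|1_A\|_{A(G)}$ is much smaller than $\log|A|$, then $A$ carries so much additive structure that it can be transported, essentially isometrically for the Wiener norm, into a torsion-free group, where the resolution of Littlewood's conjecture by McGehee, Pigno and Smith \cite{mcgpigsmi::} and by Konyagin \cite{kon::} forbids a finite set of size $|A|$ having Wiener norm $o(\log|A|)$.

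First I would pass from small Wiener norm to large additive energy. Writing $\alpha:=m_G(A)$, H{\"o}lder's inequality and Parseval's theorem give $\alpha=\sum_\gamma|\wh{1_A}(\gamma)|^2\leq \big(\sum_\gamma|\wh{1_A}(\gamma)|\big)^{2/3}\big(\sum_\gamma|\wh{1_A}(\gamma)|^4\big)^{1/3}=M^{2/3}\big(\sum_\gamma|\wh{1_A}(\gamma)|^4\big)^{1/3}$, so the additive energy of $A$ is at least $|A|^3M^{-2}$. The Balog-Szemer{\'e}di-Gowers theorem then produces $A'\subseteq A$ with $m_G(A')\geq M^{-O(1)}m_G(A)$ and $m_G(A'+A')\leq M^{O(1)}m_G(A')$; under the running assumption $M=(\log|A|)^{O(1)}$ the doubling constant here is $K=(\log|A|)^{O(1)}$.

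Next I would feed $A'$ into a quasi-polynomial Fre{\u\i}man theorem inside $\Z/p\Z$, and this is exactly where the hypothesis $|A|\leq\exp((\log p/\log\log p)^{1/3})$ is consumed: the Fre{\u\i}man blow-up factor is $\exp((\log 2K)^{O(1)})=\exp((\log\log p)^{O(1)})$, which is negligible against $p$, so one genuinely obtains a \emph{proper} generalised arithmetic progression $P$ of rank $d=(\log\log p)^{O(1)}$ with $A'\subseteq P$ and $|P|\leq\exp((\log\log p)^{O(1)})|A'|\ll p$, rather than filling up all of $\Z/p\Z$. Because $P$ is proper and short, $\Z/p\Z$ at the scale of $P$ is modelled by $\Z^d$, and transporting through this model (after a generic dilation of $\Z/p\Z$ and a generic linear projection $\Z^d\to\Z$) produces a finite set $\wt A\subset\Z$ with $|\wt A|=|A'|$ and $\|1_{A'}\|_{A(G)}=\Omega(\|1_{\wt A}\|_{A(\Z)})$. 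The Littlewood lower bound then gives $\|1_{\wt A}\|_{A(\Z)}=\Omega(\log|\wt A|)=\Omega(\log|A|)$, and, provided the structural passage did not inflate the Wiener norm, this contradicts $M=o(\log|A|)$.

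The crux, and the reason for the quantitative restrictions, is the transfer step, which is delicate for two interlocking reasons. First, the Wiener norm is \emph{not} a Fre{\u\i}man invariant, so one cannot merely relocate $A'$: the argument must exploit that a proper rank-$d$ progression in $\Z/p\Z$ of size $\ll p$ is faithfully modelled by a box in $\Z^d$, and then that the $p$ characters of $\Z/p\Z$, after a generic dilation, sample a generic one-parameter sub-torus of $\wh{\Z^d}=(\R/\Z)^d$ finely enough to approximate the continuous $L^1$-norm of a trigonometric polynomial of the relevant degrees; this is a Weyl-type equidistribution input whose losses are what force $|A|$ to be subpolynomial in $p$, and it is plausibly the true source of the exponent $1/3$ (via a bound of Green and Konyagin type \cite{grekon::}). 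Second, passing to the Balog-Szemer{\'e}di-Gowers subset $A'$ can \emph{increase} the Wiener norm, so to close the loop one must either keep control of $\|1_{A'}\|_{A(G)}$ throughout the reduction, or instead run the structure theorem on $A$ itself (placing $A$ within boundedly many translates of a small proper progression, for instance via the idempotent theorem); making this robust is the main technical burden, and is presumably the point at which the original argument of Konyagin and Shkredov diverges most from the sketch above.
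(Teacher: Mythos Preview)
The paper does not prove this statement: it is quoted verbatim as \cite[Theorem 13]{konshk::0} in the concluding remarks, with no argument given, so there is no ``paper's own proof'' to compare against. Your proposal is therefore being measured against the original Konyagin--Shkredov argument rather than anything in this paper.

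On substance, your high-level strategy --- extract additive structure from small $\|1_A\|_{A(G)}$, place $A$ (or a large piece of it) inside a proper low-rank progression short relative to $p$, transfer to $\Z$, and invoke the McGehee--Pigno--Smith/Konyagin resolution of Littlewood's conjecture --- is indeed the skeleton of the Konyagin--Shkredov proof. You have also correctly located the two genuine obstructions: the Wiener norm is not monotone under passage to the Balog--Szemer\'edi--Gowers subset $A'$, and it is not a Fre{\u\i}man isomorphism invariant, so the transfer to $\Z$ requires an equidistribution argument rather than a purely combinatorial one.

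What is missing is precisely the resolution of those two points, which you flag but do not address, and which constitute the actual content of the theorem. In particular, the argument as written does not close: you obtain $\|1_{A'}\|_{A(G)}=\Omega(\log|A|)$ for a subset $A'\subset A$, but this says nothing about $\|1_A\|_{A(G)}$, and there is no general inequality relating the two. The Konyagin--Shkredov argument avoids this by working with the whole of $A$ throughout (using that the size hypothesis already forces $A$ into a short progression after dilation, without needing Balog--Szemer\'edi--Gowers at all), and then carrying out the Riemann-sum comparison between $\|1_A\|_{A(\Z/p\Z)}$ and $\|1_{\wt A}\|_{A(\Z)}$ directly; the exponent $1/3$ in the size hypothesis comes from balancing the rank of the progression against the quality of this approximation. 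So your sketch is a plausible outline of a proof, but as it stands it is not one.
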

\begin{theorem}[{\cite[Theorem 3]{konshk::1}}] Suppose that $G=\Z/p\Z$ and $A \subset G$ has density $\alpha$ with $\exp((\log p/\log \log p)^{1/3}) \leq |A| \leq p/3$.  Then
\begin{equation*}
\|1_A\|_{A(G)} = \Omega(\log \alpha^{-1} )^{1/3 - o(1)}.
\end{equation*}
\end{theorem}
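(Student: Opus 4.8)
The statement is the Konyagin--Shkredov theorem, so the plan is just to indicate the route I would follow, which pairs a structural ``small Wiener norm implies additive structure'' step with the quantitative--continuity machinery of Green and Konyagin (whose $L_p$ incarnation is Proposition~\ref{prop.screl}). Write $M:=\|1_A\|_{A(G)}$ and suppose, towards a lower bound, that $M$ is small; the goal is to force $\alpha^{-1}=p/|A|$ to satisfy $\log\alpha^{-1}\le M^{3+o(1)}$.

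\textbf{Step 1 (structure from the norm).} From $\sum_\gamma|\wh{1_A}(\gamma)|=M$, $\sum_\gamma|\wh{1_A}(\gamma)|^2=\alpha$ and H\"older's inequality one gets $\sum_\gamma|\wh{1_A}(\gamma)|^4\ge \alpha^3 M^{-2}$, i.e.\ the additive energy of $A$ is at least $|A|^3M^{-2}$, which in the stated range $M\ll|A|^{1/2}$ far exceeds the trivial $|A|^2$. By the Balog--Szemer\'edi--Gowers theorem there is $A'\subset A$ with $|A'|\ge |A|M^{-O(1)}$ and $|A'+A'|\le M^{O(1)}|A'|$. Feeding $A'$ into Proposition~\ref{prop.f} (a quasipolynomial Freiman theorem) produces a Bohr system $B$ with $\dim B$ and $\mathcal{C}^\Delta(A';B_1)$ both bounded by $\exp(\mathrm{polylog}\,M)$ and with $\|1_{A'}\ast\beta\|_{L_\infty(G)}\ge\exp(-\mathrm{polylog}\,M)$ for every probability measure $\beta$ supported on $B_1$; since $G=\Z/p\Z$ has no proper non-trivial subgroups this $B_1$ is genuinely progression-like, and $1_A$ correlates with translates of it.

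\textbf{Step 2 (iterating continuity).} Now run Proposition~\ref{prop.screl} starting from $B$, with $f=1_A$ and $\delta\asymp M^{-1}$ (and $\kappa,p$ chosen appropriately), so that at each step $\|1_A-1_A\ast\mu\|$ is pushed below a small absolute constant on a refined Bohr system $B'\le B$; because $1_A$ is $\{0,1\}$-valued this says $A$ is approximately a union of translates of $B'_1$, and picking $a\in A$ gives $|B'_1|\lesssim|A|$. Iterating, one obtains a nested tower $B=B^{(0)}\ge B^{(1)}\ge\cdots\ge B^{(J)}$ whose sizes shrink from $\approx p$ (up to the $\exp(\mathrm{polylog}\,M)$ loss in Step~1) down to $\lesssim|A|$, so the total shrinkage is $\ge\log\alpha^{-1}-\mathrm{polylog}\,M$. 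The quantitative estimates of Proposition~\ref{prop.screl}, read with $\delta^{-1}\asymp M$, say that each step multiplies the size of the Bohr system by only $\exp(\mathrm{polylog}\,M)$ and raises its dimension by only $\mathrm{polylog}\,M$; tracking dimension-growth against size-shrinkage over the whole tower --- and using the regime hypothesis $|A|\ge\exp((\log p/\log\log p)^{1/3})$ to ensure the tower is long enough for the optimisation to bite --- yields $\log\alpha^{-1}\le M^{3+o(1)}$, that is, $M\gg(\log\alpha^{-1})^{1/3-o(1)}$.

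\textbf{The main obstacle} is Step~2: the naive combination of Balog--Szemer\'edi--Gowers and Freiman in Step~1 alone loses far too much (it would only control $\log|A|$, and with an exponent much worse than $1/3$), so the sharp constant $1/3$ --- and the passage from $\log|A|$ to $\log\alpha^{-1}=\log(p/|A|)$ --- genuinely requires the delicate iterated-Bohr-set bookkeeping of Konyagin and Shkredov, in which the exact powers of $M$ and $\log M$ surrendered to dimension increase at each level must be balanced essentially optimally against the geometric decay of the Bohr-set sizes.
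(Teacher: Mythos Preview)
The paper does not prove this statement at all: it is quoted verbatim as \cite[Theorem~3]{konshk::1} and used as a black box in \S\ref{sec.con} to assemble the Green--Konyagin--Shkredov corollary. There is therefore no ``paper's own proof'' to compare your proposal against.

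As for the proposal itself, what you have written is a plan rather than a proof, and you say so yourself. The structural Step~1 is broadly fine (and is indeed close to how the argument of Lemma~\ref{lem.mitlem} begins), but Step~2 has a real gap. Proposition~\ref{prop.screl} does not produce a tower of Bohr systems of geometrically decreasing size running from $\approx p$ down to $\approx |A|$; it produces a \emph{single} refined $B'\le B$, and the Bohr system $B$ coming out of Step~1 already has $\mathcal{C}^\Delta(A';B_1)=\exp(\mathrm{polylog}\,M)$, so $|B_1|$ is of order $|A|$ up to those losses, not of order $p$. The passage from $\log|A|$ to $\log\alpha^{-1}=\log(p/|A|)$ --- which is the entire content of the theorem in the stated density range --- is precisely what your sketch does not supply. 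Konyagin and Shkredov obtain it by a density-increment iteration on Bohr sets that is specific to $\Z/p\Z$ and is not the same mechanism as Proposition~\ref{prop.screl}; the bookkeeping you allude to in your final paragraph is not an afterthought but the substance of their argument, and it cannot be read off from the tools in this paper.
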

The arguments behind these results are not restricted to indicator functions of sets and the results themselves have been extended by Gabdullin in \cite{gab::}; that paper also develops some higher dimensional analogues.

In $\Z/p\Z$ there are no non-trivial subgroups and so these three results can be combined to give the following.
\begin{theorem}[Green-Konyagin-Shkredov]
Suppose that $G=\Z/p\Z$ and $A \subset G$ has $\|1_A\|_{A(G)} \leq M$ for some $M \geq 1$.  Then there is some $z:\mathcal{W}(G) \rightarrow \Z$ such that
\begin{equation*}
1_A=\sum_{W \in {\mathcal{W}}(G)}{z(W)1_W} \text{ and } \|z\|_{\ell_1(\mathcal{W}(G))} \leq \exp(\exp(M^{3+o(1)})).
\end{equation*}
\end{theorem}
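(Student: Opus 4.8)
The plan is to deduce this statement directly from the three results just stated by a short case analysis. The key structural remark is that the only subgroups of $\Z/p\Z$ are $\{0\}$ and $\Z/p\Z$, so $\mathcal{W}(\Z/p\Z)$ consists of the singletons $\{x\}$ ($x \in \Z/p\Z$) together with $\Z/p\Z$ itself; consequently, for any $B \subseteq \Z/p\Z$ the representation $1_B = \sum_{x \in B}{1_{\{x\}}}$ exhibits some $z$ with $\|z\|_{\ell_1(\mathcal{W}(G))} = |B|$, and the only alternative is to trade this for a representation using $1_{\Z/p\Z}$. Hence the whole task reduces to bounding $\min\{|A|, p - |A|\}$ by $\exp(\exp(M^{3+o(1)}))$.

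First I would reduce to the case $\alpha := m_G(A) \le \frac{1}{2}$, replacing $A$ by $G \setminus A$ if necessary: since $\wh{1_{G \setminus A}} = \wh{1_G} - \wh{1_A}$ and $\wh{1_G}$ is supported on the trivial character, this changes $\|1_A\|_{A(G)}$ by at most $1$, and it changes the eventual representation only by the addition of $1_{\Z/p\Z}$, costing one further unit of $\ell_1$-mass. So write $M' := \|1_A\|_{A(G)} \le 2M$ in this reduced situation, and set $\tau := \exp((\log p/\log\log p)^{1/3})$.

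Now I would split into cases on the size of $|A|$. If $|A| \le 1$ there is nothing to do. If $2 \le |A| \le \tau$, then by Konyagin--Shkredov \cite[Theorem 13]{konshk::0} we have $M' = \Omega(\log|A|)$, whence $|A| \le \exp(O(M)) \le \exp(\exp(M^{3+o(1)}))$. If $\tau \le |A| \le p/3$, then Konyagin--Shkredov \cite[Theorem 3]{konshk::1} gives $M' = \Omega((\log \alpha^{-1})^{1/3 - o(1)})$, so $\log \alpha^{-1} = O(M^{3+o(1)})$ and hence $\alpha^{-1} \le \exp(O(M^{3+o(1)}))$; while if $p/3 < |A| \le p/2$ we trivially have $\alpha^{-1} < 3 \le \exp(O(M^{3+o(1)}))$. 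In either of these last two sub-cases, the Green--Konyagin density bound $\|1_A\|_{A(G)} = \alpha \log^{1/3-o(1)} p$ (itself a consequence of \cite[Theorem 1.3]{grekon::}) gives $\log p \le (M'/\alpha)^{3+o(1)} = M^{3+o(1)} \alpha^{-(3+o(1))} \le \exp(O(M^{3+o(1)}))$, so that $|A| \le p \le \exp(\exp(O(M^{3+o(1)}))) = \exp(\exp(M^{3+o(1)}))$. Combining the cases and adding the unit of mass from the complementation step yields $\|z\|_{\ell_1(\mathcal{W}(G))} \le 1 + \min\{|A|, p-|A|\} \le \exp(\exp(M^{3+o(1)}))$.

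I do not expect a deep obstacle here: the content is entirely contained in the three cited theorems. The one point that needs genuine care is the bookkeeping of the various $o(1)$ terms, which in the hypotheses are errors tending to $0$ as $p \to \infty$ but which must be reorganised into a single $o(1)$ tending to $0$ as $M \to \infty$ in the conclusion. Concretely, one fixes $\varepsilon > 0$, notes that for $p$ below a threshold depending only on $\varepsilon$ the bound $|A| \le p$ is a constant, and for larger $p$ checks that raising $\log^{1/3-o(1)} p \le M'/\alpha$ to the power $3 + o(1)$ and substituting $\alpha^{-1} \le \exp(O((M')^{3+o(1)}))$ yields $|A| \le \exp(\exp((M')^{3+\varepsilon}))$, after which $M' \le 2M$ absorbs the constant into a further $\varepsilon$. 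I would also want to be careful that the regime $\alpha \in (\frac{1}{3}, \frac{1}{2}]$, which is not covered by \cite[Theorem 3]{konshk::1}, is handled by Green--Konyagin alone --- it is, since there $\alpha$ is bounded below, so that theorem already forces $\log p = O(M^{3+o(1)})$.
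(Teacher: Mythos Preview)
Your proposal is correct and follows precisely the route the paper indicates: the paper does not give a detailed proof but simply remarks that in $\Z/p\Z$ there are no non-trivial subgroups and that the three quoted results of Green--Konyagin and Konyagin--Shkredov can be combined to give the statement. Your case analysis supplies exactly those details --- reducing to $\alpha \le 1/2$ by complementation, handling small $|A|$ via \cite[Theorem 13]{konshk::0}, bounding $\alpha^{-1}$ via \cite[Theorem 3]{konshk::1}, and then bounding $p$ via the Green--Konyagin density estimate --- and your remarks about reconciling the various $o(1)$ terms (in $p$) into a single $o(1)$ (in $M$) correctly identify the only point requiring care.
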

Note that this is already a strengthening of the main result of \cite{gresan::0} in the particular case of groups of prime order, and this has been further strengthened by Schoen in \cite{sch::9} who showed the above with a bound of the form $\exp(M^{16+o(1)})$ by combining Konyagin and Shkredov's work more effectively.

In fact Konyagin and Shkredov's results are much sharper if one takes $A$ to be sparse.  For example, they combine to give the following.
\begin{theorem}[Konyagin-Shkredov]
Suppose that $G=\Z/p\Z$ and $A \subset G$ has $\|1_A\|_{A(G)} \leq M$ for some $M \geq 1$ and $|A| \leq p^{9/10}$.  Then there is some $z:\mathcal{W}(G) \rightarrow \Z$ such that
\begin{equation*}
1_A=\sum_{W \in {\mathcal{W}}(G)}{z(W)1_W} \text{ and } \|z\|_{\ell_1(\mathcal{W}(G))} \leq \exp(M^{3+o(1)}).
\end{equation*}
\end{theorem}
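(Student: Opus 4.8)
The plan is to use the fact that in $G = \Z/p\Z$ the only subgroups are $\{0_G\}$ and $G$ itself, so that $\mathcal{W}(G)$ consists of the $p$ singletons together with $G$; consequently finding an economical representation of $1_A$ reduces entirely to controlling $|A|$, and that control comes essentially for free from the stated lower bounds of Konyagin and Shkredov on the Wiener norm.

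Concretely, I would first observe that setting $z(G):=0$ and $z(\{x\}):=1_A(x)$ for each $x \in G$ already gives $1_A = \sum_{W \in \mathcal{W}(G)}{z(W)1_W}$ with $\|z\|_{\ell_1(\mathcal{W}(G))} = |A|$; so it is enough to prove $|A| \leq \exp(M^{3+o(1)})$. If $|A| \leq 1$ this is trivial, so assume $|A| \geq 2$ and split according to the size of $A$. If $|A| \leq \exp((\log p/\log\log p)^{1/3})$, then \cite[Theorem 13]{konshk::0} gives $M \geq \|1_A\|_{A(G)} = \Omega(\log|A|)$, whence $|A| = \exp(O(M)) \leq \exp(M^{3+o(1)})$. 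If instead $|A| > \exp((\log p/\log\log p)^{1/3})$ — so that $p$ is large and in particular $p^{9/10} < p/3$, the finitely many small primes being handled trivially — then \cite[Theorem 3]{konshk::1} applies with density $\alpha := |A|/p$, and since $|A| \leq p^{9/10}$ we have $\alpha^{-1} = p/|A| \geq p^{1/10}$, so
\begin{equation*}
M \geq \|1_A\|_{A(G)} = \Omega(\log\alpha^{-1})^{1/3-o(1)} = \Omega(\log p)^{1/3-o(1)}.
\end{equation*}
Rearranging gives $\log p \leq M^{3+o(1)}$, and hence $|A| \leq p^{9/10} < p \leq \exp(M^{3+o(1)})$.

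There is no genuine obstacle here: all the substance sits in the two cited theorems, and the hypothesis $|A| \leq p^{9/10}$ is exactly what turns the $\log\alpha^{-1}$ appearing in \cite[Theorem 3]{konshk::1} into a quantity comparable to $\log p$. The only thing needing a little care is the arithmetic with the $o(1)$ terms — checking that raising the estimate of the last display to the power $3+o(1)$ and absorbing the implied constants (permissible since $M \geq 1$) still leaves an error that is $o(1)$ as $M \to \infty$, and that this asymptotic reading is the intended one because in each case a bounded $M$ forces both $|A|$ and $p$ bounded, so only the regime $M \to \infty$ carries content.
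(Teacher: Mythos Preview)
Your proposal is correct and is precisely the argument the paper has in mind: the paper does not spell out a proof but merely says the two Konyagin--Shkredov theorems ``combine to give'' this statement, and your case split on $|A|$ together with the trivial singleton decomposition $1_A=\sum_{x\in A}1_{\{x\}}$ (so that $\|z\|_{\ell_1}=|A|$) is exactly how they combine. The only cosmetic point is that the $9/10$ is not sharp---any exponent strictly below $1$ would do---but your handling of the $o(1)$ bookkeeping and the small-$p$ edge cases is fine.
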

This is stronger than our main theorem in this particular case of small sets in groups of prime order.

\subsection{Torsion-free groups}

For a non-vacuous discussion of torsion-free groups we need to have a definition of $A(G)$ for infinite groups.  This is virtually the same, but see the start of \S\ref{sec.con} for the formal details.  Konyagin \cite{kon::} and McGehee, Pigno and Smith \cite{mcgpigsmi::} resolved the Littlewood conjecture by proving the following in our language.
\begin{theorem}\label{thm.mpsk}
Suppose that $G=\Z$ and $f \in A(G)$ is integer-valued.  Then there is some $z:\mathcal{W}(G) \rightarrow \Z$ such that
\begin{equation*}
f=\sum_{W \in {\mathcal{W}}(G)}{z(W)1_W} \text{ and } \|z\|_{\ell_1(\mathcal{W}(G))} \leq \exp(O(\|f\|_{A(G)})).
\end{equation*}
\end{theorem}
In fact some work has been done on the constant behind the big-$O$ term.  Stegeman \cite{ste::1} and Yabuta \cite{yab::0} independently give a bound of the shape
\begin{equation*}
 \|z\|_{\ell_1(\mathcal{W}(G))} \leq\exp\left(\left(c\frac{\pi^3}{4}+o(1)\right)\|f\|_{A(G)}\right).
\end{equation*}
for some $c<1$.  It must be that $c\geq\pi^{-1}$ in view of the size of the Lebesgue constants (see \cite[(16.)]{fej::0}).

\section{Cohen's idempotent theorem}\label{sec.con}

In this section we extend our work to locally compact Abelian groups; suppose that $G$ is such.  Then we write $\wh{G}$ for the (locally compact Abelian group \cite[\S1.2.6, Theorem (d)]{rud::1}) of continuous homomorphisms $G \rightarrow S^1$.  We say $f$ is an element of $B(G)$ if there is a measure $\mu \in M(\wh{G})$ such that
\begin{equation*}
f(x)=\int{\gamma(x)d\mu(\gamma)} \text{ for all }x \in G,
\end{equation*}
and $f \in A(G)$ if there is a representation of the above form in which $\mu$ is absolutely continuous with respect to the Haar measure on $\wh{G}$.  We write $\|f\|_{B(G)}:=\|\mu\|$ which is well-defined since the choice of $\mu$, if it exists, is unique \cite[\S1.3.6]{rud::1}.  We also put $\|f\|_{A(G)}=\|f\|_{B(G)}$ if $f \in A(G)$ and
\begin{equation*}
\mathcal{W}(G):=\bigcup_{H \leq G \text{ open}}{G/H},
\end{equation*}
and note that if $G$ is finite these definitions agree with those in the introduction.

A \textbf{ring of sets on $G$} is a subset of $\mathcal{P}(G)$ including $G$, and closed under complements and finite intersections (and hence finite unions by de Morgan's laws).  $\mathcal{P}(G)$ is the standard example of a ring of sets on $G$.  Another easy example is $\mathcal{A}(G):=\{A \subset G: 1_A \in B(G)\}$:

A short calculation \cite[\S3.1.2]{rud::1} shows that if $W \in \mathcal{W}(G)$ then $W \in \mathcal{A}(G)$ and $\|1_W\|_{B(G)}=1$.  It follows from the triangle inequality for $\|\cdot \|_{B(G)}$ that if $A \in \mathcal{A}(G)$ then $\neg A \in \mathcal{A}(G)$ since $1_{\neg A} = 1_G -1_A$; and it follows from the sub-multiplicativity of $\|\cdot\|_{B(G)}$ that $A \cap B \in \mathcal{A}(G)$ if $A,B \in \mathcal{A}(G)$ since $1_{A\cap B}=1_A \cdot 1_B$.

The \textbf{coset ring of $G$} is the intersection of all rings of sets on $G$ containing $\mathcal{W}(G)$.  This is a ring, and by the above is contained in $\mathcal{A}(G)$.  Cohen's idempotent theorem is the following converse.
\begin{theorem}[{\cite[\S3.1.3]{rud::1}}]\label{thm.cidem}
Suppose that $A \in \mathcal{A}(G)$.  Then $A \in \mathcal{L}(G)$.
\end{theorem}
To give a quantitative version of this we need a more constructive view of $\mathcal{L}(G)$.  With an eye to our later results we take a slightly more complicated definition than one might at first choose.

Given $H \leq G$ and $\mathcal{S} \subset G/H$ we write $\mathcal{S}^*:=\mathcal{S} \cup \left\{\neg \bigcup{\mathcal{S}}\right\}$, that is the partition of $G$ into cells from $\mathcal{S}$ and an additional cell that is everything else.  We say that $A$ has a \textbf{$(k,s)$-representation} if there are open subgroups $H_1,\dots,H_k \leq G$, and sets $\mathcal{S}_1\subset G/H_1,\dots,\mathcal{S}_k\subset G/H_k$ of size at most $s$ such that $A$ is the (disjoint) union of some cells in the partition\footnote{Recall that if $\mathcal{P}$ and $\mathcal{Q}$ are partitions of the same set then $\mathcal{P} \wedge \mathcal{Q}:=\{P \cap Q: P \in \mathcal{P}, Q \in \mathcal{Q}\}$.} $\mathcal{S}_1^* \wedge \cdots \wedge \mathcal{S}_k^*$.

We write $\mathcal{W}_{k,s}(G)$ for the set of sets with $(k,s)$-representations.  It can be shown fairly directly that $\bigcup_{k}{\mathcal{W}_{k,s}(G)}=\mathcal{L}(G)$ for any $s \in \N$, but as this also follows from what we are about to show we omit the details.

The triangle inequality and sub-multiplicativity of $\|\cdot\|_{B(G)}$ gives that each cell in the partition has algebra norm at most $(s+1)^k$ and there are at most $(s+1)^k$ cells so
\begin{equation}\label{eqn.basiccalc}
\|1_A\|_{B(G)} \leq (s+1)^{2k} \text{ for all }A \in \mathcal{W}_{s,k}(G).
\end{equation}
We shall prove the following converse.
\begin{theorem}[Quantitative idempotent theorem]\label{thm.quantcoh}
Suppose that $\|1_A\|_{B(G)} \leq M$ and $\delta \in(0,1]$ is a parameter.  Then $A \in \mathcal{W}_{k,s}(G)$ where
\begin{equation*}
k \leq M(1+\delta) \text{ and }s \leq \exp(O(M^4\log^82M + M^2\log \delta^{-1}(\log 2\log2\delta^{-1}))).
\end{equation*}
\end{theorem}
We shall prove this after the proof of the next result.

The earlier work of this paper concerned integer-valued functions, not just $\{0,1\}$-valued functions, and we now turn to these.  We say that $f:G \rightarrow \C$ has an \textbf{$(l,L)$-representation} if there are open subgroups $H_1,\dots,H_l \leq G$ and functions $z_1:G/H_1 \rightarrow \Z, \dots, z_l:G/H_l \rightarrow \Z$ such that
\begin{equation}\label{eqn.lLrep}
f=\sum_{i=1}^l{\sum_{W \in G/H_i}{z_i(W)1_W}} \text{ and } \max_i{\|z_i\|_{\ell_1(G/H_i)}} \leq L.
\end{equation}
Note that in this case $f$ is necessarily integer-valued.  

By the triangle inequality and the aforementioned calculation \cite[\S3.1.2]{rud::1}, if $f$ has an $(l,L)$-representation then $\|f\|_{B(G)} \leq lL$.  We shall bootstrap our main result to give the following.
\begin{theorem}\label{thm.mgen}
Suppose that $G$ is a locally compact Abelian group and $f \in B(G)$ is integer-valued with $\|f\|_{B(G)} \leq M$ and $\delta \in (0,1]$ is a parameter.  Then $f$ has an $(M(1+\delta),L)$-representation where
\begin{equation*}
L \leq \exp(O(M^4\log^82M + M^2\log \delta^{-1}(\log 2\log2\delta^{-1}))).
\end{equation*}
\end{theorem}
\begin{proof}[Proof of Theorem \ref{thm.mgen}]
Our argument proceeds essentially as in \cite[Appendix A]{gresan::0}; recall that if $\Lambda \leq G$ then $\Lambda^\perp:=\{\gamma \in \wh{G}: \gamma(x)=1 \text{ for all }x \in \Lambda\}$, and $\mu$ is absolutely continuous w.r.t. $\nu$ if there is some $f \in L_1(\nu)$ such that $d\mu = fd\nu$.

We begin with a qualitative variant of our result, \cite[Theorem]{ameito::}.  This gives open subgroups $S_1,\dots,S_k\leq G$; mutually orthogonal measures $\mu_1,\dots,\mu_k \in M(\wh{G})$; natural numbers $R_i$; signs and $(\epsilon_{i,j})_{j=1}^{R_i}$, and elements $(x_{i,j})_{j=1}^{R_i}$ such that
\begin{equation}\label{eqn.up}
d\mu_i(\gamma) = \sum_{j=1}^{R_i}{\epsilon_{i,j}\gamma(x_{i,j})dm_i(\gamma)} \text{ for }1\leq i \leq k,
\end{equation}
where $m_i:=m_{S_i^\perp}$ is the Haar probability measure on the compact group $S_i^\perp$; and
\begin{equation}\label{eqn.dsty}
f(x)=\sum_{i=1}^k{\int{\gamma(x)d\mu_i(\gamma)}} \text{ for all }x \in G.
\end{equation}
Since the $\mu_i$ are mutually orthogonal we have
\begin{equation*}
\|f\|_{B(G)}=\sum_{i=1}^k{\|f_i\|_{B(G)}}.
\end{equation*}
In view of (\ref{eqn.up}) the functions $f_i$ are integer-valued.  The argument now proceeds as in the proof of \cite[Proposition A.1]{gresan::0}.
\end{proof}
If one wished to avoid appealing to Cohen's theorem in the proof above the key obstacle comes in \S\ref{sec.ac}.  The concept of arithmetic connectivity extends easily enough to locally compact Abelian groups (using, \emph{e.g.}, the definition of $B(G)$ developed by Eymard \cite[(2.14) Lemme]{eym::} for non-Abelian groups), but this does not lead to a statement about large energy directly because we do not yet have a natural measure with respect to which the support of $f$ is positive but finite.

\begin{proof}[Proof of Theorem \ref{thm.quantcoh}]
Apply Theorem \ref{thm.mgen} to get $k \leq M(1+\delta)$ open subgroups $H_1,\dots,H_k$ and functions $z_1:G/H_1 \rightarrow \Z, \dots, z_l:G/H_l \rightarrow \Z$ such that
\begin{equation*}
1_A=\sum_{i=1}^l{\sum_{W \in G/H_i}{z_i(W)1_W}}
\end{equation*}
and
\begin{equation*}
\max_i{\|z_i\|_{\ell_1(G/H_i)}} \leq \exp(O(M^4\log^82M + M^2\log \delta^{-1}(\log 2\log2\delta^{-1}))).
\end{equation*}
Let $\mathcal{S}_i:=\{W \in G/H_i: z_i(W) \neq 0\}$ for $1\leq i \leq k$ and note that $1_A$ is constant on cells of the partition $\mathcal{S}_1^*\wedge \cdots \wedge \mathcal{S}_k^*$, which gives the required result.
\end{proof}

Returning to Theorem \ref{thm.mgen}, taking $\delta =\frac{1}{2M}\left(\lfloor M+1\rfloor -M\right)\in (0,1]$ we have the following corollary.
\begin{corollary}\label{cor.imep}
Suppose that $G$ is a locally compact Abelian group and $f \in B(G)$ is integer-valued with $\|f\|_{B(G)} \leq M$.  Then $f$ has an $(M,O_M(1))$-representation.
\end{corollary}
This is best possible in the first parameter of the representation as can be seen by considering a disjoint union of cosets of subgroups $H_1,\dots,H_l \leq G$ where $|H_i+H_j : H_i\cap H_j|=\infty$ if $i \neq j$.  

It is important to note that the error term is \emph{not} monotonic in the $M$ parameter and this is necessarily the case: consider $A:=G \setminus \{0_G\}$ for $G$ a group whose order is a large prime.  Then $\|1_A\|_{B(G)} <2$ and so if we are to write $A$ as a sum of indicator function of cosets of at most $\|1_A\|_{B(G)}$ subgroups, then there can only be one subgroup and we can require arbitrarily many cosets of this as the prime $p$ increases.

Apart from Cohen's original proof \cite{coh::} of Theorem \ref{thm.cidem}, which is the proof on which Rudin's \cite[Chapter 3]{rud::1} is based, there are proofs of the idempotent theorem due to Amemiya and It{\^o} \cite{ameito::} (shortening Cohen's original argument), and Host \cite{hos::} also shortening Cohen's argument, but the main purpose of which is to beautifully extend it to non-Abelian groups.

As stated these results are trivial for finite groups and the arguments do not seem to immediately extend to give quantitative information.  Both Amemiya and It{\^o}'s and Host's are very soft; Cohen's less so.  That being said they do have non-trivial quantitative content in one respect and in particular they can all be used to prove the following theorem.
\begin{theorem}\label{thm.ai}
Suppose that $G$ is a locally compact Abelian group and $f \in B(G)$ is integer-valued.  Then there are integer-valued functions $f_1,\dots,f_l \in B(G)$ such that each $f_i$ has a $(1,O_f(1))$-representation,
\begin{equation}\label{eqn.sum}
f=\sum_{i=1}^l{f_i} \text{ and } \|f\|_{B(G)} = \sum_{i=1}^l{\|f_i\|_{B(G)}}.
\end{equation}
\end{theorem}
Here $O_f(1)$ is a finite constant depending on $f$.  This has the following corollary.
\begin{corollary}
Suppose that $G$ is a locally compact Abelian group and $f \in B(G)$ is integer-valued with $\|f\|_{B(G)} \leq M$.  Then $f$ has an $(M,O_f(1))$-representation.
\end{corollary}
This is slightly weaker than Corollary \ref{cor.imep} since there are multiple functions with the same algebra norm.

It is worth noting that one cannot guarantee equality in the right sum in (\ref{eqn.sum}) for finite groups unless $l=1$ -- the example following Corollary \ref{cor.imep} applies here too.  This means that we have to relax the requirement that the underlying measures -- that is the measures $\mu_i$ such that $f_i(x)=\int{\gamma(x)d\mu_i(\gamma)}$ are mutually orthogonal to simply a requirement that they are `quite' orthogonal.  In some respects this is what happens in our quantitative continuity argument in \S\ref{sec.qc}.

\section*{Acknowledgment}

My thanks to the referees for encouragement to motivate this topic and some very careful reading of the paper leading to numerous improvements.

\bibliographystyle{halpha}

\bibliography{references}

\end{document}